\DeclareFontFamily{U}{mathx}{\hyphenchar\font45}
\DeclareFontShape{U}{mathx}{m}{n}{
      <5> <6> <7> <8> <9> <10>
      <10.95> <12> <14.4> <17.28> <20.74> <24.88>
      mathx10
      }{}
\DeclareSymbolFont{mathx}{U}{mathx}{m}{n}
\DeclareMathAccent{\widecheck}{0}{mathx}{"71}
\newcommand{\Id}{\operatorname{Id}}
\renewcommand{\P}{\mathbb{P}}
\newcommand{\R}{\mathbb{R}}
\newcommand{\C}{\mathbb{C}}
\newcommand{\hv}{\widehat{v}}
\newcommand{\hl}{\widehat{\lambda}}
\newcommand{\hmu}{\widehat{\mu}}
\newcommand{\Tr}{\operatorname{Tr}}
\newcommand{\tr}{\operatorname{tr}}
\newcommand{\E}{\mathbb{E}}
\newcommand{\eps}{\varepsilon}
\newcommand{\N}{\mathcal{N}}
\newcommand{\hSigma}{\widehat{\Sigma}}
\newcommand{\hLambda}{\widehat{\Lambda}}
\newcommand{\hK}{\widehat{K}}
\newcommand{\hT}{\widehat{T}}
\newcommand{\oSigma}{\mathring{\Sigma}}
\newcommand{\1}{\mathbf{1}}
\newcommand{\cR}{\mathcal{R}}
\newcommand{\A}{\mathcal{A}}
\newcommand{\B}{\mathcal{B}}
\newcommand{\supp}{\operatorname{supp}}
\newcommand{\spec}{\operatorname{spec}}
\newcommand{\dist}{\operatorname{dist}}
\newcommand{\diag}{\operatorname{diag}}
\newcommand{\ordereddist}{\operatorname{ordered-dist}}
\newcommand{\sT}{{\mathsf{T}}}
\renewcommand{\Im}{\operatorname{Im}}
\renewcommand{\Re}{\operatorname{Re}}
\newcommand{\wGamma}{\widetilde{\Gamma}}
\newcommand{\vT}{\widecheck{T}}
\newcommand{\wT}{\widetilde{T}}
\newcommand{\D}{\mathcal{D}}
\newcommand{\cH}{\mathcal{H}}
\newcommand{\cG}{\mathcal{G}}
\newcommand{\cF}{\mathcal{F}}
\newcommand{\cE}{\mathcal{E}}
\newcommand{\NC}{\text{NC}}
\newcommand{\HS}{\text{HS}}
\renewcommand{\i}{\mathbf{i}}
\renewcommand{\j}{\mathbf{j}}
\newcommand{\cI}{\mathcal{I}}
\newcommand{\Q}{\mathcal{Q}}
\newcommand{\hu}{\widehat{u}}
\newcommand{\Var}{\operatorname{Var}}
\newcommand{\Exponential}{\operatorname{Exponential}}
\renewcommand{\a}{\mathbf{a}}
\renewcommand{\b}{\mathbf{b}}
\newtheorem{theorem}{Theorem}[section]
\newtheorem{corollary}[theorem]{Corollary}
\newtheorem{lemma}[theorem]{Lemma}
\newtheorem{proposition}[theorem]{Proposition}
\newtheorem{assumption}[theorem]{Assumption}
\newtheorem*{remark*}{Remark}
\begin{document}
\title[PCA in linear mixed models]{Principal components in linear mixed models with general bulk}

\author{Zhou Fan}
\address{Z.F.: Department of Statistics and Data Science \\ Yale University}
\email{zhou.fan@yale.edu}

\author{Yi Sun}
\address{Y.S.: Department of Statistics \\ The University of Chicago}
\email{yisun@statistics.uchicago.edu}

\author{Zhichao Wang}
\address{Z.W.: Department of Mathematics\\ University of California, San Diego}
\email{zhw036@ucsd.edu}

\begin{abstract}
We study the principal components of covariance estimators in multivariate
mixed-effects linear models. We show that, in high dimensions, the principal
eigenvalues and eigenvectors may exhibit bias and aliasing effects that are not
present in low-dimensional settings. We derive the
first-order limits of the principal eigenvalue locations and eigenvector
projections in a high-dimensional asymptotic framework,
allowing for general population spectral distributions for the
random effects and extending previous results from a more restrictive spiked
model. Our analysis uses free probability techniques, and we develop two
general tools of independent interest---strong asymptotic freeness of GOE and
deterministic matrices and a free deterministic equivalent approximation for
bilinear forms of resolvents.
\end{abstract}

\maketitle

\section{Introduction}

Principal components analysis (PCA) is a commonly used technique for identifying
linear low-rank structure in high-dimensional data \cite{jolliffe}. For
$n$ independent samples in a comparably large dimension $p$, 
it is now well-established that the principal components of the sample
covariance matrix may be inaccurate for their population
counterparts \cite{johnstonelu}. A body of work has quantified
the behavior of PCA in this setting
\cite{johnstone,baiketal,baiksilverstein,paul,BGNrectangular,baiyao},
connecting to the Marcenko-Pastur and Tracy-Widom laws of asymptotic random 
matrix theory \cite{marcenkopastur,tracywidom}. We
refer readers to the review articles \cite{paulaue,johnstonepaul}
for more discussion and references to this and related lines of work.

Similar phenomena occur in statistical models where samples are
not independent, but instead exhibit complex dependence structure
\cite{burdaetal,zhang,liuauepaul,wangauepaul}.
However, the behavior of PCA in many such models is less well-understood. 
In this work, we consider the setting of mixed effects
linear models \cite{searleetal}, where dependence across observed samples
arises via linear combinations of independent latent variables.
These models are commonly used in statistical genetics to model
quantitative phenotypes in related individuals \cite{lynchwalsh}.
We study the behavior of principal eigenvalues and eigenvectors
of MANOVA covariance estimates for the random effects.

Our main results quantify several spectral bias and aliasing
phenomena that may occur in high-dimensional applications.
In particular, we show that large principal eigenvalues in the
covariance of one random effect may bias the principal eigenvectors
and also yield spurious eigenvalues in the estimated covariances of the other
effects. These phenomena are unique to mixed-effects models, and they
do not arise in similar spiked models of sample covariance matrices for
independent samples \cite{baiketal,baiksilverstein,paul}.
In \cite{fanjohnstonesun}, such phenomena for mixed models were first
described under an ``isotropic noise'' assumption, where the population
covariance of each random effect is a low-rank perturbation of the
identity. Our work extends these results to
the setting of general population spectral distributions for
the random effects. We derive generalizations of
the first-order limits for eigenvalues and
eigenvector projections in \cite{fanjohnstonesun} involving quantities
appearing in the fixed-point equations
for the empirical spectral law in \cite{fanjohnstonebulk}. We describe these
results in Section \ref{sec:mixedeffects}.

Our proofs are very different from the analytic
approach of \cite{fanjohnstonesun}. Instead, they
are based in free probability theory and
its connection to random matrices \cite{voiculescu,mingospeicher}.
Our work also establishes two general results in this
area---strong asymptotic freeness of independent GOE and
deterministic matrices and a method of deriving anisotropic resolvent
approximations using free deterministic equivalents \cite{speichervargas}. 
We describe these in Section \ref{sec:freeprob}.

The connection between free probability and random matrices was
introduced in \cite{voiculescu} for deterministic and GUE matrices and has
been extended to many other matrix models
\cite{dykema,voiculescustrengthened,hiaipetz,collins,capitainecasalis,collinssnaidy,benaychgeorges,speichervargas}.
Strong asymptotic freeness extending the approximation
from the trace to the operator norm was first proven in
\cite{haagerupthorbjornsen} for GUE matrices and extended to other
models in \cite{schultz,capitainedonatimartin,male,belinschicapitaine}.
Free probability techniques have recently been applied to study outlier
eigenvalues in other matrix models \cite{belinschietal,belinschietal2}
and spectral behavior in other statistical applications, including
autocovariance estimates for high-dimensional time series
\cite{bhattacharjeebose1,bhattacharjeebose2,bhattacharjeebose3,bosebook,bhattacharjeebose4} and sketching
methods for linear regression \cite{dobribanliu}. The tools we develop may be
of broader interest to the analysis of structured random matrices arising in
other applications.

\section{Probabilistic results in the linear mixed model}\label{sec:mixedeffects}

Extending the representation of
\cite{rao} to a multivariate setting, we consider the mixed-effects linear model
\begin{equation}\label{eq:mixedmodel}
Y=X\beta+U_1\alpha_1+\ldots+U_k\alpha_k \in \R^{n \times p}
\end{equation}
where $Y$ contains $n$ dependent observations in dimension $p$,
each a combination of fixed effects $X\beta$ and
random effects constituting the rows of
$\alpha_1,\ldots,\alpha_k$. Here,
\begin{itemize}
\item $X \in \R^{n \times m}$ is an observed design matrix of a small number $m$
of fixed effects,
with unknown regression coefficients $\beta \in \R^{m \times p}$.
\item For each $r=1,\ldots,k$, the matrix $\alpha_r \in \R^{n_r \times p}$ is
unobserved, its rows constituting $n_r$ i.i.d.\ realizations of a $p$-dimensional random effect.
\item Each $U_r \in \R^{n \times n_r}$ is a known, deterministic incidence matrix specified by the model design.
\end{itemize}
We study the behavior of PCA for estimates of
the \emph{variance components}, which are the 
covariance matrices $\Sigma_1,\ldots,\Sigma_k$ for the random effects in
$\alpha_1,\ldots,\alpha_k$.

In quantitative genetics, $U_1,\ldots,U_k$ may encode a
classification design, as commonly used in twin/sibling studies and
breeding experiments. Examples are discussed in
\cite{fanjohnstonebulk,fanjohnstonesun}. In genomewide association study
designs, $U_1,\ldots,U_k$ may contain genotype measurements at a set of
single-nucleotide polymorphisms (SNPs) \cite{yangetal,zhoustephens}.
It has been recognized since \cite{fisher,wright} that
variance components in these models can provide a decomposition of
the total population variance of quantitative phenotypes into constituent
genetic and non-genetic effects, yielding estimates of heritability. In
high-dimensional applications, including the analysis of gene expression traits
and other molecular phenotypes, the principal
eigenvectors of the genetic components may indicate phenotypic subspaces near
which responses to selection or random mutational drift are likely to be
constrained \cite{hineblows,blowsmcguigan,colletetal}. Principal eigenvectors
of the non-genetic components may correspond to hidden experimental
confounders, to be removed before performing downstream analyses
\cite{leekstorey,stegleetal}.

As $\alpha_1,\ldots,\alpha_k$ are not individually observed, one cannot
construct the usual
sample covariance estimator for $\Sigma_1,\ldots,\Sigma_k$. Instead, each
$\Sigma_r$ may be classically estimated by a MANOVA
estimator of the form
\[\hSigma_r=Y^\sT B_rY,\]
where the symmetric matrix $B_r$ is chosen to satisfy the properties
\[B_rX=0, \qquad \E[Y^\sT B_rY]=\Sigma_r.\]
Such an estimator $\hSigma_r$ is unbiased and equivariant to
rotations of coordinates in $\R^p$---these properties are analogous to those
holding for a sample covariance matrix for independent samples. 
For example, in a balanced one-way classification design, the within-group
covariance matrix $\Sigma_1$ is estimated by the MANOVA estimator
$\hSigma_1=Y^\top B_1Y$ where $B_1$ is a
scaled difference of two orthogonal projections, the first
onto a subspace of group means and the second onto its orthogonal
complement. See Appendix \ref{appendix:simulations} for further details of this
example.

Our main results, Theorems \ref{thm:outliers} and \ref{thm:eigenvectors} below,
characterize the first-order limiting behavior of the principal
eigenvalues and eigenvectors of any such matrix $\hSigma=Y^\top BY$
in a high-dimensional asymptotic framework.

\subsection{Model assumptions} \label{sec:model}
We assume that the random effects arise in the following way.
\begin{assumption}\label{assump:alpha}
The matrices $\alpha_1,\ldots,\alpha_k$ are independent. The rows of each
$\alpha_r$ are independent, with the $i^\text{th}$ row given by
\[\sum_{j=1}^{\ell_r} \gamma_j^{(r)} \xi_{ij}^{(r)}+\eps_i^{(r)}.\]
Here $\gamma_1^{(r)},\ldots,\gamma_{\ell_r}^{(r)} \in \R^p$
are $\ell_r$ deterministic vectors, and
$\xi_{ij}^{(r)} \in \R$ are independent random variables satisfying
\[\E[\xi_{ij}^{(r)}]=0, \qquad \E[(\xi_{ij}^{(r)})^2]=1, \qquad \E[|\xi_{ij}^{(r)}|^k]
\leq C_k\]
for all $k \geq 1$ and some constants $C_k>0$.
For a covariance $\oSigma_r \in \R^{p \times p}$,
the noise $\eps_i^{(r)} \in \R^p$ is Gaussian with distribution
$\eps_i^{(r)} \sim \N(0,\oSigma_r)$.
\end{assumption}

Stacking $\gamma_1^{(r)},\ldots,\gamma_{\ell_r}^{(r)}$ as the rows of
\begin{equation}\label{eq:Gamma}
\Gamma_r=\begin{pmatrix} - & \gamma_1^{(r)} & - \\ & \vdots & \\ - &
\gamma_{\ell_r}^{(r)} & - \end{pmatrix} \in \R^{\ell_r \times p},
\end{equation}
each $\alpha_r$ has independent rows with mean 0 and covariance of the spiked
form
\begin{equation}\label{eq:Sigmar}
\Sigma_r=\Gamma_r^\sT \Gamma_r+\oSigma_r.
\end{equation}
The leading term $\Gamma_r^\sT \Gamma_r$ induces up to $\ell_r$ ``signal''
eigenvalues that separate from the ``noise'' eigenvalues of $\oSigma_r$. Our
results should be interpreted in the setting where the noise covariance
$\oSigma_r$ does not itself have isolated eigenvalues that
separate from the bulk of its eigenvalue distribution.

As a compromise between generality of the model and simplicity of the analysis,
Assumption \ref{assump:alpha} follows the approach in \cite{nadler2008} and
imposes a Gaussian assumption on $\eps_i^{(r)}$ but not on $\xi_{ij}^{(r)}$.
The signal directions $\gamma_1^{(r)},\ldots,\gamma_{\ell_r}^{(r)}$ are not
required to be orthogonal for each $r$. It is likely that our
theoretical results in Theorems \ref{thm:sticktobulk}, \ref{thm:outliers}, and
\ref{thm:eigenvectors} all remain correct under a milder moment
assumption for this noise $\eps_i^{(r)}$, and it may be possible to prove
such an extension using cumulant expansions of the remainder terms in the
Gaussian integration-by-parts formula, as done in \cite{belinschicapitaine}. 
However, we will not pursue this direction in the current work.

For the linear mixed model (\ref{eq:mixedmodel}),
we study an asymptotic framework summarized as follows.
\begin{assumption}\label{assump:asymptotic}
The dimensions $n,p,n_1,\ldots,n_k \to \infty$, where $k$ is a fixed constant.
There are universal constants $C,c>0$ such that for each $r=1,\ldots,k$,
\begin{itemize}
\item $c<p/n<C$ and $c<n_r/n<C$,
\item $\|U_r\|<C$ and $\|B\|<C/n$
\item $\|\Gamma_r\|<C$, $\|\oSigma_r\|<C$, and $\ell_r<C$.
\end{itemize}
\end{assumption}
Thus, the number of samples is proportional to the number of
realizations of each random effect (and also to the dimension $p$).
This and the assumption $\|U_r\|<C$ are discussed in greater detail
in \cite{fanjohnstonebulk,fanjohnstonesun}, and hold for many classification
and experimental designs. 
The scaling $\|B\|<C/n$ is usual for MANOVA estimators, to yield $\hSigma_r$ on
the same scale as its estimand $\Sigma_r$.

The last statement
implies a bounded number of signal eigenvalues in each variance component,
where each eigenvalue remains bounded in size. It is an 
important open problem to extend our results beyond this setting.

\subsection{Bulk eigenvalue distribution} \label{sec:det-eq-measure}

Under the above assumptions, a characterization of a deterministic approximation
for the empirical eigenvalue
distribution of
\begin{equation}\label{eq:hSigma}
\hSigma=Y^\top BY
\end{equation}
was derived in \cite{fanjohnstonebulk}. We review this result here.

Consider the setting of no signal, meaning $\ell_r=0$ and $\Sigma_r=\oSigma_r$
for each $r=1,\ldots,k$. We introduce the notations $n_+=n_1+\cdots+n_k$ and
\begin{equation}\label{eq:Frs}
    F_{rs}=\sqrt{n_rn_s} U_r^\sT BU_s \in \R^{n_r \times n_s},
\quad F=(F_{rs})_{r,s=1}^k \in \R^{n_+ \times n_+},
\end{equation}
\[\diag_n(\a)=\diag(a_1\Id_{n_1},\ldots,a_k\Id_{n_k}) \in \R^{n_+ \times n_+},\]
\[\b \cdot \oSigma=b_1\oSigma_1+\cdots+b_k\oSigma_k \in \R^{p \times p}.\]
Let $\Tr_r$ be the trace of the $(r,r)$ block (of size $n_r \times n_r$)
in the $k \times k$ matrix block decomposition corresponding to $\C^{n_+}=\C^{n_1} \oplus \cdots \oplus \C^{n_k}$.
The Stieltjes transform of a measure $\mu$ is $m(z)=\int
(x-z)^{-1}d\mu(x)$.

\begin{theorem}[\cite{fanjohnstonebulk}]\label{thm:bulk}
Suppose Assumptions \ref{assump:alpha} and \ref{assump:asymptotic} hold, and $\ell_r=0$ for each $r=1,\ldots,k$. Let $\hSigma$ be as in (\ref{eq:hSigma}), and let $\hmu=p^{-1}\sum_{i=1}^p \delta_{\lambda_i(\hSigma)}$ be the empirical distribution of its eigenvalues.

For each $z \in \C^+$, there exist unique $z$-dependent values $a_1,\ldots,a_k \in \C^+ \cup \{0\}$ and $b_1,\ldots,b_k \in \overline{\C^+}$ that satisfy the equations
\begin{align}
a_r&=-n_r^{-1}\Tr \Big((z\Id+\b \cdot \oSigma)^{-1}\oSigma_r\Big),\label{eq:arecursion}\\
b_r&=-n_r^{-1}\Tr_r\Big((\Id+F\diag_n(\a))^{-1}F\Big).\label{eq:brecursion}
\end{align}
The function $m_0:\C^+ \to \C^+$ defined by
\begin{equation}\label{eq:m0}
m_0(z)=-p^{-1}\Tr\Big((z\Id+\b \cdot \oSigma)^{-1}\Big)
\end{equation}
is the Stieltjes transform of a deterministic
probability measure $\mu_0$ on $\R$, for which
$\hmu-\mu_0 \to 0$ weakly almost surely.
\end{theorem}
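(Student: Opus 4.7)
My plan is to prove pointwise convergence $\hat{m}(z):=p^{-1}\Tr(\hSigma-z\Id)^{-1}\to m_0(z)$ on $\C^+$, from which the claimed weak convergence $\hmu-\mu_0\to 0$ follows by the standard Stieltjes-transform criterion. Since $\ell_r=0$, each row of $\alpha_r$ is $\N(0,\oSigma_r)$, so I can write $\alpha_r=Z_r\oSigma_r^{1/2}$ with $Z_r\in\R^{n_r\times p}$ having i.i.d.\ standard Gaussian entries. Stacking $\alpha=(\alpha_1^\sT,\ldots,\alpha_k^\sT)^\sT\in\R^{n_+\times p}$ and letting $K\in\R^{n_+\times n_+}$ have blocks $K_{rs}=U_r^\sT B U_s$, the model reduces to $\hSigma=\alpha^\sT K\alpha$, with $F_{rs}=\sqrt{n_rn_s}\,K_{rs}$ appearing after rescaling, so that the quantities in (\ref{eq:arecursion})--(\ref{eq:brecursion}) arise naturally at the level of block traces.

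I would then introduce the $2\times 2$ block linearization
\[
\cH(z)=\begin{pmatrix} -z\Id_p & \alpha^\sT \\ \alpha & -K^{-1} \end{pmatrix},
\]
whose $(1,1)$ Schur complement block equals $-(\hSigma-z\Id)^{-1}$, and compute the block resolvent $\cG(z)=\cH(z)^{-1}$. Applying Gaussian integration by parts entrywise to expectations $\E[\alpha_{ij}\cG_{kl}]$, with derivatives falling on the Gaussian factors $Z_r$, produces matrix identities relating the normalized trace $p^{-1}\Tr$ of the $(1,1)$-block of $\cG$ to the partial traces $n_r^{-1}\Tr_r$ of the $r$-th diagonal sub-block of the $(2,2)$-block. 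Combined with concentration of Gaussian quadratic forms (Hanson--Wright or Gaussian Poincar\'e), these identities imply that the empirical versions of $\a,\b$ built from those partial traces satisfy (\ref{eq:arecursion})--(\ref{eq:brecursion}) up to $o(1)$ errors almost surely, and that $\hat{m}(z)$ matches the right-hand side of (\ref{eq:m0}) to the same accuracy.

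It remains to establish existence and uniqueness of the deterministic fixed point $(\a,\b)\in(\C^+\cup\{0\})^k\times\overline{\C^+}^k$ for each $z\in\C^+$, and to upgrade the approximate identities to the exact limit. The map $(\a,\b)\mapsto$ (right-hand sides of (\ref{eq:arecursion})--(\ref{eq:brecursion})) preserves this complex cone because $(z\Id+\b\cdot\oSigma)^{-1}\oSigma_r$ and $(\Id+F\diag_n(\a))^{-1}F$ are operator-Herglotz in their respective vector arguments under the a priori norm bounds of Assumption \ref{assump:asymptotic}; an Earle--Hamilton-style strict contraction of the induced map in a hyperbolic/Carath\'eodory metric on the product of half-planes then yields uniqueness. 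Stability of the fixed point in the parameters converts the $o(1)$ approximate solution into the convergence $\hat{m}(z)\to m_0(z)$, and a Nevanlinna representation check (positive imaginary part, $zm_0(z)\to -1$ as $z\to\infty$) confirms that $m_0$ is the Stieltjes transform of a probability measure $\mu_0$. The main obstacle is precisely this coupled vector fixed-point analysis: unlike the scalar Marchenko--Pastur equation, the system intertwines $k$ unknowns at two different dimensional scales through both $\oSigma_r$ and the block matrix $F$, so verifying cone preservation and contraction---rather than the Gaussian calculus or concentration---is where the real work lies.
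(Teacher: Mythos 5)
Your route is genuinely different from the one the paper leans on. Theorem \ref{thm:bulk} is quoted from \cite{fanjohnstonebulk}, whose proof is free-probabilistic: the matrices $F_{rs},G_r,H_r$ are embedded into a large square matrix as in (\ref{block_decomposition}), asymptotic freeness with amalgamation over the diagonal subalgebra $\D$ is established, and the equations (\ref{eq:arecursion})--(\ref{eq:m0}) emerge from operator-valued Cauchy- and $\cR$-transform computations for the free deterministic equivalent $w$; the measure $\mu_0$ is then identified as the $\tau^c$-law of $w$, so its Herglotz/Stieltjes properties and the identification $\supp(\mu_0)=\spec(w)$ come essentially for free, which is what the rest of the paper exploits. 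You instead propose the classical analytic route (linearization, Gaussian integration by parts, concentration, and a coupled self-consistent system), closer in spirit to \cite{fanjohnstonesun} and to deterministic-equivalent arguments of Hachem et al.; that route is viable and has the advantage of producing the equations directly at finite $n$ with quantitative error control, at the cost of having to establish existence, uniqueness, and stability of the fixed point by hand.

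As written, though, the hardest steps are asserted rather than carried out, and two of them are genuine obstructions in your formulation. First, the linearization uses $-K^{-1}$, but $K=(U_r^\sT BU_s)_{r,s}$ need not be invertible (in the one-way example $B_1$ is a scaled difference of projections), so you must either perturb $K$ and pass to a limit or linearize without inverting it, just as the paper treats non-invertible $F$ ``by continuity''; also the $(1,1)$ block of your $\cH(z)^{-1}$ is $(\hSigma-z\Id)^{-1}$, not its negative. Second, the theorem explicitly allows boundary values $a_r\in\C^+\cup\{0\}$ and $b_r\in\overline{\C^+}$ (e.g.\ when some $\oSigma_r=0$ or blocks of $F$ vanish), so an Earle--Hamilton strict contraction on the open product of half-planes cannot by itself deliver uniqueness on these closed cones: the degenerate coordinates must be split off, and the ``maps strictly inside a bounded subdomain'' hypothesis needs uniform bounds for fixed $z\in\C^+$ (a bound like $|a_r|\le n_r^{-1}(\Im z)^{-1}\Tr\oSigma_r$ is easy, but the corresponding control of $b_r$ when some $a_r$ may vanish is not). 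Third, passing from an $o(1)$-approximate solution of the system to $p^{-1}\Tr(\hSigma-z\Id)^{-1}\to m_0(z)$ requires a quantitative stability estimate for the fixed point, which you invoke but do not supply. None of these is fatal, but they constitute the substance of the proof rather than routine verifications, so the proposal stands as a plausible alternative outline rather than a complete argument.
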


The distribution $\mu_0$ is an $n$-dependent deterministic equivalent measure
\cite{hachemetal} for the empirical eigenvalue distribution of $\hSigma$. An
example is depicted in Figure \ref{fig:oneway}. It is
defined by the noise covariances $\oSigma_1,\ldots,\oSigma_k$ and the
structure of the linear model (\ref{eq:mixedmodel}),
via the fixed-point equations (\ref{eq:arecursion}--\ref{eq:m0}).

\subsection{Noise eigenvalues stick to the support}
\label{sec:firstorder}

For any $\delta>0$, denote the $\delta$-neighborhood
of the support of the above law $\mu_0$ as
\[\supp(\mu_0)_\delta=\{x \in \R:\dist(x,\supp(\mu_0))<\delta\}.\]
We first strengthen the weak convergence statement of
Theorem \ref{thm:bulk} to show that in the same
setting of no signal, all eigenvalues of $\hSigma$ belong to
$\supp(\mu_0)_\delta$ for any fixed $\delta>0$ and large $n$.

\begin{theorem}\label{thm:sticktobulk}
Suppose Assumptions \ref{assump:alpha} and \ref{assump:asymptotic} hold, and
$\ell_r=0$ for each $r=1,\ldots,k$. Let $\hSigma$ be as in (\ref{eq:hSigma}).
Then for any constant $\delta>0$, almost surely for all large $n$,
\[\spec(\hSigma) \subset \supp(\mu_0)_\delta.\]
\end{theorem}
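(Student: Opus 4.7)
The plan is to express $\hSigma$ as (a corner of) a self-adjoint noncommutative polynomial in GOE matrices and deterministic matrices, invoke the strong asymptotic freeness and free deterministic equivalent developed later in the paper to approximate $\hSigma$ by a free-probabilistic surrogate whose spectrum is exactly $\supp(\mu_0)$, and then deduce the claim by a standard smooth-cutoff argument. Setup: in the no-signal case each $\alpha_r=Z_r\oSigma_r^{1/2}$ with $Z_r\in\R^{n_r\times p}$ having i.i.d.\ standard Gaussian entries, so
\[
\hSigma=\sum_{r,s=1}^k \oSigma_r^{1/2}Z_r^\sT U_r^\sT BU_sZ_s\oSigma_s^{1/2}.
\]
To fit this into a self-adjoint GOE framework I would dilate each rectangular $Z_r$ to a self-adjoint GOE-like matrix $W_r$ of size $(n_r+p)\times(n_r+p)$ by Hermitization and embed $U_r,B,\oSigma_r^{1/2}$ as blocks on the ambient space; $\hSigma$ then sits as a corner of a self-adjoint noncommutative polynomial $P(W_1,\ldots,W_k;D)$, where $D$ collects finitely many deterministic matrices of uniformly bounded norm.

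Next I would construct a free deterministic equivalent for each $n$ by keeping $D$ fixed as noncommutative variables on their actual algebras and replacing each $W_r$ by a free semicircular element $s_r$, free from $D$ and from one another, in a tracial noncommutative probability space. This produces a self-adjoint element $X_n=P(s_1,\ldots,s_k;D)$ whose operator-valued Cauchy transform satisfies the usual matrix Dyson equation. The key algebraic check is that this equation, after taking partial traces along the block decompositions associated with $n_+=n_1+\cdots+n_k$ and with $\R^p$, reduces to the scalar fixed-point system (\ref{eq:arecursion})--(\ref{eq:m0}); this identifies the scalar Stieltjes transform associated with the $\R^p$ corner of $X_n$ with $m_0$, so its spectral support equals $\supp(\mu_0)$.

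I would then invoke strong asymptotic freeness of GOE and deterministic matrices (one of the paper's new tools): for each fixed noncommutative polynomial $Q$, almost surely
\[
\bigl\|Q(W_1,\ldots,W_k;D)\bigr\|\longrightarrow \bigl\|Q(s_1,\ldots,s_k;D)\bigr\|.
\]
Assumption \ref{assump:asymptotic} yields an almost sure deterministic bound on $\|\hSigma\|$, so $\spec(\hSigma)$ is eventually contained in a fixed bounded interval $I$. For any $\delta>0$ I would cover $I\setminus\supp(\mu_0)_\delta$ by finitely many closed sub-intervals $J_1,\ldots,J_N$, choose smooth cutoffs $f_j\equiv 1$ on $J_j$ with $\supp f_j\subset I\setminus\supp(\mu_0)_{\delta/2}$, and approximate each $f_j$ uniformly on $I$ by a polynomial $q_j$. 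Because $f_j$ vanishes on $\supp(\mu_0)$, the free norm $\|q_j(X_n)\|$ is arbitrarily small, and strong convergence then forces $\|q_j(\hSigma)\|\to 0$ almost surely, ruling out eigenvalues of $\hSigma$ in $J_j$ for large $n$. A union bound over $j=1,\ldots,N$ completes the argument.

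The main obstacle I expect is the free deterministic equivalent step: verifying that the operator-valued matrix Dyson equation for $P(s_1,\ldots,s_k;D)$ reduces precisely to the two-level scalar system (\ref{eq:arecursion})--(\ref{eq:m0}) is bookkeeping-heavy, because one must track two distinct partial traces (over the $n_r$ blocks and over $\R^p$) and unravel the freeness of the $s_r$ from the block-structured deterministic data. The strong asymptotic freeness itself would be used essentially as a black box, but it is applied to a nonstandard ensemble (several independent rectangular Gaussians with block-structured deterministic coefficients), and one must confirm that its hypotheses survive the Hermitization.
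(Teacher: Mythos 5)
Your overall strategy---realize $\hSigma$ inside a self-adjoint polynomial in GOE and deterministic matrices, build an $n$-dependent free deterministic equivalent whose law is $\mu_0$, and transfer spectral information by strong asymptotic freeness---is the same as the paper's. But as written there are two genuine gaps.

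First, you invoke strong asymptotic freeness in its norm-convergence form, $\|Q(W_1,\ldots,W_k;D)\|\to\|Q(s_1,\ldots,s_k;D)\|$. That form (Theorem \ref{thm:strongfreeusual}) requires the deterministic matrices to converge in moments and in norm to fixed limits in a fixed von Neumann algebra, which is not available here: $U_r$, $B$, $\oSigma_r$, and hence $\mu_0$ and $\supp(\mu_0)$, are all $n$-dependent, and the right-hand side of your limit is itself $n$-dependent. For the same reason your smooth-cutoff step does not go through as stated: the intervals $J_j$, the cutoffs $f_j$, and the approximating polynomials $q_j$ must change with $n$, while the convergence you invoke is for a \emph{fixed} polynomial $Q$. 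The tool that makes the argument work is the spectral-inclusion form, Theorem \ref{thm:strongfree}, which is stated precisely for $N$-dependent deterministic matrices and gives $\spec(Q(\mathbf{X}_N,\mathbf{Y}_N))\subset\spec(Q(\mathbf{x},\mathbf{Y}_N))_\delta$ directly; with it the cutoff/polynomial-approximation layer is unnecessary (it is already internal to the proof of that theorem). This is how the paper's proof proceeds.

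Second, you do not address the corner-versus-ambient spectrum issue created by the dilation. The random object that strong freeness controls is the ambient polynomial, whose spectrum is $\spec(\hSigma)$ together with spurious values (at least $0$, with multiplicity of order $N$, coming from the zero-padded blocks), and the free surrogate has the matching spurious spectrum. Consequently, near such a spurious point the inclusion (or your norm bound, since $q_j(0)$ need not be small when $0\notin\supp(\mu_0)$) gives no information, and you cannot rule out outlier eigenvalues of $\hSigma$ near $0$. The paper resolves this by applying the inclusion twice, once to $\tilde W$ and once to $\check W=\tilde W+P_1+\cdots+P_{2k}$ (which shifts the spurious eigenvalue from $0$ to $1$), and intersecting, using $\spec(\hSigma)=\spec(\tilde W)\cap\spec(\check W)$ and $\supp(\mu_0)_\delta=\spec(\tilde w)_\delta\cap\spec(\check w)_\delta$ for $\delta<1/2$; some such device is needed in your argument as well. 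The remaining step you flag---identifying the corner law of the free surrogate with $\mu_0$---is indeed nontrivial but sound in spirit; the paper does it by verifying the conditions (1)--(4) of Section \ref{sec:det-eq-measure} (Marcenko--Pastur moments of $g_r^*g_r$, amalgamated freeness over $\D$) rather than re-deriving a Dyson equation, and then uses faithfulness of the compressed trace to get $\supp(\mu_0)=\spec(w)$; note also that the paper uses a single GOE matrix compressed by the projections $P_r$ rather than $k$ separate Hermitizations, which keeps you squarely within the hypotheses of Theorem \ref{thm:strongfree}.
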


We defer the proof to Appendix \ref{sec:prelim-results}.
The proof is an application of a strong asymptotic freeness result for GOE
and deterministic matrices, which we describe in Section \ref{sec:freeprob}.

\subsection{Limits of signal eigenvalues and eigenvectors}

We now consider the setting where $\ell_s \neq 0$ for at least one component
$s \in \{1,\ldots,k\}$. This may induce ``outlier'' eigenvalues of
$\hSigma$ that separate from the support of $\mu_0$---these and their 
eigenvectors are typically the focus of analysis in PCA.
(The component where $\ell_s \neq 0$ may or may not be
the component estimated by $\hSigma \equiv \hSigma_r$.)

Our main results describe the first-order limits of
these eigenvalues and eigenvectors. This description involves the $z$-dependent
quantities $\{b_r\}_{r = 1}^k$ from Theorem \ref{thm:bulk}. We check in
Proposition \ref{prop:analyticextension} that each $b_r$ extends as an analytic
function in $z$ to all of $\C \setminus \supp(\mu_0)$, and we denote this
extension by $b_r(z)$. Let us write as shorthand
\[\Gamma=\begin{pmatrix} \Gamma_1 \\ \vdots \\ \Gamma_k \end{pmatrix}
\in \R^{\ell_+ \times p}, \qquad \ell_+=\ell_1+\cdots+\ell_k,\]
where $\Gamma_r \in \R^{\ell_r \times p}$ are as defined in (\ref{eq:Gamma}).
For $\lambda \in \R \setminus \supp(\mu_0)$, let us denote
\begin{equation}\label{eq:bnotation}
\b \cdot \oSigma= \sum_{r = 1}^k b_r(\lambda) \oSigma_r,\qquad
\diag_\ell(\b) = \diag(b_1(\lambda) \Id_{\ell_1}, \ldots,
b_k(\lambda)\Id_{\ell_k}).
\end{equation}
Then, in the asymptotic limit, the outlier eigenvalue locations are approximated
by the deterministic multiset
\begin{equation} \label{eq:Lambda0}
\Lambda_0= [\,\lambda \in \R \setminus \supp(\mu_0) : 0 = \det T(\lambda)\,]
\end{equation}
where, for $\lambda \in \R \setminus \supp(\mu_0)$, we define
\begin{equation} \label{eq:Tnew}
T(\lambda)= \Id + \Gamma (\lambda\Id
    + \b \cdot \oSigma)^{-1} \Gamma^\sT \diag_\ell(\b) \in \R^{\ell_+ \times
    \ell_+}.
\end{equation}
The roots of the equation $0=\det T(\lambda)$ are counted with their
analytic multiplicities in this multiset.

\begin{theorem}\label{thm:outliers}
Suppose Assumptions \ref{assump:alpha} and \ref{assump:asymptotic} hold, let
$\hSigma$ be as in (\ref{eq:hSigma}), and let $\Lambda_0$ be defined by
(\ref{eq:Lambda0}). Fix any constant $\delta>0$. Almost surely as $n \to
\infty$, there exist $\Lambda_\delta \subseteq \Lambda_0$ and $\hLambda_\delta
\subseteq \spec(\hSigma)$, where $\Lambda_\delta$ and $\hLambda_\delta$
respectively contain all elements of $\Lambda_0$ and $\spec(\hSigma)$ outside
$\supp(\mu_0)_\delta$, such that
\[
\ordereddist(\Lambda_\delta,\hLambda_\delta) \to 0.
\]
\end{theorem}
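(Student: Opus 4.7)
The plan is to express $\hSigma$ as a rank-$2\ell_+$ perturbation of the noise-only MANOVA matrix $\hSigma_0$, apply the matrix determinant lemma to reduce the outlier equation to zeros of a $2\ell_+ \times 2\ell_+$ determinant, identify its almost-sure deterministic limit as $\det T(\lambda)$ using the free-probabilistic tools of Section \ref{sec:freeprob}, and conclude via Hurwitz's theorem.

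Writing $Y = X\beta + Y_0 + W\Gamma$ where $Y_0 := \sum_r U_r\cE_r$ is the Gaussian noise part and $W := [\,U_1\xi_1 \mid \cdots \mid U_k\xi_k\,] \in \R^{n\times\ell_+}$ is the matrix of non-Gaussian signal weights, and using $BX = 0$, one obtains
\[
\hSigma = \hSigma_0 + AKA^\sT,\quad \hSigma_0 := Y_0^\sT BY_0,\quad A := [\,\Gamma^\sT \mid Y_0^\sT BW\,],\quad K := \begin{pmatrix} W^\sT BW & \Id \\ \Id & 0 \end{pmatrix},
\]
exhibiting $\hSigma - \hSigma_0$ as a rank-$2\ell_+$ perturbation with anti-diagonal core. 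By Theorem \ref{thm:sticktobulk} applied to $\hSigma_0$, for any fixed $\delta > 0$ we have $\spec(\hSigma_0) \subset \supp(\mu_0)_\delta$ almost surely for large $n$, so $R(\lambda) := (\lambda\Id - \hSigma_0)^{-1}$ is uniformly bounded on compact subsets of $\C \setminus \supp(\mu_0)_\delta$, and the matrix determinant lemma gives
\[
\det(\lambda\Id - \hSigma) = \det(\lambda\Id - \hSigma_0)\cdot\det\bigl(\Id_{2\ell_+} - K\,A^\sT R(\lambda)\,A\bigr),
\]
so outliers of $\hSigma$ outside $\supp(\mu_0)_\delta$ are the zeros in that region of the second factor.

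Left-multiplying the inner matrix by $\begin{pmatrix}\Id & -W^\sT BW \\ 0 & \Id\end{pmatrix}$ (of unit determinant) and applying the resolvent identity $BY_0 R Y_0^\sT B = -B + \lambda \tilde R B$ with $\tilde R(\lambda) := (\lambda\Id_n - BY_0 Y_0^\sT)^{-1}$, the inner determinant equals
\[
\det\begin{pmatrix} \Id - (\Gamma R Y_0^\sT BW)^\sT & -\lambda\,W^\sT \tilde R BW \\ -\Gamma R \Gamma^\sT & \Id - \Gamma R Y_0^\sT BW \end{pmatrix}.
\]
The entries of $\Gamma R Y_0^\sT BW$ (appearing in the block-diagonal positions) are inner products $\langle \gamma_i^\sT R Y_0^\sT BU_r,\, \xi^{(r)}_{\cdot,j}\rangle$; the first factor depends only on $Y_0$, with norm bounded by $\|\gamma_i\|\,\|R\|\,\|Y_0\|_{op}\,\|B\|\,\|U_r\| = O(n^{-1/2})$, and $\xi^{(r)}_{\cdot,j}$ is independent of $Y_0$ with i.i.d.\ mean-zero, unit-variance entries, so by Chebyshev and Borel--Cantelli this cross term converges to zero almost surely, uniformly in $\lambda$ on compact subsets of $\C \setminus \supp(\mu_0)_\delta$. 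For the off-diagonal blocks $-\Gamma R\Gamma^\sT$ and $-\lambda\, W^\sT \tilde R BW$, the anisotropic resolvent approximation via free deterministic equivalents from Section \ref{sec:freeprob}, combined with the conditional expectation $\E_\xi[\xi_r^\sT M\xi_s] = \delta_{rs}\tr(M)\Id$, yields the almost-sure limits
\[
\Gamma R(\lambda)\Gamma^\sT \to \Gamma(\lambda\Id + \b\cdot\oSigma)^{-1}\Gamma^\sT,\qquad -\lambda\, W^\sT \tilde R(\lambda) BW \to \diag_\ell(\b),
\]
the latter using a deterministic equivalent for $\tilde R$ and the defining relation (\ref{eq:brecursion}) to identify $-\lambda\tr(U_r^\sT \tilde R BU_r) \to b_r(\lambda)$ blockwise. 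Applying the block determinant identity $\det\begin{pmatrix}\Id & X \\ Y & \Id\end{pmatrix} = \det(\Id - YX)$, the inner determinant converges almost surely to $\det(\Id + \Gamma(\lambda\Id + \b\cdot\oSigma)^{-1}\Gamma^\sT\diag_\ell(\b)) = \det T(\lambda)$, uniformly on compact subsets of $\C \setminus \supp(\mu_0)_\delta$.

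Both the random and limiting determinants are analytic on $\C \setminus \supp(\mu_0)_\delta$, so Hurwitz's theorem transfers this uniform convergence on compacts to convergence of zero multisets with multiplicity in any bounded open window, producing the coupling $\Lambda_\delta \subseteq \Lambda_0$ and $\hLambda_\delta \subseteq \spec(\hSigma)$ with $\ordereddist(\Lambda_\delta, \hLambda_\delta) \to 0$. I expect the main obstacle to be the second step, specifically establishing the uniform-in-$\lambda$ anisotropic deterministic equivalents for both $R(\lambda)$ and the companion $\tilde R(\lambda)$ and verifying that the diagonal entries of $W^\sT \tilde R BW$ indeed assemble via (\ref{eq:arecursion})--(\ref{eq:brecursion}) into $-b_r(\lambda)/\lambda$; this is precisely where the tools of strong asymptotic freeness of GOE and deterministic matrices together with the bilinear free deterministic equivalents developed in Section \ref{sec:freeprob} are decisive.
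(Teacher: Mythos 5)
Your overall route is the same as the paper's: write $\hSigma$ as the noise-only matrix plus a bounded-rank perturbation, reduce the outlier equation to the vanishing of a small determinant via $\det(\Id+XY)=\det(\Id+YX)$, kill the cross terms by concentration of the $\xi$'s against $Y_0$-measurable vectors (this is the content of Lemma \ref{lem:schur-comp}), replace $\Gamma R \Gamma^\sT$ and the block traces of $B+BY_0RY_0^\sT B$ by deterministic equivalents, and finish with Hurwitz. Your symmetric $2\ell_+\times 2\ell_+$ linearization with anti-diagonal core and the companion resolvent $\tilde R$ is an equivalent repackaging of the paper's $\hK(z)$ and its Schur complement $\hT(z)$; the block algebra you carry out is correct and leads to the same target $\det T(\lambda)$.

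Two steps, however, are asserted rather than proved, and they are exactly where the work lies. First, the identification $\Tr\big(U_r^\sT(B+BY_0R(\lambda)Y_0^\sT B)U_r\big)\to -b_r(\lambda)$ (equivalently your $-\lambda\,\tr(U_r^\sT\tilde R BU_r)\to b_r(\lambda)$) is not a black-box consequence of the strong-freeness or bilinear-resolvent theorems of Section \ref{sec:freeprob}: those results transfer the matrix quantity to its free-model counterpart, but one must still evaluate the free-model limit, i.e.\ compute $\sum_{r,s}\tau\big(f_{ts}g_sh_s(w-z)^{-1}h_r^*g_r^*f_{rt}\big)$ and show it assembles into $-b_t(z)$. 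This is precisely Proposition \ref{prop:det-eq}, whose proof needs the augmented Cauchy/$R$-transform identities (Lemma \ref{lem:mod-gr}, Propositions \ref{prop:rh} and \ref{prop:free-comp}) together with a Woodbury step --- not merely ``the defining relation (\ref{eq:brecursion})''; similarly the limit of $\Gamma R\Gamma^\sT$ requires the explicit formula for $\tau^\cH((w-z)^{-1})$ imported from \cite{fanjohnstonebulk}. You correctly flag this as the main obstacle, but the proposal leaves it open. Second, your Hurwitz step presumes a fixed limiting analytic function, whereas $\mu_0$, the $b_r$, $T(\lambda)$, and $\Lambda_0$ are all $n$-dependent and need not converge, so ``uniform convergence on compacts plus Hurwitz'' does not literally apply. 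The paper handles this with a subsequence/Montel compactness device (Proposition \ref{prop:subseq}) combined with reality and boundedness of the roots of $\det\wT(z)$ and $\det\hT(z)$ (Proposition \ref{prop:T-analytic}) to obtain the multiset matching in $\ordereddist$, with multiplicities and with control of all elements outside $\supp(\mu_0)_\delta$; some such argument is needed to make your concluding paragraph rigorous.
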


Here, for two finite multisets $A, B \subset \R$, we denote
\[
\ordereddist(A, B) = \begin{cases} \infty & \text{if $|A| \neq |B|$} \\ \max_i\{|a_{(i)} - b_{(i)}|\} & \text{if $|A| = |B|$}, \end{cases}
\]
where $a_{(i)}$ and $b_{(i)}$ are the ordered values of $A$ and $B$ counting multiplicity. 
We state the result as a matching of $\spec(\hSigma)$
and $\Lambda_0$, rather than convergence of $\spec(\hSigma)$ to $\Lambda_0$, as
$\Lambda_0$ is also $n$-dependent. A phase-transition phenomenon analogous to
that of \cite{baiketal} is implicit in this result, in that the cardinality of
the multiset $\Lambda_0$ may transition from 0 to a positive value with the
increase of signal strength in $\Gamma$.

For the corresponding outlier eigenvectors 
of $\hSigma$, the following characterizes their inner products with the signal
vectors $\gamma_i^{(r)}$ that constitute the rows of $\Gamma_1,\ldots,\Gamma_k$.
We denote, in addition to (\ref{eq:bnotation}), $\partial_\lambda$ as the
derivative in $\lambda$, and
\[\diag_\ell(\b')=\partial_\lambda \diag_\ell(\b)
=\diag(b_1'(\lambda)\Id_{\ell_1},\ldots,b_k'(\lambda)\Id_{\ell_k}).\]

\begin{theorem} \label{thm:eigenvectors}
    In the setting of Theorem \ref{thm:outliers}, 
    let $\lambda \in \Lambda_0$ be any element of multiplicity 1 such that
    $|\lambda-\lambda'|>\delta$ for all other $\lambda' \in \Lambda_0$,
    and $\dist(\lambda,\supp(\mu_0))>\delta$.
    Let $u \in \ker T(\lambda) \subset
    \R^{\ell_+}$ be a unit vector, and let $\hv$ be the unit eigenvector for the
    eigenvalue $\hl$ of $\hSigma$ closest to $\lambda$. Almost surely as $n \to
    \infty$, for some choice of sign of $\hv$,
\[
\Gamma\,\hv- \alpha^{-1/2} u \to 0,
\]
where $\alpha>0$ is the scalar quantity defined by
\begin{equation} \label{eq:alpha-def}
    \alpha = u^\sT \Big(-\diag_\ell(\b) \Gamma \cdot
    \partial_\lambda[(\lambda\Id + \b \cdot
    \oSigma)^{-1}] \cdot \Gamma^\sT \diag_\ell(\b) +
    \diag_\ell(\b')\Big)u.
\end{equation}
\end{theorem}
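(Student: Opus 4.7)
\medskip
\noindent\emph{Proof plan.} The plan is to convert the eigenvector equation $\hSigma \hv = \hl \hv$ into a finite-dimensional linear system by a resolvent / Schur-complement argument, replace the random coefficients by the deterministic equivalents established in Section~\ref{sec:freeprob}, and read off both the direction of $\Gamma \hv$ from the kernel of the limiting system and the scalar $\alpha^{-1/2}$ from a derivative of the same equivalent.

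Writing the random effects as $\alpha_r = \Xi_r \Gamma_r + E_r$ and setting $V = [U_1 \Xi_1 \mid \cdots \mid U_k \Xi_k] \in \R^{n \times \ell_+}$ and $\wT = \sum_r U_r E_r$, I have $Y = V\Gamma + \wT$ and the rank-$2\ell_+$ identity
\[\hSigma = \hSigma_0 + U K W, \qquad \hSigma_0 = \wT^\sT B \wT, \qquad U = [\Gamma^\sT \mid \wT^\sT B V], \qquad W = U^\sT,\]
with the middle block $K = \left(\begin{smallmatrix} V^\sT B V & \Id \\ \Id & 0 \end{smallmatrix}\right) \in \R^{2\ell_+ \times 2\ell_+}$. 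Setting $G_0(z) = (z\Id - \hSigma_0)^{-1}$, uniformly bounded for $\hl$ near $\lambda$ by Theorem~\ref{thm:sticktobulk}, the equation $\hSigma \hv = \hl \hv$ rearranges to $\hv = G_0(\hl) U K (W\hv)$, and projecting onto $U$ reduces it to the finite-dimensional kernel relation
\[\bigl(\Id - M(\hl)\, K\bigr)(W\hv) = 0, \qquad M(z) := W G_0(z) U \in \R^{2\ell_+ \times 2\ell_+}.\]

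I would then invoke the anisotropic resolvent approximation of Section~\ref{sec:freeprob} to show $M(\hl) - \widetilde M(\lambda) \to 0$ in operator norm, where $\widetilde M(\lambda)$ is a deterministic $2\ell_+ \times 2\ell_+$ matrix. The $\Gamma G_0 \Gamma^\sT$ block is handled by the standard anisotropic bulk equivalent in terms of $(\lambda\Id + \b\cdot\oSigma)^{-1}$; the mixed block $\Gamma G_0 \wT^\sT B V$ vanishes at rate $O(n^{-1/2})$ (since $\|\Gamma G_0 \wT^\sT B U_r\|_{\mathrm{op}} = O(n^{-1/2})$ from $\|B\| = O(1/n)$ and $\|\wT\| = O(\sqrt{n})$, and then each entry of $(\Gamma G_0 \wT^\sT B U_r)\Xi_r$ is a centered sum of iid terms with total variance $O(1/n)$); and the noise-noise block $V^\sT B \wT G_0 \wT^\sT B V$ concentrates by Hanson-Wright to its $V$-conditional mean, whose deterministic equivalent is supplied by the free probability tools. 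A block Schur complement on the limit equation $(\Id - \widetilde M(\lambda) K)x = 0$---the same algebra that underlies the outlier equation in Theorem~\ref{thm:outliers}---collapses it to $T(\lambda)\, w_1 = 0$ in the $w_1 = \Gamma \hv$ direction, where the compatibility with the fixed-point system (\ref{eq:arecursion})--(\ref{eq:brecursion}) is what selects exactly the scalars $\b(\lambda)$ appearing in (\ref{eq:Tnew}). Simplicity of $\lambda$ as a root of $\det T(\cdot) = 0$ then forces $\Gamma \hv = c\, u + o(1)$ for some scalar $c$.

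The scalar $c$ is pinned down by the exact identity
\[1 = \|\hv\|^2 = (KW\hv)^\sT \bigl(-\partial_\lambda M(\hl)\bigr)(KW\hv),\]
which follows from $\hv = G_0(\hl) U K(W\hv)$ together with $G_0^2 = -\partial_\lambda G_0$. Analyticity of the deterministic equivalent on $\C \setminus \supp(\mu_0)$ (Proposition~\ref{prop:analyticextension}) upgrades $M \to \widetilde M$ to $\partial_\lambda M \to \partial_\lambda \widetilde M$ on compact subsets of the resolvent set, and substituting $W\hv = c(u, z_\star) + o(1)$ turns the identity into $c^2 \alpha + o(1) = 1$. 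Tracking the two surviving blocks $\Gamma G_0 \Gamma^\sT$ and $V^\sT B \wT G_0 \wT^\sT B V$ separately, the first produces $-\diag_\ell(\b)\,\Gamma\,\partial_\lambda[(\lambda\Id + \b\cdot\oSigma)^{-1}]\,\Gamma^\sT \diag_\ell(\b)$ and the second produces $\diag_\ell(\b')$ through the $\lambda$-dependence of $\b(\lambda)$, together matching (\ref{eq:alpha-def}). Hence $\Gamma \hv \to \pm \alpha^{-1/2} u$ after fixing the sign by continuity.

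The main obstacle is the noise-noise block $V^\sT B \wT G_0 \wT^\sT B V$: its deterministic equivalent requires $\wT$ to appear both inside and outside the resolvent $G_0$, so it is not a direct consequence of a ``plug-in'' anisotropic law and requires the full strength of the free deterministic equivalent calculus introduced in Section~\ref{sec:freeprob}. Verifying that the resulting collapse of the $2\ell_+$-dimensional kernel equation reproduces exactly the scalars $\b(\lambda)$ of the fixed-point system, and that the $\lambda$-derivative identity reproduces exactly the bilinear form (\ref{eq:alpha-def}) rather than a more complicated expression, is where the bookkeeping is delicate and where the structural compatibility between the signal subspace and the bulk equivalent plays the decisive role.
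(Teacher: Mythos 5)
Your proposal is correct and follows essentially the same route as the paper: decompose $\hSigma$ into the pure-noise piece $W=\hSigma_0$ plus a low-rank perturbation, project the eigenvector relation through the resolvent to a finite-dimensional kernel equation, substitute free deterministic equivalents (concentration over $\Xi$ plus the anisotropic resolvent/trace approximations of Propositions~\ref{prop:det-res-val} and~\ref{prop:det-eq}), reduce to $T(\lambda)$ by a Schur complement, and extract the normalization from $1=\|\hv\|^2$ via the exact identity $G_0^2=-\partial_\lambda G_0$. The only differences are packaging: you factor $P=UKW$ into a symmetric $2\ell_+$-dimensional system in $W\hv=(\Gamma\hv,\;V^\sT B \wT\hv)$, while the paper uses an $(\ell_++\ell)$-dimensional system in $(\Xi^\sT FGH\hv,\;Q^\sT\hv)$ (introducing $\Gamma=\wGamma Q^\sT$ to handle possible rank deficiency of $\Gamma$ explicitly), and your identity $1=(KW\hv)^\sT(-\partial_\lambda M(\hl))(KW\hv)$ is a cleaner one-line version of the paper's three-term expansion $1=\sum_{i,j}\hv_2^\sT M_i^\sT R^2 M_j\hv_2$.
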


We show in the proof that $\ker T(\lambda)$ has dimension 1,
so $u \in \ker T(\lambda)$ is unique up to
sign. The above states that the inner-products of the sample eigenvector
$\hv$ with the true signal vectors $\gamma_i^{(r)}$ are approximately
a scalar multiple of the entries of this vector $u$.

We verify in Appendix \ref{appendix:isotropic} that if
$\oSigma_r=\sigma_r^2\Id$ for each $r=1,\ldots,k$, then
Theorems \ref{thm:outliers} and \ref{thm:eigenvectors} coincide with
the first-order results in \cite{fanjohnstonesun}.

\section{Implications for principal components analysis}\label{sec:PCA}

Theorems \ref{thm:outliers} and \ref{thm:eigenvectors} imply several
qualitative phenomena for the
behavior of PCA for classical MANOVA covariance estimators in
high-dimensional linear mixed models. They imply that in the asymptotic regime
of Assumption \ref{assump:asymptotic}, the naive
sample eigenvalues and eigenvectors are inconsistent for
their population counterparts, and they lead to open questions
about how to obtain improved estimates in these settings. A full exploration
of these questions is outside the scope of this work, but we provide some
discussion of these phenomena and inferential challenges in this section.

\subsection{Qualitative phenomena}\label{subsec:qualitative}

To illustrate the phenomena that are implied by Theorems
\ref{thm:outliers} and \ref{thm:eigenvectors},
we will focus our discussion on a simple example. 

Consider any mixed model (\ref{eq:mixedmodel}) with $k \geq 2$ components.
Suppose that $\Sigma_1$ and $\Sigma_2$ each have a rank-one signal,
and $\Sigma_r$
has no signals for $r \geq 3$; that is, $\ell_1=\ell_2=1$, and $\ell_r=0$
for each $r \geq 3$. Suppose further that $\hSigma \equiv \hSigma_1$ is an
unbiased MANOVA estimate of $\Sigma_1$.  Denote
\[\gamma^{(1)}_1 \equiv \sqrt{\mu_1}v_1, \qquad \gamma^{(2)}_1 \equiv
\sqrt{\mu_2}v_2\]
as the rows of $\Gamma_1$ and $\Gamma_2$, where $v_1,v_2 \in \R^p$ are unit
vectors. For simplicity of interpretation, let us assume
that $\oSigma_r v_1=\oSigma_r v_2=0$ for every $r$.
This implies by (\ref{eq:Sigmar}) that
$\mu_1,\mu_2$ are the signal eigenvalues
in $\Sigma_1,\Sigma_2$, with eigenvectors $v_1,v_2$. (Note that our results do
not require the matrices $\oSigma_r$ to be of full rank.) We set
\[\rho=\langle v_1,v_2 \rangle, \qquad \mu=\max(\mu_1,\mu_2).\]
We also define two $O(1)$ quantities $c_1,c_2$ by
\begin{equation}
c_r=\sum_{t=1}^k \Tr (U_r^\sT BU_t)(U_tBU_r^\sT) \cdot
\Tr \oSigma_t.\label{eq:cr}
\end{equation}

\emph{Eigenvalue bias.} Theorem \ref{thm:outliers} reveals that
principal eigenvalues of $\hSigma_1$ are biased upwards
for the true eigenvalues of $\Sigma_1$. Assuming that
$\mu_1$ is large and $\mu_2 \lesssim \mu_1$, we show in Appendix
\ref{appendix:qualitative} that the largest root of $0=\det T(\lambda)$ has the
large-$\mu$ expansion
\begin{equation}\label{eq:eigenvaluebias}
\lambda=\mu_1+\text{bias}, \qquad
\text{bias}=c_1+c_2\frac{\mu_2\rho^2}{\mu_1}+o_\mu(1),
\end{equation}
where $o_\mu(1) \to 0$ as $\mu_1 \to \infty$.
Thus, for large but fixed $\mu_1$ and $\mu_2 \lesssim \mu_1$,
as $n,p \to \infty$, the sample eigenvalue is upward biased by approximately
$c_1+c_2\mu_2\rho^2/\mu_1$. Here, the first term is a constant depending on the
model design and level of noise, and the second term arises as an extra bias
if $\mu_2$ is also large and the signal eigenvector of $\Sigma_2$
is aligned with that of $\Sigma_1$.\\

\emph{Eigenvalue aliasing.} Theorem \ref{thm:outliers} also reveals that
$\hSigma_1$ can have spurious ``aliased'' outlier eigenvalues that are not
caused by signal in $\Sigma_1$, but rather by signal in $\Sigma_2$.
Suppose $\mu_1=0$, but $\mu_2$ is large. We show in Appendix
\ref{appendix:qualitative} that $0=\det T(\lambda)$ has two roots given by
\begin{equation}\label{eq:eigenvaluealias}
\lambda=\pm \sqrt{c_2\mu_2}+o_\mu(1),
\end{equation}
where $o_\mu(1) \to 0$ as $\mu_2 \to \infty$.
Thus, $\hSigma_1$ has two aliased outlier eigenvalues of opposite signs.
For large but fixed $\mu_2$, as $n,p \to \infty$, these aliased eigenvalues are
of size proportional to $\sqrt{\mu_2}$.\\

\emph{Eigenvector bias.} Theorem
\ref{thm:eigenvectors} implies that the sample eigenvectors of $\hSigma_1$
may be biased in the signal direction of $\Sigma_2$.
Suppose $\mu_1 \asymp \mu_2$ are both large, and $\rho$ is bounded away
from $\pm 1$. For the sample eigenvector $\hv$ corresponding to the
eigenvalue described by (\ref{eq:eigenvaluebias}), we show in Appendix
\ref{appendix:qualitative} that the deterministic approximation for
$\Gamma\,\hv$ in Theorem \ref{thm:eigenvectors} is
\begin{equation}\label{eq:eigenvecexpansion}
\alpha^{-1/2}u=\Gamma\,v_1+O_\mu(1/\sqrt{\mu}).
\end{equation}
Here, the $O_\mu(1/\sqrt{\mu})$ term captures the error
between $\hv$ and the true eigenvector $v_1$. To better understand this
error, let us define a vector $w \in \R^2$ so that
$\Gamma^\top w$ is the unit vector parallel to the component
of $v_2$ orthogonal to $v_1$.
We show in Appendix \ref{appendix:qualitative} that the approximation
for $\langle \Gamma^\top w,\hv \rangle=w^\top (\Gamma\,\hv)$ has the
large-$\mu$ expansion
\begin{equation}\label{eq:eigenvecexpansion2}
w^\top (\alpha^{-1/2}u)=\frac{c_2\mu_2}{\mu_1^2}\rho\sqrt{1-\rho^2}
+o_\mu(1/\mu).
\end{equation}
Thus, for large but fixed $\mu_1 \asymp \mu_2$, as $n,p \to \infty$, 
$\hv$ is biased in the direction $\Gamma^\top w$ which is orthogonal to $v_1$,
of size approximately $(c_2\mu_2/\mu_1^2)\rho\sqrt{1-\rho^2}$.\\

\begin{figure}[t]
\includegraphics[width=0.4\textwidth,page=1]{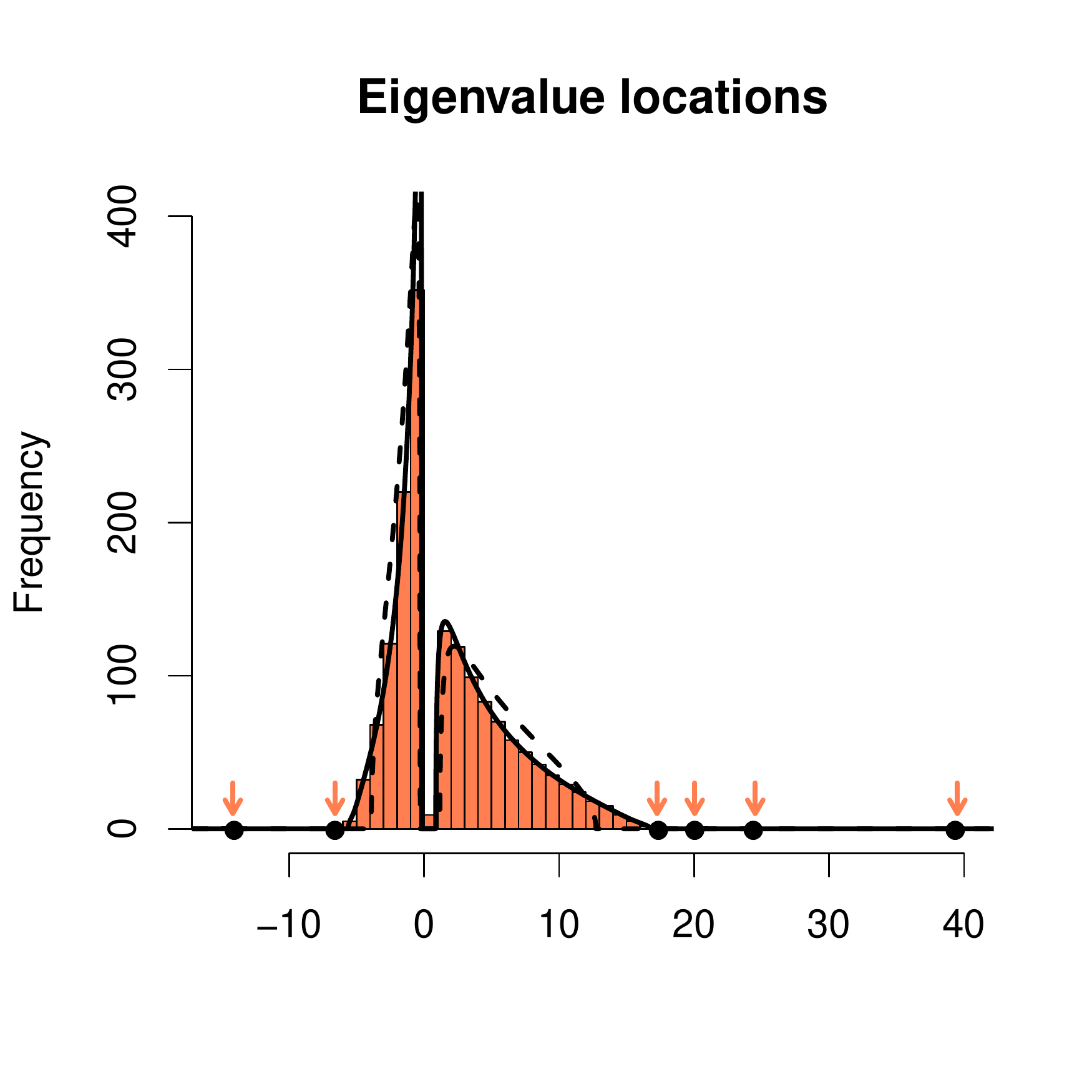}%
\includegraphics[width=0.4\textwidth,page=2]{outliers.pdf}
\caption{Left: Histogram of sample eigenvalues of the MANOVA estimate for
$\Sigma_1$ in a one-way layout design, averaged across 1000 simulations, with
the four largest and two smallest eigenvalues indicated by red arrows. Six black
dots indicate roots of $0=\det T(\lambda)$, predicting the asymptotic locations
of these eigenvalues. Overlaid are the densities of the bulk law
$\mu_0$ (solid black) and of $\mu_0$ computed from an isotropic-noise
approximation (dashed black).
Right: Inner-product of each of three principal sample eigenvectors
$(\hv_j:j=1,2,3)$ with the true population eigenvector $e_j$ (horizontal
axis) and an orthogonal direction $w_j$ partially aligned with
$\Sigma_2$ (vertical axis). Empirical
averages across 1000 simulations (red dots/arrows) are overlaid with the
predictions of Theorem \ref{thm:eigenvectors} (black dots).}\label{fig:oneway}
\end{figure}

\input{outliers.tab}

Figure \ref{fig:oneway} and Table \ref{tab:oneway} illustrate these phenomena
in a more complex setting for a balanced one-way layout design,
corresponding for example to a twin study with $n=1600$ individuals in $n_1=800$
twin pairs, and $p=3200$ traits. We simulate a rank-3 signal
component $32e_1e_1^\top+16e_2e_2^\top+8e_3e_3^\top$ in $\Sigma_1$
and a rank-2 signal component $32ww^\top+64e_4e_4^\top$ in $\Sigma_2$, where
$w=(e_1+e_2+e_3)/\sqrt{3}$, and we sample all remaining eigenvalues
of $\Sigma_1,\Sigma_2$ from $\Exponential(1)$.
Additional details are provided in Appendix \ref{appendix:simulations}.

Figure \ref{fig:oneway} displays sample eigenvalues of the MANOVA estimate
$\hSigma_1$, with numerically computed roots
of $0=\det T(\lambda)$. There are 4
positive and 2 negative roots. Of these, the 3rd largest and
the smallest (negative) root are attributed to aliasing from $e_4$ in
$\Sigma_2$---their sample eigenvectors are predicted by
Theorem \ref{thm:eigenvectors} to be orthogonal to $\{e_1,e_2,e_3\}$. The
1st, 2nd, and 4th largest roots correspond to the true eigenvalues
32, 16, and 8, each observed with upward bias. For each of the three
corresponding sample eigenvectors $\hv_j$, Figure \ref{fig:oneway} displays its
predicted and simulated alignment with the true direction $e_j$ and with the
orthogonal direction $w_j$ obtained by residualizing $e_j$ out of $w$.
The values of these predicted and simulated eigenvalues and
eigenvector alignments are also summarized in Table \ref{tab:oneway}.

\subsection{Improved estimation of principal components}\label{subsec:isotropic}

The preceding phenomena indicate that the sample eigenvalues and eigenvectors of
classical MANOVA estimates for $\Sigma_1,\ldots,\Sigma_k$ are inconsistent in
the regime of Assumption \ref{assump:asymptotic}.
The estimated eigenvalues in $\hSigma_r$ may have upward bias, the estimated 
eigenvectors may be biased towards eigenvectors of other
components $\Sigma_s$ for $s \neq r$, and the number of apparent signal
principal eigenvectors in $\hSigma_r$ may even be incorrect due to aliasing
effects from these other components.

The probabilistic results of Theorems \ref{thm:outliers} and
\ref{thm:eigenvectors} also suggest a possible route for improved estimation of 
the principal eigenvalues and eigenvectors:
The observed signal eigenvalues of a matrix
$\hSigma=Y^\top BY$, while inconsistent for the true signal eigenvalues of
$\Sigma_1,\ldots,\Sigma_k$, do nonetheless provide some information about these
matrices. As indicated by Theorem \ref{thm:outliers},
they correspond approximately to roots of the equation $0=\det T(\lambda)$. This
matrix $T(\lambda)$ in (\ref{eq:Tnew}) depends on:
\begin{enumerate}
\item The spectra of the $k$ noise covariances $\oSigma_1,\ldots,\oSigma_k$, and
the alignments of their eigenvectors across these $k$ different components.
\item The alignments of the rows of $\Gamma$ (the true signal
vectors) with these noise covariances $\oSigma_1,\ldots,\oSigma_k$.
\item The sizes of the true signal eigenvalues and the alignments of the
signal vectors across these $k$ components, which are related to the
magnitudes and inner-products of the rows of $\Gamma$.
\end{enumerate}

Under parametric modeling assumptions for the noise covariance matrices
$\oSigma_1,\ldots,\oSigma_k$, the observed outlier eigenvalues $\hl$
for matrices of the form $\hSigma=Y^\top BY$ can yield estimating equations
$0=\det T(\hl)$ for the true signal eigenvalues in each component
$\Sigma_1,\ldots,\Sigma_k$ (as well as for the cross-component
alignments of their corresponding signal eigenvectors).
Furthermore, if an estimation matrix $B$ for $\hSigma=Y^\top BY$ can be chosen
such that the vector $u \in \ker T(\hl)$ is proportional to $\Gamma\,v$,
where $\hl$ is the observed eigenvalue of $\hSigma$ and $v$ is a true signal
eigenvector of $\Sigma_r$, then Theorem \ref{thm:eigenvectors} indicates that
the corresponding sample eigenvector $\hv$ of $\hSigma$ approximately
satisfies $\Gamma\,\hv
\propto \Gamma\,v$, so that $\hv$ is not asymptotically biased towards the
signal direction of a different variance component $\Sigma_s$. This debiasing
can, for example, lead to asymptotically consistent estimates of linear
functionals of this true eigenvector $v$.

These ideas were implemented and analyzed in \cite{fanjohnstonesun} in the
simplest parametric setting where $\oSigma_r=\sigma_r^2\Id$, for each
$r=1,\ldots,k$ and some scalar variance parameters
$\sigma_1^2,\ldots,\sigma_k^2$. In this setting, \cite{fanjohnstonesun}
proposed a specific algorithm to solve the estimating equations 
$0=\det T(\hl)$ arising from a parametric family of matrices
$\hSigma=Y^\top B(\theta_1,\ldots,\theta_k)Y$ to yield
estimates of all sufficiently large signal eigenvalues of
$\Sigma_1,\ldots,\Sigma_k$. These estimated eigenvalues were shown to be
asymptotically consistent in the high-dimensional regime of Assumption
\ref{assump:asymptotic}, at a parametric $1/\sqrt{n}$ rate.
Furthermore, for each corresponding signal eigenvector $v$,
\cite{fanjohnstonesun} demonstrated how to obtain a specific estimation matrix
$B(\theta_1,\ldots,\theta_k)$ for which the vector $u \in \ker T(\hl)$ indeed
satisfies $u \propto \Gamma\,v$, and thus the algorithm returns a debiased
estimate of this true eigenvector $v$.
We refer readers to \cite{fanjohnstonesun} for further details.

When $\oSigma_1,\ldots,\oSigma_k$ are not isotropic, we believe that
nonparametric estimation of their spectra and eigenvector alignments may be
challenging. However, in certain more parametric contexts---for example when
$\oSigma_1,\ldots,\oSigma_k$ capture known autocovariance structure across
temporal variables or known genetic correlation structure across quantitative
traits, up to a small number of unknown parameters---it may be possible to
develop an estimation procedure similar to that of \cite{fanjohnstonesun},
which first estimates these parameters that describe the noise structure in
$\oSigma_1,\ldots,\oSigma_k$, and then estimates the principal eigenvalues and
eigenvectors of interest, using more general estimating equations that are
derived from our results in Theorems \ref{thm:outliers} and
\ref{thm:eigenvectors}. We leave a more
detailed exploration of this possibility to future work.

Isotropic noise is often assumed in practice \cite{pattersonetal}, and our
results also provide an understanding of the error that may arise in the
original method of \cite{fanjohnstonesun} due to model misspecification.
Figure \ref{fig:oneway} displays simulated eigenvalue
densities $\mu_0$ computed using
the true matrices $\oSigma_r$, which have exponentially decaying spectra,
versus using their isotropic noise approximations with
$\sigma_r^2=p^{-1}\Tr \oSigma_r$. Table \ref{tab:oneway} compares the
corresponding eigenvalue and eigenvector alignment predictions.
We observe that the predictions of Theorems \ref{thm:outliers}
and \ref{thm:eigenvectors} for large outliers are very close to those under the
isotropic noise approximation.
This may also be understood from the calculations in the preceding section
for large $\mu_1,\mu_2$, as the dependence of $c_1,c_2$ in
(\ref{eq:cr}) on $\oSigma_1,\oSigma_2$ is only through their trace.
This suggests that the estimation procedure in
\cite{fanjohnstonesun} may be reasonably accurate for the larger principal
eigenvalues and their associated eigenvectors.
For eigenvalues closer to the support of the noise spectrum, the predictions of
Theorems \ref{thm:outliers} and \ref{thm:eigenvectors} using the true noise
covariances $\oSigma_r$ are more accurate than
those assuming isotropic noise, suggesting that inference for these principal
components may be improved by better parametric modeling of the noise structure.

\section{Free probability results}\label{sec:freeprob}

Our proofs use the connection between free probability and random matrices.
Introducing representations of $U_r$, $\alpha_r$, and $B$ detailed
in Section \ref{sec:det-eq-measure}, our matrix model $\hSigma$ may be written as
\begin{equation}\label{eq:WP}
\hSigma=W+P, \qquad W=\sum_{r=1}^k\sum_{s=1}^k H_r^\sT G_r^\sT F_{rs}G_sH_s,
\end{equation}
for deterministic matrices $\{H_1,\ldots,H_k\}$ and $\{F_{11},F_{12},
\ldots,F_{kk}\}$, independent matrices $\{G_1,\ldots,G_k\}$ with i.i.d.\ Gaussian
entries, and a fixed-rank perturbation $P$ (depending on $G_1,\ldots,G_k$).
We study the spectrum of $W$ by introducing an asymptotic approximation
\[w=\sum_{r=1}^k\sum_{s=1}^k h_r^*g_r^*f_{rs}g_sh_s,\]
where $h_r,g_r,f_{rs}$ belong to a von Neumann algebra
and are conditionally free (i.e.\ free with amalgamation)
over a diagonal subalgebra \cite{benaychgeorges}.
This method was also used in \cite{fanjohnstonebulk} to derive the
fixed-point equations (\ref{eq:arecursion}--\ref{eq:m0}) in Theorem
\ref{thm:bulk}.

Our analysis develops several new tools and results in free probability theory.
In this section, we state these results independent of the specific model
(\ref{eq:mixedmodel}), as they are of general interest for analyzing structured
random matrices in other applications. We defer proofs to Appendices
\ref{app:transforms}, \ref{app:strongfreeness}, and \ref{app:resolventapprox}.

\subsection{Augmented Cauchy and
\texorpdfstring{$R$}{R}-transforms}\label{sec:augmentedtransforms}

We call $(\A,\tau)$ a \emph{von Neumann probability space}
($W^*$-probability space) if $\A$ is a von Neumann algebra and $\tau:\A \to \C$ a positive, faithful,
normal trace. For a von Neumann subalgebra $\B \subset \A$, we denote by
\[\tau^\B:\A \to \B\]
the (unique) conditional expectation satisfying $\tau(\tau^\B(a))=\tau(a)$.

We review the following definitions of $\B$-valued Cauchy- and
$\cR$-transforms: For each $l \geq 1$, let $\NC(l)$ be the space of
non-crossing partitions of $1,\ldots,l$. For $\pi \in \NC(l)$, denote by
$\kappa_\pi^\B(a_1,\ldots,a_l)$ the non-crossing cumulant corresponding to
$\pi$. These satisfy the moment-cumulant relations
\begin{equation}\label{eq:momentcumulant}
\tau^\B(a_1a_2\ldots a_l)=\sum_{\pi \in \NC(l)} \kappa_\pi^\B(a_1,a_2,\ldots,a_l).
\end{equation}
Define the $\B$-valued Cauchy- and $\cR$-transform of $a \in \A$ by
    \[G_a^\B(b)=\tau^\B((b-a)^{-1})=\sum_{l \geq 0} b^{-1}(ab^{-1})^l, \qquad
    \cR_a^\B(b)=\sum_{l \geq 1} \kappa_l^\B(a,ba,\ldots,ba),\]
the former for all invertible $b \in \B$
with $\|b^{-1}\|$ sufficiently small and the latter for all $b \in \B$
with $\|b\|$ sufficiently small.\footnote{Note that, following conventions in\
free probability, we take the opposite sign for $G_a^\B(b)$ here as for the
Stieltjes transform used in Section \ref{sec:det-eq-measure}.}
The moment-cumulant relations (\ref{eq:momentcumulant}) yield the identity
\begin{equation}\label{eq:GRrelation}
    G_a^\B(b)=(b-\cR_a^\B(G_a^\B(b)))^{-1}
\end{equation}
for invertible $b \in \B$ with $\|b^{-1}\|$ sufficiently small.
We refer the reader to \cite[Chapter 9]{mingospeicher} for additional background
and details.

For our computations in Section \ref{sec:proof}, we will make use of
the following ``left-augmented'' Cauchy- and $R$-transforms, defined for
$a_1,a \in \A$ and $b \in \B$ by the mixed moments and mixed cumulants
\begin{align} \label{eq:aug-cauchy}
G^\B_{a_1, a}(b) &= \tau^\B\Big(a_1(b - a)^{-1}\Big)
    =\sum_{l \geq 0} \tau^\B(a_1b^{-1}(ab^{-1})^l),\\  \label{eq:aug-r}
R^{\B }_{a_1, a}(b) &= \sum_{l \geq 1} \kappa_l^\B(a_1, ba, \ldots, ba).
\end{align}
The following identity is then also a consequence of (\ref{eq:momentcumulant}),
and we provide a short proof in Appendix \ref{app:transforms}.

\begin{lemma} \label{lem:mod-gr}
For $a_1,a \in \A$ and all invertible $b \in \B$ with
    $\|b^{-1}\|$ sufficiently small,
    \begin{equation}\label{eq:mod-gr}
G^{\B}_{a_1,a}(b) = R^{\B }_{a_1, a}(G^{\B}_{a}(b)) G^{\B}_{a}(b).
    \end{equation}
\end{lemma}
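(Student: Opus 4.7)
My plan is to mimic the standard combinatorial derivation of identity (\ref{eq:GRrelation}), now with $a_1$ inserted in the leftmost slot of each mixed moment. The first step is to use the Neumann expansion $(b-a)^{-1} = \sum_{l \geq 0}(b^{-1}a)^l b^{-1}$ and the $\B$-linearity of $\tau^\B$ to pull the trailing $b^{-1}$ out, giving
\[
G_{a_1, a}^\B(b) = \Big(\sum_{l \geq 0}\tau^\B(a_1(b^{-1}a)^l)\Big)\,b^{-1}.
\]
The goal is then to identify the bracketed sum as $R_{a_1, a}^\B(G_a^\B(b))\,G_a^\B(b)\,b$, so that multiplication by $b^{-1}$ yields the claim.

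For each $l \geq 0$, I would expand $\tau^\B(a_1(b^{-1}a)^l)$ via (\ref{eq:momentcumulant}) as $\sum_{\pi \in \NC(l+1)}\kappa_\pi^\B(a_1, b^{-1}a, \ldots, b^{-1}a)$, treating it as the $\B$-valued moment of $l+1$ elements of $\A$. Grouping partitions by the outer block $V = \{1 < j_2 < \cdots < j_m\}$ containing position~$1$ and writing $i_s \geq 0$ for the gap-size between the $s$-th and $(s+1)$-st block elements (with $i_m$ the tail after $j_m$), the nested-moment formula for $\B$-valued non-crossing cumulants performs the sum over all inner partitions on each gap; each gap collapses to the inner moment $\tau^\B((b^{-1}a)^{i_s})$, inserted as a $\B$-valued weight to the right of the $s$-th argument of the outer cumulant $\kappa_m^\B$ (with the tail $\tau_m$ sitting outside the cumulant on the right).

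Summing $l \geq 0$ and each $i_s \geq 0$ independently, the inner-moment sums telescope via $\sum_{i \geq 0}\tau^\B((b^{-1}a)^i) = \tau^\B((1 - b^{-1}a)^{-1}) = G_a^\B(b)\,b$; pushing each resulting factor $G_a^\B(b)\,b$ rightward across the outer cumulant slots by $\B$-multilinearity, the adjacent $b\cdot b^{-1}$ pairs cancel, converting each $b^{-1}a$ in slots $2,\ldots,m$ into $G_a^\B(b)\,a$ and leaving a single trailing $G_a^\B(b)\,b$ on the right that absorbs the $b^{-1}$ from the first step. The surviving sum is $\sum_{m \geq 1}\kappa_m^\B(a_1, G_a^\B(b)a, \ldots, G_a^\B(b)a)\,G_a^\B(b) = R_{a_1, a}^\B(G_a^\B(b))\,G_a^\B(b)$, which is the desired identity.

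The main obstacle is the bookkeeping for operator-valued $\B$-multilinearity: in the scalar case every factor commutes and the cancellations are automatic, but here the inner moments $\tau^\B((b^{-1}a)^{i_s})$ are noncommutative $\B$-valued weights that must be transported across the outer cumulant slots in a prescribed left-to-right order via $\kappa_m^\B(\ldots, x_s c, x_{s+1}, \ldots) = \kappa_m^\B(\ldots, x_s, c x_{s+1}, \ldots)$ for $c \in \B$, so that each $b$ cancels exactly the adjacent $b^{-1}$. Once this convention is fixed, convergence of the rearranged series for $\|b^{-1}\|$ sufficiently small is routine.
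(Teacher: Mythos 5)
Your proposal is correct and follows essentially the same route as the paper: expand the augmented Cauchy transform via the moment--cumulant relation, decompose each non-crossing partition by the outer block containing position $1$, collapse each gap to an inner $\B$-valued moment that telescopes to (a one-sided $\B$-multiple of) $G_a^\B(b)$, and finally push the $\B$-weights across the outer cumulant slots by $\B$-multilinearity to reassemble $R^{\B}_{a_1,a}(G^{\B}_a(b))\,G^{\B}_a(b)$. The only cosmetic difference is that you group the $b^{-1}$'s on the left of each $a$ (so the inner sums give $G_a^\B(b)b$) whereas the paper groups them on the right (giving $bG_a^\B(b)$); both conventions close up identically after the multilinearity transport, and your remark about the ordered bookkeeping of the noncommutative weights is exactly the point one must handle, as the paper also does.
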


\subsection{Strong asymptotic freeness of GOE and deterministic matrices}

We establish a strong asymptotic freeness result for GOE and deterministic
matrices, which is the real analogue of the GUE result in \cite{male}.
The proof is provided in Appendix \ref{app:strongfreeness}.

Fix integers $p,q \geq 0$. Let
$X_1,\ldots,X_p \in \R^{N \times N}$ be independent
GOE matrices, with diagonal entries distributed as $\N(0,2/N)$ and off-diagonal
entries as $\N(0,1/N)$. Let $Y_1,\ldots,Y_q \in \C^{N \times N}$ be
deterministic matrices.  Denote
$\mathbf{X}_N=\left(X_1,\ldots,X_p\right)$ and
$\mathbf{Y}_N=\left(Y_1,\ldots,Y_q\right)$. Let $\tr_N=N^{-1}\Tr$ be the
normalized matrix trace on $\C^{N \times N}$.

Consider an $N$-dependent von Neumann probability space $(\A_N,\tau_N)$.
Suppose $\A_N$ contains $x_1,\ldots,x_p$ and $Y_1,\ldots,Y_q$, where
$x_1,\ldots,x_p$ are free semicircular elements also free
of $Y_1,\ldots,Y_q$, and $\tau_N \equiv \tr_N$ restricted to the von Neumann
subalgebra $\langle Y_1,\ldots,Y_q \rangle$. Denote
$\mathbf{x}=(x_1,\ldots,x_p)$.

\begin{theorem}\label{thm:strongfree}
Suppose $\|Y_j\|\leq C$ for all $j=1,\ldots,q$ and a constant $C>0$.
Then for any fixed non-commutative self-adjoint $*$-polynomial $Q$ in $p+q$
    variables, and any constant $\delta>0$, almost surely for all large $N$,
    \begin{equation}\label{eq:polynomial_inclusion}
        \spec(Q(\mathbf{X}_{N},\mathbf{Y}_{N}))\subset
        \spec (Q(\mathbf{x},\mathbf{Y}_{N}))_\delta.
\end{equation}
\end{theorem}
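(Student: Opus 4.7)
The plan is to follow the Haagerup-Thorbj\o rnsen strategy extended to GOE by Schultz, and to the presence of deterministic matrices by Capitaine-Donati-Martin and Male. It has four ingredients: (i) linearize $Q$ to an affine self-adjoint pencil, (ii) prove a sharp matrix-valued Schwinger-Dyson equation with $O(N^{-2})$ remainder for that pencil, (iii) upgrade to almost-sure statements via Gaussian concentration, and (iv) pass from resolvent control to spectral inclusion by a Helffer-Sj\"ostrand contour argument. For Step (i), the Anderson self-adjoint linearization trick produces a fixed integer $K$, self-adjoint coefficients $a_0,\ldots,a_p,c_1,\ldots,c_q \in M_K(\C)$, and an affine Hermitian $\Lambda(z)$, so that with
\[
L_N = a_0 \otimes \Id_N + \sum_{j=1}^p a_j \otimes X_j + \sum_{l=1}^q c_l \otimes Y_l,
\qquad
\ell = a_0 \otimes 1 + \sum_{j=1}^p a_j \otimes x_j + \sum_{l=1}^q c_l \otimes Y_l,
\]
we have $z \notin \spec(Q(\mathbf{X}_N,\mathbf{Y}_N))$ iff $\Lambda(z) \notin \spec(L_N)$, and similarly for the free counterpart. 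Containment (\ref{eq:polynomial_inclusion}) thus reduces, after relabeling $\delta$, to the analogous inclusion of $\spec(L_N)$ inside the $\delta$-neighborhood of $\spec(\ell)$.

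For Step (ii), introduce the $M_K(\C)$-valued Cauchy transforms
\[
H_N(z)=(\mathrm{id}_K \otimes \E\tr_N)\bigl[(z\Id - L_N)^{-1}\bigr], \qquad h(z)=(\mathrm{id}_K \otimes \tau_N)\bigl[(z\Id-\ell)^{-1}\bigr],
\]
for $z\in\C^+$. Gaussian integration by parts applied entry-by-entry to the GOE matrices $X_j$ produces a Schwinger-Dyson relation
\[
\bigl(z\Id - a_0 - \eta(H_N(z)) - \gamma_N(z)\bigr) H_N(z) = \Id + \Omega_N(z),
\]
where $\eta(M)=\sum_j \tfrac{1}{2}(a_j M a_j + a_j M^\sT a_j)$ encodes the GOE covariance (the transpose summand coming from the symmetry $X_{ij}=X_{ji}$), $\gamma_N(z)$ gathers the $Y_l$-dependent contributions, and the remainder satisfies $\|\Omega_N(z)\|\leq C(|\Im z|^{-1})/N^2$ by bounding the second-order Poincar\'e cumulants. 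The free model satisfies the same equation with $\Omega\equiv 0$, so inversion of the Schwinger-Dyson Jacobian (which is bounded off $\spec(\ell)$ in terms of $|\Im z|^{-1}$ and the fixed coefficient matrices) yields $\|H_N(z)-h(z)\|=O(N^{-2})$ uniformly on compact subsets of $\C\setminus\spec(\ell)$. The hypothesis $\|Y_l\|\leq C$ keeps these constants uniform in $N$, which is necessary since $\spec(\ell)$ and $(\A_N,\tau_N)$ depend on $N$.

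For Steps (iii) and (iv), the Gaussian Poincar\'e inequality applied to the Lipschitz functional $(X_j)_{j=1}^p \mapsto (\mathrm{id}_K \otimes \tr_N)[(z\Id-L_N)^{-1}]$ yields variance $O(N^{-2}|\Im z|^{-4})$; Borel-Cantelli then promotes Step (ii) to an almost-sure estimate. Given $\lambda_0 \in \R\setminus\spec(\ell)_\delta$, pick a smooth cut-off $\chi \in C_c^\infty(\R)$ with $\supp(\chi)\subset(\lambda_0-\delta/4,\lambda_0+\delta/4)$ and $\chi(\lambda_0)=1$. By Helffer-Sj\"ostrand,
\[
\tr_N\chi(L_N) = \frac{1}{\pi}\int_\C \partial_{\bar z}\widetilde\chi(z)\,(\mathrm{id}_K\otimes \tr_N)\bigl[(z\Id-L_N)^{-1}\bigr]\,dA(z)
\]
with an almost-analytic extension $\widetilde\chi$ supported in $\C\setminus\spec(\ell)_{\delta/2}$, and this integral is $o(1)$ almost surely by Steps (ii)--(iii). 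Hence $L_N$ has no eigenvalue within $\delta/4$ of $\lambda_0$ for large $N$. Covering the relevant compact subset of $\R\setminus\spec(\ell)_\delta$---bounded by a priori operator-norm estimates on $\|X_j\|$ and $\|Y_l\|$---by finitely many such intervals completes the proof.

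The main obstacle is the sharp $O(N^{-2})$ master equation of Step (ii). The GOE-specific transpose term in $\eta$, absent from the GUE analogue of \cite{male}, must be tracked through both the derivation (via a careful accounting of the Wick pairings produced by integration by parts on the symmetric entries) and the stability analysis of the Schwinger-Dyson map, where it could in principle obstruct uniform invertibility of the Jacobian. Combined with the requirement that every estimate be uniform in $N$---since $\spec(\ell)$ and the algebra $\A_N$ depend on $N$ through $Y_1,\ldots,Y_q$, forcing the entire analysis to use only the a priori bound $\|Y_l\|\leq C$---this is the heart of the argument.
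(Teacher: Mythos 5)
Your skeleton (linearization, a matrix Schwinger--Dyson/subordination identity, Gaussian Poincar\'e for the variance, smooth test functions plus Borel--Cantelli, uniformity in $N$ through $\|Y_l\|\le C$ only) is the same as the paper's, but Step (ii) as you state it contains a genuine error, and it is exactly at the point you yourself identify as the heart of the matter. Gaussian integration by parts for the GOE does \emph{not} produce the covariance map $\eta(M)=\sum_j\tfrac12(a_jMa_j+a_jM^{\sT}a_j)$ acting on the $M_K(\C)$-valued transform $H_N(z)$. The first Wick pairing gives the full term $\sum_j a_jH_N(z)a_j$ (no factor $\tfrac12$), i.e.\ the same $\cR$-transform as in the GUE/free case; the second (transpose) pairing does not close up into $a_jH_N(z)^{\sT}a_j$ at order one, but instead yields a term carrying an explicit $1/N$ prefactor that couples the \emph{full} resolvent with its transpose on $M_K\otimes M_N$ and cannot be expressed through the partial trace $H_N(z)$ at all. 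Moreover, the free model with semicircular $x_j$ satisfies the subordination equation with $\cR_s(M)=\sum_j a_jMa_j$ only; it does not satisfy any equation containing your transpose summand, so the assertion ``the free model satisfies the same equation with $\Omega\equiv 0$'' is false. The upshot is that after subtracting the free Schwinger--Dyson structure the remainder is genuinely $O(1/N)$, not $O(1/N^2)$.

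This matters because the spectral-exclusion step needs better than $O(1/N)$: a single eigenvalue of $L_N$ escaping the $\delta$-neighborhood contributes at least $1/N$ to $\tr_N\chi(L_N)$, so an $O(1/N)$ bound on $\E[\tr_N\chi(L_N)]$ proves nothing, and the $O(N^{-4})$ variance cannot rescue it. The paper's proof (following Schultz's treatment of the GOE) spends most of its effort precisely here: it isolates the $O(1/N)$ correction as an explicit term $\mathcal{L}_\Lambda(R_N(\Lambda))$, replaces the random $R_N$ by a deterministic $R_\A$ up to $O(1/N^2)$ (this itself requires a left-augmented subordination identity and a second application of the stability estimate for the subordination map), and then shows that the resulting scalar correction $r_\A(\lambda)$ is the Stieltjes transform of a distribution supported inside $\spec(L_\A)$, so that it integrates to zero against test functions supported off the spectrum, leaving an honest $O(1/N^2)$ error. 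Without this correction-and-support analysis, or some equivalent device, your master equation with $O(N^{-2})$ remainder does not hold and the Helffer--Sj\"ostrand/covering conclusion does not follow. (Your remaining steps, including the final reduction from the linear pencil to general $Q$, would also need the subsequential ultraproduct argument the paper uses because $\A_N$ and $\spec(\ell)$ are $N$-dependent, but that is a secondary issue compared with Step (ii).)
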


Here, $\spec(Q(\mathbf{X}_{N},\mathbf{Y}_{N}))$ are the eigenvalues of the
self-adjoint random
matrix $Q(\mathbf{X}_{N},\mathbf{Y}_{N}) \in \C^{N \times N}$, and
$\spec (Q(\mathbf{x},\mathbf{Y}_{N}))_\delta$ is the $\delta$-neighborhood
of the spectrum of the operator $Q(\mathbf{x},\mathbf{Y}_{N}) \in \A_N$.

For our application, we will apply strong asymptotic freeness directly
in the above form. However, we may also obtain as a corollary
the following more usual statement, by the arguments of \cite[Section 7]{male}.

\begin{theorem}\label{thm:strongfreeusual}
	Let $\mathbf{x}=(x_1,\ldots,x_p)$ and $\mathbf{y}=(y_1,\ldots,y_q)$ be
        elements of a fixed von Neumann probability space $(\A,\tau)$,
        such that $x_1,\ldots,x_p$ are free semicircular elements also free
        from $\mathbf{y}$. Assume that almost surely as $N \to \infty$,
        for any fixed self-adjoint $*$-polynomial $P$ in $q$ variables, 
	\[\tr_N[P(\mathbf{Y}_N)]\rightarrow\tau(P(\mathbf{y}))
	\quad\text{and}\quad
	\|P(\mathbf{Y}_N)\|\rightarrow\|P(\mathbf{y})\|.\] 
	Then, almost surely for any self-adjoint
        $*$-polynomial $Q$ in $p+q$ variables,
	\begin{equation}\label{norm_convergence}
            \tr_N[Q(\mathbf{X}_N,\mathbf{Y}_N)]\rightarrow\tau(Q(\mathbf{x},\mathbf{y}))\quad\text{and}\quad
	\|Q(\mathbf{X}_N,\mathbf{Y}_N)\|\rightarrow\|Q(\mathbf{x},\mathbf{y})\|.
	\end{equation} 
\end{theorem}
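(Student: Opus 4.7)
The plan is to deduce Theorem \ref{thm:strongfreeusual} from the spectrum-inclusion statement of Theorem \ref{thm:strongfree} along the lines of \cite[Section 7]{male}: handle trace convergence first, obtain the norm lower bound from it, and then address the norm upper bound using Theorem \ref{thm:strongfree} together with a C$^*$-algebraic continuity property for norms in reduced free products. The main obstacle will be the last of these, namely the continuity statement for reduced free products under strong convergence of the deterministic inputs.

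I would first establish trace convergence. Consider the $N$-dependent algebra $(\A_N,\tau_N)$ from Theorem \ref{thm:strongfree}, and write $\mathbf{x}^{(N)}=(x_1^{(N)},\ldots,x_p^{(N)})$ for the free semicirculars in $\A_N$ that are free from $\mathbf{Y}_N$. Since $\tau_N$ restricted to $\langle\mathbf{Y}_N\rangle$ equals $\tr_N$, the hypothesis $\tr_N[P(\mathbf{Y}_N)]\to\tau(P(\mathbf{y}))$ gives $\tau_N(P(\mathbf{Y}_N))\to\tau(P(\mathbf{y}))$ for every self-adjoint $*$-polynomial $P$. By freeness, any mixed moment $\tau_N(Q(\mathbf{x}^{(N)},\mathbf{Y}_N))$ is a universal polynomial in the moments of $\mathbf{Y}_N$ together with the fixed semicircular moments, and the same identity holds for $\tau(Q(\mathbf{x},\mathbf{y}))$; hence $\tau_N(Q(\mathbf{x}^{(N)},\mathbf{Y}_N))\to\tau(Q(\mathbf{x},\mathbf{y}))$. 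On the matrix side, the moment-method proof of Voiculescu's asymptotic freeness for GOE---Gaussian integration by parts plus the combinatorics of non-crossing pair partitions---gives $\E\tr_N[Q(\mathbf{X}_N,\mathbf{Y}_N)] - \tau_N(Q(\mathbf{x}^{(N)},\mathbf{Y}_N))\to 0$, and Gaussian concentration of polynomials of GOE entries (e.g.\ via the log-Sobolev inequality) upgrades this to almost-sure convergence, establishing the first half of (\ref{norm_convergence}). The norm lower bound then follows by the standard device: for self-adjoint $Q$ and any integer $k\geq 1$, $\tr_N[Q(\mathbf{X}_N,\mathbf{Y}_N)^{2k}]\leq \|Q(\mathbf{X}_N,\mathbf{Y}_N)\|^{2k}$, so passing $N\to\infty$ and then $k\to\infty$, using $\|A\|=\lim_k \tau(A^{2k})^{1/(2k)}$ in a faithful tracial $W^*$-probability space, yields $\liminf_N \|Q(\mathbf{X}_N,\mathbf{Y}_N)\|\geq \|Q(\mathbf{x},\mathbf{y})\|$.

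For the matching upper bound, Theorem \ref{thm:strongfree} gives directly that, for any fixed $\delta>0$ and almost surely for all large $N$,
\[
\|Q(\mathbf{X}_N,\mathbf{Y}_N)\| \leq \|Q(\mathbf{x}^{(N)},\mathbf{Y}_N)\|_{\A_N} + \delta,
\]
so what remains is the continuity statement
\[
\|Q(\mathbf{x}^{(N)},\mathbf{Y}_N)\|_{\A_N} \to \|Q(\mathbf{x},\mathbf{y})\|_{\A},
\]
in which the strong (not merely distributional) convergence of $\mathbf{Y}_N$ to $\mathbf{y}$ in the hypothesis is essential. This is the main obstacle. The $\liminf$ direction is the moment argument of the preceding paragraph repeated inside $\A_N$. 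For the $\limsup$ direction, one uses that the joint $*$-distribution of $(\mathbf{x}^{(N)},\mathbf{Y}_N)$ is that of a reduced free product of the semicircular free group factor with $(C^*(\mathbf{Y}_N),\tr_N)$; strong convergence of $\mathbf{Y}_N$ to $\mathbf{y}$ allows one to construct, for each fixed $Q$ and each $\eta>0$, a trace-preserving unital map $C^*(\mathbf{Y}_N)\to C^*(\mathbf{y})$ that is $(1+\eta)$-isometric on the finite-dimensional subspace of polynomials relevant to $Q$. Functoriality of the reduced free product with respect to such trace-preserving embeddings, as set out in \cite[Section 7]{male}, extends this to a $(1+O(\eta))$-isometric comparison at the level of polynomials in $(\mathbf{x}^{(N)},\mathbf{Y}_N)$, producing the required upper bound. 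Combining the two bounds yields the operator-norm convergence in (\ref{norm_convergence}).
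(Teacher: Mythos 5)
Your overall route is the paper's route: the trace convergence is classical asymptotic freeness (the paper simply cites \cite[Theorem 5.4.5]{AGZbook}); your $\liminf$ bound via $\tr_N[Q^{2k}]\leq\|Q\|^{2k}$ and $\|a\|=\lim_k\tau(a^{2k})^{1/(2k)}$ for a faithful trace is exactly the content of the lemma of \cite{haagerupthorbjornsen} that the paper invokes; and the $\limsup$ bound combines the spectral inclusion of Theorem \ref{thm:strongfree} with the deterministic continuity statement $\|Q(\mathbf{x},\mathbf{Y}_N)\|\to\|Q(\mathbf{x},\mathbf{y})\|$. Using Theorem \ref{thm:strongfree} directly for general $Q$ even spares you the re-linearization the paper performs (the paper passes back to linear $L$ with $M_k$-coefficients only so as to quote \cite{male} at that level), so up to that cosmetic point the skeleton is identical.

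The gap is in the mechanism you propose for the remaining continuity step. Strong convergence of $\mathbf{Y}_N$ to $\mathbf{y}$ does not furnish a trace-preserving unital map $C^*(\mathbf{Y}_N)\to C^*(\mathbf{y})$ of the kind you describe: in general no unital trace-preserving $*$-homomorphism between these algebras exists (e.g.\ $C^*(\mathbf{Y}_N)$ may be all of $M_N(\C)$, which is simple, while $C^*(\mathbf{y})$ need not contain a unital trace-compatible copy of it), and the reduced free product is functorial only with respect to trace-preserving embeddings, i.e.\ genuine $*$-homomorphisms; a linear map that is merely $(1+\eta)$-isometric on a finite-dimensional subspace of polynomials cannot be inserted into a free-product construction at all, since the construction uses multiplicativity. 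This is also not what \cite[Section 7 and Appendix A]{male} does: Male proves the stability statement (equivalently $\spec(L(\mathbf{x},\mathbf{Y}_N))\subset\spec(L(\mathbf{x},\mathbf{y}))_\delta$ for all large $N$) by a direct analysis, and the paper's proof simply quotes that result as a black box. If you likewise treat this step as a citation, your proposal collapses to the paper's proof; but read as a self-contained argument, the "approximately isometric map plus functoriality" step fails and would have to be replaced by Male's actual argument or another proof that strong convergence is preserved under adjoining a free semicircular family.
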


\subsection{Resolvent approximation using free deterministic
equivalents}\label{sec:resolventapprox}

We also establish a method of approximating bilinear forms in resolvents using
the free deterministic equivalent framework of \cite{speichervargas}.

Fix integers $p,q \geq 0$. We study the resolvent $R(z)$ of a random matrix
\begin{equation}\label{eq:generalW}
W=Q(H_1,\ldots,H_p,B_1,\ldots,B_q) \in \C^{N \times N},
\end{equation}
where $Q$ is any self-adjoint $*$-polynomial,
$H_1,\ldots,H_p$ are deterministic, and $B_1,\ldots,B_q$ are random matrices
orthogonally invariant in law.
For spectral arguments $z$ with constant separation from $\spec(W)$,
and any deterministic unit vectors $u,v \in \C^N$, we will show an approximation
\[u^*R(z)v \approx u^*R_0(z)v,\]
where $R_0(z)$ is a deterministic matrix defined by a free deterministic
equivalent model.

We consider a setup that will allow us to study rectangular matrices, following
\cite{benaychgeorges}: Let $\A_1=\C^{N \times N}$ and
$\tau_1=N^{-1}\Tr$. Fix $k \geq 1$, let $N=N_1+\ldots+N_k$, and consider
the associated $k \times k$ block decomposition of $\A_1$. 
Define mutually orthogonal projections $P_1,\ldots,P_k \in \A_1$ by
\[P_r=\diag(0,\ldots,0,\Id_{N_r},0,\ldots,0)\]
with $\Id_{N_r}$ in the $r$th diagonal block. Then
$(\A_1,\tau_1,P_1,\ldots,P_k)$ is a rectangular probability space in the sense
of \cite{benaychgeorges}. Define the subalgebra $\D \subset \A_1$
generated by $P_1,\ldots,P_k$, given explicitly by
\[\D=\{z_1P_1+\ldots+z_kP_k:z_1,\ldots,z_k \in \C\}.\]
Define also the space of block-diagonal orthogonal matrices
\[\mathcal{O}=\{\diag(O_1,\ldots,O_k):O_r \in \R^{N_r \times N_r},\,O_r^\sT
O_r=\Id \text{ for each } r\}.\]
Consider $H_1,\ldots,H_p,B_1,\ldots,B_q \in \A_1$, where
$H_1,\ldots,H_p$ are deterministic, and
$(B_1,\ldots,B_q)$ is random and equal in joint law to
$(OB_1O^\sT,\ldots,OB_qO^\sT)$ for all $O \in \mathcal{O}$.
For a self-adjoint $*$-polynomial $Q$ in $p+q$ arguments with coefficients
in $\D$, define $W$ by (\ref{eq:generalW}), and define its resolvent
\[R(z)=(W-z\Id)^{-1}.\]

To define the approximation $R_0(z)$, we construct a free deterministic
equivalent model: Let $(\A_2,\tau_2)$ be a second
von Neumann probability space, where $\D \subset \A_2$ and $\tau_1 \equiv
\tau_2$ restricted to $\D$.
Let $\A_2$ have elements $b_1,\ldots,b_q$ satisfying
\begin{equation}\label{eq:bapprox}
N^{-1}\Tr \Big(P(B_1,\ldots,B_q)\Big)
-\tau_2\Big(P(b_1,\ldots,b_q)\Big) \to 0
\end{equation}
almost surely as $N \to \infty$,
for any fixed $*$-polynomial $P$  with coefficients in $\D$.
Define the von Neumann amalgamated free product over $\D$,
\[(\A,\tau)=(\A_1,\tau_1) *_\D (\A_2,\tau_2),\]
so that $(H_1,\ldots,H_p)$ is free of $(b_1,\ldots,b_q)$ with amalgamation
over $\D$. Define the free deterministic equivalent approximation to $W$ by
\[w=Q(H_1,\ldots,H_p,b_1,\ldots,b_q) \in \A.\]
Finally, let $\cH=\langle H_1,\ldots,H_p,\D\rangle$
be the generated von Neumann subalgebra of $\A$, and
let $\tau^\cH:\A \to \cH$ be the
conditional expectation onto $\cH$ that satisfies $\tau(\tau^\cH(a))=\tau(a)$.
Importantly, note that for any $a \in \A$,
\[\tau^\cH(a) \in \cH \subset \A_1 \equiv \C^{N \times N},\]
so that $\tau^\cH(a)$ is an $N \times N$ matrix.
We define the free deterministic approximation $R_0(z)$ of $R(z)$ by
\begin{equation}\label{eq:R0}
R_0(z)=\tau^\cH((w-z)^{-1}).
\end{equation}
We now state our approximation result, whose proof is in Appendix
\ref{app:resolventapprox}.

\begin{theorem}[Resolvent approximation]\label{thm:resolventapprox}
For some constants $C,c>0$, suppose $c<N_r/N<C$, $\|H_i\|<C$, and
$\|B_j\|<C$ for all $r,i,j$,
almost surely for all large $N$. Fix any constant $\delta>0$ and set
\[\mathbb{D}=\{z \in \C:\dist(z,\spec(w)) \geq \delta \text{ and }
\dist(z,\spec(W)) \geq \delta\}.\]
Then for any (sequence of) deterministic unit vectors $u,v \in \C^N$,
almost surely as $N \to \infty$,
\begin{equation}\label{eq:resolventapprox}
\sup_{z \in \mathbb{D}} |u^*R(z)v-u^*R_0(z)v| \to 0.
\end{equation}
\end{theorem}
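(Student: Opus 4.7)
The strategy is to split $|u^*R(z)v - u^*R_0(z)v|$ into a fluctuation part, handled by concentration of measure under Haar orthogonal integration, and a bias part, handled by a matrix Schwinger--Dyson equation whose unique solution coincides with the free deterministic equivalent. Since the joint law of $(B_1,\ldots,B_q)$ is $\mathcal{O}$-conjugation invariant, the law of $R(z)$ is unchanged if we replace each $B_j$ by $\Theta B_j\Theta^\sT$ for $\Theta$ Haar-distributed on the compact group $\mathcal{O}$. For fixed $z \in \mathbb{D}$, the map $\Theta \mapsto u^*R(z)v$ is Lipschitz with constant of order $\|R(z)\|^2 = O(1/\delta^2)$, up to polynomial factors in the norms of the $H_i$, $B_j$ and the degree of $Q$. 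Concentration of measure on $\mathcal{O}$ (Gromov--Milman / log-Sobolev) then yields $u^*R(z)v - \E_\Theta[u^*R(z)v] \to 0$ almost surely. An $\eps$-net argument on $\mathbb{D}$, combined with the uniform derivative bound following from holomorphicity of both $u^*R(z)v$ and $u^*R_0(z)v$, upgrades this to uniform convergence over $z \in \mathbb{D}$.

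To identify $\E_\Theta[u^*R(z)v]$ with $u^*R_0(z)v$ in the limit, I would apply the standard linearization trick to replace $W$ by a linear pencil $L(z)$ on an enlarged space $\C^{Nd}$ ($d$ fixed), whose inverse contains $R(z)$ as a Schur complement block and whose coefficients lie in $\D\otimes M_d(\C)$. The bilinear form $u^*R(z)v$ then becomes a bilinear form of $L(z)^{-1}$ against explicit deterministic vectors depending on $u,v$. Orthogonal invariance and Weingarten integration by parts yield an approximate matrix equation
\[
\E[L(z)^{-1}] = \widehat{L}(z)^{-1} + o(1),
\]
with coefficients determined by the $H_i$ and the empirical $*$-distribution of $(B_1,\ldots,B_q)$ relative to $\D$. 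The free deterministic equivalent $R_0(z) = \tau^\cH((w-z)^{-1})$ satisfies the same fixed-point relation with the empirical distributions replaced by their limits; this is intrinsic to the amalgamated free product and is captured by the $\D$-valued $\cR$-transform identity (\ref{eq:GRrelation}) together with the augmented variant Lemma \ref{lem:mod-gr}, which handles the non-central ``probes'' coming from $u,v$. Hypothesis (\ref{eq:bapprox}) ensures the two sets of coefficients agree in the limit, and a contraction argument using the $\delta$-separation of $z$ from $\spec(W)$ and $\spec(w)$ on $\mathbb{D}$ stabilizes the equation and closes the identification $\E[L(z)^{-1}] - L_0(z)^{-1} \to 0$ in operator norm, whence $\E[u^*R(z)v] - u^*R_0(z)v \to 0$.

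The main obstacle will be the anisotropic extension of this Schwinger--Dyson identification. For trace-valued quantities $\tr_N(D R(z))$ with $D$ in a fixed algebra of bounded complexity, the identification is essentially the Speicher--Vargas free deterministic equivalent theorem. The bilinear version requires controlling all subleading Weingarten contributions arising from pairings that couple the rank-one probe $vu^*$ to the internal matrix structure, uniformly in $N$, and verifying stability of the matrix fixed-point equation in operator norm rather than in trace, uniformly over $z \in \mathbb{D}$. Both points are genuinely more delicate than the scalar-valued analog and are expected to account for the bulk of the technical work in Appendix \ref{app:resolventapprox}.
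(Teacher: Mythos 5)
Your organizing split into a fluctuation term (controlled by orthogonal invariance) and a bias term (identified with the free deterministic equivalent) is a sensible frame, and the paper's actual argument realizes a version of this same split, but the implementations differ and your proposed implementation of each piece has issues worth flagging.

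For the fluctuation part, the paper does not invoke Gromov--Milman concentration on $\mathcal{O}$. Instead it works moment by moment: Theorem~\ref{thm:anisotropicpoly}, proved via Lemma~\ref{lemma:anisotropicpoly}, shows $v^* Q(H_1,\ldots,H_p,B_1,\ldots,B_q) w - v^* \tau^\cH(Q(H_1,\ldots,H_p,b_1,\ldots,b_q)) w \to 0$ almost surely. The proof centers the factors of each monomial, reduces by induction to a fully centered product, writes the conjugation $B_j \mapsto O B_j O^{-1}$ explicitly, and bounds the fourth moment over Haar $O$ by a Weingarten-combinatorics count (Lemma~\ref{lemma:combinatorics}); Markov plus Borel--Cantelli gives the almost-sure statement. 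Your Gromov--Milman plan is plausible in spirit, but as written it has a gap: the Lipschitz constant of $\Theta \mapsto u^*(W(\Theta)-z)^{-1}v$ is not uniformly bounded, because for fixed $z \in \mathbb{D}$ the spectrum $\spec(W(\Theta))$ can approach $z$ for atypical $\Theta$ even though $z$ keeps distance $\delta$ from $\spec(W)$. One must either regularize the resolvent away from such $\Theta$ or first establish a spectral-inclusion statement confining $\spec(W(\Theta))$; in this paper such a statement is proved separately (Theorem~\ref{thm:sticktobulk} via Theorem~\ref{thm:strongfree}) and is not a cheap input.

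For the bias identification, the two routes diverge more seriously, and here I see a genuine gap in your plan. You propose linearizing $W$ to a pencil, running Weingarten integration by parts to get a Schwinger--Dyson fixed-point equation $\E[L(z)^{-1}] = \widehat{L}(z)^{-1} + o(1)$, and matching $R_0(z)$ to the same equation. For a general orthogonally-invariant family $(B_1,\ldots,B_q)$ this equation does not close at any finite order: Haar integration by parts produces all mixed $\D$-valued cumulants of the $B_j$'s, so the ``coefficients'' of $\widehat{L}$ are not a finite-dimensional matrix but essentially the full $\D$-valued $\cR$-transform of $(b_1,\ldots,b_q)$, an infinite family. Proving operator-norm stability of this infinite-dimensional equation uniformly over $\mathbb{D}$ is not addressed by Lemma~\ref{lem:mod-gr} (which is a finite algebraic identity used in the mixed-model computation, not a stability result), and would be a project comparable in size to the theorem itself. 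The paper avoids this entirely: from Theorem~\ref{thm:anisotropicpoly} one has $u^* W^l v - u^*\tau^\cH(w^l)v \to 0$ for each fixed $l$, which after a tail truncation gives uniform convergence of the Neumann series for $u^* R(z) v - u^* R_0(z) v$ on $\{|z| > 2C\}$, and Lemma~\ref{lem:analytic-extend} then extends this uniformly over $\mathbb{D}$ by a compactness-of-spectra and analyticity argument. No fixed-point equation or linearization is needed. I'd suggest replacing your SD step with this direct moment comparison plus analytic continuation; the combinatorial bound of Lemma~\ref{lemma:combinatorics} is where the genuine work in the paper lives.
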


Taking $k=1$ yields a result for square orthogonally invariant matrices,
where $(\A,\tau)$ is the von Neumann free product over $\D \equiv \C$. We
consider $k \geq 2$ to encompass applications with rectangular matrices, where
each $H_i,B_j$ typically
has a single off-diagonal block which is non-zero.  We are then
interested in $*$-polynomials $Q$ that are $(1,1)$-simple, i.e.\
$W$ and $w$ satisfy
\[W=P_1WP_1, \qquad w=P_1wP_1.\]
Denote by $W_{11} \in \C^{N_1 \times N_1}$ the $(1,1)$-block of $W$.
Corresponding to $\C^{N_1 \times N_1}$ is a ``compressed algebra''
$\A^c=\{P_1aP_1:a \in \A\}$ with unit $P_1$ \cite{speichervargas}.
Denote by $w_{11} \in \A^c$ and $\spec(w_{11})$ the element $w$ and its
spectrum, viewed as a self-adjoint operator in $\A^c$. We then
have the following corollary.

\begin{corollary}\label{cor:resolventapprox}
In the setting of Theorem \ref{thm:resolventapprox}, suppose in addition that
$W=P_1WP_1$ and $w=P_1wP_1$, and let $W_{11}$ and $w_{11}$ be as above.
Let $(R_0(z))_{11} \in \C^{N_1 \times N_1}$
be the $(1,1)$-block of $R_0(z)=\tau^\cH((w-z)^{-1})$, and set
\[\mathbb{D}_1=\{z \in \C:\dist(z,\spec(w_{11})) \geq \delta \text{ and }
\dist(z,\spec(W_{11})) \geq \delta\}.\]
Then for any (sequence of) deterministic unit vectors $u_1,v_1 \in \C^{N_1}$,
almost surely as $N \to \infty$,
\begin{equation}\label{eq:resolventapprox2}
\sup_{z \in \mathbb{D}_1} |u_1^*(W_{11}-z\Id)^{-1}v_1-u_1^*(R_0(z))_{11}v_1|
\to 0.
\end{equation}
\end{corollary}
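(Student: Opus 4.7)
The plan is to reduce the claim to Theorem~\ref{thm:resolventapprox} via a block decomposition followed by a shift trick. Using $W=P_1WP_1$ and the orthogonality of the projections $P_r$, I would decompose $W-z\Id=(W_{11}-zP_1)+(-z)(\Id-P_1)$ as a sum supported on complementary blocks and invert blockwise, obtaining $R(z)=(W_{11}-z\Id_{N_1})^{-1}+(-z)^{-1}(\Id-P_1)$ in $\C^{N\times N}$. The analogous computation in $\A$ for $w=P_1wP_1$ yields $(w-z)^{-1}=(w_{11}-zP_1)^{-1}+(-z)^{-1}(\Id-P_1)$, where the first inverse is taken in the compressed algebra $\A^c=P_1\A P_1$. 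Since $\Id-P_1=P_2+\cdots+P_k\in\D\subset\cH$, the conditional expectation $\tau^\cH$ commutes with this block structure, so $(R_0(z))_{11}=\tau^\cH((w_{11}-zP_1)^{-1})$. Embedding the unit vectors $u_1,v_1\in\C^{N_1}$ as $u=P_1u$ and $v=P_1v$ in $\C^N$ then identifies $u^*R(z)v=u_1^*(W_{11}-z\Id)^{-1}v_1$ and $u^*R_0(z)v=u_1^*(R_0(z))_{11}v_1$.

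A direct application of Theorem~\ref{thm:resolventapprox} to $(W,w)$ would give convergence only on $\mathbb{D}=\mathbb{D}_1\cap\{|z|\geq\delta\}$, since $\spec(W)=\spec(W_{11})\cup\{0\}$ and $\spec(w)=\spec(w_{11})\cup\{0\}$ contribute a spurious $0$ from the blocks $r\geq 2$ on which $W$ and $w$ vanish. The key step is to instead apply Theorem~\ref{thm:resolventapprox} to the shifted polynomial $Q'=Q+R^*(\Id-P_1)$ for a large real constant $R^*>0$. Since $\Id-P_1\in\D$, $Q'$ remains a self-adjoint $*$-polynomial with coefficients in $\D$, and it defines $W'=W+R^*(\Id-P_1)$ and $w'=w+R^*(\Id-P_1)$. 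Crucially, $W'_{11}=W_{11}$ and $w'_{11}=w_{11}$ are unchanged, so by the previous block computation the bilinear form of interest is unchanged; however $\spec(W')=\spec(W_{11})\cup\{R^*\}$ and $\spec(w')=\spec(w_{11})\cup\{R^*\}$, so the theorem yields uniform convergence on $\mathbb{D}^*=\mathbb{D}_1\cap\{|z-R^*|\geq\delta\}$, with the offending $\delta$-neighborhood moved from a disk around $0$ to one around $R^*$.

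A crude tail bound handles the large-$|z|$ region: for $|z|>C''+1$, where $C''$ is an almost sure bound on $\max(\|W_{11}\|,\|w_{11}\|)$ (following from $\|H_i\|,\|B_j\|\leq C$ together with norm convergence of the free approximation), both $(W_{11}-z\Id)^{-1}$ and $(R_0(z))_{11}$ have norm at most $1/(|z|-C'')$, so the bilinear form difference vanishes uniformly as $|z|\to\infty$. Choosing $R^*$ larger than $C''+\delta+1$ ensures $\mathbb{D}_1\cap\{|z|\leq R^*-\delta\}\subset\mathbb{D}^*$, so the shifted theorem handles the bounded part of $\mathbb{D}_1$ while the tail estimate handles the rest, yielding uniform convergence on all of $\mathbb{D}_1$. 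The main obstacle is simply recognizing that this shift trick is available and leaves the relevant bilinear form invariant; once this is spotted, the rest is a routine verification.
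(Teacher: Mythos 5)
Your core idea is the same as the paper's: adding a constant multiple of $\Id-P_1=P_2+\cdots+P_k$ (a coefficient lying in $\D$) leaves $W_{11}$, $w_{11}$, and $(R_0(z))_{11}$ unchanged---the latter because $P_1\in\D\subset\cH$, so $\tau^\cH$ commutes with compression by $P_1$---while moving the spurious spectral point contributed by the vanishing blocks away from the origin. The paper implements this by applying Theorem \ref{thm:resolventapprox} twice, to $(W,w)$ and to $(W+P_2+\cdots+P_k,\;w+P_2+\cdots+P_k)$, noting that for $\delta\le 1/2$ the union of the two admissible domains is $\mathbb{D}_1$; you instead shift by a large constant $R^*$ and apply the theorem once, patching the remaining region by a norm bound.

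That patching step is where there is a genuine gap. For a fixed $R^*$, the shifted application gives convergence only on $\mathbb{D}_1\cap\{|z-R^*|\ge\delta\}$, and on the leftover piece $\mathbb{D}_1\cap\{|z|>R^*-\delta\}$ your crude resolvent bound shows only that the difference of bilinear forms is at most $2/(|z|-C'')\le 2/(R^*-\delta-C'')$ for all large $N$. That is a fixed positive constant: it is small when $R^*$ is large, but it does not tend to $0$ as $N\to\infty$, so the asserted conclusion $\sup_{z\in\mathbb{D}_1}|\cdots|\to 0$ does not follow from the steps as written. The repair is routine: either run the argument for a countable sequence $R^*=m\to\infty$, intersect the corresponding almost-sure events, and conclude that the limsup of the supremum is at most $2/(m-\delta-C'')$ for every $m$, hence zero; or, more simply, also apply Theorem \ref{thm:resolventapprox} to the unshifted pair $(W,w)$, whose admissible domain is $\mathbb{D}_1\cap\{|z|\ge\delta\}$ and hence already covers the neighborhood of $R^*$---at which point your proof collapses to the paper's two-application argument, with shift $1$ playing the role of $R^*$.
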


\section{Analysis of the linear mixed model}\label{sec:proof}

In this section, we give a high-level outline of the proofs of
Theorems \ref{thm:outliers} and
\ref{thm:eigenvectors}, which follow the perturbative approach of \cite{BGN}.
We present the main steps of the computations,
deferring technical details to Appendix \ref{appendix:mixedmodel}.

We assume implicitly throughout that
Assumptions \ref{assump:alpha} and \ref{assump:asymptotic} hold. We denote
by $C,c>0$ constants which may change from instance to instance.
We fix a constant $\delta > 0$, and define
\[
U_{\delta} = \{z \in \C : \dist(z, \supp(\mu_0)) > \delta\}.
\]
We denote $\|X\|_\infty=\max_{i,j} |X_{i,j}|$.
For $n$-dependent matrices $X_1(z), X_2(z)$ of the same (bounded) dimension,
we write
\[X_1(z) \sim X_2(z)\]
if almost surely as $n \to \infty$, we have
\[
\sup_{z \in U_{\delta}} \|X_1(z) - X_2(z)\|_\infty \to 0.
\]

\subsection{Model and deterministic equivalent
measure}\label{sec:det-eq-measure}

We first clarify the form of $\hSigma$ and the free probability
interpretation of the measure $\mu_0$. Introducing $\Gamma_r \in \R^{\ell_r \times p}$
as in (\ref{eq:Gamma}), and defining
\[\Xi_r=\frac{1}{\sqrt{n_r}}(\xi_{ij}^{(r)})_{i,j} \in \R^{n_r \times \ell_r},
\qquad E_r=\begin{pmatrix} - & \eps_1^{(r)} & - \\ & \vdots & \\
- & \eps_n^{(r)} & - \end{pmatrix} \in \R^{n_r \times p},\]
the random effect matrix $\alpha_r$ is written concisely as
\begin{equation}\label{eq:alphar}
    \alpha_r=\sqrt{n_r}\,\Xi_r\Gamma_r+E_r.
\end{equation}
Write further
\begin{equation}\label{eq:ErGrHr}
    E_r=\sqrt{n_r}\,G_rH_r,
\end{equation}
where $G_r \in \R^{n_r \times p}$ has i.i.d.\ $\N(0,1/n_r)$ entries and
$H_r=\oSigma^{1/2} \in \R^{p \times p}$. Then, when $\ell_r=0$ and
$\alpha_r=E_r$ for all $r$, we obtain
\[\hSigma=W \equiv \sum_{r,s=1}^k H_r^\sT G_r^\sT F_{rs}G_sH_s,\]
where $\{F_{rs}:r,s=1,\ldots,k\}$ are defined in (\ref{eq:Frs}).
More generally, we have
\begin{equation}\label{eq:hSigmarepr}
    \hSigma = \sum_{r,s=1}^k
    (\Xi_r \Gamma_r + G_r H_r)^\sT F_{rs}(\Xi_s \Gamma_s + G_s H_s) = W + P
\end{equation}
for $W$ as above, and for the low-rank perturbation
\begin{equation}
P = \sum_{r, s = 1}^k \Big(\Gamma_r^\sT \Xi_r^\sT F_{rs} G_s H_s + H_r^\sT
G_r^\sT F_{rs} \Xi_s \Gamma_s + \Gamma_r^\sT\Xi_r^\sT F_{rs}\Xi_s
\Gamma_s\Big).\label{eq:P}
\end{equation}

The proof of Theorem \ref{thm:bulk} in \cite{fanjohnstonebulk} used a
free probability approach. As the matrices $G_r$ and $F_{rs}$ in this model are
rectangular, asymptotic freeness was formally expressed using the ideas
of \cite{benaychgeorges}, by embedding these matrices in a larger
square matrix, and establishing asymptotic freeness with amalgamation over a
subalgebra generated by block-identity matrices along the diagonal.

More specifically, the proof in \cite[Section 4]{fanjohnstonebulk} 
illustrates that $\mu_0$ is a
spectral measure in the following model: Set $N = (k + 1) p + n_1+\ldots+n_k$.
Embed $\{F_{rs},G_r,H_r:r,s=1,\ldots,k\}$ into $\C^{N
\times N}$ by zero-padding, in the following blocks of the $(2k+1) \times
(2k+1)$ block decomposition for
$\C^N=\C^p \oplus \cdots \oplus \C^p \oplus \C^{n_1} \cdots \oplus \C^{n_k}$:
\begin{equation}\label{block_decomposition}
    \begin{pmatrix} & H_1^* & \cdots & H_k^* & & & \\
H_1 &&  & & G_1^* & & \\
\vdots&  & & & & \ddots & \\
H_k & & && & & G_k^* \\
& G_1 & && F_{11} & \cdots & F_{1k}\\
& & \ddots &  & \vdots &\ddots & \vdots\\
& & & G_k & F_{k1} & \cdots & F_{kk}
\end{pmatrix}
\end{equation}
Denote by $\tilde{F}_{rs}$, $\tilde{G}_r$, and $\tilde{H}_r \in \C^{N \times N}$
these embedded matrices. Consider the mutually orthogonal projections
\[P_0=\diag(\Id_p,0,\ldots,0),\quad \ldots,\quad
P_{2k}=\diag(0,\ldots,0,\Id_{n_k})\]
corresponding to the $2k+1$ diagonal blocks of $\C^{N \times N}$. Then the block
structure of this embedding induces an asymptotic freeness of
the families $\{F_{rs}\}$, $\{H_r\}$, and individual matrices $G_1,\ldots,G_k$
with amalgamation over the
subalgebra generated by these projections $\{P_0,\ldots,P_{2k}\}$.

Let $(\A,\tau)$ be a von Neumann probability space containing
mutually orthogonal projections $p_0,p_1,\ldots,p_{2k}$ which analogously
satisfy $\tau(p_0)=\ldots=\tau(p_k)=p/N$ and $\tau(p_{k+r})=n_r/N$ for each
$r=1,\ldots,k$. Let $(\A,\tau)$ also contain
$\{f_{rs},g_r,h_r:r,s=1,\ldots,k\}$ such that
\begin{enumerate}
\item \label{1} $p_{k+r}f_{rs}p_{k+s}$, $p_{k+r}g_rp_r=g_r$, and $p_rh_rp_0=h_r$.
\item \label{2} For any non-commutative $*$-polynomial $Q$ of $k$ variables,
\[\tau(Q(h_1,\ldots,h_k))=N^{-1}\Tr Q(\tilde{H}_1,\ldots,\tilde{H}_k).\]
Similarly, for any non-commutative $*$-polynomial $Q$ of $k^2$ variables,
\[\tau(Q(f_{11},f_{12},\ldots,f_{kk}))=N^{-1}\Tr
Q(\tilde{F}_{11},\tilde{F}_{12},\ldots,\tilde{F}_{kk}).\]
\item \label{3} For each $r \in \{1,\ldots,k\}$ and $l \geq 0$,
\[\frac{N}{p}\tau((g_r^*g_r)^l)=\int x^l \nu_{p/n_r}(dx)\]
where $\nu$ is the Marcenko-Pastur law with parameter $p/n_r$.
\item \label{4} The families $\{f_{rs}:r,s=1,\ldots,k\}$, $\{h_r:r=1,\ldots,k\}$, and
individual elements $g_1,\ldots,g_k$ are free with amalgamation over the
von Neumann subalgebra $\mathcal{D}=\langle p_0,\ldots,p_{2k} \rangle$.
\end{enumerate}

Define a free deterministic equivalent for $W$ by
\begin{equation}\label{def:w}
w=\sum_{r,s=1}^k h_r^*g_r^*f_{rs}g_sh_s.
\end{equation}
Only the $(0,0)$-block of $w$ is non-zero---this corresponds to $W$ belonging to
the $(0,0)$ block in the embedded space (\ref{block_decomposition}).
Thus $w$ is an element of
the compressed algebra $\A^c=\{a \in \A:a=p_0ap_0\}$, which has
unit $p_0$ and trace $\tau^c(a)=(N/p)\tau(p_0ap_0)$.
The analysis of \cite[Section 4]{fanjohnstonebulk} shows that
the law $\mu_0$ in Theorem \ref{thm:bulk} is the $\tau^c$-distribution of $w$.
This means that for any continuous function $f:\R \to \C$, we have
\[\int f(x) d\mu_0(x)=\tau^c(f(w))\]
where $f(w)$ is defined by the functional calculus on $\A^c$.
Since $\tau$ is a faithful trace, so is $\tau^c$ as a trace on $\A^c$,
and thus (cf.\ \cite[Prop.\ 3.13 and 3.15]{nicaspeicher})
\begin{equation}\label{eq:support_mu_0}
\supp(\mu_0)=\spec(w)
\end{equation}
where $\spec(w)$ is the spectrum of $w$ as an element of $\A^c$.

\subsection{Master equation}\label{subsec:master}
Following \cite{BGN}, we first establish a ``master equation'' characterizing
outlier eigenvalues of $\hSigma$.

Recall the form (\ref{eq:hSigmarepr}) for $\hSigma$.
Letting $\ell$ be the rank of $\Gamma$ (so $\ell \leq \ell_+$), write
\[
\Gamma = \wGamma Q^\sT
\]
where $Q \in \R^{p \times \ell}$ contains the right singular vectors of
$\Gamma$. We have $Q^\sT Q = \Id_{\ell}$ and $\|\wGamma\| \leq C$.
Denote the resolvent of $W$ by
\[
R(z) = (W - z \Id)^{-1}.
\]
Define the block-diagonal matrices
\[
\Xi = \left[\begin{matrix}\Xi_1 & & & \\ & \Xi_2 & & \\ & & \ddots & \\ & & &
\Xi_k \end{matrix}\right] \in \R^{n_+ \times \ell_+},
\quad G = \left[\begin{matrix} G_1 & & & \\ & G_2 & & \\ & & \ddots & \\ & & &
G_k \end{matrix}\right] \in \R^{n_+ \times kp}.
\]
Finally, define $H \in \R^{kp \times p}$ as the vertical stacking of
$\{H_r\}_{r = 1}^k$, and set
\[S(z)=\Xi^\sT FGHR(z)Q.\]

In Appendix \ref{app:approx-trace}, we write the low-rank perturbation matrix
$P$ in (\ref{eq:P}) as $P=P_1P_2$ for two rectangular matrices $P_1$ and $P_2$.
We then apply the identity $\det(\Id+R(z)P_1P_2)=\det(\Id+P_2R(z)P_1)$ to obtain
the following result.
\begin{lemma} \label{lem:master-eq}
The eigenvalues of $\hSigma$ which are not eigenvalues of $W$ are the roots of
$\det \hK(z) = 0$, where
\begin{equation} \label{eq:k-def}
    \hK(z) = \Id + \left[\begin{matrix} S(z) \cdot \wGamma^\sT & \Xi^\sT
        F G H R(z) H^\sT G^\sT F \Xi \cdot \wGamma + S(z) \cdot \wGamma^\sT
    \Xi^\sT F \Xi \wGamma  \\ Q^\sT R(z) Q \cdot \wGamma^\sT &  S(z)^\sT \cdot
    \wGamma  +  Q^\sT R(z) Q \cdot \wGamma^\sT \Xi^\sT F \Xi \wGamma  \end{matrix}\right].
\end{equation}
\end{lemma}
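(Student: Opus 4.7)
The plan is to apply the standard perturbative strategy for rank-deficient perturbations. Since $\hSigma = W + P$ and $R(z) = (W - z\Id)^{-1}$ exists whenever $z \notin \spec(W)$, we have
\[
\det(\hSigma - z\Id) = \det(W - z\Id) \cdot \det(\Id + R(z) P),
\]
so the eigenvalues of $\hSigma$ that are not eigenvalues of $W$ are exactly the roots of $\det(\Id + R(z) P) = 0$. Since $\Gamma$ has rank $\ell$, the perturbation $P$ in \eqref{eq:P} has rank at most $\ell + \ell_+$. I would therefore seek a factorization $P = P_1 P_2$ with $P_1 \in \R^{p \times (\ell_+ + \ell)}$ and $P_2 \in \R^{(\ell_+ + \ell) \times p}$, and apply the Sylvester determinant identity $\det(\Id + R(z) P_1 P_2) = \det(\Id + P_2 R(z) P_1)$ to reduce the vanishing condition to an $(\ell_+ + \ell) \times (\ell_+ + \ell)$ determinant, which I would then match with $\hK(z)$.

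The crux is choosing the correct factorization. Writing $\Gamma = \wGamma Q^\sT$ and reading off the block structure of $\hK(z)$, a natural guess is
\[
P_1 = \Bigl(\Gamma^\sT,\ H^\sT G^\sT F \Xi \wGamma + \Gamma^\sT \Xi^\sT F \Xi \wGamma\Bigr), \qquad
P_2 = \begin{pmatrix} \Xi^\sT F G H \\ Q^\sT \end{pmatrix}.
\]
Expanding $P_1 P_2$ reproduces the three summands of $P$ in \eqref{eq:P}: the first column of $P_1$ times the top row of $P_2$ gives $\Gamma^\sT \Xi^\sT F G H$, while the second column times the bottom row gives $H^\sT G^\sT F \Xi \Gamma + \Gamma^\sT \Xi^\sT F \Xi \Gamma$. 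The one subtle choice is where to place the pure signal-signal term $\Gamma^\sT \Xi^\sT F \Xi \Gamma$: splitting it symmetrically between the two factors does not reproduce $\hK(z)$, but absorbing it entirely into the second column of $P_1$ (paired with the $Q^\sT$ row of $P_2$) does. This is what produces the asymmetric appearance of $S(z)\wGamma^\sT$ in the $(1,1)$ block and $S(z)^\sT \wGamma$ in the $(2,2)$ block of $\hK(z)$.

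With this factorization in hand, the remaining computation of $P_2 R(z) P_1$ is a routine block multiplication. Using the definition $S(z) = \Xi^\sT F G H R(z) Q$, the identity $\Gamma^\sT = Q \wGamma^\sT$, and the symmetries $R(z)^\sT = R(z)$ and $F^\sT = F$ (so that $S(z)^\sT = Q^\sT R(z) H^\sT G^\sT F \Xi$), each of the four blocks simplifies directly to the corresponding entry of $\hK(z) - \Id$. For example, the $(1,1)$ block becomes $\Xi^\sT F G H \cdot R(z) \cdot Q \wGamma^\sT = S(z) \wGamma^\sT$, and the $(1,2)$ block becomes $\Xi^\sT F G H R(z) H^\sT G^\sT F \Xi \wGamma + S(z) \wGamma^\sT \Xi^\sT F \Xi \wGamma$.

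The only real obstacle in the argument is identifying the correct asymmetric factorization of $P$; the three-term structure admits several plausible rank-$(\ell_+ + \ell)$ splits, but only the asymmetric one above matches the lemma's target. Once this is fixed, the rest is bookkeeping via Sylvester's identity and block multiplication.
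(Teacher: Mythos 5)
Your proposal is correct and follows essentially the same route as the paper: factor $P = P_1 P_2$ with $P_2 = \begin{pmatrix}\Xi^\sT F G H \\ Q^\sT\end{pmatrix}$ and $P_1 = \bigl(Q\wGamma^\sT,\; H^\sT G^\sT F\Xi\wGamma + Q\wGamma^\sT\Xi^\sT F\Xi\wGamma\bigr)$ (noting $\Gamma^\sT = Q\wGamma^\sT$, so this agrees with your choice), then apply Sylvester's identity $\det(\Id + R(z)P_1P_2)=\det(\Id + P_2R(z)P_1)$ and do the block multiplication. You also correctly isolate the two non-routine points — absorbing the full signal-signal term $\Gamma^\sT\Xi^\sT F\Xi\wGamma Q^\sT$ into the second column of $P_1$ rather than splitting it, and invoking $R(z)^\sT = R(z)$ and $F^\sT = F$ to convert $Q^\sT R(z)H^\sT G^\sT F\Xi$ into $S(z)^\sT$ in the $(2,2)$-block — which is exactly what the paper's bookkeeping relies on.
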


Denote the four blocks of this matrix as $\hK_{11},\hK_{12},\hK_{21},\hK_{22}$.
When $\hK_{11}$ is invertible, the condition
$\det \hK(z)=0$ is equivalent to $\det \hT(z)=0$ for the Schur complement
\begin{equation} \label{eq:def-sc}
\hT(z) = \hK_{22}(z) - \hK_{21}(z) \hK_{11}(z)^{-1} \hK_{12}(z).
\end{equation}
Observe that each matrix $\Xi^\top (\ldots)\Xi$ in the definition of $\hK$ has
bounded dimension $\ell_+ \times \ell_+$, each matrix $\Xi^\top (\ldots) Q$ has
bounded dimension $\ell_+ \times \ell$, and $\Xi$ is independent of
$G$ and $R(z)$. Then, conditioning on $G$ and $R(z)$ and applying
concentration inequalities for linear and bilinear forms in $\Xi$, we obtain
that $\hT(z)$ is approximated by a matrix
\begin{equation} \label{eq:def-wk}
\vT(z) = \Id + Q^\sT R(z) Q \cdot \wGamma^\sT \left(\sum_{r = 1}^k n_r^{-1}
    \Tr_r[F-FGHR(z)H^\sT G^\sT F]\Id_{\ell_r}\right) \wGamma.
\end{equation}
This is formalized in the following result, proven in
Appendix \ref{app:approx-trace}.
\begin{lemma} \label{lem:schur-comp}
We have that $S(z) \sim 0$, $\hK_{11}(z) \sim \Id_{\ell_+}$, and $\hT(z) \sim \vT(z)$.
\end{lemma}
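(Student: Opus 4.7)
The plan is to exploit the independence of $\Xi$ from the remaining randomness, together with the fact that $\Xi$ has independent entries of mean $0$, variance $1/n_r$, and uniformly bounded moments under Assumption \ref{assump:alpha}. Conditional on $G$ (with $F, H, Q, \wGamma$ deterministic), the dependence of $\hK(z)$ on $\Xi$ enters only through the linear form $S(z) = \Xi^\sT M_0(z)$ with $M_0(z) = F G H R(z) Q$, and the bilinear forms $\Xi^\sT M_1(z) \Xi$ with $M_1(z) = F G H R(z) H^\sT G^\sT F$ and $\Xi^\sT F \Xi$. The conditional mean of $S(z)$ is $0$, while each bilinear form $\Xi^\sT M \Xi$ has conditional mean block-diagonal with $(r,r)$ block $n_r^{-1} \Tr_r(M) \Id_{\ell_r}$. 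Denote these block-diagonal matrices by $\bar{A}_1(z)$ (for $M = M_1(z)$) and $\bar{A}_2$ (for $M = F$); they are exactly the trace expressions appearing in $\vT(z)$.

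First I invoke Theorem \ref{thm:sticktobulk} with parameter $\delta/2$ in the $\ell_r = 0$ case to conclude that almost surely for large $n$, $\spec(W) \subset \supp(\mu_0)_{\delta/2}$, hence $\|R(z)\| \leq 2/\delta$ uniformly on $U_\delta$, with $R(z)$ analytic and $\|R'(z)\| \leq 4/\delta^2$; also $\|F\|, \|G\|, \|H\|, \|\wGamma\| \leq C$ almost surely. Next, for fixed $z$ I establish entry-wise bounds $\|S(z)\|_\infty$, $\|\Xi^\sT F \Xi - \bar{A}_2\|_\infty$, and $\|\Xi^\sT M_1(z) \Xi - \bar{A}_1(z)\|_\infty$ each $\leq n^{-c}$ for some $c > 0$, holding with probability at least $1 - n^{-D}$ for any fixed $D$. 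The linear bound follows from a high-moment Markov inequality, since each entry of $S(z)$ is a sum of independent zero-mean variables of total variance $O(1/n)$. The bilinear bounds follow from a moment-based Hanson--Wright inequality applied to the independent entries of $\Xi_r$. For uniformity over $U_\delta$, I take an $n^{-D}$-net of $U_\delta \cap \{|z| \leq n^2\}$, union bound over this polynomial-size set, and interpolate via $\|R'(z)\| \leq 4/\delta^2$; on the complement $\{|z| > n^2\}$, every quantity involving $R(z)$ is $O(1/|z|)$ deterministically, so the claim is trivial there.

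With $S(z) \sim 0$, $\Xi^\sT F \Xi \sim \bar{A}_2$, and $\Xi^\sT M_1(z) \Xi \sim \bar{A}_1(z)$ in hand, the remainder is algebra using that $\wGamma$ and $Q^\sT R(z) Q$ are bounded on $U_\delta$. One finds $\hK_{11}(z) = \Id + S(z) \wGamma^\sT \sim \Id$, hence $\hK_{11}(z)^{-1} \sim \Id$; $\hK_{12}(z) \sim \bar{A}_1(z) \wGamma$ (the $S(z)$-containing summand is negligible since $\Xi^\sT F \Xi$ is bounded); $\hK_{22}(z) \sim \Id + Q^\sT R(z) Q \wGamma^\sT \bar{A}_2 \wGamma$; and $\hK_{21}(z) = Q^\sT R(z) Q \wGamma^\sT$ is already $\Xi$-free and bounded. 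Substituting into the Schur complement (\ref{eq:def-sc}) yields
\[
\hT(z) \sim \Id + Q^\sT R(z) Q \wGamma^\sT (\bar{A}_2 - \bar{A}_1(z)) \wGamma = \vT(z).
\]

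The main obstacle is the uniformity in $z$ over the unbounded set $U_\delta$: this requires combining the net argument on a bounded portion of $U_\delta$ with tail bounds for large $|z|$, and ensuring that all the relevant Lipschitz constants depend only on $\delta$ and not on $n$. A secondary technical point is that the classical Hanson--Wright inequality assumes sub-Gaussian entries, whereas Assumption \ref{assump:alpha} only gives bounded polynomial moments for $\xi_{ij}^{(r)}$; the needed version follows from a moment-based combinatorial or decoupling argument in the spirit of earlier high-dimensional covariance analyses.
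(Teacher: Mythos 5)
Your proposal is correct and matches the paper's argument in all essentials: condition on $G$, control the good event $\spec(W)\subset\supp(\mu_0)_{\delta/2}$ via Theorem \ref{thm:sticktobulk} to get uniform resolvent bounds, concentrate the linear form $S(z)$ and the bilinear forms $\Xi^\sT F\Xi$ and $\Xi^\sT FGHR(z)H^\sT G^\sT F\Xi$ around their block-traced means, handle uniformity in $z$ by a net plus Lipschitz estimate and a separate trivial bound for large $|z|$, then substitute into the Schur complement. The moment-based concentration you correctly anticipate needing (in place of sub-Gaussian Hanson--Wright) is exactly what Proposition \ref{prop:concentration}, quoted from \cite{BEKYY14}, supplies; the paper restricts the net to $|z|\leq C_0$ for a fixed constant rather than $|z|\leq n^2$, but that is a cosmetic difference.
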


The outlier eigenvalues of $\hSigma$ will be approximate roots of $0=\det
\vT(z)$, where this matrix $\vT(z)$ no longer depends
on the randomness in $\Xi$.

\subsection{Approximation by deterministic equivalents}

The main step of the proof is to approximate the $G$- and $R(z)$-dependent terms
appearing in (\ref{eq:def-wk}) by deterministic quantities. We do this using a
free deterministic equivalent approach. Define 
\[
    \wT(z) = \Id + Q^\sT (z \Id +\b \cdot \oSigma)^{-1} Q \cdot
    \wGamma^\sT \diag_\ell(\b)\wGamma,
\]
with notation as in Theorem \ref{thm:outliers}.
Our goal is to show the following lemma.

\begin{lemma}\label{lem:master-limit}
We have $\vT(z) \sim \wT(z)$.
\end{lemma}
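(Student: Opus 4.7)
The plan is to apply the free deterministic equivalent framework of Section \ref{sec:resolventapprox} to both random factors in $\vT(z)$ from (\ref{eq:def-wk}), namely $Q^\sT R(z) Q$ and the block trace $n_r^{-1}\Tr_r[F - FGHR(z)H^\sT G^\sT F]$, and to identify each with the corresponding deterministic quantity in $\wT(z)$ via the fixed-point equations (\ref{eq:arecursion})--(\ref{eq:brecursion}).

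For the first factor, I would apply Corollary \ref{cor:resolventapprox} to $W$ using the embedding (\ref{block_decomposition}): the deterministic matrices are $\{\tilde H_r, \tilde F_{rs}\}$, the random matrices $\{\tilde G_r\}$ are orthogonally invariant since the $G_r$ have i.i.d.\ Gaussian entries, and Theorem \ref{thm:sticktobulk} together with (\ref{eq:support_mu_0}) places $U_\delta$ (up to a harmless shrinkage of $\delta$) inside the corollary's domain $\mathbb{D}_1$. Applying the corollary to each of the boundedly many pairs of columns of $Q$ yields
\[
Q^\sT R(z) Q \sim Q^\sT (R_0(z))_{00} Q,
\]
where $(R_0(z))_{00}$ is the $(0,0)$-block of $\tau^\cH((w-z)^{-1})$. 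The $\D$-valued Cauchy transform computation performed in \cite[Section 4]{fanjohnstonebulk} to establish Theorem \ref{thm:bulk} then identifies this block as $-(z\Id + \b \cdot \oSigma)^{-1}$, the value arising from the unique solution of the coupled system (\ref{eq:arecursion})--(\ref{eq:brecursion}) on $U_\delta$.

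For the block trace, I would first apply the resolvent identity $KR(z)K^\sT = KK^\sT(FKK^\sT - z\Id)^{-1}$, where $K := GH$. Writing $FKK^\sT = (FKK^\sT - z\Id) + z\Id$ then gives
\[
FGHR(z)H^\sT G^\sT F - F = z(FKK^\sT - z\Id)^{-1} F,
\]
so the block trace reduces to $-zn_r^{-1}\Tr_r[(FKK^\sT - z\Id)^{-1} F]$, a normalized block trace of a $*$-polynomial in the embedded matrices and a single resolvent. Convergence of such traces to their free-probabilistic analogs in the free deterministic equivalent model, combined with matching the resulting free quantity with $b_r(z)$ as defined in (\ref{eq:brecursion}), gives
\[
n_r^{-1}\Tr_r[F - FGHR(z)H^\sT G^\sT F] \sim -b_r(z).
\]
Multiplying the two approximations and summing over $r$ reproduces $\wT(z)$, establishing the lemma.

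The main obstacle is the two explicit free-probabilistic identifications---showing that $(R_0(z))_{00} = -(z\Id + \b \cdot \oSigma)^{-1}$ and that the block-trace limit equals $-b_r(z)$. Both reduce to the $\D$-valued subordination calculation underlying Theorem \ref{thm:bulk}, using the freeness with amalgamation of $\{g_r\}$ from $\{h_r, f_{rs}\}$ over $\D$ and the fact that each $g_r$ has only second-order $\D$-valued cumulants; the relevant conditional expectations can be evaluated via the augmented Cauchy/$R$-transforms of Section \ref{sec:augmentedtransforms}, in particular Lemma \ref{lem:mod-gr}. Upgrading pointwise approximation to uniformity on $U_\delta$ is then a standard Vitali/Montel argument given uniform boundedness and analyticity of both sides on a neighborhood of $U_\delta$.
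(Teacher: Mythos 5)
Your treatment of the first factor is exactly the paper's: Proposition \ref{prop:det-res-val} is proved by applying Corollary \ref{cor:resolventapprox} columnwise to $Q$ in the embedding (\ref{block_decomposition}) and then quoting the computation of $\tau^\cH((w-z)^{-1})$ from \cite[Section 4]{fanjohnstonebulk} to identify the $(0,0)$-block with $-(z\Id+\b\cdot\oSigma)^{-1}$. For the second factor your push-through identity is correct: indeed $FGHR(z)H^\sT G^\sT F - F = z(FGHH^\sT G^\sT F^{\,0}\!\cdots)$ — more precisely, with $K=GH$ one has $KR(z)K^\sT = KK^\sT(FKK^\sT - z\Id)^{-1}$ and hence $FKR(z)K^\sT F - F = z(FKK^\sT - z\Id)^{-1}F$, so the block trace does reduce to $-z\,n_r^{-1}\Tr_r[(FKK^\sT - z\Id)^{-1}F]$.

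The gap is in the step you compress into ``matching the resulting free quantity with $b_r(z)$ as defined in (\ref{eq:brecursion}).'' That identification is the entire content of Proposition \ref{prop:det-eq}, and it does \emph{not} reduce to the subordination calculation underlying Theorem \ref{thm:bulk}: that calculation produces the scalar/$\cH$-valued Cauchy transform of $w$ (equivalently $m_0$ and the fixed points $a_r,b_r$), whereas your block trace corresponds in the free model to \emph{mixed} quantities of the form $\tau\big(f_{ts}g_sh_s(w-z)^{-1}h_r^*g_r^*f_{rt}\big)$, which are not determined by the Cauchy transform alone. The paper handles exactly this by a two-level subordination for the left-augmented transforms (Lemma \ref{lem:mod-gr}, Lemma \ref{lem:leftsubordination}, and Propositions \ref{prop:rh} and \ref{prop:free-comp}), pushing the conditional expectation from $\cH$ to $\cG$ to $\D$, then invoking $e_{r+k}=-a_r^{-1}$ (Proposition \ref{prop:free-identity}(d)) and the Woodbury identity $F - F(\diag_n(\a^{-1})+F)^{-1}F = (\Id+F\diag_n(\a))^{-1}F$ to land on (\ref{eq:brecursion}). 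Your sketch names the right tools but asserts rather than performs this computation, and the assertion that (\ref{eq:brecursion}) directly supplies the limit is not justified: (\ref{eq:brecursion}) defines $b_r$ through $(\Id+F\diag_n(\a))^{-1}F$, and connecting that to the free analogue of $(FKK^\sT - z\Id)^{-1}F$ is precisely the missing argument. A secondary (more routine) omission is the passage from the random block trace to its free analogue, which in the paper goes through the moment convergence of \cite[Theorem 3.9]{fanjohnstonesun} on $|z|>C$ together with Lemma \ref{lem:analytic-extend} to reach all of $U_\delta$; your appeal to ``convergence of such traces'' would need to be fleshed out in the same way.
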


This requires approximating the two terms in $\vT$ by those in $\wT$.
For approximating the first term, as perhaps can be guessed from the form
of the Stieltjes transform (\ref{eq:m0}), the matrix
$-(z\Id+\b \cdot \oSigma)^{-1}$ is a deterministic equivalent for
the resolvent $R(z)$. We verify this in the following result, using the
resolvent approximation techniques in Section \ref{sec:resolventapprox}
and Theorem \ref{thm:resolventapprox}.

\begin{proposition} \label{prop:det-res-val}
We have $Q^\sT R(z) Q \sim -Q^\sT (z \Id + \b \cdot \oSigma)^{-1} Q$.
\end{proposition}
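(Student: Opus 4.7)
The plan is to invoke Corollary \ref{cor:resolventapprox} applied to the ambient random matrix $W = \sum_{r,s=1}^k H_r^\sT G_r^\sT F_{rs} G_s H_s$ after embedding into the $(2k+1)$-block structure of (\ref{block_decomposition}). In that embedding, $W$ occupies the $(0,0)$-block, the matrices $G_r$ are orthogonally invariant in law (they have i.i.d.\ Gaussian entries), and $\{H_r, F_{rs}\}$ are deterministic. The corresponding free deterministic equivalent is exactly the element $w = \sum_{r,s} h_r^* g_r^* f_{rs} g_s h_s$ from (\ref{def:w}) living in the compressed algebra $\A^c$. By (\ref{eq:support_mu_0}) we have $\spec(w_{11}) = \supp(\mu_0)$, and by Theorem \ref{thm:sticktobulk} we have $\spec(W) \subset \supp(\mu_0)_{\delta/2}$ almost surely for all large $n$. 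Hence for every $z \in U_\delta$, both $\dist(z,\spec(w_{11})) \geq \delta$ and $\dist(z,\spec(W)) \geq \delta/2$ hold eventually, so $z$ lies in the admissible set $\mathbb{D}_1$ of the corollary.

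Next I would verify the remaining hypotheses of Corollary \ref{cor:resolventapprox}. The dimension bounds and the operator-norm bounds $\|H_r\| \leq C$, $\|F_{rs}\| \leq \sqrt{n_r n_s}\,\|B\|\,\|U_r\|\,\|U_s\| \leq C$, and $\|G_r\| \leq C$ (almost surely, by standard Wishart norm estimates) all follow from Assumption \ref{assump:asymptotic}. The joint moment convergence (\ref{eq:bapprox}) for the $G_r$ to the free prescriptions $(g_r)$ is the asymptotic freeness over $\D = \langle p_0,\ldots,p_{2k}\rangle$ of independent Gaussian blocks against deterministic matrices, which was established in the proof of Theorem \ref{thm:bulk} in \cite{fanjohnstonebulk}. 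The corollary then yields, for every pair of fixed deterministic unit vectors $u, v \in \R^p$,
\[
\sup_{z \in U_\delta} \bigl| u^\sT R(z) v - u^\sT (R_0(z))_{(0,0)} v \bigr| \to 0
\]
almost surely, where $(R_0(z))_{(0,0)}$ denotes the $(0,0)$-block of $R_0(z) = \tau^\cH((w-z)^{-1})$. Since $Q \in \R^{p \times \ell}$ has $\ell = O(1)$ columns, applying the above to each of the $\ell^2$ entries of $Q^\sT R(z) Q$ gives $Q^\sT R(z) Q \sim Q^\sT (R_0(z))_{(0,0)} Q$ in the $\|\cdot\|_\infty$ sense required by the relation $\sim$.

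The remaining task is the identification
\[
(R_0(z))_{(0,0)} = -(z \Id + \b(z) \cdot \oSigma)^{-1},
\]
with $\b(z) = (b_1(z),\ldots,b_k(z))$ the analytic extension of the solution to the fixed-point system (\ref{eq:brecursion}). This identification is essentially the trace-level computation already carried out in \cite[Section 4]{fanjohnstonebulk}: using the amalgamated freeness over $\D$ of the families $\{f_{rs}\}$, $\{h_r\}$, and the individual $g_r$, one expresses $\tau^\cH((w-z)^{-1})$ as a block-diagonal element of $\cH$ whose $(0,0)$-component solves a Schur-type fixed-point equation. Invoking the identities $h_r^* h_r = \oSigma_r$ and the $p/n_r$-Marcenko-Pastur law for $g_r^* g_r$ collapses this to the closed form $-(z \Id + \b(z)\cdot\oSigma)^{-1}$; the fixed-point relations (\ref{eq:arecursion})--(\ref{eq:m0}) for $\a(z), \b(z), m_0(z)$ emerge precisely from this computation. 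Combined with the bilinear-form approximation above, this establishes the proposition.

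The main obstacle is the upgrade from a trace-level deterministic equivalent to an anisotropic statement at the level of bilinear forms $u^\sT R(z) v$; this is exactly what the resolvent-approximation framework of Section \ref{sec:resolventapprox} is designed to supply, so the real work lies in establishing Theorem \ref{thm:resolventapprox} itself. Given that result, together with the spectral separation uniformly in $z \in U_\delta$ afforded by Theorem \ref{thm:sticktobulk}, the present proposition reduces to verifying Corollary \ref{cor:resolventapprox}'s hypotheses and recognizing that the $(0,0)$-block of the free deterministic equivalent resolvent collapses to the expected closed form via the same amalgamated free-probability computation that produced the bulk law $\mu_0$.
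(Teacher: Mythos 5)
Your proposal takes essentially the same route as the paper's proof: apply Corollary \ref{cor:resolventapprox} in the embedded $(2k+1)$-block model to approximate $Q^\sT R(z)Q$ by $Q^\sT(\tau^\cH((w-z)^{-1}))_{(0,0)}Q$, then identify that block as $-(z\Id+\b\cdot\oSigma)^{-1}$ via the fixed-point computation of \cite[Section~4]{fanjohnstonebulk}. Your added remarks---verifying that $z\in U_\delta$ lies in $\mathbb{D}_1$ via Theorem \ref{thm:sticktobulk} and $\spec(w_{11})=\supp(\mu_0)$, and checking the orthogonal-invariance and norm-bound hypotheses---are correct and simply make explicit what the paper leaves implicit.
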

\begin{proof}
    The von Neumann probability space $(\A,\tau)$ in Section
    \ref{sec:det-eq-measure} may be constructed as follows:
    Let $(\A_1,\tau_1)=(\C^{N \times N},N^{-1}\Tr)$, containing
    the embeddings of the matrices
    $H_1,\ldots,H_k$ and $P_0,\ldots,P_{2k}$. Denote these elements of $\A_1$
    also by $h_r$ and $p_r$. Construct a von Neumann probability space
    $(\A_2,\tau_2)$ also containing $p_0,\ldots,p_{2k}$ and
    elements $\{f_{rs},g_r:r,s=1,\ldots,k\}$ satisfying all
    required conditions on their joint law under $\tau_2$.
    Let $(\A,\tau)$ be the von Neumann amalgamated free product over
    $\langle p_0,\ldots,p_{2k} \rangle$.

    Let $w=\sum_{r,s} h_r^*g_r^*f_{rs}g_sh_s \in \A$.
    By Corollary \ref{cor:resolventapprox} applied to each pair of
    columns of $Q$, we find that 
    \[Q^\sT R(z) Q \sim Q^\sT P_0 \tau^\cH((w - z)^{-1}) P_0 Q\]
    where $P_0\tau^\cH((w - z)^{-1})P_0$ is identified with its $(0,0)$-block
    as an element of $\C^{p \times p}$. This $\tau^\cH$ trace was computed in
    \cite[Equation (4.12)]{fanjohnstonebulk} to be
    \[
    \tau^\cH\Big((w - z)^{-1}\Big) = - \left(z+\sum_{r = 1}^k h_r^* h_r
            b_r(z)\right)^{-1},
    \]
    using the identification $\beta_r(z)=-b_r(z)$
    at the conclusion of the proof of \cite[Lemma 4.4]{fanjohnstonebulk}.
    The $(0,0)$-block of this matrix is exactly
    \[-\left(z+\sum_{r=1}^k H_r^\sT H_r b_r(z)\right)^{-1}
    =-(z+\b \cdot \oSigma)^{-1}.\qquad \qedhere\]
\end{proof}

Lemma \ref{lem:master-limit} now follows by applying Proposition \ref{prop:det-res-val} and
the following approximation for the second term of $\vT(z)$.

\begin{proposition} \label{prop:det-eq}
    For each $t \in \{1,\ldots,k\}$, we have
\[
n_t^{-1} \Tr_t[F - F G H R(z) H^\sT G^\sT F] \sim -b_t(z).
\]
\end{proposition}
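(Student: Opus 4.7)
The plan is to pass to the free probability framework from Section~\ref{sec:det-eq-measure} and identify the limit with $-b_t(z)$.  Let $(\A,\tau)$ be the von Neumann probability space of Section~\ref{sec:det-eq-measure}, with projections $p_0,\ldots,p_{2k}$ and free analogs $h=\sum_r h_r$, $g=\sum_r g_r$, $f=\sum_{r,s} f_{rs}$ of the embedded matrices.  Let $w=h^*g^*fgh$ and $r(z)=(w-zp_0)^{-1}$ in the compressed algebra $p_0\A p_0$, and set $p_+=\sum_{r=1}^k p_{k+r}$ and $\D_+=p_+\D p_+$.

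The first step is the asymptotic-freeness transfer
\[
n_t^{-1}\Tr_t\bigl[F-FGHR(z)H^\sT G^\sT F\bigr] \sim \tau(p_{k+t})^{-1}\,\tau\bigl(p_{k+t}(f-fghr(z)h^*g^*f)p_{k+t}\bigr),
\]
uniformly on $U_\delta$.  This follows from the asymptotic $\D$-freeness of the embedded families $\{\tilde F_{rs}\},\{\tilde G_r\},\{\tilde H_r\}$ (established in the proof of Theorem~\ref{thm:bulk} in \cite{fanjohnstonebulk}) combined with the uniform resolvent bound from Theorem~\ref{thm:sticktobulk}.  A Cauchy-integral representation of $R(z)$ on a contour encircling $\supp(\mu_0)$ reduces the claim to the convergence of normalized traces of $*$-polynomials, which is standard asymptotic freeness; alternatively, one can apply Corollary~\ref{cor:resolventapprox} with each canonical basis vector of block $t$ and average.

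The second step is the free-probabilistic computation and identification with $-b_t(z)$.  Setting $A=gh$, the matrix-level identity $(FAA^\sT-z\Id)(FA)=FA(W-z\Id)$ translates to the push-through $fAr(z)=(fAA^*-zp_+)^{-1}fA$, yielding
\[
f-fghr(z)h^*g^*f = -z(fghh^*g^*-zp_+)^{-1}f \quad \text{in } p_+\A p_+,
\]
so the right-hand side of the transfer is essentially the augmented Cauchy transform $G^{\D_+}_{f,\,ghh^*g^*f}(zp_+)$ of Section~\ref{sec:augmentedtransforms}.  Since $f$ is $\D$-free from $\{g,h\}$ and each $g_r$ is rectangular semicircular with variance map $\tau^\D(g_r\,X\,g_r^*)=\tau(p_{k+r})^{-1}\tau(X)\,p_{k+r}$ for $X\in p_r\A p_r$ free from $g_r$, one first computes $\tau^\D(ghr(z)h^*g^*)=\sum_r a_r(z)\,p_{k+r}$---the free analog of $\diag_n(\a)$---by invoking the formula $\tau^\cH[(w-zp_0)^{-1}]=-(z+\b\cdot\oSigma)^{-1}$ from \cite[Lemma 4.4]{fanjohnstonebulk}.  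Applying Lemma~\ref{lem:mod-gr} and summing the $\D_+$-valued cumulant series---which, by the alternation of $f$ with this $\D$-expectation, reproduces the geometric expansion of $(\Id+F\diag_n(\a))^{-1}F$ appearing in (\ref{eq:brecursion})---identifies the free trace with $-\tau(p_{k+t})\,b_t(z)$, completing the argument.

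The main obstacle will be the operator-valued bookkeeping in the second step: unpacking the $\D_+$-valued cumulants of the non-self-adjoint product $ghh^*g^*f$ and verifying that the resulting operator-valued series matches the right-hand side of (\ref{eq:brecursion}).  The machinery of Section~\ref{sec:augmentedtransforms} and the ingredient identities from \cite{fanjohnstonebulk} make this tractable, but the computation is of essentially the same technical difficulty as the derivation of the fixed-point equations themselves.
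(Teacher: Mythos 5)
Your first step (transferring the normalized matrix trace to the corresponding free trace) is essentially the paper's approach, up to minor technicalities: the paper uses a high-$|z|$ series expansion, term-wise asymptotic freeness from \cite[Theorem 3.9]{fanjohnstonesun}, and Lemma \ref{lem:analytic-extend} to extend to $U_\delta$; your contour-integral or resolvent-approximation alternatives would accomplish the same.

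The push-through identity $f-fgh\,r(z)h^*g^*f=-z(fghh^*g^*-zp_+)^{-1}f$ is a correct and clean algebraic observation, but there is a genuine error in the second step. The key claim
\[
\tau^\D\bigl(gh\,r(z)\,h^*g^*\bigr)=\sum_{r=1}^k a_r(z)\,p_{k+r}
\]
is false. You justify it by invoking the variance map $\tau^\D(g_r X g_r^*)=\tau(p_{k+r})^{-1}\tau(X)\,p_{k+r}$, but that formula requires $X$ to be $\D$-free from $g_r$, and the element $X=h_r\,r(z)\,h_r^*$ is \emph{not} free from $g_r$: the resolvent $r(z)=(w-zp_0)^{-1}$ depends on every $g_r$ through $w=\sum_{r,s}h_r^*g_r^*f_{rs}g_sh_s$. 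That dependence is precisely what the subordination must track. To see the claim fail concretely, take $k=1$, $f_{11}=\Id$, $h_1=\Id$, so $w=g^*g$; then
\[
\tau_{k+1}\bigl(g\,r(z)\,g^*\bigr)
=\frac{N}{n_1}\,\tau\bigl(g^*g\,r(z)\bigr)
=\frac{p}{n_1}\bigl(1+z\,m_0(z)\bigr)
=-a_1(z)\,b_1(z),
\]
whereas $a_1(z)=\tfrac{p}{n_1}m_0(z)$, and $-a_1b_1=a_1$ only when $b_1=-1$, which occurs only in the degenerate limit $|z|\to\infty$.

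The paper's proof handles exactly this dependence via a two-stage peeling: Proposition \ref{prop:rh} (built on the augmented Cauchy/$R$-transform identity of Lemma \ref{lem:mod-gr}) first moves the $\cH$-valued resolvent computation to a $\cG$-valued one, replacing $w$ by the subordinated element $v=\sum g_r^*f_{rs}g_s$ evaluated at the $\D$-argument $d$; a second application moves the $\cG$-valued computation to a $\D$-valued one, replacing $v$ by $u=f$ evaluated at the $\D$-argument $e$. Proposition \ref{prop:free-identity}(d) then identifies $e_{r+k}=-a_r^{-1}$, and the Woodbury identity closes the loop to (\ref{eq:brecursion}). Your push-through attempts to collapse this nested peeling into a single $\D$-valued step, but without a correct subordination for $\tau^\D(gh\,r(z)h^*g^*)$, the claimed cumulant series does not reproduce the geometric expansion of $(\Id+F\diag_n(\a))^{-1}F$, and the argument does not go through.
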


In the remainder of this section, we prove Proposition
\ref{prop:det-eq}.
We apply a computation using the augmented Cauchy- and $\cR$-transforms of Section \ref{sec:augmentedtransforms}.
In the von Neumann probability space $(\A, \tau)$ of Section \ref{sec:det-eq-measure}, let
$\cH = \langle h_1, \ldots, h_k\rangle$, $\cG = \langle g_1,
\ldots, g_k\rangle$, $\cF=\langle f_{11},f_{12},\ldots,f_{kk} \rangle$
and $\D=\langle p_0,\ldots,p_{2k} \rangle$
be the generated von Neumann subalgebras of $\A$. Define the elements
\begin{equation} \label{eq:wvu-def}
w = \sum_{r, s = 1}^k h_r^* g_r^* f_{rs} g_s h_s \qquad v = \sum_{r, s = 1}^k g_r^* f_{rs} g_s \qquad u = \sum_{r, s = 1}^k f_{rs}.
\end{equation}
For any $r, s, t \in \{1, \ldots, k\}$ define 
\[
a_{rts} = h_r^* g_r^* f_{rt} f_{ts} g_s h_s \qquad b_{rts} = g_r^* f_{rt} f_{ts} g_s \qquad c_{rts} = f_{rt} f_{ts}.
\]
Our goal is to compute
\[\sum_{r,s=1}^k \tau(f_{ts}g_{s}h_{s}(w-z)^{-1}h_r^*g_r^*f_{rt})
=\sum_{s,t=1}^k \tau(a_{rts}(w-z)^{-1}),\]
which is the free approximation for $\Tr_t F G H R(z) H^\sT G^\sT F$.

For $a \in \A$ and $h \in \cH$, define the $\cH$-valued conditional expectation $\tau^\cH(a)$, Cauchy-transform
$G^\cH_a(h)$, and $\cR$-transform $\cR^\cH_a(h)$, and similarly for $\cG$ and $\D$.
For each $i \in \{0,\ldots,2k\}$, denote
\[\tau_i(a)=\tau(p_i)^{-1} \tau(p_iap_i)\]
and note that $\tau^\D(a)=\sum_i \tau_i(a)p_i$.
For a sufficiently large constant $C>0$, define
\[
\mathbb{D} = \{z \in \C : |z| > C\}.
\]
We define the following analytic functions
$\{\alpha_i\}_{i = 0}^{2k}$, $\{\beta_i\}_{i = 0}^{2k}$, $\{d_i\}_{i=0}^{2k}$,
$\{\gamma_j\}_{j = 0}^{2k}$, $\{\delta_j\}_{j = 0}^{2k}$, and
$\{e_j\}_{j=0}^{2k}$ on $\mathbb{D}$,
also used in \cite{fanjohnstonebulk}: For $i=1,\ldots,k$, define
\begin{equation} \label{eq:alpha-beta-def}
\alpha_i = \tau_i(h_i G^\cH_w(z) h_i^*),\qquad \beta_i =
\tau_i\left(R^\D_v\left(\sum_{i = 1}^k \alpha_i p_i\right)\right).
\end{equation}
Set $\alpha_0 = \alpha_{k + 1} = \cdots = \alpha_{2k} = |z|^{-1}$ and $\beta_0 =
\beta_{k + 1} = \cdots = \beta_{2k} = 0$, and
\[d_i=\alpha_i^{-1} + \beta_i, \quad  d = \sum_{i = 0}^{2k} d_ip_i.\]
Now, for $j=1,\ldots,k$, define
\begin{equation} \label{eq:gamma-delta-def}
\gamma_{j + k} = \tau_{j + k}(g_j G^\cG_v(d) g_j^*), \quad \delta_{j + k} =
\tau_{j + k}\left(R^\D_u\left(\sum_{j = k + 1}^{2k} \gamma_j p_j\right)\right).
\end{equation}
Set $\gamma_0 = \gamma_{1} = \cdots = \gamma_{k} = |z|^{-1}$ and $\delta_0 =
\delta_{1} = \cdots = \delta_k = 0$, and
\[
e_j=\gamma_j^{-1} + \delta_j, \quad e = \sum_{j = 0}^{2k} e_jp_j.
\]

The following identities are shown in \cite{fanjohnstonebulk}.  

\begin{proposition} \label{prop:free-identity}
For all $z \in \mathbb{D}$,
\begin{itemize}
\item[(a)] $\sum_{i = 0}^{2k} \alpha_i p_i = G_v^\D(d)$.

\item[(b)] $\sum_{j = 0}^{2k} \gamma_j p_j = G^\D_u(e)$.

\item[(c)] The quantities $a_r = - \frac{p\alpha_r}{n_r}$ and $b_r = - \beta_r$
    satisfy the relations (\ref{eq:arecursion}--\ref{eq:brecursion}).
\item[(d)] For $r=1,\ldots,k$, we have $e_{r + k} = - a_r^{-1}$. 
\end{itemize}
\end{proposition}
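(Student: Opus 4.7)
The plan is to prove parts (a)--(d) sequentially, exploiting the two layers of amalgamated freeness in the model: $\cH$ is $*$-free from $\langle \cG, \cF\rangle$ over $\D$, and $\cG$ is $*$-free from $\cF$ over $\D$. Each of (a) and (b) is a ``sandwich subordination'' identity of the operator-valued Cauchy transform, and (c)--(d) translate these abstract identities into matricial form and combine them.

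For part (a), I would view $w = \sum_{r,s} h_r^* v_{rs} h_s$ as a sandwich of $v$ by the free family $\cH$. To compute $\alpha_i = \tau_i(h_i G_w^\cH(z) h_i^*)$, I would apply the moment--cumulant relation (\ref{eq:momentcumulant}) to the Neumann-series expansion of $G_w^\cH(z)$ and use that mixed $\D$-cumulants across $\cH$ and $\langle \cG,\cF\rangle$ vanish. The surviving noncrossing diagrams collapse into an alternating structure in which each block of $h$'s contributes a factor of $\alpha_j^{-1}$ (via its sandwich moments) and each block of $v$'s contributes a factor of $\cR_v^\D$, producing the self-consistent fixed point $d_i = \alpha_i^{-1} + \beta_i$ with $\beta_i = \tau_i(\cR_v^\D(\sum_j \alpha_j p_j))$. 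The statement that $\sum_i \alpha_i p_i = G_v^\D(d)$ is then precisely the resulting identity (it is the version of Lemma~\ref{lem:mod-gr}/the subordination relation (\ref{eq:GRrelation}) adapted to this sandwich structure). The values for $i=0$ and $i>k$ come out trivially because $h_r = p_r h_r p_0$ ensures $p_i h_r = 0$ for those $i$. Part (b) is the same argument with $(h,w,v)$ replaced by $(g,v,u)$, writing $v = \sum_r g_r^* \bigl(\sum_s f_{rs} g_s\bigr)$ as a sandwich of $u$ by $\cG$ and using freeness of $\cG$ from $\cF$ over $\D$.

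For part (c), I would unfold the abstract identities into matrix-trace form. From (a), $\alpha_r = \tau_r(h_r G_v^\D(d) h_r^*)$; applying the identification $\tau(h_r^* X h_r) = p^{-1} \Tr(H_r^* X H_r)$ from item (2) of the setup, together with the block-diagonal structure of $G_v^\D(d)$, lets me rewrite $-p\alpha_r/n_r$ exactly as $-n_r^{-1} \Tr((z\Id + \b\cdot\oSigma)^{-1}\oSigma_r)$ once the substitution $b_r = -\beta_r$ is made. Matching the other direction, $\beta_r = \tau_r(\cR_v^\D(\sum_j \alpha_j p_j))$, requires computing $\cR_v^\D$ explicitly: since $v = g^* u g$ with $g$ free of $u$ over $\D$, the standard formula for the $R$-transform of an amalgamated sandwich expresses $\cR_v^\D$ in terms of $\cR_u^\D$ and the Marcenko-Pastur moments of $g_r^* g_r$ from item (3). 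Using item (2) again to turn $\tau$-moments of $f_{rs}$ into traces of products of $F_{rs}$ then yields (\ref{eq:brecursion}) after a Schur-complement/resolvent manipulation with $\Id + F\diag_n(\a)$.

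Part (d) is a direct substitution exercise: by part (b), $\gamma_{r+k} = (G_u^\D(e))_{r+k}$, so $\gamma_{r+k}^{-1} = e_{r+k} - \delta_{r+k}$ is equivalent to what is claimed, provided $\gamma_{r+k} = -a_r^{-1}$ is verified. This follows by combining (a) and the Marcenko-Pastur computation of $\gamma_{r+k} = \tau_{r+k}(g_r G_v^\cG(d) g_r^*)$ with the identity $a_r = -p\alpha_r/n_r$ from (c). The main obstacle in the overall argument is the careful bookkeeping of the two nested subordinations and ensuring that each step's fixed-point equation has a unique analytic solution on the domain $\mathbb{D}$; this is precisely where the augmented transform identity (Lemma~\ref{lem:mod-gr}) is used, since the relevant Cauchy transforms appear in ``left-dressed'' form $\tau^\B(a_1 (b-a)^{-1})$ rather than as bare $\tau^\B((b-a)^{-1})$.
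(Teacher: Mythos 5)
The paper does not actually re-derive these identities: its proof of Proposition \ref{prop:free-identity} consists entirely of citations to \cite{fanjohnstonebulk} (Equations (4.15), (4.21), (4.28) and the end of the proof of Lemma 4.4 there). Your proposal instead sketches a self-contained derivation, and its overall strategy---nested operator-valued subordination over $\D$, first for the sandwich of $v$ by $\cH$ and then for the sandwich of $u$ by $\cG$, combined with the Marcenko--Pastur moments of $g_r^*g_r$---is indeed the strategy of the cited work, so the route is reasonable in outline. But what you have written is a sketch with genuine gaps rather than a proof.

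Concretely: for (c), the relation (\ref{eq:arecursion}) does not follow from applying the trace identification $\tau(h_r^* X h_r)=p^{-1}\Tr(H_r^* X H_r)$ to $G_v^\D(d)$, because $\alpha_r$ is defined through $G_w^\cH(z)$, not through $G_v^\D(d)$; what is needed is the explicit resolvent formula $G_w^\cH(z)=-\bigl(z\Id+\sum_r h_r^*h_r\beta_r\bigr)^{-1}$ (the identity quoted in the proof of Proposition \ref{prop:det-res-val} from \cite[Eq.\ (4.12)]{fanjohnstonebulk}), which your outline never isolates, and similarly the passage from $\beta_r=\tau_r(\cR_v^\D(\sum_i\alpha_ip_i))$ to (\ref{eq:brecursion}) requires an explicit evaluation of the $\D$-valued transform as the block trace $-n_r^{-1}\Tr_r\bigl((\Id+F\diag_n(\a))^{-1}F\bigr)$, which you only gesture at. The most serious problem is (d): you reduce it to ``provided $\gamma_{r+k}=-a_r^{-1}$ is verified,'' which cannot be correct---for $|z|$ large one has $\gamma_{r+k}=O(1/|z|)$ while $-a_r^{-1}\asymp n_r z/\Tr\oSigma_r$ diverges---and the displayed reduction $\gamma_{r+k}^{-1}=e_{r+k}-\delta_{r+k}$ is simply the definition of $e_{r+k}$, not a consequence of (b). The identity $e_{r+k}=-a_r^{-1}$ is not a substitution exercise: it requires a genuine computation tying $\gamma_{r+k}$ and $\delta_{r+k}$ to $\alpha_r$ through the Marcenko--Pastur law of $g_r^*g_r$ (this is exactly \cite[Eq.\ (4.28)]{fanjohnstonebulk}), and that computation is absent from your outline.
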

\begin{proof}
(a) follows from \cite[Equation (4.15)]{fanjohnstonebulk}, (b) follows from
    \cite[Equation (4.21)]{fanjohnstonebulk}, (c) is shown at the end of the
    proof of \cite[Lemma 4.4]{fanjohnstonebulk}, and (d)
    follows from \cite[Equation (4.28)]{fanjohnstonebulk}.
\end{proof}

The following identities are similar to \cite[Lemma 4.3]{fanjohnstonebulk}.  

\begin{proposition} \label{prop:rh}
We have
\begin{align*}
R^{\cH }_{a_{rts}, w}(G^\cH_w(z)) &= h_r^* h_r \tau_r\left[R^{\D }_{b_{rts},
    v}\Big(G^\D_v(d)\Big)\right],\\
R^{\cG }_{b_{rts}, v}(G^\cG_v(d)) &= g_r^* g_r \tau_{r + k}\left[R^{\D }_{c_{rts}, u}\Big(G^\D_u(e)\Big)\right].
\end{align*}
\end{proposition}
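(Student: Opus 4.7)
Both identities generalize \cite[Lemma 4.3]{fanjohnstonebulk}, which establishes the analogous statements for the unaugmented $\cR$-transforms of $w$ and $v$, to the augmented $\cR$-transforms defined in (\ref{eq:aug-r}). The plan is to expand each augmented $\cR$-transform as a sum of mixed non-crossing cumulants, apply the reduction formula for mixed cumulants under amalgamated freeness (see e.g.\ \cite[Chapter 9]{mingospeicher}), and identify the resulting expressions using Proposition \ref{prop:free-identity}.

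For the first identity, decompose $a_{rts} = h_r^* b_{rts} h_s$ and $w = \sum_{r', s'} h_{r'}^* \tilde v_{r' s'} h_{s'}$ where $\tilde v_{r' s'} = g_{r'}^* f_{r' s'} g_{s'} \in \langle \cG, \cF\rangle$. Since $\cH$ is free from $\langle \cG, \cF\rangle$ with amalgamation over $\D$ (by condition (\ref{4}) of Section \ref{sec:det-eq-measure}), each $\cH$-valued mixed cumulant appearing in
\[
R^\cH_{a_{rts}, w}(h) = \sum_{l \geq 1} \sum_{(r_j, s_j)_{j \geq 2}} \kappa^\cH_l\bigl(h_r^* b_{rts} h_s,\, h h_{r_2}^* \tilde v_{r_2 s_2} h_{s_2},\, \ldots,\, h h_{r_l}^* \tilde v_{r_l s_l} h_{s_l}\bigr)
\]
reduces to a $\D$-valued cumulant of the $\langle \cG, \cF \rangle$-parts in which the interior $\cH$-strings $h_{s_j} h h_{r_{j+1}}^*$ are replaced by their $\tau^\D$-projections, and the boundary $\cH$-factors (from the left of the first argument and the right of the last argument) are pulled outside. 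The block structure $h_i \in p_i \cH p_0$ forces $\tau^\D(h_{s_j} h h_{r_{j+1}}^*)$ to vanish unless $s_j = r_{j+1}$, so the sum over interior indices collapses into a chain $s = r_2,\, s_2 = r_3,\, \ldots,\, s_{l-1} = r_l$.

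Substituting $h = G^\cH_w(z)$ and invoking the identity $G^\D_v(d) = \sum_i \alpha_i p_i$ from Proposition \ref{prop:free-identity}(a), where $\alpha_i = \tau_i(h_i G^\cH_w(z) h_i^*)$, the interior $\tau^\D$-projections reproduce exactly the $d$-argument of $R^\D_{b_{rts}, v}$. Together with the boundary contributions, these combine---by the same index-matching as in the unaugmented computation of \cite[Lemma 4.3]{fanjohnstonebulk}---into the prefactor $h_r^* h_r$ and the scalar $\tau_r[R^\D_{b_{rts}, v}(G^\D_v(d))]$ on the right-hand side. The second identity follows from an identical argument applied one level inside: decompose $b_{rts} = g_r^* c_{rts} g_s$ and $v = \sum_{r',s'} g_{r'}^* f_{r' s'} g_{s'}$, use that $\cG$ is free from $\cF$ with amalgamation over $\D$, and invoke Proposition \ref{prop:free-identity}(b) to identify $G^\D_u(e)$ in place of $G^\D_v(d)$.

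\textbf{Main obstacle.} The delicate step is the bookkeeping of the boundary $\cH$-factors in the reduction formula: the left-boundary $h_r^*$ comes transparently from the first argument, but the right-boundary factor, which one might naively guess to be $h_s$, must be shown to equal $h_r$ after the chain of index constraints and the closure of the non-crossing partition. Verifying that these boundary contributions combine into \emph{precisely} $h_r^* h_r$ (rather than $h_r^* h_s$ or a sum over indices) and that the scalar part is the $\tau_r$-trace (rather than some other $\tau_i$) is the essential combinatorial content, and the work runs entirely parallel to the unaugmented computation carried out in \cite[Lemma 4.3]{fanjohnstonebulk}.
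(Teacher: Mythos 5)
Your plan matches the paper's proof exactly: the paper expands the augmented $\cR$-transform into mixed $\cH$-valued cumulants, uses $\D$-freeness of $\cH$ and $\langle \cF,\cG\rangle$ (via \cite[Theorem 3.6]{NSS}) to replace interior strings $h_{s_j}\,c\,h_{r_{j+1}}^*$ by their $\tau^\D$-projections, invokes the block structure to enforce the chain $s=r_2,\ldots,s_{l-1}=r_l$ and the boundary constraint $s_l=r$, upgrades $\alpha_r p_r$ to $G^\D_v(d)=\sum_i\alpha_i p_i$ via Proposition \ref{prop:free-identity}(a), and concludes by comparing with the analogous expansion of $\kappa_l^\D(b_{rts},c'v,\ldots,c'v)$; the second identity is treated by the parallel argument one level deeper, exactly as you describe. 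The boundary bookkeeping you flag as the main obstacle is indeed the point the paper handles explicitly, using that $\kappa_l^\D(\cdot)\in\D$ is diagonal so $h_r^*\,d'\,h_{s_l}$ vanishes unless $s_l=r$, and then $h_r^*d'h_r=h_r^*h_r\,\tau_r(d')$.
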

\begin{proof}
For the first equality, notice that for $c = G^\cH_w(z)$, we have
\begin{align*}
&\kappa_l^\cH(a_{rts}, cw, \ldots, cw)\\
&= \sum_{\substack{r_2, \ldots, r_l = 1\\ s_2, \ldots, s_l = 1}}^k \kappa_l^\cH\Big(h_r^* g_r^* f_{rt} f_{ts} g_s h_s, c h_{r_2}^* g_{r_2}^* f_{r_2 s_2} g_{s_2} h_{s_2}, \ldots, c h_{r_{l}}^* g_{r_l}^* f_{r_l s_l} g_{s_l} h_{s_l}\Big)\\
&= \sum_{\substack{r_2, \ldots, r_l = 1\\ s_2, \ldots, s_l = 1}}^k h_r^*
    \kappa_l^\cH\Big( g_r^* f_{rt} f_{ts} g_s , h_s c h_{r_2}^* g_{r_2}^* f_{r_2
    s_2} g_{s_2} , \ldots, h_{s_{l-1}}c h_{r_{l}}^* g_{r_l}^* f_{r_l s_l} g_{s_l}\Big)  h_{s_l}\\
&= \sum_{\substack{r_2, \ldots, r_l = 1\\ s_2, \ldots, s_l = 1}}^k h_r^* \kappa_l^\D\Big( g_r^* f_{rt} f_{ts} g_s, \tau^\D(h_s c h_{r_2}^*) g_{r_2}^* f_{r_2 s_2} g_{s_2}, \ldots, \tau^\D(h_{s_{l - 1}} c h_{r_{l}}^*) g_{r_l}^* f_{r_l s_l} g_{s_l}\Big) h_{s_l},
\end{align*}
where we apply \cite[Theorem 3.6]{NSS} and $\D$-freeness of $\{\cF,\cG\}$ and
    $\cH$ in the last step.  Notice now that $\tau^\D(h_s c h_r^*) = 0$ unless
    $s = r$, that for any $d' \in \D$ we have $h_r^* d' h_r = h_r^* h_r \tau_r(d')$, and that 
\[
\tau^\D(h_r c h_r^*) g_r^* = \tau_r(h_r c h_r^*) p_r g_r^* = \Big(\sum_{i = 0}^{2k} \alpha_i p_i\Big) g_r^*.
\]
Therefore, applying Proposition \ref{prop:free-identity}(a) and defining $c' =
G^\D_v(d)$, the above is equal to
    \[h_r^* h_r \sum_{r_3, \ldots, r_l = 1}^k
    \tau_r\Big(\kappa_l^\D\Big( g_r^* f_{rt} f_{ts} g_s, c' g_{s}^* f_{s r_3}
    g_{r_3}, c' g_{r_3}^* f_{r_3r_4} g_{r_4}, \ldots, c' g_{r_l}^* f_{r_l r}
    g_{r}\Big)\Big).\]
On the other hand, using $g_s=g_sp_s$ and $p_sc'p_r=0$ unless $s=r$, we have
\begin{align*}
    &\kappa_l^\D(b_{rts},c'v,\ldots,c'v)\\
    &=\sum_{\substack{r_2,\ldots,r_l=1 \\ s_2,\ldots,s_l=1}}^k
\kappa_l^\D\Big(g_r^*f_{rt}f_{ts}g_s,c'g_{r_2}^*f_{r_2s_2}g_{s_2},
\ldots,c'g_{r_l}^*f_{r_ls_l}g_{s_l}\Big)\\
    &=\sum_{\substack{r_2,\ldots,r_l=1 \\ s_2,\ldots,s_l=1}}^k
    \kappa_l^\D\Big(g_r^*f_{rt}f_{ts}g_s,p_sc'p_{r_2}g_{r_2}^*f_{r_2s_2}g_{s_2},
    \ldots,p_{s_{l-1}}c'p_{r_l}g_{r_l}^*f_{r_ls_l}g_{s_l}\Big)\\
    &=\sum_{r_3,\ldots,r_l=1}^k
    \kappa_l^\D\Big( g_r^* f_{rt} f_{ts} g_s, c' g_{s}^* f_{s r_3}
    g_{r_3}, c' g_{r_3}^* f_{r_3r_4} g_{r_4}, \ldots, c' g_{r_l}^* f_{r_l r}
    g_{r}\Big).
\end{align*}
Comparing with the above,
        \[\kappa_l^\cH(a_{rts},cw,\ldots,cw)
        = h_r^* h_r \tau_r\Big(\kappa_l^\D( b_{rts}, c'v, \ldots, c'v)\Big).\]
Summing over $l \geq 1$ yields the first identity.
    The proof of the second identity is
    exactly parallel, using Proposition \ref{prop:free-identity}(b) in place of
    Proposition \ref{prop:free-identity}(a).
\end{proof}

\begin{proposition} \label{prop:free-comp}
We have
  \[
\tau(a_{rts}(z - w)^{-1}) = \tau(c_{rts}(e - u)^{-1}).
\]
\end{proposition}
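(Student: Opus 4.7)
The plan is to telescope the trace through two applications of Lemma \ref{lem:mod-gr}, producing the chain
$$\tau(a_{rts}(z-w)^{-1}) \;=\; \tau(b_{rts}(d-v)^{-1}) \;=\; \tau(c_{rts}(e-u)^{-1}),$$
with each equality corresponding to descending one rung of the tower $\cH \to \cG \to \D$ of subalgebras.

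For the first equality, I would start from $\tau(a_{rts}(z-w)^{-1}) = \tau(G^\cH_{a_{rts}, w}(z))$ and apply Lemma \ref{lem:mod-gr} with $\B = \cH$, $a_1=a_{rts}$, $a=w$, $b=z$ to rewrite $G^\cH_{a_{rts}, w}(z) = R^\cH_{a_{rts}, w}(G^\cH_w(z)) \cdot G^\cH_w(z)$. Proposition \ref{prop:rh} identifies $R^\cH_{a_{rts}, w}(G^\cH_w(z)) = h_r^* h_r \cdot X$, where $X := \tau_r[R^\D_{b_{rts}, v}(G^\D_v(d))] \in \C$ is a scalar; pulling $X$ out of the trace and using $\tau(h_r^* h_r G^\cH_w(z)) = \tau(h_r G^\cH_w(z) h_r^*) = \tau(p_r) \alpha_r$ (by cyclicity, together with $h_r\,G^\cH_w(z)\,h_r^* \in p_r \A p_r$ and the definition (\ref{eq:alpha-beta-def}) of $\alpha_r$) gives $\tau(a_{rts}(z-w)^{-1}) = \tau(p_r)\alpha_r \cdot X$. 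To simplify $X$, I apply Lemma \ref{lem:mod-gr} a second time with $\B = \D$, $a_1 = b_{rts}$, $a = v$, $b = d$, obtaining $R^\D_{b_{rts},v}(G^\D_v(d)) = G^\D_{b_{rts},v}(d) \cdot (G^\D_v(d))^{-1}$. Proposition \ref{prop:free-identity}(a) supplies $G^\D_v(d) = \sum_i \alpha_i p_i$, so its inverse on the $r$-th block contributes $\alpha_r^{-1}$; combined with $\tau_r(G^\D_{b_{rts},v}(d)) = \tau(p_r)^{-1}\tau(b_{rts}(d-v)^{-1})$ (using $b_{rts} = p_r b_{rts}$), the prefactor $\tau(p_r)\alpha_r$ cancels exactly with $\tau(p_r)^{-1}\alpha_r^{-1}$ to yield $\tau(a_{rts}(z-w)^{-1}) = \tau(b_{rts}(d-v)^{-1})$.

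The second equality follows from the identical argument one level down, under the substitutions $\cH \rightsquigarrow \cG$, $(w,z) \rightsquigarrow (v,d)$, and $(b_{rts}, v, d, \alpha_r) \rightsquigarrow (c_{rts}, u, e, \gamma_{r+k})$, now using the second identity of Proposition \ref{prop:rh} and Proposition \ref{prop:free-identity}(b) (which provides $G^\D_u(e) = \sum_j \gamma_j p_j$). The only bookkeeping hazards are verifying that $d \in \cG$ so that Lemma \ref{lem:mod-gr} applies at the $\cG$-level (this is automatic, since $\D \subset \cG$ under amalgamation over $\D$), and tracking the support projections of $a_{rts} \in p_0 \A p_0$, $b_{rts} \in p_r \A p_s$, and $c_{rts} \in p_{k+r} \A p_{k+s}$ so that the normalizations $\tau(p_r)$ and $\tau(p_{r+k})$ cancel at each step. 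I do not anticipate a genuine obstacle; once the bookkeeping is in place, the proof is a mechanical unfolding of the two identities.
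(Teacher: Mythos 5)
Your proof is correct and is essentially the paper's own argument: two passes of Lemma \ref{lem:mod-gr} combined with Proposition \ref{prop:rh} and Proposition \ref{prop:free-identity}(a)--(b), descending $\cH \to \cG \to \D$, with the intermediate quantity $\tau(b_{rts}(d-v)^{-1})$ appearing in the paper as $\tau(p_r)\tau_r[G^\cG_{b_{rts},v}(d)]$. The only cosmetic difference is that you extract the scalar by inverting $G^\D_v(d)=\sum_i\alpha_i p_i$, whereas the paper avoids the inversion by replacing $\alpha_r$ with $G^\D_v(d)$ inside $\tau_r$ and reapplying Lemma \ref{lem:mod-gr} forward; this is immaterial since the construction of $d$ and $e$ already presumes the $\alpha_i,\gamma_j$ are nonzero.
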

\begin{proof}
    Note first that
\[
\tau(a_{rts}(z - w)^{-1}) = \tau\left(G^\cH_{a_{rts, w}}(z)\right)
    =\tau(p_0)\tau_0\left(G^\cH_{a_{rts, w}}(z)\right).
\]
Substituting the expression of Proposition \ref{prop:rh} into the identity 
\[
G^\cH_{a_{rts}, w}(z) = R^{\cH }_{a_{rts}, w}(G^\cH_w(z)) G^\cH_w(z)
\]
of Lemma \ref{lem:mod-gr}, we find that
\[
G^\cH_{a_{rts}, w}(z) = h_r^* h_r \cdot G_w^\cH(z) \tau_r\left[R^{\D }_{b_{rts}, v}\Big(G^\D_v(d)\Big)\right],
\]
from which we obtain
\[
\tau_0[G^\cH_{a_{rts}, w}(z)] = \tau_0[h_r^* h_r G_w^\cH(z)] \tau_r\left[R^{\D
    }_{b_{rts}, v}\Big(G^\D_v(d)\Big)\right].
\]
    Noting that $\tau_0[h_r^* h_r G^\cH_w(z)] = \frac{\tau(p_r)}{\tau(p_0)}\alpha_r$, we obtain
\begin{multline*}
\tau_0[G^\cH_{a_{rts}, w}(z)] = \frac{\tau(p_r)}{\tau(p_0)}\tau_r\left[
    R^{\D }_{b_{rts}, v}(G^\D_v(d)) \alpha_r\right]\\
= \frac{\tau(p_r)}{\tau(p_0)}\tau_r\left[R^{\D }_{b_{rts},
    v}(G^\D_v(d))G^\D_v(d) \right] = \frac{\tau(p_r)}{\tau(p_0)} \tau_r[G^\D_{b_{rts},
    v}(d)] = \frac{\tau(p_r)}{\tau(p_0)} \tau_r[G^\cG_{b_{rts}, v}(d)],
\end{multline*}
where in the second equality we replace $\alpha_r$ by $G^\D_v(d) = \sum_{i =
    0}^{2k} \alpha_i p_i$.  Substituting Proposition \ref{prop:rh} into the identity
\[
G^\cG_{b_{rts}, v}(d) = R^{\cG }_{b_{rts}, v}(G^\cG_v(d)) G^\cG_v(d),
\]
we find that
\[
G^\cG_{b_{rts}, v}(d) = g_r^* g_r G_v^\cG(d) \tau_{r + k}\left[R^{\D }_{c_{rts}, u}\Big(G^\D_u(e)\Big)\right].
\]
Noting that $\tau_r(g_r^* g_r G^\cG_v(d)) = \frac{\tau(p_{r + k})}{\tau(p_r)}
    \gamma_{r + k}$, we find similarly that
\begin{multline*}
\tau_r[G^\cG_{b_{rts}, v}(d)] = \frac{\tau(p_{r + k})}{\tau(p_r)} \tau_{r +
    k}\left[R^{\D }_{c_{rts}, u}\Big(G^\D_u(e)\Big)\gamma_{r + k}\right]\\
= \frac{\tau(p_{r + k})}{\tau(p_r)} \tau_{r + k}\left[R^{\D
    }_{c_{rts}, u}\Big(G^\D_u(e)\Big)G^\D_u(e) \right]
= \frac{\tau(p_{r + k})}{\tau(p_r)} \tau_{r + k}\left[G^\D_{c_{rts}, u}\Big(e\Big)\right].
\end{multline*}
Putting everything together, we conclude that
\begin{align*}
\tau(a_{rts}(z - w)^{-1}) = \tau(p_{r + k}) \tau_{r + k}\left[G^\D_{c_{rts},
    u}\Big(e\Big)\right]=\tau(c_{rts}(e - u)^{-1}).\qquad \qedhere
\end{align*}
\end{proof}

Applying the definitions of $a_{rts}$ and $c_{rts}$,
the asymptotic freeness result in \cite[Theorem 3.9]{fanjohnstonesun}, and
Proposition \ref{prop:free-identity}(d), the above implies
\[\frac{1}{n_t} \Tr_t[F G H R(z) H^\sT G^\sT F]
\sim \frac{1}{n_t} \Tr_t \Big(F (\diag_n(\a^{-1}) + F)^{-1} F \Big),\]
and Proposition \ref{prop:det-eq} now follows from the Woodbury matrix identity.
We defer these details to Appendix \ref{app:approx-trace}.

\subsection{Outlier eigenvectors and eigenvectors}

Combining Lemmas \ref{lem:schur-comp} and \ref{lem:master-limit}, we have shown
that $\hT \sim \wT$. Recalling $\Gamma=\wGamma Q^\top$ and using
$\det(\Id+AB)=\det(\Id+BA)$, we see that
the roots of $0=\det \wT(z)$ are the same as those
of $0=\det T(z)$. Then Theorem \ref{thm:outliers} follows from an application of
Hurwitz's theorem. We defer the technical details of this argument to
Appendix \ref{app:outlier-eigenvalues}.

The proof of Theorem \ref{thm:eigenvectors} uses the following two
results, whose proofs are deferred to Appendix \ref{app:outlier-eigenvectors}.

\begin{proposition} \label{prop:sec-sing}
    In the setting of Theorem \ref{thm:eigenvectors}, $\ker \wT(\lambda)$ has
    dimension exactly $1$, and
    each other singular value of $\wT(\lambda)$ is at least a constant $c \equiv c(\delta) > 0$.
\end{proposition}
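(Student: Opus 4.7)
The plan has two parts, corresponding to the two assertions. For the kernel dimension, first apply Sylvester's determinant identity $\det(\Id_{\ell_+}+AB)=\det(\Id_\ell+BA)$ with $A=\wGamma$ and $B=Q^\sT(\lambda\Id+\b\cdot\oSigma)^{-1}Q\wGamma^\sT\diag_\ell(\b)$ to obtain the scalar identity $\det T(\mu)=\det\wT(\mu)$ as analytic functions of $\mu$. The hypothesis that $\lambda\in\Lambda_0$ has analytic multiplicity $1$ therefore means $\det\wT$ has a simple zero at $\mu=\lambda$. I would then invoke Jacobi's formula $\tfrac{d}{d\mu}\det\wT(\mu)=\Tr\bigl(\operatorname{adj}(\wT(\mu))\,\wT'(\mu)\bigr)$: if $\dim\ker\wT(\lambda)\geq 2$, then $\operatorname{rank}\wT(\lambda)\leq\ell-2$, every $(\ell-1)\times(\ell-1)$ minor vanishes, and $\operatorname{adj}\wT(\lambda)=0$, so the derivative above vanishes, contradicting simplicity. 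Combined with $\det\wT(\lambda)=0$, this forces $\dim\ker\wT(\lambda)=1$.

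For the lower bound on the remaining singular values, I would argue by contradiction using compactness and Hurwitz's theorem. Suppose that along some subsequence $n_k\to\infty$ there exist $\lambda_{n_k}$ satisfying all hypotheses with $\sigma_2\bigl(\wT_{n_k}(\lambda_{n_k})\bigr)\to 0$. Assumption \ref{assump:asymptotic} uniformly bounds $\wGamma$, $\oSigma_r$, $Q$, while Proposition \ref{prop:analyticextension} together with the fixed-point equations yields uniform bounds on $b_r(\mu)$ and on $\wT_n(\mu)$ for $\mu$ with $\dist(\mu,\supp(\mu_0))\geq\delta/2$. Since the supports $\supp(\mu_0)$ lie in a fixed compact interval and $\wT_n(\mu)=\Id_\ell+O(|\mu|^{-1})$ uniformly in $n$ as $|\mu|\to\infty$, the root locations $\lambda_n$ are uniformly bounded; pass to a further subsequence along which $\lambda_{n_k}\to\lambda_\infty$. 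Then $\dist(\lambda_\infty,\supp(\mu_0))\geq\delta/2$ for all large $k$, so a fixed open disk $\Omega$ around $\lambda_\infty$ lies inside the relevant $U_\delta$ for all large $k$. By Montel's theorem, extract a further subsequence along which $\wT_{n_k}\to\wT_\infty$ uniformly on compact subsets of $\Omega$, with $\wT_\infty$ analytic, and with $\sigma_2(\wT_\infty(\lambda_\infty))=0$. The adjugate argument above then gives that $\det\wT_\infty$ vanishes to order $\geq 2$ at $\lambda_\infty$.

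On the other hand, by the $\delta$-separation hypothesis the only root of $\det\wT_{n_k}$ inside a smaller disk $\Omega'\subset\Omega$ (say of radius $\delta/4$) is $\lambda_{n_k}$, which is simple. Since $\det\wT_n(\mu)=1+O(|\mu|^{-1})$ uniformly in $n$, in particular $\det\wT_\infty\not\equiv 0$. Hurwitz's theorem then says that on a disk around $\lambda_\infty$ inside $\Omega'$ in which $\lambda_\infty$ is the only zero of $\det\wT_\infty$, the total zero count of $\det\wT_{n_k}$ in the disk (with multiplicity) equals the multiplicity of $\lambda_\infty$ as a zero of $\det\wT_\infty$---so the latter is $1$, contradicting the order-$\geq 2$ conclusion. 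The main technical obstacle I anticipate is the bookkeeping for uniform convergence: the natural domain $U_\delta$ is $n$-dependent through $\supp(\mu_0)$, so one must carve out a common complex neighborhood of $\lambda_\infty$ on which all $\wT_{n_k}$ are jointly analytic and uniformly bounded, which should follow from the fact that the supports lie in a fixed compact interval by the norm bounds in Assumption \ref{assump:asymptotic}.
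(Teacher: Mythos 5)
Your Part 1 (kernel dimension) is correct: since $\det T\equiv\det\wT$ by Sylvester's identity, the multiplicity-one hypothesis makes $\lambda$ a simple zero of $\det\wT$, and the Jacobi/adjugate argument then rules out a kernel of dimension $\geq 2$. This is a genuinely different, purely algebraic route: the paper instead gets both halves of the proposition in one stroke from monotonicity. By Proposition \ref{prop:b-increasing}, $\partial_z\big(z\Id+\b(z)\cdot\Sigma\big)\succeq\Id$ on $\R\setminus\supp(\mu_0)$, so the ordered eigenvalues of the real symmetric matrix $M(z)=z\Id+\b(z)\cdot\Sigma$ move at speed at least $1$; the $\delta$-isolation and multiplicity-one hypotheses then force $M(\lambda)$ to have exactly one zero eigenvalue with all others of modulus at least $\delta$, and this bound is transferred to $\wT(\lambda)$ through the explicit block-triangular factorization of $(\lambda\Id+\b\cdot\oSigma)^{-1}M(\lambda)$ in an orthogonal basis extending $Q$. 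That argument is quantitative (constant $\delta/C$), requires no subsequences, and is what makes the constant $c(\delta)$ uniform in $n$ for free.

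Your Part 2, as written, has two genuine gaps. First, the inference ``$\det\wT_n(\mu)=1+O(|\mu|^{-1})$ uniformly in $n$, hence $\det\wT_\infty\not\equiv0$'' is a non sequitur: your Montel limit is constructed only on a small disk around $\lambda_\infty$, and $\lambda_\infty$ may lie in a gap of the support inside the fixed interval $[-C,C]$, so that disk need not be connected, within a common domain of analyticity, to the region where the large-$|\mu|$ asymptotics hold. Without ruling out $\det\wT_\infty\equiv0$, Hurwitz gives nothing (consider $f_k(z)=(z-\lambda_k)/k$, which has simple, well-separated zeros yet converges to $0$). The repair is exactly the device of Proposition \ref{prop:subseq}: pass to a further subsequence along which $\supp(\mu_0)$ converges in Hausdorff distance to a compact $V\subset\R$, apply Montel on the fixed connected domain $\{z:\dist(z,V)>\delta/2\}$, which contains a neighborhood of infinity, and only then conclude the limit is not identically zero. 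Second, your zero count ``the only root of $\det\wT_{n_k}$ inside $\Omega'$ is $\lambda_{n_k}$, which is simple'' uses only the $\delta$-separation of the \emph{real} roots constituting $\Lambda_0$; to exclude non-real zeros in the complex disk you also need that all roots of $\det\wT$ off $\supp(\mu_0)$ are real, i.e.\ Proposition \ref{prop:T-analytic}(a), which you neither invoke nor prove. Both gaps are fixable with results already in the paper, so your compactness route can be completed, but these two steps fail as stated, and the paper's monotonicity argument avoids the entire subsequence apparatus.
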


\begin{proposition}\label{prop:derbounds}
Denote by $S'(z)$ and $R'(z)$ the derivatives of $S(z)$ and $R(z)$ with respect
to $z$. Then
\[S'(z) \sim 0, \qquad Q^\sT R'(z)Q \sim -Q^\sT\partial_z[(z\Id+\b \cdot
    \oSigma)^{-1}]Q,\]
\[n_t^{-1}\Tr_t[F G H R'(z) H^\sT G^\sT F] \sim b_t'(z).\]
\end{proposition}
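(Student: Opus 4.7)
The plan is to derive each of the three assertions from its ``undifferentiated'' counterpart---namely $S(z)\sim 0$ in Lemma \ref{lem:schur-comp}, Proposition \ref{prop:det-res-val}, and Proposition \ref{prop:det-eq}---by applying Cauchy's integral formula entrywise. Both sides of each assertion first need to be holomorphic in $z$ on a strictly larger region than $U_\delta$. On the random side, Theorem \ref{thm:sticktobulk} applied in the no-signal setting gives $\spec(W)\subset \supp(\mu_0)_{\delta/3}$ almost surely for all large $n$, so $R(z)=(W-z\Id)^{-1}$, and hence $S(z)=\Xi^\sT FGHR(z)Q$ and the scalar $\Tr_t[FGHR(z)H^\sT G^\sT F]$, are analytic on $U_{\delta/2}$. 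On the deterministic side, Proposition \ref{prop:analyticextension} provides the analytic continuation of each $b_r(z)$ to $\C\setminus \supp(\mu_0)$, and the identification in the proof of Proposition \ref{prop:det-res-val} shows that $-(z\Id+\b(z)\cdot\oSigma)^{-1}$ is the $(0,0)$-block of $\tau^\cH((w-z)^{-1})$, which is analytic and bounded on $U_{\delta/2}$ by (\ref{eq:support_mu_0}). In particular $z\Id+\b(z)\cdot\oSigma$ is invertible on $U_{\delta/2}$, so the inverse (and $b_t'(z)$) is analytic there.

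Once analyticity is in hand, the transfer to derivatives is routine. For any scalar $f$ holomorphic on $U_{\delta/2}$ and any $z\in U_\delta$, the circle $|w-z|=\delta/2$ lies in $U_{\delta/2}$, so Cauchy's formula
\[
f'(z)=\frac{1}{2\pi i}\oint_{|w-z|=\delta/2}\frac{f(w)}{(w-z)^{2}}\,dw
\]
yields $\sup_{z\in U_\delta}|f'(z)|\leq (2/\delta)\sup_{w\in U_{\delta/2}}|f(w)|$. Applying this bound entrywise to the matrix/scalar differences $S(z)-0$, $Q^\sT R(z)Q+Q^\sT(z\Id+\b\cdot\oSigma)^{-1}Q$, and $n_t^{-1}\Tr_t[F-FGHR(z)H^\sT G^\sT F]+b_t(z)$---each of which tends to zero uniformly in $z$ on $U_{\delta/2}$ almost surely by the three cited results (permissible since the constant $\delta>0$ in the $\sim$-relation is arbitrary)---we conclude that the $\sup_{z\in U_\delta}\|\cdot\|_\infty$ norms of their $z$-derivatives also tend to $0$ almost surely. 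Since entrywise differentiation commutes with the $Q^\sT(\cdot)Q$, $\Tr_t$, and matrix-multiplication operations, this yields exactly the three claimed relations.

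The main obstacle I expect is not the derivative transfer itself but rather rigorously pinning down the analyticity and uniform invertibility of $z\Id+\b(z)\cdot\oSigma$ on a region strictly larger than $U_\delta$, together with the corresponding statement that $\spec(W)$ stays away from $U_{\delta/2}$ for large $n$. Both follow by combining Theorem \ref{thm:sticktobulk} and Proposition \ref{prop:analyticextension} with the operator-algebraic identification of $(z\Id+\b\cdot\oSigma)^{-1}$ as a block of the resolvent of the free deterministic equivalent $w$, but this bookkeeping---essentially replacing $\delta$ by $\delta/2$ throughout and checking that the hypotheses of the prior propositions still apply---is the only nontrivial step. The remainder is a direct application of Cauchy's integral formula entry by entry.
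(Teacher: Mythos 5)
Your proposal is correct and follows essentially the same route as the paper: both apply Cauchy's integral formula entrywise on a circle of radius $\delta/2$ around each $z\in U_\delta$, using that the undifferentiated $\sim$-relations (Lemma \ref{lem:schur-comp}, Propositions \ref{prop:det-res-val} and \ref{prop:det-eq}) already hold with $\delta$ replaced by $\delta/2$, to transfer uniform convergence of the functions to uniform convergence of their derivatives.
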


\begin{proof}[Proof of Theorem \ref{thm:eigenvectors}]
Since $(\hl, \hv)$ is an eigenvalue-eigenvector pair, we have that $\hl \hv = \hSigma \hv = W \hv + P \hv$, which implies that
\begin{equation} \label{eq:eigenvect}
0 = (\Id + R(\hl) P) \hv.
\end{equation}
Define 
\[
\hv_1 = \Xi^\sT F G H \hv \qquad \text{ and } \qquad \hv_2 = Q^\sT \hv.
\]
Multiplying (\ref{eq:eigenvect}) on the left by
$\left[\begin{matrix} \Xi^\sT F G H \\ Q^\sT \end{matrix}\right]$
and recalling (\ref{eq:k-def}), we obtain
\begin{equation}\label{eq:hKkernel}
0 = \hK(\hl) \left[\begin{matrix} \hv_1 \\ \hv_2 \end{matrix}\right].
\end{equation}
Eliminating $\hv_1$ in this system of equations,
    we get $0 = \hT(\hl) \hv_2$
for the Schur complement $\hT$ from (\ref{eq:def-sc}).
We show in Proposition \ref{prop:T-analytic} that $\wT(z)$ is bounded over
$U_\delta$. Then so is $\wT'(z)$, by the Cauchy integral formula.
Applying a Taylor expansion and
the results $\hl-\lambda \to 0$ and $\hT \sim \wT$ from Theorem
    \ref{thm:outliers} and Lemmas \ref{lem:schur-comp} and \ref{lem:master-limit}, almost surely
$\|\hT(\hl) - \wT(\lambda)\| \to 0$. So also
\[
\|\hT(\hl)^\sT \hT(\hl) - \wT(\lambda)^\sT \wT(\lambda)\| \to 0.
\]
Applying this to $\hv_2$, we find that 
$\|\wT(\lambda)^\sT \wT(\lambda) \hv_2 \| \to 0$,
which implies by Proposition \ref{prop:sec-sing} and the Davis-Kahan theorem that
\begin{equation} \label{eq:eigenvector-dir}
\hv_2 - \|\hv_2\| v_2 \to 0,
\end{equation}
where $v_2$ is a unit vector in $\ker \wT(\lambda)$ with an appropriate choice of sign.

We now compute the limit of $\|\hv_2\|$.  By (\ref{eq:eigenvect}) and the
    definition of $P$, we see that 
    \begin{equation}\label{eq:hvrelation}
- \hv = R(\hl) \Big(Q \wGamma^\sT \hv_1 + (H^\sT G^\sT F \Xi \wGamma + Q \wGamma^\sT \Xi^\sT F \Xi \wGamma) \hv_2\Big). 
    \end{equation}
On the other hand, in the equation (\ref{eq:hKkernel}),
we may solve for $\hv_1$ to obtain
$\hv_1 = - \hK_{11}(\hl)^{-1} \hK_{12}(\hl) \hv_2$
when $\hK_{11}(\hl)$ is invertible. Substituting into (\ref{eq:hvrelation}),
    \begin{equation}\label{eq:hvrelation2}
        \hv = R(\hl)(M_1(\hl)+M_2(\hl))\hv_2
    \end{equation}
    for the matrices \[M_1(\hl)=Q\wGamma^\sT \hK_{11}(\hl)^{-1}
    \hK_{12}(\hl) - Q \wGamma^\sT \Xi^\sT F \Xi
    \wGamma, \quad M_2(\hl)=-H^\sT G^\sT F \Xi \wGamma.\]
Note that $M_1'(z),M_2'(z),R'(z)$ are also bounded over $U_\delta$, on a
high-probability event when $\spec(W) \subset \supp(\mu_0)_{\delta/2}$ and
$\|\Xi\|,\|G\|<C$. Taking the squared
norm of (\ref{eq:hvrelation2}) on both sides and applying
    $\hl-\lambda \to 0$ and a Taylor expansion,
\begin{equation} \label{eq:v-len}
    1 = \sum_{i,j=1}^2 \hv_2^\sT M_i(\hl)^\sT R(\hl)^2 M_j(\hl) \hv_2
    = \sum_{i,j=1}^2 \hv_2^\sT M_i(\lambda)^\sT R(\lambda)^2 M_j(\lambda) \hv_2
    +o(1).
\end{equation}

Applying Lemma \ref{lem:schur-comp} and Propositions \ref{prop:concentration} and \ref{prop:det-eq}, we find that 
    \[Q^\sT M_1(z) \sim \wGamma^\sT \Xi^\sT F G H R(z) H^\sT G^\sT F \Xi
    \wGamma -\wGamma^\sT \Xi^\sT F \Xi \wGamma
    \sim \wGamma^\sT \diag_\ell(\b(z)) \wGamma.\]
    Also, noting that $R(z)^2=R'(z)$ and
    applying Proposition \ref{prop:derbounds},
    \[Q^\sT R(z)^2 Q \sim Q^\sT R'(z)Q \sim -Q^\sT \partial_z[(z\Id+\b(z) \cdot
    \oSigma)^{-1}]Q.\]
Combining these, applying $\Gamma=\wGamma Q^\sT$, and setting
$\hu=\wGamma \hv_2=\Gamma \hv$, we get
\begin{align}
    &\hv_2^\sT M_1(\lambda)^\sT R(\lambda)^2M_1(\lambda)\hv_2\nonumber\\
&=-\hu^\sT \diag_\ell(\b) \Gamma \cdot
\partial_\lambda[(\lambda \Id+\b \cdot \oSigma)^{-1}] \cdot
\Gamma^\sT\diag_\ell(\b)\hu+o(1)\label{eq:M1M1}
\end{align}
where we write as shorthand $\b \equiv \b(\lambda)$.
Applying $R(z)^2=R'(z)$
and Propositions \ref{prop:concentration} and \ref{prop:derbounds}, we
also get $\Xi^\sT F G H R(z)^2 H^\sT G^\sT F \Xi \sim \diag_\ell(\b'(z))$,
and hence
\begin{equation}\label{eq:M2M2}
    \hv_2^\sT M_2(\lambda)^\sT R(\lambda)^2 M_2(\lambda)\hv_2
=\hu^\sT \diag_\ell(\b')\hu+o(1).
\end{equation}
Finally, applying $S'(z) \sim 0$ from Proposition \ref{prop:derbounds},
we get $Q^\sT R(z)^2H^\sT G^\sT F\Xi \sim 0$ and hence
\begin{equation}\label{eq:M1M2}
    \hv_2^\sT M_1(\lambda)^\sT R(\lambda)^2 M_2(\lambda)\hv_2 \to 0.
\end{equation}
Then substituting (\ref{eq:M1M1}), (\ref{eq:M2M2}), and (\ref{eq:M1M2})
into (\ref{eq:v-len}),
\begin{equation} \label{eq:first-ueq}
1=\hu^\sT \Big(-\diag_\ell(\b) \Gamma \cdot \partial_{\lambda}[(\lambda \Id + \b
    \cdot \oSigma)^{-1}]\cdot  \Gamma^\sT \diag_\ell(\b) + \diag_\ell(\b') \Big)
    \hu+o(1).
\end{equation}
Multiplying (\ref{eq:eigenvector-dir}) on the left by $\wGamma$, we find that 
\begin{equation} \label{eq:ev-dir2}
\hu - \|\hv_2\| \wGamma v_2 \to 0.
\end{equation}
Define $\tilde{u}=\wGamma v_2$, and note that $\tilde{u}$
is a non-zero vector in $\ker T(\lambda)$ because $v_2$ is a unit vector in
$\ker \wT(\lambda)$. Then $u=\tilde{u}/\|\tilde{u}\|$ is a unit vector in $\ker
T(\lambda)$, which is unique up to sign by Proposition
\ref{prop:sec-sing}. Substituting (\ref{eq:ev-dir2}) into (\ref{eq:first-ueq})
and recalling the definition of
$\alpha$ in Theorem \ref{thm:eigenvectors}, we find that
\[1=\|\hv_2\|^2\|\tilde{u}\|^2 \cdot \alpha+o(1).\]
Writing (\ref{eq:ev-dir2}) as $\hu-\|\hv_2\|\|\tilde{u}\|u \to 0$
and substituting $\alpha^{-1/2}$ for $\|\hv_2\|\|\tilde{u}\|$ concludes the
proof.
\end{proof}

\appendix

\section{Qualitative phenomena and simulations}\label{appendix:examples}

\subsection{Qualitative phenomena}\label{appendix:qualitative}
We provide the calculations for (\ref{eq:eigenvaluebias}),
(\ref{eq:eigenvaluealias}), (\ref{eq:eigenvecexpansion}), and
(\ref{eq:eigenvecexpansion2}).
Recall that $\alpha_1,\ldots,\alpha_k$ are independent, with independent rows of
mean 0 and covariances $\Sigma_1,\ldots,\Sigma_k$. 
Then in the mixed model (\ref{eq:mixedmodel}),
for any matrix $\hSigma=Y^\sT BY$, we have
\[\E[\hSigma]=\sum_{r,s=1}^k \E[\alpha_r^\sT U_r^\sT BU_s\alpha_s]
=\sum_{r=1}^k \left(\Tr U_r^\sT BU_r\right)\Sigma_r.\]
Recall also $F_{rt}=\sqrt{n_rn_t}(U_r^\sT BU_t)$.
If $\hSigma$ is an unbiased MANOVA estimate for $\Sigma_1$, this implies
\begin{equation}\label{eq:MANOVAcondition}
\frac{1}{n_1}\Tr F_{11}=\Tr U_1^\sT BU_1=1,
\quad \frac{1}{n_r}\Tr F_{rr}=\Tr U_r^\sT BU_r=0 \text{ for all } r \neq 1.
\end{equation}

In Step 4 of the proof of \cite[Lemma 4.4]{fanjohnstonebulk}, it is shown that
$z a_r(z)$ and $b_r(z)$ remain bounded as $|z| \to \infty$.
Then, linearizing the fixed-point equations
(\ref{eq:arecursion}--\ref{eq:brecursion}) for large $z=\lambda$, we obtain
\begin{align*}
a_r(\lambda)&=-\frac{1}{\lambda}\left(\frac{1}{n_r}\Tr \oSigma_r\right)
+O\left(\frac{1}{\lambda^2}\right),\\
b_r(\lambda)&=-\frac{1}{n_r}\Tr F_{rr}+
\frac{1}{n_r}\sum_{t=1}^k a_t(\lambda) \Tr F_{rt}F_{tr}
+O\left(\frac{1}{\lambda^2}\right).
\end{align*}
Substituting the first expression into the second,
applying (\ref{eq:MANOVAcondition}), and recalling the definition of
$c_r$ from (\ref{eq:cr}),
\begin{equation}\label{eq:bexpansion}
b_r(\lambda)=-\1\{r=1\}-\frac{c_r}{\lambda}+O\left(\frac{1}{\lambda^2}\right).
\end{equation}
Under Assumption \ref{assump:asymptotic}, $|c_r|$ is bounded by a constant.

Suppose that $\ell_1=\ell_2=1$ and $\ell_r=0$ for each other $r$,
and write the rows of $\Gamma_1,\Gamma_2$ as
$\sqrt{\mu_1}v_1$ and $\sqrt{\mu_2}v_2$. Assume
$\oSigma_r v_1=\oSigma_r v_2=0$ for every $r$. Then,
recalling $\rho=\langle v_1,v_2 \rangle$, we get
\begin{equation}\label{eq:quadformsimplify}
\Gamma (\lambda \Id+b \cdot \oSigma)^{-1}\Gamma^\sT
=\frac{1}{\lambda}\Gamma\Gamma^\sT
=\frac{1}{\lambda}
\begin{pmatrix} \mu_1 & \rho\sqrt{\mu_1\mu_2} \\
\rho\sqrt{\mu_1\mu_2} & \mu_2 \end{pmatrix}.
\end{equation}
Applying this and (\ref{eq:bexpansion}) to (\ref{eq:Tnew}), we obtain
\begin{equation}\label{eq:Texpansionfull}
T(\lambda)=\Id-
\begin{pmatrix} \mu_1 & \rho\sqrt{\mu_1\mu_2} \\
\rho\sqrt{\mu_1\mu_2} & \mu_2 \end{pmatrix}
\begin{pmatrix}
\frac{1}{\lambda}+\frac{c_1}{\lambda^2}
& 0 \\ 0 & \frac{c_2}{\lambda^2}
\end{pmatrix}+O\left(\frac{\mu}{\lambda^3}\right).
\end{equation}
Taking the determinant of this $2 \times 2$ matrix yields the expansion
\begin{equation}\label{eq:detTexpansion}
\det T(\lambda)= 1-\frac{\mu_1}{\lambda}
-\frac{c_1\mu_1+c_2\mu_2}{\lambda^2}\nonumber
+\frac{c_2\mu_1\mu_2(1-\rho^2)}{\lambda^3}+
O\left(\frac{\lambda \mu+\mu^2}{\lambda^4}\right).
\end{equation}
When $\mu_2 \lesssim \mu_1$ and $\mu_1$ is large,
the largest root of $0=\det T(\lambda)$ takes
the form (\ref{eq:eigenvaluebias}). When $\mu_1=0$ and $\mu_2$ is large,
$0=\det T(\lambda)$ has two roots given by (\ref{eq:eigenvaluealias}).

For (\ref{eq:eigenvecexpansion}) and (\ref{eq:eigenvecexpansion2}),
consider $\lambda$ described by (\ref{eq:eigenvaluebias}),
which satisfies $\lambda \asymp \mu_1 \asymp \mu_2$.
The expression (\ref{eq:eigenvaluebias}) yields
\[\frac{1}{\lambda}=\frac{1}{\mu_1}\left(1-\frac{\text{bias}}{\mu_1}\right)
+o_\mu\left(\frac{1}{\mu^2}\right).\]
Substituting this into the second row of (\ref{eq:Texpansionfull}),
\[T_{21}(\lambda)=-\rho\sqrt{\mu_2/\mu_1}
+c_2\rho^3\sqrt{\mu_2^3/\mu_1^5}+o_\mu(1/\mu),\quad
T_{22}(\lambda)=1-c_2\mu_2/\mu_1^2+o_\mu(1/\mu),\]
where the first terms are $O(1)$ and the second terms are $O(1/\mu)$.
The unit vector $u \in \ker T(\lambda)$ is orthogonal to $(T_{21},T_{22})$, so
it is given (up to sign) by
\begin{equation}\label{eq:ufirstorder}
u=\frac{1}{\sqrt{\mu_1+\rho^2\mu_2}}
\begin{pmatrix} \sqrt{\mu_1} \\ \rho\sqrt{\mu_2} \end{pmatrix}+O_\mu(1/\mu).
\end{equation}
To approximate $\alpha$ in (\ref{eq:alpha-def}), recall
(\ref{eq:quadformsimplify}) and (\ref{eq:bexpansion}). Then
\[\alpha=u^\top \begin{pmatrix} -1 & 0 \\ 0 & 0 \end{pmatrix}
\cdot \frac{1}{\lambda^2}\Gamma\Gamma^\top
\cdot \begin{pmatrix} -1 & 0 \\ 0 & 0 \end{pmatrix} u
+O(1/\lambda^2)
=(\mu_1+\rho^2\mu_2)^{-1}+O_\mu(1/\mu^2),\]
so
\begin{equation}\label{eq:alphaexpansion}
\alpha^{-1/2}=\sqrt{\mu_1+\rho^2\mu_2}+O_\mu(1/\sqrt{\mu}).
\end{equation}
Multiplying (\ref{eq:ufirstorder}) and (\ref{eq:alphaexpansion}) yields
(\ref{eq:eigenvecexpansion}).

For (\ref{eq:eigenvecexpansion2}), one may check that
\[w=\frac{1}{\sqrt{\mu_1\mu_2(1-\rho^2)}}
\begin{pmatrix} -\rho\sqrt{\mu_2} \\ \sqrt{\mu_1}
\end{pmatrix}\]
is the desired vector $w \in \R^2$ for which $\Gamma^\top w$ is the
unit vector parallel to the component of $v_2$ orthogonal to $v_1$.
Let us write a second-order correction to (\ref{eq:ufirstorder}) as 
\[u=\frac{1}{\sqrt{\mu_1+\rho^2\mu_2}}
\begin{pmatrix} \sqrt{\mu_1} \\ \rho\sqrt{\mu_2}
\end{pmatrix}+v+o_\mu(1/\mu)\]
where $v=O_\mu(1/\mu)$. Then $w^\top u=w^\top v+o_\mu(1/\mu^{3/2})$.
The condition $T_{21}u_1+T_{22}u_2=0$ requires
\begin{align*}
-\rho\sqrt{\frac{\mu_2}{\mu_1}}\,v_1+v_2&=
\frac{1}{\sqrt{\mu_1+\rho^2\mu_2}}
\left(-c_2\rho^3\sqrt{\mu_2^3/\mu_1^4}
+c_2\rho\sqrt{\mu_2^3/\mu_1^4}\right)+o_\mu(1/\mu)\\
&=\frac{c_2\rho(1-\rho^2)}{\mu_1^2}\sqrt{\frac{\mu_2^3}{\mu_1+\rho^2\mu_2}}
+o_\mu(1/\mu).
\end{align*}
So
\[w^\top u=w^\top v+o_\mu(1/\mu^{3/2})
=\frac{c_2\rho}{\mu_1^2}\sqrt{\frac{(1-\rho^2)\mu_2^2}{\mu_1+\rho^2\mu_2}}
+o_\mu(1/\mu^{3/2}).\]
Multiplying by (\ref{eq:alphaexpansion}) yields (\ref{eq:eigenvecexpansion2}).

\subsection{Simulation details}\label{appendix:simulations}
We provide additional details for the simulations
in Sections \ref{subsec:qualitative} and \ref{subsec:isotropic}:
We consider the
special case of (\ref{eq:mixedmodel}) corresponding to a balanced one-way layout
design,
\begin{equation}\label{eq:oneway}
Y=U_1\alpha_1+\alpha_2,
\end{equation}
where $n=2n_1=n_2=1600$, $p=3200$, and $U_2=\Id$. Each sample
$i=1,\ldots,n$ belongs to one of $n_1$ disjoint pairs, and
each column of $U_1 \in \R^{n \times n_1}$ has a
block of two 1's indicating the samples belonging to the corresponding pair.
The MANOVA estimate of $\Sigma_1$ is given by
\[\hSigma_1=Y^\sT B_1Y, \qquad B_1=\frac{1}{n}(\pi-\pi^\perp),\]
where $\pi$ and $\pi^\perp$ are the orthogonal projections onto the
column span of $U_1$ and its orthogonal complement.

We simulate $\alpha_1$ and $\alpha_2$ using the covariances
\begin{align*}
\Sigma_1&=32e_1e_1^\top+16e_2e_2^\top+8e_3e_3^\top+\oSigma_1,\\
\Sigma_2&=32ww^\top+64e_4e_4^\top+\oSigma_2,
\end{align*}
where $e_1,e_2,e_3,e_4$ are the first four standard basis vectors,
$w=(e_1+e_2+e_3)/\sqrt{3}$, and $\oSigma_1,\oSigma_2$ are diagonal matrices
whose first 4 diagonal entries are 0 and remaining entries are drawn randomly
from $\operatorname{Exponential}(1)$. We fix a single instance of
$\Sigma_1,\Sigma_2$ and generate all 1000 simulations of $\hSigma$
from this instance.

We compute $\det T(\lambda)$ over a fine grid of values $\lambda
\in \R$, by iteratively solving the fixed-point equations
(\ref{eq:arecursion}--\ref{eq:brecursion}).
For faster computation, the initial values for $b_r$
at each next point $\lambda+0.01$ are initialized by linear interpolation from
their values at $\lambda$ and $\lambda-0.01$.
Applying the Stieltjes inversion formula, we approximate the density
of $\mu_0$ at $\lambda$ by the value $\pi^{-1}\Im m_0(\lambda+10^{-8}i)$,
where $m_0$ is computed from (\ref{eq:m0}). We compute the
roots of the equation $0=\det T(\lambda)$ using grid search.

\subsection{Isotropic noise}\label{appendix:isotropic}
We verify that Theorems \ref{thm:outliers} and \ref{thm:eigenvectors}
agree with the earlier results of \cite{fanjohnstonesun} when restricted to the
setting of isotropic noise, where
$\oSigma_r=\sigma_r^2\Id$ for each
    $r=1,\ldots,k$.

Comparing (\ref{eq:m0}) with (\ref{eq:arecursion}), we have in this setting
    $a_r(z)=(p\sigma_r^2/n_r)m_0(z)$ for each $r$. Then $b_r(z)$ coincides with
    $-t_r(z)$ as defined in \cite[Eq.\ (3.2)]{fanjohnstonesun}. (Note that the
    matrix $F_{rs}$ in \cite{fanjohnstonesun} corresponds to
    $(p\sigma_r\sigma_s/\sqrt{n_rn_s})F_{rs}$ in the notation of this paper.)
    Applying $\det(\Id+XY)=\det(\Id+YX)$, our
    determinant equation $0=\det T(\lambda)$ is equivalent to
    \begin{align*}
&0=\det\Big(\Id+(\lambda \Id+\b \cdot \oSigma)^{-1} \Gamma^\sT \diag_\ell(\b)
    \Gamma\Big) \\
&\Leftrightarrow
    0=\det(\lambda \Id+\b \cdot \oSigma+\Gamma^\sT \diag_\ell(\b) \Gamma)
    =\det(\lambda \Id+\b \cdot \Sigma).
\end{align*}
    This is the same as the equation defining $\Lambda_0$
    in \cite[Eq.\ (3.4)]{fanjohnstonesun}.

    For the eigenvectors, note that $u \in \ker T(\lambda)$ corresponds to
    \[v=-M(\lambda)u \in \ker(\Id + \b \cdot \Sigma), \quad
    M(\lambda)=(\lambda \Id+\b \cdot \oSigma)^{-1}\Gamma^\sT \diag_\ell(\b).\]
    Then, in our notation, \cite[Theorem 3.3]{fanjohnstonesun} shows
    \[\Gamma\,\hv+\Big(u^\sT M(\lambda)^\sT \Big(\Id+\b'(\lambda) \cdot
    \oSigma+\Gamma^\sT \diag_\ell(\b')\Gamma\Big)
    M(\lambda)u\Big)^{-1/2}\Gamma M(\lambda)u \to 0.\]
    Since $u \in \ker T(\lambda)$, we have $\Gamma M(\lambda) u=-u$.
    Applying this and simplifying, we recover exactly
    Theorem \ref{thm:eigenvectors}.

\section{Augmented Cauchy and \texorpdfstring{$R$}{R}-transforms} \label{app:transforms}

We prove the identity between augmented Cauchy and $\cR$-transforms in
Lemma \ref{lem:mod-gr}.

\begin{proof}[Proof of Lemma \ref{lem:mod-gr}]
We apply the cumulant expansion to obtain
\begin{equation}\label{eq:cumulantexpansion}
G^{\B}_{a_1, a}(b)=\sum_{l \geq 0} \tau^{\B}(a_1 b^{-1} (a b^{-1})^l) =\sum_{l \geq 0} \sum_{\pi \in \NC(l + 1)} \kappa^{\B}_\pi(a_1 b^{-1}, a b^{-1}, \ldots, ab^{-1}).
\end{equation}
For a given non-crossing partition $\pi \in \NC(l+1)$, let $S \in \pi$ denote the element containing 1. Then the size $m$ of $S$ can range from 1 to $l+1$. Denote $S=\{j_0,j_1,\ldots,j_{m-1}\}$ where $j_0=1$. Set $c_i=j_i-j_{i-1}-1$ for $i=1,\ldots,m-1$ to be the number of elements between $j_{i-1}$ and $j_i$, and set $c_m=l+1-j_{m-1}$ as the number of elements after $j_{m-1}$. Then $c_1,\ldots,c_m$ sum to $l+1-m$, and the remaining elements of $\pi$ form non-crossing partitions of these intervals of sizes $c_1,\ldots,c_m$. Hence, applying the definition and multilinearity of $\kappa_\pi^\B$, we have
\begin{align*}
&\sum_{\pi \in \NC(l+1)} \kappa_\pi^\B(a_1,\ldots,a_{l+1})\\
&=\sum_{m=1}^{l+1} \mathop{\sum_{c_1,\ldots,c_m \geq 0}}_{\sum_i c_i=l+1-m} \sum_{\pi_1 \in \NC(c_1),\ldots,\pi_m \in \NC(c_m)}\\
&\hspace{0.5in}\kappa_m^\B(a_1 \kappa_{\pi_1}^\B(a_2,\ldots,a_{j_1-1}), a_{j_1} \kappa_{\pi_2}^\B(a_{j_1+1},\ldots,a_{j_2-1}), \ldots, a_{j_{m-1}}\kappa_{\pi_m}^\B(a_{j_{m-1}+1},\ldots,a_{l+1}))\\
&=\sum_{m=1}^{l+1}\mathop{\sum_{c_1,\ldots,c_m \geq 0}}_{\sum_i c_i=l+1-m} \kappa_m^\B(a_1\tau^\B(a_2\ldots a_{j_1-1}),a_{j_1} \tau^\B(a_{j_1+1}\ldots a_{j_2-1}),\ldots,a_{j_{m-1}} \tau^\B(a_{j_{m-1}+1}\ldots a_{l+1})).
\end{align*}
Applying this to (\ref{eq:cumulantexpansion}), exchanging orders of summations by
\[\sum_{l \geq 0}\;\;\sum_{m=1}^{l+1} \mathop{\sum_{c_1, \ldots, c_m \geq 0}}_{\sum_i c_i = l + 1 - m}
=\;\;\sum_{m \geq 1}\;\;\sum_{l \geq m-1} \mathop{\sum_{c_1, \ldots, c_m \geq 0}}_{\sum_i c_i = l + 1 - m}
=\;\;\sum_{m \geq 1}\sum_{c_1 \geq 0,\ldots,c_m \geq 0},\]
and then applying the definition of $\B$-valued Cauchy and $R$-transforms, we obtain
\begin{align*}
&G^\B_{a_1,a}(b)\\
&= \sum_{l \geq 0} \sum_{m = 1}^{l + 1} \sum_{\substack{c_1, \ldots, c_m \geq 0\\
\sum_i c_i = l + 1 - m}} \kappa^{\B}_m(a_1 b^{-1} \tau^{\B}((ab^{-1})^{c_1}), ab^{-1}\tau^\B((ab^{-1})^{c_2}),
\ldots, a b^{-1}\tau^{\B}((a b^{-1})^{c_m})) \\
&= \sum_{m \geq 1} \kappa^{\B}_m(a_1 G^{\B}_{a}(b), a G^{\B}_{a}(b), \ldots, a) G^{\B}_{a}(b)\\
&= R^{\B }_{a_1, a}(G^{\B}_{a}(b)) G^{\B}_{a}(b).
\end{align*}
For $\|b^{-1}\|$ sufficiently small, the preceding infinite series are all
    absolutely norm-convergent, and hence the preceding manipulations are
    valid as convergent series in $\B$.
\end{proof}

Recall that if $s,t \in \A$ are free with amalgamation over $\B$, we have a
subordination identity for the $\B$-valued Cauchy-transform
$G_{s+t}^\B$, given by
\begin{equation} \label{eq:gst-sub}
  G_{s+t}^\B(b)=G_t^\B(b-\cR_s^\B(G_{s+t}^\B(b))).
\end{equation}
This is a consequence of the additivity
$\cR_s^\B(b)+\cR_t^\B(b)=\cR_{s+t}^\B(b)$, and the moment-cumulant relation
(\ref{eq:GRrelation}). Lemma \ref{lem:mod-gr} yields the following analogous
subordination identity for the augmented transforms.

\begin{lemma}[Left subordination identity]\label{lem:leftsubordination}
Suppose $s,t,m \in \A$ are such that $s$ and $\{t,m\}$ are free with
    amalgamation over $\B$. Then for any invertible $b \in \B$ with 
    $\|b^{-1}\|$ sufficiently small,
\[G^{\B}_{m,s+t}(b)=G_{m,t}^\B(b-\cR^\B_s(G_{s+t}^\B(b))).\]
\end{lemma}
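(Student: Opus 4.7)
The plan is to derive the left subordination identity by composing Lemma \ref{lem:mod-gr} with the standard $\B$-valued subordination identity (\ref{eq:gst-sub}) and a cumulant-vanishing argument coming from $\B$-freeness. As in Lemma \ref{lem:mod-gr}, I work with $\|b^{-1}\|$ small enough that all power series below are absolutely norm-convergent in $\A$.

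First, I would apply Lemma \ref{lem:mod-gr} to the left-hand side to obtain
\[G^\B_{m,s+t}(b) = R^\B_{m,s+t}\bigl(G^\B_{s+t}(b)\bigr) \cdot G^\B_{s+t}(b).\]
The key step is showing that $R^\B_{m,s+t}$ collapses to $R^\B_{m,t}$ on $\B$. For any $c \in \B$, expand by $\B$-multilinearity
\[\kappa_l^\B\bigl(m,\, c(s+t),\, \ldots,\, c(s+t)\bigr) = \sum_{x_2,\ldots,x_l \in \{s,t\}} \kappa_l^\B(m,\, cx_2,\, \ldots,\, cx_l).\]
For any term with at least one $x_i = s$, the entry $cx_i = cs$ lies in the subalgebra $\langle s, \B\rangle$, while $m \in \langle t, m, \B\rangle$; by the $\B$-freeness of $s$ from $\{t, m\}$, such a mixed $\B$-cumulant vanishes. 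Only the all-$t$ term survives, yielding $\kappa_l^\B(m, ct, \ldots, ct)$, and summing over $l \geq 1$ gives $R^\B_{m,s+t}(c) = R^\B_{m,t}(c)$.

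Next, I would invoke the standard subordination identity (\ref{eq:gst-sub}) to write $G^\B_{s+t}(b) = G^\B_t(\tilde b)$, where $\tilde b := b - \cR^\B_s(G^\B_{s+t}(b)) \in \B$ and $\|\tilde b^{-1}\|$ remains small for our range of $b$. Substituting this together with the cumulant collapse $R^\B_{m,s+t} = R^\B_{m,t}$ into the first display gives
\[G^\B_{m,s+t}(b) = R^\B_{m,t}\bigl(G^\B_t(\tilde b)\bigr) \cdot G^\B_t(\tilde b).\]
Applying Lemma \ref{lem:mod-gr} in the reverse direction at argument $\tilde b$ identifies the right-hand side with $G^\B_{m,t}(\tilde b)$, which is precisely the claimed identity.

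The main obstacle is verifying the cumulant-vanishing step cleanly: one must confirm that inserting the $\B$-coefficient $c$ between $m$ and $s+t$ does not disturb the assignment of arguments to the subalgebras $\langle s, \B\rangle$ and $\langle t, m, \B\rangle$, so that the $\B$-freeness hypothesis kills every mixed term containing an $s$. Once this is in hand, the remainder is symbolic manipulation of convergent power series together with two applications of Lemma \ref{lem:mod-gr}, one in each direction.
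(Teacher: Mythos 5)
Your proposal is correct and follows essentially the same approach as the paper's own proof: apply Lemma \ref{lem:mod-gr} to expand $G^\B_{m,s+t}(b)$, use $\B$-multilinearity of cumulants together with vanishing of mixed cumulants for $\B$-free elements to replace $s+t$ by $t$ inside the $\cR$-transform, invoke the standard subordination identity $G^\B_{s+t}(b)=G^\B_t(\tilde b)$, and finally apply Lemma \ref{lem:mod-gr} in reverse at $\tilde b$. The only cosmetic difference is that you isolate the cumulant collapse as the identity $R^\B_{m,s+t}(c)=R^\B_{m,t}(c)$ for $c\in\B$ before substituting, whereas the paper performs the same step inline as the middle equality of its display; your worry at the end is unfounded since $c\in\B$ and $\B$-valued cumulants are $\B$-multilinear, so the vanishing is exactly the standard mixed-cumulant statement.
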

\begin{proof}
Denote $b'=G_{s+t}^\B(b) \in \B$. The usual subordination identity gives
    $b'=G_t^\B(b-\cR_s^\B(b'))$. Then
\begin{align*}
G_{m,s+t}^\B(b)&=\cR_{m,s+t}^\B(b')b'\\
&=\sum_{l \geq 1} \kappa_l^\B(m,b'(s+t),\ldots,b'(s+t))b'\\
&=\sum_{l \geq 1} \kappa_l^\B(m,b't,\ldots,b't)b'\\
&=\cR_{m,t}^\B(b')b'\\
&=G_{m,t}^\B(b-\cR^\B_s(b'))
\end{align*}
where the first and last equalities apply (\ref{eq:mod-gr}) with $a=s+t$
and $a=t$, the second and fourth equalities apply the definition of
$\cR_{a_1,a}^\B$, and the middle equality applies multi-linearity of
$\kappa_l$, $\B$-freeness of $s$ and $m$, and vanishing of mixed cumulants for free elements.
\end{proof}

\section{Strong asymptotic freeness}\label{app:strongfreeness}

In this section, we prove Theorem \ref{thm:strongfree}.
The proof follows an argument analogous to \cite{haagerupthorbjornsen,male},
which established such a result for GUE and GUE + deterministic
matrices, respectively. Several modifications to the argument are needed,
drawing on ideas in \cite{schultz,belinschicapitaine} which established this
type of result for GOE matrices and complex Wigner + deterministic matrices,
respectively. We note that the result of \cite{belinschicapitaine} requires the
real and imaginary parts of the complex Wigner matrices to have the same
variance, and does not directly apply to the GOE + deterministic matrix setting.

We provide here a brief outline of the proof and its relation to these previous
works:

\begin{enumerate}
    \item By the linearization trick of \cite[Section 2]{haagerupthorbjornsen},
        we first study linear polynomials $L$ with $k \times k$ Hermitian 
        matrix-valued coefficients, for arbitrary fixed dimension $k$. We aim 
        to show the spectral inclusion (\ref{eq:polynomial_inclusion}) for such 
        $L$, see Lemma \ref{inclusion_linear}.
    \item For this, it suffices to show that the difference between the Cauchy
        transform of $L(\mathbf{X}_N,\mathbf{Y}_N)$ and that of a deterministic
        measure $\mu_\A$ with the same spectrum as $L(\mathbf{x},\mathbf{Y}_N)$
        is at most $\operatorname{poly}((\Im \lambda)^{-1})/N^{1+\kappa}$,
        for some $\kappa>0$ and any spectral argument $\lambda \in \C^+$.
        For simplicity, we drop the $\lambda$-dependence here and denote
        this as $O(1/N^{1+\kappa})$.
        As in \cite{haagerupthorbjornsen,male}, we bound the expected
        difference by $O(1/N^2)$ and 
        the variance by $O(1/N^4)$. The latter bound uses the same Gaussian
        Poincar\'e argument as in these works.
    \item To bound the expected difference, we work with the expected
        $M_k$-valued Cauchy transform $G_{S_N+T_N}$ of
        $L(\mathbf{X}_N,\mathbf{Y}_N)$, and
        the $M_k$-valued Cauchy transform $G_{s+T_N}$ of
        $L(\mathbf{x},\mathbf{Y}_N)$. The latter satisfies the
        operator-valued subordination equation for the free additive convolution,
        \[G_{s+T_N}(\Lambda)=G_{T_N}(\Lambda-\cR_s(G_{s+T_N}(\Lambda))).\]
        Applying a similar Gaussian
        integration-by-parts argument as in \cite{male}, we show
        \[G_{S_N+T_N}(\Lambda)=G_{T_N}(\Lambda-\cR_s(G_{S_N+T_N}(\Lambda)))
        +O(1/N),\]
        see Lemma \ref{lemma:matrixsubordination}. In contrast to the GUE
        setting of \cite{male}, this is a \emph{first-order} remainder of size
        $O(1/N)$, not $O(1/N^2)$ as required. The $O(1/N)$ term vanishes for
        the GUE by a cancellation due to the real and imaginary parts having the
        same variance, but does not vanish for the GOE.
        A similar difficulty occurred also in \cite{schultz}.
    \item The bulk of the additional work in our argument lies in obtaining the
        second-order $O(1/N^2)$ approximation. In Proposition
        \ref{prop:secondordersubordination} below,
        applying the stability property of the subordination equation
        established in \cite[Proposition 4.3]{male} together with
        a Taylor expansion of $G_{T_N}$, we obtain
        \[\|G_{S_N+T_N}(\Lambda)-G_{s+T_N}(\Lambda)
        -\mathcal{L}_\Lambda(R_N(\Lambda))\| \leq O(1/N^2),\]
        where $\|\mathcal{L}_\Lambda(R_N(\Lambda))\| \leq O(1/N)$.
        We approximate the random quantity $R_N(\Lambda)$ by a deterministic
        approximation $R_\A(\Lambda)$, and show that $G_{s+T_N}(\Lambda)
        +\mathcal{L}_\Lambda(R_\A(\Lambda))$ is the Cauchy transform of a
        deterministic measure $\mu_\A$ as above. For the approximation
        $R_N(\Lambda) \approx R_\A(\Lambda)$, we follow an approach inspired
        by \cite{schultz}, and we identify the key term of
        $R_N(\Lambda)-R_\A(\Lambda)$ as the derivative of the difference of
        certain ``left-augmented'' $M_{2k}$-valued Cauchy transforms of
        $L(\mathbf{X}_N,\mathbf{Y}_N)$ and $L(\mathbf{x},\mathbf{Y}_N)$ in an
        expanded $2k \times 2k$ coefficient space, see
        (\ref{eq:aug-cauchy}) below. We bound this difference using
        a left-augmented subordination identity for $R_\A(\Lambda)$,
        an approximate such identity for $R_N(\Lambda)$, and a second
        application of \cite[Proposition 4.3]{male}.

    \item Finally, having established (\ref{eq:polynomial_inclusion}) for all
        such linear polynomials $L$, we may directly establish
        (\ref{eq:polynomial_inclusion}) for all $Q$ by applying the
        linearization and ultraproduct argument of \cite[Section
        7]{haagerupthorbjornsen} in a subsequential form. This concludes the
        proof of Theorem \ref{thm:strongfree}.
\end{enumerate}

In the remainder of this section, we carry out these steps to prove Theorem
\ref{thm:strongfree}. Its corollary
Theorem \ref{thm:strongfreeusual} then follows from results in \cite{male},
and is discussed at the end of the section. 

\subsection{Linearization and first-order approximation}
Replacing $Y_j$ and $Y_j^*$
by $(Y_j+Y_j^*)/2$ and $(Y_j-Y_j^*)/(2i)$, we will assume without loss
of generality that $Y_1,\ldots,Y_q$ are Hermitian.
We write as shorthand $M_N=\C^{N \times N}$, and denote by
$\tr_N=N^{-1}\Tr$ the normalized matrix trace on $M_N$. 

We first consider linear polynomials with matrix-valued coefficients.
Fix any $k \geq 1$ and Hermitian matrices $a_{0},...,a_{p},b_{1},...,b_{q} \in
M_{k}$. Set
    \begin{equation}\label{eq:LN}
        L_{N}=a_{0}\otimes \Id_{N}+S_{N}+T_{N}, \qquad
    S_{N}=\sum_{j=1}^{p}a_{j}\otimes X_{j}, \qquad
    T_{N}=\sum_{j=1}^{q}b_{j}\otimes Y_{j}.
    \end{equation}
Define correspondingly
\begin{equation}\label{eq:LA}
    L_\A=a_{0}\otimes \Id_{N}+s+T_{N}, \qquad
s=\sum_{j=1}^{p}a_{j}\otimes x_{j}.
\end{equation}
These belong to von Neumann probability spaces
$(M_k \otimes M_N,\tr_k \otimes \tr_N)$ and $(M_k \otimes \A_N,\tr_k \otimes
\tau_N)$. We denote by $\Id_k$ the identity in $M_k$, and by $\Id_N$ both the identity in $M_N$ and
the unit in $\A_N$. The space $M_k$ is identified as a subalgebra of both $M_k
\otimes M_N$ and $M_k \otimes \A_N$ via the inclusion map $x \mapsto x \otimes
\Id_N$, with the partial traces $\Id_k \otimes
\tr_N$ and $\Id_k \otimes \tau_N$ being the conditional expectations onto this
subalgebra.
Throughout, we let $C,C_1,C_2,\ldots$ be arbitrary constants depending on $k,p,q,\|a_0\|,\ldots,\|a_p\|,\|b_1\|,\ldots,\|b_q\|$.

For any element $x$ of a von Neumann algebra $\A$, define the self-adjoint
element
\[\Im x=\frac{x-x^*}{2i} \in \A.\]
We will use repeatedly the fact that for any self-adjoint element $y \in \A$,
\[\|(x+y)^{-1}\| \leq \|(\Im (x+y))^{-1}\|=\|(\Im x)^{-1}\|,\]
see \cite[Lemma 3.1]{haagerupthorbjornsen}. Let
\[M_{k}^{+}=\{X\in M_{k}:\Im X \succ 0\},
\quad M_k^-=\{X \in M_k:\Im X \prec 0\}\]
where $\succ$ and $\prec$ denote the positive-definite partial ordering for Hermitian matrices.

For $\Lambda,\Gamma \in M_k^+$, define the resolvents
    \begin{align*}
h_{S_{N}+T_{N}}(\Lambda)&=\left(\Lambda\otimes
        \Id_{N}-S_{N}-T_{N}\right)^{-1},\\
        g_{S_{N}+T_{N}}(\Lambda)&=\E[h_{S_N+T_N}(\Lambda)],\\
    g_{T_{N}}(\Gamma)&=\left(\Gamma\otimes \Id_{N}-T_{N}\right)^{-1}.
\end{align*}
    Define the $M_k$-valued Cauchy transforms
    \begin{align*}
H_{S_{N}+T_{N}}(\Lambda)&=(\Id_{k}
        \otimes\tr_{N})[h_{S_{N}+T_{N}}(\Lambda)],\\
    G_{S_{N}+T_{N}}(\Lambda)&=\E[H_{S_N+T_N}(\Lambda)],\\
    G_{T_{N}}(\Gamma)&=(\Id_{k}\otimes\tr_{N})[g_{ T_{N}}(\Gamma)].
\end{align*}
    We will eventually apply these with $\Lambda=\lambda \Id_k-a_0$
    to obtain the scalar-valued Cauchy transform of $L_N$.
Since $S_N$ and $T_N$ are Hermitian, we have the operator-norm bounds
\[\|H_{S_N+T_N}(\Lambda)\| \leq \|h_{S_N+T_N}(\Lambda)\|
\leq \|(\Im \Lambda)^{-1}\|,\]
and similarly for $G_{T_N}$ and $g_{T_N}$. One may verify that $H_{S_N+T_N}$, $G_{S_N+T_N}$, and $G_{T_N}$ are analytic maps from $M_k^+$ to $M_k^-$.

Correspondingly,
define the resolvent and $M_k$-valued Stieltjes transform of $s+T_N$, for
$\Lambda \in M_k^+$, by
\begin{align*}
g_{s+T_N}(\Lambda)&=(\Lambda \otimes \Id_N-s-T_N)^{-1},\\
G_{s+T_N}(\Lambda)&=(\Id_k \otimes \tau_N)[g_{s+T_N}(\Lambda)].
\end{align*}
Then by (\ref{eq:gst-sub}), $G_{s+T_N}$ satisfies the subordination identity
\begin{equation}\label{eq:subordination}
G_{s+T_N}(\Lambda)=G_{T_N}(\Lambda-\cR_s(G_{s+T_N}(\Lambda))),
\end{equation}
where
\begin{equation} \label{eq:rs-def}
  \cR_s(x)=\sum_{j=1}^p a_jxa_j
  \end{equation}
is the $M_k$-valued $\cR$-transform of $s$, see \cite[Proposition 4.2]{male}.
This identity holds for all $\Lambda \in M_k^+$, as both sides are analytic
over $M_k^+$. Since the $a_j$'s are Hermitian, $x \in M_k^+$ implies
$\Im \cR_s(x) \succeq 0$, and $x \in M_k^-$ implies $\Im \cR_s(x) \preceq 0$.

The subordination property (\ref{eq:subordination}) arises from freeness of $s$
and $T_N$ over $M_k$. In this subsection, we establish the following matrix analogue of this identity, which arises from the asymptotic freeness of $S_N$ and $T_N$.

\begin{lemma}[Matrix subordination identity]\label{lemma:matrixsubordination}
Fix any $\Lambda \in M_k^+$, and set
    $\Gamma=\Lambda-\cR_s(G_{S_N+T_N}(\Lambda))$. Then
\begin{equation}\label{eq:matrixsubordination}
G_{S_N+T_N}(\Lambda)=G_{T_N}(\Gamma)+R_N(\Lambda,\Gamma,\Id_k \otimes \Id_N)
+\Theta_N(\Lambda,\Gamma,\Id_k \otimes \Id_N)
\end{equation}
where
    \[\|R_N(\Lambda,\Gamma,\Id_k \otimes \Id_N)\| \leq \frac{C}{N}\|(\Im
    \Lambda)^{-1}\|^3,\qquad
\|\Theta_N(\Lambda,\Gamma,\Id_k \otimes \Id_N)\| \leq \frac{C}{N^2}
    \|(\Im \Lambda)^{-1}\|^5.\]
\end{lemma}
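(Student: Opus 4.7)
The plan is to combine the second-resolvent identity with Gaussian integration by parts for the GOE, following the GUE argument of \cite{male} but tracking the first-order correction that is specific to the real symmetric case. With $h = h_{S_N+T_N}(\Lambda)$ and $g = g_{T_N}(\Gamma)$---which is deterministic since $\Gamma$ is built from the deterministic quantity $G_{S_N+T_N}(\Lambda)$---the resolvent identity
\[
h = g - g\bigl[(\Lambda - \Gamma) \otimes \Id_N\bigr]h + g S_N h
\]
reduces, upon applying $(\Id_k \otimes \tr_N)\E[\cdot]$, the whole identity to analyzing $(\Id_k \otimes \tr_N)\E[g S_N h]$.

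Next I would apply Stein's formula entry-wise to each $\E[g(a_j \otimes X_j)h]$. The GOE structure $(X_j)_{\alpha\beta} = (X_j)_{\beta\alpha}$ with $\Var((X_j)_{\alpha\beta}) = (1+\delta_{\alpha\beta})/N$ produces, after summing over $\alpha,\beta,j$ and using $\cR_s(x) = \sum_j a_j x a_j$,
\[
\E[g S_N h] = \E\bigl[g\bigl(\cR_s(H(\Lambda)) \otimes \Id_N\bigr) h\bigr] + \frac{1}{N}\sum_{j=1}^p \E\bigl[g(a_j \otimes \Id_N)\,\widetilde h\,(a_j \otimes \Id_N) h\bigr],
\]
where $H(\Lambda) = (\Id_k \otimes \tr_N)[h]$ and $\widetilde h$ denotes the partial transpose of $h$ in its $M_N$-factor. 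The first summand is the GUE-type contraction producing free-probability subordination; the second is the genuine GOE correction, arising because each entry contracts with both itself and its transpose. Writing $\cR_s(H(\Lambda)) = \cR_s(G_{S_N+T_N}(\Lambda)) + \cR_s(H(\Lambda) - G_{S_N+T_N}(\Lambda))$ and invoking the defining relation $\Lambda - \Gamma = \cR_s(G_{S_N+T_N}(\Lambda))$, the deterministic piece yields exactly the term that cancels the subordination remainder from the first step. I would thus identify
\[
R_N(\Lambda,\Gamma,\Id_k \otimes \Id_N) = \frac{1}{N}\sum_{j=1}^p (\Id_k \otimes \tr_N)\E\bigl[g(a_j \otimes \Id_N)\widetilde h(a_j \otimes \Id_N)h\bigr]
\]
together with the centered fluctuation remainder $\Theta_N(\Lambda,\Gamma,\Id_k \otimes \Id_N) = (\Id_k \otimes \tr_N)\E[g(\cR_s(H(\Lambda) - G_{S_N+T_N}(\Lambda)) \otimes \Id_N) h]$.

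For the bounds, $\|h\| \le \|(\Im \Lambda)^{-1}\|$, and since each $a_j$ is Hermitian the map $\cR_s$ sends $M_k^-$ to $M_k^-$, so $\Im \Gamma \succeq \Im \Lambda$ and $\|g\| \le \|(\Im \Gamma)^{-1}\| \le \|(\Im \Lambda)^{-1}\|$. The bound $\|R_N\| \le CN^{-1}\|(\Im \Lambda)^{-1}\|^3$ is then immediate, using that the partial transpose is bounded on $M_k \otimes M_N$ by a $k$-dependent constant. For $\Theta_N$ I would apply Cauchy-Schwarz together with the Gaussian Poincaré inequality (as in \cite{male}): $H(\Lambda)$ is Lipschitz in the GOE variables with constant of order $\|h\|^2/\sqrt{N}$, yielding $\Var[H(\Lambda)_{ij}] \le CN^{-2}\|(\Im \Lambda)^{-1}\|^4$, and the extra $\|g\|\|h\|$ factor from the integrand produces the claimed $CN^{-2}\|(\Im \Lambda)^{-1}\|^5$ bound. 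The main obstacle, and the genuine departure from the GUE analysis of \cite{male}, is that $R_N$ is of order $1/N$ and cannot be absorbed into the higher-order error: the Wick-contraction symmetry that eliminates this term for GUE is unavailable for GOE, which is why the lemma must be stated with an explicit first-order correction to be carried through later by the subordination-stability step of Proposition \ref{prop:secondordersubordination}.
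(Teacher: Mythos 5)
Your proposal follows essentially the same plan as the paper's: reduce via the resolvent identity to $\E[g\,S_N\,h]$, apply Gaussian integration by parts, peel off the GUE-type $\cR_s(H)$ contraction, identify the GOE-specific transpose correction as $R_N$, and treat the remaining centered fluctuation as $\Theta_N$. Your Stein's-formula-with-covariance-kernel version and the paper's $X_j = (Z_j + Z_j^\sT)/\sqrt{2}$ representation are interchangeable, and your $R_N$ written with the $M_N$-partial transpose $\widetilde h$ coincides with the paper's form via the identity $\sum_{s,l=1}^k (e_{sl}^{(k)}\otimes\Id_N)\,h^\sT\,(e_{sl}^{(k)}\otimes\Id_N) = \widetilde h$; as you note, this map has completely bounded norm at most $k$, so the $R_N$ bound goes through. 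The algebraic identification of $R_N$ and $\Theta_N$ and the $R_N$ bound are fine.

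The one real gap is in your bound for $\Theta_N$. As you describe it --- $\Var[H(\Lambda)_{ij}]\lesssim N^{-2}\|(\Im\Lambda)^{-1}\|^4$ combined with an extra $\|g\|\|h\|$ factor --- the arithmetic does not close: $\sqrt{\Var[H]}\cdot\|g\|\|h\|$ is of order $N^{-1}\|(\Im\Lambda)^{-1}\|^4$, which is only first order. The crucial point is that $\E[\cR_s(K)]=0$ with $K=H-\E[H]$, so one may replace $h$ in the integrand by the centered resolvent $k=h-\E[h]$ at no cost; $\Theta_N$ then becomes a covariance of two mean-zero quantities, and Cauchy-Schwarz gives a product of two standard deviations, each of order $N^{-1}$ by Poincar\'e, e.g.\ $\sqrt{\Var[\cR_s(H)_{u'v'}]}\cdot\sqrt{\Var[\tr_N(g_{(u,u')}h_{(v',v)})]}\lesssim (N^{-1}\|(\Im\Lambda)^{-1}\|^2)(N^{-1}\|(\Im\Lambda)^{-1}\|^3)$. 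You gesture at \cite{male} where this is done, but the double centering is the step that makes $\Theta_N$ genuinely second order and should be made explicit; without it the argument as written only establishes an $O(N^{-1})$ bound on $\Theta_N$, which is useless since it is absorbed by $R_N$.
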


Comparing with (\ref{eq:subordination}), there is a ``first-order'' remainder
term $R_N$ and ``second-order'' remainder term $\Theta_N$, whose exact
forms are below. We will further approximate $R_N$ in the next subsection.

We show Lemma \ref{lemma:matrixsubordination} by specializing the following
proposition to $M=\Id_k \otimes \Id_N$.

\begin{proposition}\label{master1}
For any deterministic
$\Lambda,\Gamma\in M_{k}^{+}$ and $M \in M_k \otimes M_N$, we have
\begin{equation}
\Id_k \otimes \tr_N \Big(\E[Mh_{S_N+T_N}(\Lambda)]-Mg_{T_N}(\Gamma)\Big)
=R_N(\Lambda,\Gamma,M)+\Theta_N(\Lambda,\Gamma,M)
\end{equation}
where
\begin{align*}
\Theta_N(\Lambda,\Gamma,M)&=\E\left[\Id_k\otimes\tr_{N}\left[Mg_{T_{N} }(\Gamma)\Big(\big(\mathcal{R}_{s}(H_{S_{N}+T_{N}}(\Lambda))-\Lambda+\Gamma\big)\otimes \Id_{N}\Big)h_{S_{N}+T_{N}}(\Lambda)\right]\right],\\
    R_{N}(\Lambda,\Gamma,M)&=-\frac{1}{N}\sum_{j=1}^{p}\sum_{s,l=1}^{k}\mathbb{E}\Big[\Id_{k}\otimes
    \tr_{N}\Big[Mg_{T_{N}}(\Gamma)(a_{j} e_{sl}^{(k)} \otimes
    \Id_{N}) \\
&\hspace{2in}h_{S_{N}+T_{N}}^{\sT}(\Lambda)(e_{sl}^{(k)}a_{j}\otimes
    \Id_{N})h_{S_{N}+T_{N}}(\Lambda)\Big]\Big].
\end{align*}
    Here, $e_{sl}^{(k)} \in M_k$ is the matrix with $(s,l)$ coordinate equal to
    1 and remaining coordinates 0, and
    $h_{S_N+T_N}^{\sT}(\Lambda)=(\Lambda^\sT \otimes \Id_N-S_N^\sT-T_N^\sT)^{-1}$ is the (non-conjugated) matrix transpose, where
\[S_N^\sT=\sum_{j=1}^p a_j^\sT \otimes X_j, \qquad
T_N^\sT=\sum_{j=1}^q b_j^\sT \otimes Y_j^\sT.\]
 \end{proposition}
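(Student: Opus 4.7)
My plan is to derive the claimed identity by combining the resolvent identity (which converts $h_{S_N+T_N}(\Lambda) - g_{T_N}(\Gamma)$ into an expression involving $S_N$) with Gaussian integration-by-parts for the GOE matrices $X_1,\ldots,X_p$ appearing in $S_N$. The first step is the algebraic identity
\[
h_{S_N+T_N}(\Lambda) - g_{T_N}(\Gamma) = g_{T_N}(\Gamma)\bigl((\Gamma-\Lambda)\otimes \Id_N + S_N\bigr) h_{S_N+T_N}(\Lambda),
\]
obtained directly from $g^{-1}-h^{-1}=(\Gamma-\Lambda)\otimes\Id_N+S_N$. Multiplying on the left by $M$, applying $\Id_k\otimes \tr_N$, and taking expectation gives one term of the form $\E[\Id_k\otimes \tr_N(Mg((\Gamma-\Lambda)\otimes\Id_N)h)]$ together with the remaining term $\E[\Id_k\otimes\tr_N(MgS_Nh)]$, which is where all randomness in $S_N$ enters.

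The second step is to expand $S_N = \sum_{j,c,d}(X_j)_{cd}\bigl(a_j\otimes e_{cd}^{(N)}\bigr)$ and apply the GOE integration-by-parts formula $\E[(X_j)_{cd}\phi] = \tfrac{1}{N}\E[\tilde\partial_{(X_j)_{cd}}\phi + \tilde\partial_{(X_j)_{dc}}\phi]$, where $\tilde\partial$ treats all entries of $X_j$ as independent. Since $M$ and $g$ are independent of $X_j$, only $h$ is differentiated, producing the resolvent derivative $\tilde\partial_{(X_j)_{cd}}h = h(a_j\otimes e_{cd}^{(N)})h$. This gives two contributions per $(j,c,d)$, distinguished by whether the second occurrence of $a_j\otimes e_{cd}^{(N)}$ is replaced by $a_j\otimes e_{dc}^{(N)}$ (from the $\tilde\partial_{dc}$ piece).

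The third step is to evaluate the sums over $c,d$ using the two elementary matrix identities $\sum_{c,d}e_{cd}X e_{dc}=\Tr(X)\Id_N$ and $\sum_{c,d} e_{cd} X e_{cd} = X^\sT$. The first identity turns the $\tilde\partial_{dc}$-contribution into $\sum_j \bigl(a_j H_{S_N+T_N}(\Lambda) a_j\bigr) \otimes \Id_N = \cR_s(H_{S_N+T_N}(\Lambda))\otimes \Id_N$, which combines with the earlier $(\Gamma-\Lambda)\otimes\Id_N$ term to yield precisely $\Theta_N(\Lambda,\Gamma,M)$ as defined. The second identity shows that, on each $(s,l)$ block, $\sum_{c,d}(a_j\otimes e_{cd}^{(N)})\,h\,(a_j\otimes e_{cd}^{(N)})$ reduces to $\sum_{m,n}(a_j)_{sm}\,h_{mn}^\sT\,(a_j)_{nl}$; rewriting this in global form using the elementary matrices $e_{sl}^{(k)}\in M_k$ gives $\sum_{s,l}(a_j e_{sl}^{(k)}\otimes\Id_N)\,h_{S_N+T_N}^\sT(\Lambda)\,(e_{sl}^{(k)}a_j\otimes\Id_N)$, which is the expression appearing inside $R_N(\Lambda,\Gamma,M)$. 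The prefactor $1/N$ from the IBP formula is not cancelled here, so this contribution survives as a genuine $O(1/N)$ correction, in contrast to the GUE case.

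The main obstacle, such as it is, lies in Step 3: the GOE integration-by-parts produces the \emph{extra} transposed derivative $\tilde\partial_{dc}$ (compared with GUE, where only $\tilde\partial_{cd}$ arises), and the contribution from $\tilde\partial_{cd}$ does not combine into $\cR_s(H)$ but rather into the transposed quantity $h^\sT$ via $\sum_{c,d} e_{cd} X e_{cd} = X^\sT$. Correctly tracking this transpose and recognizing it as $h_{S_N+T_N}^\sT(\Lambda)=(\Lambda^\sT\otimes\Id_N - S_N^\sT - T_N^\sT)^{-1}$ (together with the block-index bookkeeping to express the sum in terms of $e_{sl}^{(k)}$) is the single nonroutine computation; everything else is standard Gaussian calculus with resolvents. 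Once the identity is established for a deterministic $M$, it holds in particular for $M=\Id_k\otimes\Id_N$, which is the form used subsequently in Lemma \ref{lemma:matrixsubordination}.
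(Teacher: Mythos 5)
Your proposal is essentially the paper's proof, with a minor reorganization. The paper first writes $X_j = (Z_j + Z_j^\sT)/\sqrt{2}$ and applies standard i.i.d.\ Gaussian integration by parts to the entries of $Z_j$ (and, by symmetry, $Z_j^\sT$), then multiplies through by $Mg_{T_N}(\Gamma)$ and rearranges $S_N = (\Lambda-\Gamma)\otimes\Id_N + g^{-1} - h^{-1}$ at the end; you instead start from the resolvent identity and invoke the GOE integration-by-parts formula (with both formal partials $\tilde\partial_{cd}$ and $\tilde\partial_{dc}$) directly. These two routes are equivalent, and you correctly identify the two elementary-matrix sums that arise---one yielding $\cR_s(H_{S_N+T_N})\otimes\Id_N$ after $\sum_{c,d}e_{cd}Xe_{dc}=\Tr(X)\Id_N$, the other yielding the transpose term $h_{S_N+T_N}^\sT$ after $\sum_{c,d}e_{cd}Xe_{cd}=X^\sT$ with the resulting block re-indexing. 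One small remark: both your derivation and the paper's own proof in fact produce $\Theta_N + \frac{1}{N}\sum_{j,s,l}\E[\cdots]$; against the leading minus in the paper's definition of $R_N$ this reads $\Theta_N - R_N$, not $\Theta_N + R_N$ as stated, which is a harmless sign inconsistency in the proposition (and the downstream Lemma~\ref{lemma:matrixsubordination}) that has no effect since $R_N$ is only ever used through operator-norm bounds.
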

\begin{proof}
    The argument follows \cite[Proposition 5.2]{male}, with
    modifications similar to \cite[Theorem 2.1]{schultz} which produce the extra term $R_N$ in the setting of the GOE.

We represent $X_{j}=\frac{1}{\sqrt{2}}(Z_{j}+Z_{j}^\sT)$ where $Z_j \in \R^{N \times N}$ has i.i.d.\ $\N(0,1/N)$ entries. The Gaussian integration-by-parts identity $\E[\xi f(\xi)]=N^{-1}\E[f'(\xi)]$ for $\xi \sim \N(0,1/N)$ gives
\[\mathbb{E}\left[(Z_{j})_{sl}h_{S_{N}+T_{N}}(\Lambda)\right]=
\frac{1}{N\sqrt{2}}\mathbb{E}\left[\frac{d}{dt}\Big|_{t=0}(\Lambda\otimes \Id_{N}-S_{N}-T_{N}-t(a_{j}\otimes e_{sl}^{(N)}+a_{j}\otimes e_{ls}^{(N)}))^{-1}\right],\]
where $e_{sl}^{(N)} \in \R^{N \times N}$ is the matrix with the single entry $(s,l)$ equal to 1. Applying
\begin{equation}\label{eq:matrixderivative}
\frac{d}{dt}A(t)^{-1}=-A(t)^{-1}(t)A'(t)A^{-1}(t),
\end{equation}
we get
\[\mathbb{E}\left[(Z_{j})_{sl}h_{S_{N}+T_{N}}(\Lambda)\right]
=\frac{1}{N\sqrt{2}}
\mathbb{E}\left[h_{S_{N}+T_{N}}(\Lambda)(a_{j}\otimes e_{sl}^{(N)}+a_{j}\otimes e_{ls}^{(N)})h_{S_{N}+T_{N}}(\Lambda)\right].\] Then writing $Z_{j}=\sum_{s,l=1}^{N}(Z_{j})_{sl}e_{sl}^{(N)}$, 
\begin{align}
    &\mathbb{E}\left[\left(\frac{a_{j}}{\sqrt{2}}\otimes
Z_{j}\right)h_{S_{N}+T_{N}}(\Lambda)\right]\nonumber\\
&\hspace{0.5in}=\frac{1}{2N}\sum_{s,l=1}^{N}(a_{j}\otimes e_{sl}^{(N)})\mathbb{E}\left[h_{S_{N}+T_{N}}(\Lambda)(a_{j}\otimes e_{sl}^{(N)}+a_{j}\otimes e_{ls}^{(N)})h_{S_{N}+T_{N}}(\Lambda)\right].\label{e4}
\end{align}

For any $a,b \in M_k$ and any elementary tensor $x \otimes Y \in M_k \otimes M_N$,
\[\sum_{s,l=1}^{N}(a \otimes e_{sl}^{(N)})(x \otimes Y)(b\otimes e_{ls}^{(N)})=N\tr_N(Y) \cdot axb \otimes \Id_{N},\]
and
\begin{align*}
&\sum_{s,l=1}^{N}(\Id_{k}\otimes e_{sl}^{(N)})(x\otimes Y)(\Id_{k}\otimes
e_{sl}^{(N)})=x\otimes Y^{\sT}\\
&\hspace{1in}=(x^{\sT}\otimes Y)^{\sT}=\sum_{s,l=1}^{k}(e^{(k)}_{sl}\otimes \Id_{N})(x\otimes Y)^{\sT}(e^{(k)}_{sl}\otimes \Id_{N}).
\end{align*}
Then by linearity, for any $M \in M_k \otimes M_N$,
\[\sum_{s,l=1}^{N}(a \otimes e_{sl}^{(N)})M(b\otimes e_{ls}^{(N)})=N \cdot \Big(a((\Id_k \otimes \tr_N)M)b\Big) \otimes \Id_{N},\]
and
\[\sum_{s,l=1}^{N}(\Id_{k}\otimes e_{sl}^{(N)})M(\Id_{k}\otimes e_{sl}^{(N)})=\sum_{s,l=1}^{k}(e^{(k)}_{sl}\otimes \Id_{N})M^\sT(e^{(k)}_{sl}\otimes \Id_{N}).\]
So the right side of (\ref{e4}) is
\begin{align*}
&\frac{1}{2}\mathbb{E}\left[(a_{j}H_{S_{N}+T_{N}}(\Lambda)a_{j}\otimes
\Id_{N})h_{S_{N}+T_{N}}(\Lambda)\right]\\
&+\frac{1}{2N}\sum_{s,l=1}^{k} \mathbb{E}\left[(a_{j} e_{sl}^{(k)} \otimes \Id_{N}) h_{S_{N}+T_{N}}^{\sT}(\Lambda)(e_{sl}^{(k)}a_{j}\otimes \Id_{N})h_{S_{N}+T_{N}}(\Lambda)\right].
\end{align*}

Applying the same identity as (\ref{e4}) for $Z_j^\sT$, summing over $j$, recalling $S_N=\sum_j a_j \otimes (Z_j+Z_j^\sT)/\sqrt{2}$, multiplying on the left by $Mg_{T_N}(\Gamma)$, and recalling the definition of $\cR_s$ from (\ref{eq:rs-def}) we get
\begin{align*}
&\mathbb{E}\left[Mg_{T_{N}}(\Gamma)S_{N}h_{S_{N}+T_{N}}(\Lambda)\right]\\
        &=\mathbb{E}\left[Mg_{T_{N}}(\Gamma)(\cR_s(H_{S_{N}+T_{N}}(\Lambda))\otimes \Id_{N})h_{S_{N}+T_{N}}(\Lambda)\right]\\
        &\hspace{0.1in}+\frac{1}{ N}\sum_{j=1}^{p}\sum_{s,l=1}^{k}\mathbb{E}\left[Mg_{T_{N}}(\Gamma)(a_{j} e_{sl}^{(k)} \otimes \Id_{N})h_{S_{N}+T_{N}}^{\sT}(\Lambda)(e_{sl}^{(k)}a_{j}\otimes \Id_{N})h_{S_{N}+T_{N}}(\Lambda)\right].
\end{align*}
Writing $S_{N}=(\Lambda-\Gamma)\otimes \Id_{N}+(\Gamma\otimes \Id_{N}-T_{N})-(\Lambda\otimes \Id_{N}-S_{N}-T_{N})$, rearranging,
and applying the partial trace $\Id_k \otimes \tr_N$ to both sides yields the result.
\end{proof}

\begin{remark*}
Proposition \ref{master1} shows the difference between GOE and GUE matrices.
Applying integration by parts for the $N^2$ independent Gaussian random
variables in the GUE setting, we would obtain $N^2$ terms on the right of
(\ref{e4}), see \cite[eqs.\ (3.7--3.9)]{haagerupthorbjornsen}. However, in
the GOE setting, there are $2N^2$ terms in (\ref{e4}), and the terms in
(\ref{e4}) which do not appear in the GUE case lead to the first order
remainder $R_N$.
\end{remark*}

\begin{proposition}\label{prop:Rbound}
For any $\Lambda,\Gamma \in M_k^+$ and $M \in M_k \otimes M_N$,
\begin{equation}
    \|R_{N}(\Lambda,\Gamma,M)\|\leq \frac{C}{N}\|M\|\|(\Im\Lambda)^{-1}\|^{2}\|(\Im\Gamma)^{-1}\|.
\end{equation}
\end{proposition}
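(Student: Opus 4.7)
The proposition follows from a direct operator-norm estimate on the explicit formula for $R_N(\Lambda,\Gamma,M)$ given in Proposition \ref{master1}. The plan is to pass norms inside the expectation, the double sum over $j$ and $(s,l)$, and the partial trace, and then bound each factor individually using resolvent estimates.

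First, I would record the elementary resolvent bounds $\|g_{T_N}(\Gamma)\|\leq\|(\Im\Gamma)^{-1}\|$ and $\|h_{S_N+T_N}(\Lambda)\|\leq\|(\Im\Lambda)^{-1}\|$, which hold because $T_N$ and $S_N+T_N$ are self-adjoint (the $a_j,b_j$ are Hermitian and $X_j,Y_j$ are self-adjoint) and $\Im\Gamma,\Im\Lambda\succ 0$, via \cite[Lemma 3.1]{haagerupthorbjornsen}.

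Second, I need the analogous bound $\|h_{S_N+T_N}^{\sT}(\Lambda)\|\leq\|(\Im\Lambda)^{-1}\|$ for the transposed resolvent. Here I would observe that, since each $X_j$ is real symmetric (so $X_j^{\sT}=X_j$) and each $a_j$ is Hermitian (so $a_j^{\sT}=\overline{a_j}$), we have $S_N^{\sT}=\sum_j\overline{a_j}\otimes X_j$, which is self-adjoint because $(\overline{a_j})^*=a_j^{\sT}=\overline{a_j}$; the same applies to $T_N^{\sT}$. Moreover, $\Im(\Lambda^{\sT})=\overline{\Im\Lambda}\succ 0$ has the same spectrum as $\Im\Lambda$, so applying \cite[Lemma 3.1]{haagerupthorbjornsen} again gives the desired bound.

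Third, I would assemble the estimate. The coefficient factors $\|a_j e_{sl}^{(k)}\otimes\Id_N\|\leq\|a_j\|$ are bounded by a constant depending only on $a_0,\ldots,a_p$. The partial trace $\Id_k\otimes\tr_N$ is a unital completely positive map, hence norm-contractive on $M_k\otimes M_N$. Taking norms inside and applying sub-multiplicativity term by term yields, for each summand in $R_N$,
\[
\Bigl\|\Id_k\otimes\tr_N\bigl[Mg_{T_N}(\Gamma)(a_je_{sl}^{(k)}\otimes\Id_N)h_{S_N+T_N}^{\sT}(\Lambda)(e_{sl}^{(k)}a_j\otimes\Id_N)h_{S_N+T_N}(\Lambda)\bigr]\Bigr\|\leq C_1\|M\|\|(\Im\Gamma)^{-1}\|\|(\Im\Lambda)^{-1}\|^{2}.
\]
Summing over $j=1,\ldots,p$ and $s,l=1,\ldots,k$ contributes a factor $pk^{2}$ which is absorbed into the overall constant $C$, and the prefactor $1/N$ produces the required $1/N$ decay. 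There are no real obstacles in this argument; the only subtle point is verifying that the transposed matrices retain Hermiticity so that the $h^{\sT}$-resolvent admits the same operator-norm bound as $h$, which is exactly where the real-symmetric structure of the GOE (as opposed to, e.g., a GUE) is used.
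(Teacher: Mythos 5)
Your proposal is correct and matches the paper's own (very terse) proof: the bound follows directly from the definition of $R_N$ by contractivity of $\E$ and $\Id_k\otimes\tr_N$, the resolvent bounds $\|g_{T_N}(\Gamma)\|\leq\|(\Im\Gamma)^{-1}\|$ and $\|h_{S_N+T_N}(\Lambda)\|\leq\|(\Im\Lambda)^{-1}\|$, and boundedness of the coefficient factors. Your explicit verification that $h_{S_N+T_N}^{\sT}(\Lambda)$ obeys the same bound (using the real-symmetric $X_j$, Hermitian $a_j,b_j,Y_j$, and $\Im(\Lambda^{\sT})=(\Im\Lambda)^{\sT}$) is a detail the paper leaves implicit, but the argument is the same.
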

\begin{proof}
This follows from the definition of $R_N$, and the bounds $\|g_{T_N}(\Gamma)\| \leq \|(\Im \Gamma)^{-1}\|$ and $\|h_{S_N+T_N}(\Lambda)\| \leq \|(\Im \Lambda)^{-1}\|$.
\end{proof}

\begin{proposition}\label{prop:Thetabound}
For any $\Lambda \in M_k^+$, $M \in M_k \otimes M_N$, and for $\Gamma=\Lambda-\cR_s(G_{S_N+T_N}(\Lambda))$,
\[\|\Theta_N(\Lambda,\Gamma,M)\|\leq \frac{C}{N^2}\|M\|\|(\Im \Lambda)^{-1}\|^5.\]
\end{proposition}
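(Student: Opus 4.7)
The plan is to exploit two features of the specific choice $\Gamma=\Lambda-\cR_s(G_{S_N+T_N}(\Lambda))$. First, by linearity of $\cR_s$,
\[\cR_s(H_{S_N+T_N}(\Lambda))-\Lambda+\Gamma=\cR_s\bigl(H_{S_N+T_N}(\Lambda)-G_{S_N+T_N}(\Lambda)\bigr)=\cR_s(\Delta H),\]
where $\Delta H:=H_{S_N+T_N}(\Lambda)-G_{S_N+T_N}(\Lambda)$ has mean zero. Second, since $G_{S_N+T_N}(\Lambda)$ is the expectation of an element of $M_k^-$ and each $a_j$ is Hermitian, $\Im\cR_s(G_{S_N+T_N}(\Lambda))\preceq 0$, so $\Im\Gamma\succeq\Im\Lambda$ and $\|(\Im\Gamma)^{-1}\|\leq\|(\Im\Lambda)^{-1}\|$. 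Writing $\tilde A:=Mg_{T_N}(\Gamma)$, this gives $\|\tilde A\|\leq\|M\|\|(\Im\Lambda)^{-1}\|$.

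The key step is a centering trick. Since $\tilde A$ and $\E h_{S_N+T_N}(\Lambda)$ are deterministic and $\E[\cR_s(\Delta H)]=0$, we may replace $h_{S_N+T_N}(\Lambda)$ in $\Theta_N$ by its fluctuation $\Delta h:=h_{S_N+T_N}(\Lambda)-\E h_{S_N+T_N}(\Lambda)$ without changing the expectation. Expanding $\cR_s(\Delta H)=\sum_{j=1}^{p}a_j(\Delta H)a_j$ and $\Delta H=\sum_{s,l=1}^{k}(\Delta H)_{sl}e_{sl}^{(k)}$, the $(u,v)$-entry of $\Theta_N(\Lambda,\Gamma,M)$ becomes a finite sum
\[(\Theta_N)_{uv}=\sum_{j,s,l}\E\bigl[(\Delta H)_{sl}\cdot\phi_{j,s,l,u,v}\bigr],\qquad \phi_{j,s,l,u,v}:=\bigl[\Id_k\otimes\tr_N\bigl(\tilde A(a_je_{sl}^{(k)}a_j\otimes\Id_N)\,\Delta h\bigr)\bigr]_{uv},\]
in which both $(\Delta H)_{sl}$ and $\phi_{j,s,l,u,v}$ are centered. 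Cauchy--Schwarz reduces the task to bounding $\Var((\Delta H)_{sl})$ and $\Var(\phi_{j,s,l,u,v})$.

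Both variances are $O(1/N^2)$ by the Gaussian Poincar\'e inequality applied to the independent Gaussian entries of $Z_1,\ldots,Z_p$ (in the decomposition $X_j=(Z_j+Z_j^\sT)/\sqrt 2$), which gives $\Var(f)\leq (C/N)\,\E\|\nabla f\|^2$ for smooth scalar $f$. Since $\partial h_{S_N+T_N}(\Lambda)/\partial (Z_j)_{ab}=(1/\sqrt 2)\,h_{S_N+T_N}(\Lambda)(a_j\otimes(e_{ab}^{(N)}+e_{ba}^{(N)}))h_{S_N+T_N}(\Lambda)$, summing squared derivatives via a Hilbert--Schmidt bound yields $\sum_{j,a,b}|\partial\tr_N(B\Delta h)/\partial (Z_j)_{ab}|^2\leq C\|B\|^2\|(\Im\Lambda)^{-1}\|^4/N$, hence $\Var(\tr_N(B\Delta h))\leq C\|B\|^2\|(\Im\Lambda)^{-1}\|^4/N^2$. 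Applied with $B$ extracting $(\Delta H)_{sl}$ this gives $\Var((\Delta H)_{sl})\leq C\|(\Im\Lambda)^{-1}\|^4/N^2$; applied with $B=\tilde A(a_je_{sl}^{(k)}a_j\otimes\Id_N)$ extracting the $(u,v)$-entry it gives $\Var(\phi_{j,s,l,u,v})\leq C\|M\|^2\|(\Im\Lambda)^{-1}\|^6/N^2$. Each covariance is therefore bounded by $C\|M\|\|(\Im\Lambda)^{-1}\|^5/N^2$, and summing the $O(1)$-many terms yields the stated bound.

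The main obstacle is identifying the centering step: one must recognize $\Theta_N$ as a covariance of \emph{two} fluctuating quantities rather than a single fluctuation acting on a deterministic kernel. Without this, a direct bound using $\|\cR_s(\Delta H)\|\leq C\|\Delta H\|$ together with an $L^2$ estimate on $\|\Delta H\|$ of order $1/N$ and $\|h_{S_N+T_N}(\Lambda)\|\leq\|(\Im\Lambda)^{-1}\|$ would yield only an $O(1/N)$ estimate---precisely the order of $R_N$ retained as the first-order correction in Lemma \ref{lemma:matrixsubordination}.
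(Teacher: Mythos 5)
Your proposal is correct and follows essentially the same route as the paper's proof (itself modeled on Male's argument): use the choice of $\Gamma$ to rewrite $\Theta_N$ with the centered fluctuation $\cR_s(H_{S_N+T_N}(\Lambda)-G_{S_N+T_N}(\Lambda))$, replace $h_{S_N+T_N}(\Lambda)$ by its fluctuation since the other factor is centered, then apply Cauchy--Schwarz and bound both resulting variances via the Gaussian Poincar\'e inequality, yielding $O(N^{-2})$ with the stated powers of $\|(\Im\Lambda)^{-1}\|$. The only difference is organizational (you expand over the matrix units $e_{sl}^{(k)}$ and absorb $a_je_{sl}^{(k)}a_j$ into the deterministic kernel, while the paper keeps entries of $\cR_s(H_{S_N+T_N}(\Lambda))$ as one factor and decomposes over $M_N$ blocks), which does not change the substance of the argument.
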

\begin{proof}
    The proof is similar to that of \cite[Proposition 5.3]{male}, and we
    will omit some details. Introduce
\[K_{S_{N}+T_{N}}(\Lambda)=H_{S_{N}+T_{N}}(\Lambda)-G_{S_{N}+T_{N}}(\Lambda)
=H_{S_{N}+T_{N}}(\Lambda)-\E[H_{S_{N}+T_{N}}(\Lambda)].\]
Then, as $\cR_s$ is a linear map, for the given value of $\Gamma$
\[\Theta_N(\Lambda,\Gamma,M)
=\E\Big[\Id_k \otimes \tr_N\Big[
Mg_{T_N}(\Gamma)(\cR_s(K_{S_N+T_N}(\Lambda)) \otimes \Id_N)
h_{S_N+T_N}(\Lambda)\Big]\Big].\]
Further introduce
\[k_{S_N+T_N}(\Lambda)=h_{S_N+T_N}(\Lambda)-g_{S_N+T_N}(\Lambda)
=h_{S_N+T_N}(\Lambda)-\E[h_{S_N+T_N}(\Lambda)].\]
Then, applying $\E[K_{S_N+T_N}(\Lambda)]=0$, the above implies
\[\Theta_N(\Lambda,\Gamma,M)
=\E\Big[\Id_k \otimes \tr_N\Big[
Mg_{T_N}(\Gamma)(\cR_s(K_{S_N+T_N}(\Lambda)) \otimes \Id_N)
k_{S_N+T_N}(\Lambda)\Big]\Big].\]

Denote
\[\|M\|_\infty=\max_{i,j} |M_{ij}|, \quad \|M\|_\HS^2=\sum_{i,j} |M_{ij}|^2.\]
For $X \in M_k \otimes M_N$ and $e_s^{(N)}$ the $s^\text{th}$ standard basis vector in $\C^N$, define
\[(\Id_k \otimes e_s^{(N)})^\sT X (\Id_k \otimes e_l^{(N)})
=X^{(s,l)} \in M_k\]
and
\[(e_s^{(k)} \otimes \Id_N)^\sT X (e_l^{(k)} \otimes \Id_N)=X_{(s,l)} \in M_N.\]
Note in particular that
\[X=\sum_{s,l=1}^N X^{(s,l)} \otimes e_{sl}^{(N)}.\]
Applying this decomposition to $Mg_{T_N}(\Gamma)$ and to $h_{S_N+T_N}(\Lambda)$, we bound
    \begin{align*}
\|\Theta(\Lambda,\Gamma,M)\| &\leq \sqrt{k}\|\Theta(\Lambda,\Gamma,M)\|_\infty\\
&=\frac{\sqrt{k}}{N}\left\|\sum_{s,l=1}^N
    \E\Big[(Mg_{T_N}(\Gamma))^{(s,l)}\cR_s(K_{S_N+T_N}(\Lambda))(k_{S_N+T_N}(\Lambda))^{(l,s)}\Big]\right\|_\infty.
\end{align*}
Then applying $|\sum_{i=1}^m y_i| \leq m \max_i |y_i|$, we obtain
\begin{align*}
&\|\Theta(\Lambda,\Gamma,M)\|\\
&\leq \frac{k^{5/2}}{N}
\max_{u,v,u',v' \in \{1,\ldots,k\}}
\left|\E\left[\cR_s(K_{S_N+T_N}(\Lambda))_{u',v'}
\sum_{s,l=1}^N (Mg_{T_N}(\Gamma))^{(s,l)}_{u,u'} \cdot
(k_{S_N+T_N}(\Lambda))_{v',v}^{(l,s)}\right]\right|\\
&\leq k^{5/2}\max_{u,v,u',v' \in \{1,\ldots,k\}}
\E\left[|\cR_s(K_{S_N+T_N}(\Lambda))_{u',v'}| \cdot \left|\tr_N
\Big(Mg_{T_N}(\Gamma)_{(u,u')}k_{S_N+T_N}(\Lambda)_{(v',v)}\Big)\right|
\right]\\
&\leq k^{5/2}\max_{u,v,u',v' \in \{1,\ldots,k\}}
\Var[\cR_s(H_{S_N+T_N}(\Lambda))_{u',v'}]^{1/2}\\
&\hspace{2in}\cdot \Var\left[\tr_N
\Big(Mg_{T_N}(\Gamma)_{(u,u')}h_{S_N+T_N}(\Lambda)_{(v',v)}\Big)\right]^{1/2},
\end{align*}
where the last line applies Cauchy-Schwarz and $\Var$ denotes the complex variance.

Fix any $u,v,u',v'$, and define the scalar-valued functions
\begin{align*}
F_1(S_N)&=\cR_s(H_{S_N+T_N}(\Lambda))_{u',v'},\\
F_2(S_N)&=\tr_N \left(Mg_{T_N}(\Gamma)_{(u,u')}h_{S_N+T_N}(\Lambda)_{(v',v)}\right).
\end{align*}
    Following the same arguments as in \cite[Proposition 5.3]{male},
and applying $\|(\Im \Gamma)^{-1}\| \leq \|(\Im \Lambda)^{-1}\|$ because $\Lambda \in M_k^+$ and $\Im \cR_s(G_{S_N+T_N}(\Lambda)) \preceq 0$, we may verify that
    \[\|\nabla F_1(S_N)\|^2 \leq \frac{C}{N}\|(\Im \Lambda)^{-1}\|^4, \qquad
    \|\nabla F_2(S_N)\|^2 \leq \frac{C}{N}\|M\|^2\|(\Im \Lambda)^{-1}\|^6.\]
Then, as the entries of $S_N$ are $C/\sqrt{N}$-Lipschitz in the independent standard Gaussian variables which define $X_1,\ldots,X_p$, the Gaussian Poincar\'e inequality yields
\[\Var[F_1(S_N)] \leq \frac{C}{N^2}\|(\Im \Lambda)^{-1}\|^4,
\quad \Var[F_2(S_N)] \leq \frac{C}{N^2}\|M\|^2\|(\Im \Lambda)^{-1}\|^6.\]
Substituting above concludes the proof.
\end{proof}

Combining Propositions \ref{master1}, \ref{prop:Rbound} and
\ref{prop:Thetabound}, and specializing to $M=\Id_k \otimes \Id_N$,
we obtain Lemma \ref{lemma:matrixsubordination}. The following is then a
consequence of the stability property for the subordination equation
(\ref{eq:subordination}), established in \cite{male}:
For a parameter $\eta>0$, define the simply connected open set
\begin{equation}\label{omega}
    \Omega_{\eta}^{(N)}=\left\{\Lambda\in M_{k}^{+}:\|(\Im\Lambda)^{-1}\|<N^{\eta}\right\},
\end{equation}
\begin{lemma}[First-order Cauchy transform approximation]\label{lemma:firstorderapprox}
Let $\eta<1/3$. Then there exists $N_{0}>0$ such that for all $N \geq N_{0}$ and $\Lambda \in \Omega_\eta^{(N)}$,
\begin{align*}
    \|G_{s+T_{N}}(\Lambda)-G_{S_{N}+T_{N}}(\Lambda)\|
    &\leq \frac{C}{N}\|(\Im \Lambda)^{-1}\|^3\left(1+\|(\Im \Lambda)^{-1}\|^2\right).
\end{align*}
\end{lemma}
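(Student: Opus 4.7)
The plan is to recognize $G_{S_N+T_N}(\Lambda)$ as an approximate fixed point of the very same operator-valued subordination equation (\ref{eq:subordination}) whose unique solution in $M_k^-$ is $G_{s+T_N}(\Lambda)$, and then to invoke the stability of this equation from \cite[Proposition 4.3]{male}. The heavy lifting has already been carried out in Lemma \ref{lemma:matrixsubordination}; what remains is to translate ``approximate fixed point'' into ``close to the true fixed point.''

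First I would choose $\Gamma = \Lambda - \cR_s(G_{S_N+T_N}(\Lambda))$ in Lemma \ref{lemma:matrixsubordination} and rewrite its conclusion as
\[
G_{S_N+T_N}(\Lambda) - G_{T_N}\!\bigl(\Lambda - \cR_s(G_{S_N+T_N}(\Lambda))\bigr) = R_N(\Lambda, \Gamma, \Id_k \otimes \Id_N) + \Theta_N(\Lambda, \Gamma, \Id_k \otimes \Id_N).
\]
Since $G_{S_N+T_N}(\Lambda) \in M_k^-$ and each $a_j$ is Hermitian, $\Im\cR_s(G_{S_N+T_N}(\Lambda)) \preceq 0$, so $\|(\Im\Gamma)^{-1}\| \leq \|(\Im\Lambda)^{-1}\|$. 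Combining Propositions \ref{prop:Rbound} and \ref{prop:Thetabound} with $M = \Id_k \otimes \Id_N$ yields a defect bound
\[
\bigl\|R_N + \Theta_N\bigr\| \;\leq\; \frac{C}{N}\|(\Im\Lambda)^{-1}\|^{3} + \frac{C}{N^{2}}\|(\Im\Lambda)^{-1}\|^{5}.
\]
By (\ref{eq:subordination}), $G_{s+T_N}(\Lambda)$ satisfies the same equation exactly, with defect zero.

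Next I would apply \cite[Proposition 4.3]{male}: whenever $F \in M_k^-$ has $\|F - G_{T_N}(\Lambda - \cR_s(F))\| \leq \varepsilon$ and $\varepsilon$ is sufficiently small relative to an explicit polynomial in $\|(\Im\Lambda)^{-1}\|$, one has $\|F - G_{s+T_N}(\Lambda)\| \leq C\, P(\|(\Im\Lambda)^{-1}\|)\,\varepsilon$. The constraint $\eta < 1/3$ enters precisely through this smallness hypothesis: for $\Lambda \in \Omega_\eta^{(N)}$ the defect is at most $CN^{3\eta-1}(1 + N^{2\eta - 1}) \to 0$, so the hypothesis is met for all large $N \geq N_0$. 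Setting $F = G_{S_N+T_N}(\Lambda)$ and using $1/N \leq 1$ to combine the two terms of the defect into $\|(\Im\Lambda)^{-1}\|^{3}(1+\|(\Im\Lambda)^{-1}\|^{2})$ produces the claimed estimate.

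The main obstacle is aligning the polynomial $P$ coming from Male's stability statement with the specific prefactor $1+\|(\Im\Lambda)^{-1}\|^2$ appearing in the lemma. The stability in \cite{male} is a Lipschitz-type inversion of the implicit equation $F = G_{T_N}(\Lambda - \cR_s(F))$, and tracking its polynomial growth in $\|(\Im\Lambda)^{-1}\|$ requires careful bookkeeping of the derivative of $G_{T_N}$ (bounded by $\|(\Im\Gamma)^{-1}\|^2$) and of the linear map $\cR_s$. No new ingredients are needed beyond what is already set up here and in \cite{male}, but this is the step where the exponents $3$ and $5$ in the defect bound get redistributed into the final factorization $\|(\Im\Lambda)^{-1}\|^3\bigl(1+\|(\Im\Lambda)^{-1}\|^2\bigr)$.
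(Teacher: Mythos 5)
Your proposal is correct and follows the same route as the paper: apply Lemma \ref{lemma:matrixsubordination} with $\Gamma=\Lambda-\cR_s(G_{S_N+T_N}(\Lambda))$ to exhibit $G_{S_N+T_N}(\Lambda)$ as an approximate solution of the subordination equation with defect $O(N^{-1})$ (vanishing on $\Omega_\eta^{(N)}$ because $\eta<1/3$), then invoke the stability estimate \cite[Proposition 4.3]{male}. Your closing remark correctly locates the source of the extra factor $1+\|(\Im\Lambda)^{-1}\|^{2}$ in the polynomial prefactor of Male's stability bound.
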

\begin{proof}
For $\eta<1/3$, $N \geq N_{0}$, and $\Lambda\in\Omega_{\eta}^{(N)}$, Lemma
    \ref{lemma:matrixsubordination} implies
\[\|G_{S_N+T_N}(\Lambda)-G_{T_N}(\Lambda-\cR_s(G_{S_N+T_N}(\Lambda)))\|
\leq \frac{C}{N}\|(\Im \Lambda)^{-3}\| \leq 1/2.\]
The result then follows from \cite[Proposition 4.3]{male}.
\end{proof} 

\subsection{Second-order approximation}
For $\Lambda \in M_k^+$, denote
the first-order remainder in Lemma \ref{lemma:matrixsubordination} as
\[R_{N}(\Lambda)=R_{N}(\Lambda,\Gamma_N, \Id_k \otimes \Id_N), \qquad
\Gamma_N \equiv \Gamma_N(\Lambda)=\Lambda-\cR_s(G_{S_N+T_N}(\Lambda)).\]
Define the approximation to $\Gamma_N$, which appears in
(\ref{eq:subordination}), by
\[\Gamma_\A \equiv \Gamma_\A(\Lambda)=\Lambda-\mathcal{R}_{s}(G_{s+T_{N}}(\Lambda))).\]
Note that if $\Lambda \in M_k^+$, then $\Gamma_N,\Gamma_\A \in M_k^+$ also.
Then define an approximation to $R_N(\Lambda)$ by
\begin{equation}\label{eq:RA}
    R_\A(\Lambda)=-\frac{1}{N}\sum_{j=1}^{p}\sum_{m,l=1}^{k}(\Id_{k}\otimes \tau_N)\left(g_{T_{N}}(\Gamma_\A)(a_{j} e_{ml}^{(k)} \otimes \Id_{N})g_{s+T_{N}}^{\sT}(\Lambda)(e_{ml}^{(k)}a_{j}\otimes \Id_{N})g_{s+T_{N}}(\Lambda)\right).
\end{equation}
Here, $g_{s+T_N}^\sT=(\Lambda^\sT \otimes \Id_N-s^\sT-T_N^\sT)^{-1}$ where
\[s^\sT=\sum_{j=1}^p a_j^\sT \otimes x_j\]
and $T_N^\sT$ is as before. In this section, we extend Lemma
\ref{lemma:firstorderapprox} to the following second-order approximation.

\begin{lemma}[Second-order Cauchy-transform approximation]\label{GsT-GST+R/N}
For $\eta<1/3$, a constant $N_0>0$, all $N \geq N_0$, and
    $\Lambda \in \Omega_\eta^{(N)}$,
\[\|G_{s+T_{N}}(\Lambda)-G_{S_{N}+T_{N}}(\Lambda)+\mathcal{L}_\Lambda(R_\A(\Lambda))\|
    \leq \frac{C}{N^{2}} \|(\Im\Lambda)^{-1}\|^5(1+\|(\Im
    \Lambda)^{-1}\|^{10})\]
    where $\mathcal{L}_\Lambda:M_k \to M_k$ is the linear map
\[\mathcal{L}_\Lambda(x)=x-G'_{s+T_N}(\Lambda)[\cR_s(x)],\]
    and $G_{s+T_N}'(\Lambda)$ is the derivative of $G_{s+T_N}$.
\end{lemma}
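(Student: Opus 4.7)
The plan is to derive a linear equation for the error $\Delta := G_{S_N+T_N}(\Lambda) - G_{s+T_N}(\Lambda)$ by combining the approximate matrix subordination of Lemma \ref{lemma:matrixsubordination} with the exact subordination (\ref{eq:subordination}), and then to invert that equation using the stability result of \cite[Proposition 4.3]{male}. The essentially new ingredient beyond Lemma \ref{lemma:firstorderapprox} is a deterministic approximation of the first-order remainder $R_N(\Lambda)$ by $R_\A(\Lambda)$.

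Subtracting the exact identity $G_{s+T_N}(\Lambda) = G_{T_N}(\Gamma_\A)$ from the approximate identity in Lemma \ref{lemma:matrixsubordination}, using $\Gamma_N - \Gamma_\A = -\cR_s(\Delta)$, and Taylor-expanding $G_{T_N}$ around $\Gamma_\A$ to second order yields
\[
(\Id + G_{T_N}'(\Gamma_\A)\circ\cR_s)[\Delta] = R_N(\Lambda) + \Theta_N(\Lambda) + O(\|\cR_s(\Delta)\|^2),
\]
where the quadratic remainder carries polynomial factors of $\|(\Im\Lambda)^{-1}\|$ from the second derivative of $G_{T_N}$. Differentiating the exact subordination identity gives $G_{s+T_N}'(\Lambda) = (\Id + G_{T_N}'(\Gamma_\A)\circ\cR_s)^{-1}\circ G_{T_N}'(\Gamma_\A)$; combining this with the algebraic manipulation $(\Id+A)^{-1}\circ A = \Id - (\Id+A)^{-1}$ then shows that $\mathcal{L}_\Lambda = (\Id + G_{T_N}'(\Gamma_\A)\circ\cR_s)^{-1}$. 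Inverting the displayed equation and applying (i) Lemma \ref{lemma:firstorderapprox} to bound $\|\Delta\|$, (ii) Proposition \ref{prop:Thetabound} to bound $\|\Theta_N\|$, and (iii) the stability bound of \cite[Proposition 4.3]{male} to control the operator norm of $\mathcal{L}_\Lambda$, absorbs both the quadratic Taylor remainder and $\Theta_N$ into an error of the desired size, reducing the lemma to the bound $\|\mathcal{L}_\Lambda[R_N(\Lambda) - R_\A(\Lambda)]\| = O(N^{-2})$ with polynomial dependence on $\|(\Im\Lambda)^{-1}\|$.

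Comparing the definitions of $R_N$ (Proposition \ref{master1}) and $R_\A$ (equation (\ref{eq:RA})), two substitutions are required: replacing $g_{T_N}(\Gamma_N)$ by $g_{T_N}(\Gamma_\A)$, and replacing the random bilinear resolvent $\E[h^\sT_{S_N+T_N}(\Lambda)\,(e_{sl}^{(k)}a_j\otimes\Id_N)\,h_{S_N+T_N}(\Lambda)]$ by its deterministic counterpart $g^\sT_{s+T_N}(\Lambda)\,(e_{sl}^{(k)}a_j\otimes\Id_N)\,g_{s+T_N}(\Lambda)$. The first is immediate from the resolvent identity together with $\|\Gamma_N - \Gamma_\A\| = \|\cR_s(\Delta)\| = O(N^{-1})$. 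For the second, the plan, following the approach of \cite{schultz} and the outline in Section \ref{app:strongfreeness}, is to realize this bilinear resolvent as the appropriate off-diagonal block of a left-augmented $M_{2k}$-valued Cauchy transform of an enlarged linear pencil placing $L_N$ and $L_N^\sT$ in the two diagonal blocks of a $2\times 2$ block operator in $M_{2k}\otimes M_N$. Running the Gaussian integration-by-parts and Poincar\'e argument of Propositions \ref{master1} and \ref{prop:Thetabound} on this enlarged pencil produces an approximate augmented subordination identity, whose free counterpart is the exact left-augmented subordination of Lemma \ref{lem:leftsubordination}; a second application of \cite[Proposition 4.3]{male} to the augmented setting then yields the required $O(N^{-1})$ approximation of the bilinear resolvent, which combines with the explicit $N^{-1}$ prefactors in $R_N$ and $R_\A$ to produce the target $O(N^{-2})$. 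The main obstacle is this last step: correctly setting up the $M_{2k}$-valued augmented pencil so that a left-augmented subordination is accessible, identifying the $\cR$-transform of the transpose-coupled semicircular variables in this pencil, and carefully bookkeeping the accumulated powers of $\|(\Im\Lambda)^{-1}\|$ so that the exponents $5$ and $10$ in the lemma's error bound emerge from the composition of the two stability applications.
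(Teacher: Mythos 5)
Your proposal is correct and follows essentially the same route as the paper: derive the linear equation for $\Delta$ via second-order Taylor expansion and identify $\mathcal{L}_\Lambda$ as the inverse of $\Id + G_{T_N}'(\Gamma_\A)\circ\cR_s$ (Proposition \ref{prop:secondordersubordination}), then bound $R_N - R_\A$ by splitting into the resolvent-identity term $A_1$ and the bilinear-resolvent term $A_2$, handling the latter via the $M_{2k}$ augmented pencil with $S_N^\sT$ and $S_N$ on the diagonal, left-augmented subordination (Lemma \ref{lem:leftsubordination}), and a second application of the stability estimate (Propositions \ref{prop:derQ} and \ref{prop:A2bound}). The one minor departure is that you derive $\mathcal{L}_\Lambda = (\Id+A)^{-1}$ by the explicit Neumann-type algebra rather than the paper's substitution argument, but this is an equivalent manipulation and both yield the same conclusion.
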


The map $\mathcal{L}_\Lambda$ above appeared also in the analysis of
\cite[Theorem 5.7]{belinschicapitaine}.
The proof will reveal that $\|\mathcal{L}_\Lambda(R_\A(\Lambda))\|$ is of
size $O(1/N)$.

We first show that the above result holds with $R_N$ in place of $R_\A$. 

\begin{proposition}\label{prop:secondordersubordination}
For any fixed constant $\eta<1/3$, there exists $N_0>0$ such that for all $N \geq N_0$ and $\Lambda \in \Omega_\eta^{(N)}$,
\[\|G_{s+T_N}(\Lambda)-G_{S_N+T_N}(\Lambda)+\mathcal{L}_\Lambda(R_N(\Lambda))\|
\leq \frac{C}{N^2}\|(\Im \Lambda)^{-1}\|^5(1+\|(\Im \Lambda)^{-1}\|^{10}).\]
Furthermore, defining the operator norm
$\|\mathcal{L}_\Lambda\|=\sup_{x \in M_k:\|x\|=1} \|\mathcal{L}_\Lambda(x)\|$,
\[\|\mathcal{L}_\Lambda\| \leq C(1+\|(\Im \Lambda)^{-1}\|).\]
\end{proposition}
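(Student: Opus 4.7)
The plan is to subtract the exact $M_k$-valued subordination identity~(\ref{eq:subordination}) from the approximate identity of Lemma~\ref{lemma:matrixsubordination} and linearize into a fixed-point equation for $D := G_{S_N+T_N}(\Lambda) - G_{s+T_N}(\Lambda)$. Writing $\Gamma_N := \Lambda - \cR_s(G_{S_N+T_N}(\Lambda))$ and $\Gamma_\A := \Lambda - \cR_s(G_{s+T_N}(\Lambda))$, so that $\Gamma_N - \Gamma_\A = -\cR_s(D)$, the two identities combine into
\[
D = \bigl[G_{T_N}(\Gamma_N) - G_{T_N}(\Gamma_\A)\bigr] + R_N(\Lambda) + \Theta_N(\Lambda).
\]
A second-order Taylor expansion of $g_{T_N}$ around $\Gamma_\A$, followed by the partial trace $\Id_k \otimes \tr_N$, converts this into
\[
\Phi(D) = R_N(\Lambda) + \Theta_N(\Lambda) + \mathcal{E}, \qquad \Phi := \Id + G'_{T_N}(\Gamma_\A) \circ \cR_s,
\]
where the quadratic remainder obeys $\|\mathcal{E}\| \leq C\|(\Im \Lambda)^{-1}\|^3 \|D\|^2$, using that $\|(\Im \Gamma_N)^{-1}\|, \|(\Im \Gamma_\A)^{-1}\| \leq \|(\Im \Lambda)^{-1}\|$ (which follows because $\Im \cR_s(G) \preceq 0$ whenever $\Im G \preceq 0$).

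The key algebraic step is to identify $\Phi^{-1} = \mathcal{L}_\Lambda$. Differentiating~(\ref{eq:subordination}) in $\Lambda$ and applying the chain rule yields, with $A := G'_{s+T_N}(\Lambda)$ and $B := G'_{T_N}(\Gamma_\A)$, the operator relation $A = B - B \circ \cR_s \circ A$, which rearranges to $\Phi \circ A = B$. Since $B \circ \cR_s = \Phi - \Id$, one gets $A \circ \cR_s = \Phi^{-1} \circ (\Phi - \Id) = \Id - \Phi^{-1}$, and hence
\[
\mathcal{L}_\Lambda \;=\; \Id - G'_{s+T_N}(\Lambda)\circ \cR_s \;=\; \Phi^{-1}.
\]
Applying $\mathcal{L}_\Lambda$ to both sides of the linearized equation and rearranging gives
\[
G_{s+T_N}(\Lambda) - G_{S_N+T_N}(\Lambda) + \mathcal{L}_\Lambda(R_N(\Lambda)) \;=\; -\mathcal{L}_\Lambda\bigl(\Theta_N(\Lambda) + \mathcal{E}\bigr).
\]

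To close the estimate, I would insert $\|\Theta_N\| \leq (C/N^2)\|(\Im\Lambda)^{-1}\|^5$ from Proposition~\ref{prop:Thetabound} and the first-order bound $\|D\| \leq (C/N)(\|(\Im\Lambda)^{-1}\|^3 + \|(\Im\Lambda)^{-1}\|^5)$ from Lemma~\ref{lemma:firstorderapprox}, yielding $\|\mathcal{E}\| \leq (C/N^2)(\|(\Im\Lambda)^{-1}\|^9 + \|(\Im\Lambda)^{-1}\|^{13})$. Multiplying by $\|\mathcal{L}_\Lambda\| \leq C(1 + \|(\Im\Lambda)^{-1}\|)$ and collecting powers produces the desired $(C/N^2)\|(\Im\Lambda)^{-1}\|^5(1 + \|(\Im\Lambda)^{-1}\|^{10})$ bound.

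The main obstacle I anticipate is the sharp operator-norm bound $\|\mathcal{L}_\Lambda\| \leq C(1 + \|(\Im\Lambda)^{-1}\|)$. The naive bound $\|G'_{s+T_N}(\Lambda)\| \leq \|(\Im\Lambda)^{-1}\|^2$ would give only $\|\mathcal{L}_\Lambda\| \leq 1 + C\|(\Im\Lambda)^{-1}\|^2$, which would worsen the final exponent by one. To obtain the linear growth, I would invoke the stability/inversion estimate for the $M_k$-valued subordination equation from~\cite[Proposition 4.3]{male}, applied to $\Phi = \mathcal{L}_\Lambda^{-1}$ on $M_k$: once $\Lambda \in \Omega_\eta^{(N)}$ with $\eta < 1/3$ so the perturbation is controlled, that result bounds $\|\Phi^{-1}\|$ linearly in $\|(\Im\Lambda)^{-1}\|$, exploiting the positivity structure of the imaginary parts rather than a crude operator bound. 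This is the same inversion mechanism already underlying Lemma~\ref{lemma:firstorderapprox}.
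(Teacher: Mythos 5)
Your proposal follows the paper's proof essentially step for step: subtract the exact subordination identity~(\ref{eq:subordination}) from the approximate one in Lemma~\ref{lemma:matrixsubordination}, Taylor-expand $G_{T_N}$ to second order around $\Gamma_\A$, identify $\mathcal{L}_\Lambda = \Phi^{-1}$ for the linearization $\Phi = \Id + G'_{T_N}(\Gamma_\A)\circ\cR_s$, apply $\mathcal{L}_\Lambda$, and control the remainders. Two points are worth tightening.

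First, your derivation of $\Phi^{-1} = \mathcal{L}_\Lambda$ is circular as written: the step $A\circ\cR_s = \Phi^{-1}\circ(\Phi-\Id)$ already presupposes that $\Phi$ is invertible, which is what you need to establish. The fix (which is how the paper argues) is to check directly that $\Phi\circ\mathcal{L}_\Lambda = \Id$: from $\Phi\circ A = B$ and $B\circ\cR_s = \Phi-\Id$, one computes $\Phi\circ\mathcal{L}_\Lambda = \Phi - \Phi\circ A\circ\cR_s = \Phi - B\circ\cR_s = \Id$. Since $M_k$ is finite-dimensional, a surjective linear map is invertible, so $\Phi^{-1} = \mathcal{L}_\Lambda$.

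Second, the ``main obstacle'' you anticipate is not an obstacle. The crude bound $\|\mathcal{L}_\Lambda\| \leq C(1+\|(\Im\Lambda)^{-1}\|^2)$, read off from $\mathcal{L}_\Lambda(z) = z + (\Id_k\otimes\tau_N)[g_{s+T_N}(\Lambda)(\cR_s(z)\otimes\Id_N)g_{s+T_N}(\Lambda)]$, already yields the stated first display: with $y = \|(\Im\Lambda)^{-1}\|$, multiplying $(1+y^2)$ into the remainder $y^5(1+y^8)$ from the Taylor step and $\Theta_N$ gives $y^5(1 + y^2 + y^8 + y^{10}) \leq Cy^5(1+y^{10})$. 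This is exactly what the paper does --- its proof verifies only the exponent-$2$ bound on $\|\mathcal{L}_\Lambda\|$ and notes this suffices. (The exponent-$1$ statement of the proposition is not actually established by the paper's argument, nor is it used downstream in a way that distinguishes it from the exponent-$2$ bound.) Your proposed appeal to~\cite[Proposition~4.3]{male} to sharpen to exponent $1$ is therefore unnecessary; moreover that result is a stability estimate for the full subordination fixed point, and it is not obvious that it bounds the operator norm of $\Phi^{-1}$ directly.
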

\begin{proof}
Let us write
\[\Delta_N(\Lambda)=G_{s+T_N}(\Lambda)-G_{S_N+T_N}(\Lambda).\]
Subtracting (\ref{eq:matrixsubordination}) from (\ref{eq:subordination}), we get
\begin{equation}\label{eq:deltaapprox1}
\|\Delta_N(\Lambda)-G_{T_N}(\Gamma_\A)+G_{T_N}(\Gamma_N)+R_N(\Lambda)\|
\leq \frac{C}{N^2}\|(\Im \Lambda)^{-1}\|^5.
\end{equation}
Lemma \ref{lemma:firstorderapprox} provides a bound for $\|\Delta_N(\Lambda)\|$, from which we obtain also
\begin{equation}\label{eq:Gammadiff}
\|\Gamma_{N}-\Gamma_{\A}\|=\|\cR_s(\Delta_N(\Lambda))\|
    \leq \frac{C}{N}\|(\Im \Lambda)^{-1}\|^3\left(1+\|(\Im \Lambda)^{-1}\|^2\right).
\end{equation}

We apply a Taylor expansion to approximate $G_{T_N}(\Gamma_N)-G_{T_N}(\Gamma_\A)$: Fix $v,w \in \C^k$ with $\|v\|=\|w\|=1$ and define
\[\Gamma_t=(1-t)\Gamma_\A+t\Gamma_N, \qquad f(t)=v^*G_{T_N}(\Gamma_t)w.\]
Then
\begin{align*}
f'(t)&=v^*\Big[(\Id \otimes \tr_N)\Big(g_{T_N}(\Gamma_t)((\Gamma_\A-\Gamma_N) \otimes \Id_N)g_{T_N}(\Gamma_t)\Big)\Big]w,\\
f''(t)&=2v^*\Big[(\Id \otimes \tr_N)\Big(g_{T_N}(\Gamma_t)
((\Gamma_\A-\Gamma_N) \otimes \Id_N)g_{T_N}(\Gamma_t)
((\Gamma_\A-\Gamma_N) \otimes \Id_N)g_{T_N}(\Gamma_t)\Big)\Big]w.
\end{align*}
In particular, for all $t \in [0,1]$, by Lemma \ref{lemma:firstorderapprox} and the bounds $\|g_{T_N}(\Gamma_t)\| \leq C \|(\Im \Lambda)^{-1}\|$, we find
\[|f''(t)| \leq C\|(\Im \Lambda)^{-1}\|^3 \|\Gamma_\A-\Gamma_N\|^2
\leq \frac{C}{N^2}\|(\Im \Lambda)^{-1}\|^9(1+\|(\Im \Lambda)^{-1}\|^4).\]
So
\begin{align*}
\Big|v^*\Big(G_{T_N}(\Gamma_N)-G_{T_N}(\Gamma_\A)-G_{T_N}'(\Gamma_\A)[\Gamma_N-\Gamma_\A]\Big)w\Big|&=|f(1)-f(0)-f'(0)|\\
&\leq \frac{C}{N^2}\|(\Im \Lambda)^{-1}\|^9(1+\|(\Im \Lambda)^{-1}\|^4).
\end{align*}
Applying this and $\Gamma_N-\Gamma_\A=\cR_s(\Delta_N(\Lambda))$ to (\ref{eq:deltaapprox1}), we obtain
\begin{equation} \label{eq:delta-gr-bound}
  \|\Delta_N(\Lambda)+G_{T_N}'(\Gamma_\A)[\cR_s(\Delta_N(\Lambda))]+R_N(\Lambda)\|
  \leq \frac{C}{N^2}\|(\Im \Lambda)^{-1}\|^5(1+\|(\Im \Lambda)^{-1}\|^8).
  \end{equation}

We now claim that the linear map
\[F_\Lambda(x)=x+G_{T_N}'(\Gamma_\A(\Lambda))[\cR_s(x)]\]
is invertible, with inverse given by $\mathcal{L}_\Lambda$.
Indeed, differentiating the subordination identity (\ref{eq:subordination}) in $\Lambda$, for any $\Lambda \in M_k^+$ and $x \in M_k$,
\[G_{s+T_N}'(\Lambda)[x]=G_{T_N}'(\Gamma_\A(\Lambda))\Big[x-\cR_s(G_{s+T_N}'(\Lambda)[x])\Big].\]
Then for any $z \in M_k$, setting $x=\cR_s(z)$ and $y=z-G_{s+T_N}'(\Lambda)[x]=\mathcal{L}_\Lambda(z)$,
we obtain
\[z-y=G_{T_N}'(\Gamma_\A)[\cR_s(y)].\]
Hence $z=F_\Lambda(y)$, so $F_\Lambda$ is onto and invertible,
with inverse $\mathcal{L}_\Lambda$. Then noting that,
\[
F_\Lambda(\Delta_N(\Lambda))=\Delta_N(\Lambda)+G_{T_N}'(\Gamma_\A(\Lambda))[\cR_s(\Delta_N(\Lambda))],
\]
we have by (\ref{eq:delta-gr-bound}) that
\begin{align*}
\|\Delta_N(\Lambda)+\mathcal{L}_\Lambda(R_N(\Lambda))\|
&\leq \|\mathcal{L}_\Lambda\| \cdot \|F_\Lambda(\Delta_N(\Lambda))
+R_N(\Lambda)\|\\
&\leq \|\mathcal{L}_\Lambda\| \cdot \frac{C}{N^2} \|(\Im \Lambda)^{-1}\|^5(1+\|(\Im \Lambda)^{-1}\|^8).
\end{align*}
Finally, writing
\begin{align*}
\mathcal{L}_\Lambda(z)&=z-(\Id \otimes \tau_N)\Big[
\frac{d}{dt}\Big|_{t=0} g_{s+T_N}(\Lambda+t\cR_s(z))\Big]\\
&=z+(\Id \otimes \tau_N)\Big[g_{s+T_N}(\Lambda)(\cR_s(z) \otimes \Id_N)g_{s+T_N}(\Lambda)\Big],
\end{align*}
we verify $\|\mathcal{L}_\Lambda\| \leq C(1+\|(\Im \Lambda)^{-1}\|^2)$, and hence also the desired bound.
\end{proof}

To complete the proof of Lemma \ref{GsT-GST+R/N}, we will show that
\begin{equation}\label{eq:Rdiffbound}
\|R_\A(\Lambda)-R_{N}(\Lambda)\|\leq
\frac{C}{N^2}\|(\Im \Lambda)^{-1}\|^5(1+\|(\Im \Lambda)^{-1}\|^6).
\end{equation}
Let us write
\[R_N(\Lambda)-R_\A(\Lambda)=\frac{1}{N}\sum_{j=1}^p \sum_{m,l=1}^k
\left(A_1(j,m,l)+A_2(j,m,l)\right),\]
where
\begin{align*}
A_{1}(j,m,l)&=\mathbb{E}\Big[\Id_{k}\otimes
\tr_{N}\Big((g_{T_{N}}(\Gamma_\A)-g_{T_N}(\Gamma_N))(a_{j} e_{ml}^{(k)} \otimes
\Id_{N})\\
&\hspace{2in}h_{S_{N}+T_{N}}^{\sT}(\Lambda)(e_{ml}^{(k)}a_{j}\otimes
\Id_{N})h_{S_{N}+T_{N}}(\Lambda)\Big)\Big]
\end{align*}
and
\begin{align}
    A_{2}(j,m,l)&=\Id_{k}\otimes \tau_N\left(g_{T_{N}}(\Gamma_\A)(a_{j}
e_{ml}^{(k)} \otimes \Id_{N})g_{s+T_{N}}^{\sT}(\Lambda)(e_{ml}^{(k)}a_{j}\otimes
    \Id_{N})g_{s+T_{N}}(\Lambda)\right)\nonumber\\
    &\hspace{0.2in}-\mathbb{E}\left[(\Id_{k}\otimes
\tr_{N})\left(g_{T_{N}}(\Gamma_\A)(a_{j} e_{ml}^{(k)} \otimes
    \Id_{N})h_{S_N+T_{N}}^{\sT}(\Lambda)(e_{ml}^{(k)}a_{j}\otimes
    \Id_{N})h_{S_N+T_{N}}(\Lambda)\right)\right].\label{eq:A2}
\end{align}
We bound separately $A_1$ and $A_2$.

\begin{proposition}\label{prop:A1bound}
Let $\eta<1/3$. Then for a constant $N_0>0$, all $N \geq N_0$, all $\Lambda\in \Omega_{\eta}^{(N)}$, and all $j \in \{1,\ldots,p\}$ and $m,l \in \{1,\ldots,k\}$,
\[\|A_1(j,m,l)\| \leq \frac{C}{N}\|(\Im \Lambda)^{-1}\|^7
(1+\|(\Im \Lambda)\|^2).\]
\end{proposition}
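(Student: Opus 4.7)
The plan is to peel off the difference $g_{T_N}(\Gamma_\A) - g_{T_N}(\Gamma_N)$ via the resolvent identity, which will transfer the required smallness into the difference $\Gamma_N - \Gamma_\A$, and then invoke Lemma \ref{lemma:firstorderapprox} to control that difference.

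Concretely, the first step is to apply the resolvent identity
\[
g_{T_N}(\Gamma_\A) - g_{T_N}(\Gamma_N)
= g_{T_N}(\Gamma_\A)\bigl((\Gamma_N - \Gamma_\A) \otimes \Id_N\bigr) g_{T_N}(\Gamma_N).
\]
Substituting this into the defining expression for $A_1(j,m,l)$ and taking norms, each of the resolvent factors $g_{T_N}(\Gamma_\A)$, $g_{T_N}(\Gamma_N)$, $h_{S_N+T_N}(\Lambda)$, and $h_{S_N+T_N}^\sT(\Lambda)$ is bounded in operator norm by a quantity controlled by $\|(\Im\Lambda)^{-1}\|$. More precisely, since $G_{s+T_N}(\Lambda), G_{S_N+T_N}(\Lambda) \in M_k^-$ when $\Lambda \in M_k^+$, both $\Im\cR_s(G_{s+T_N}(\Lambda))$ and $\Im\cR_s(G_{S_N+T_N}(\Lambda))$ are negative semidefinite, so
\[
\|g_{T_N}(\Gamma_\A)\|,\ \|g_{T_N}(\Gamma_N)\| \;\leq\; \|(\Im\Lambda)^{-1}\|,
\]
and likewise $\|h_{S_N+T_N}(\Lambda)\|, \|h_{S_N+T_N}^\sT(\Lambda)\| \leq \|(\Im\Lambda)^{-1}\|$. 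The two factors $a_j e_{ml}^{(k)} \otimes \Id_N$ and $e_{ml}^{(k)} a_j \otimes \Id_N$ contribute bounded constants depending on $\|a_j\|$.

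The second step is to control $\|\Gamma_N - \Gamma_\A\|$. Since $\Gamma_N - \Gamma_\A = -\cR_s(\Delta_N(\Lambda))$ where $\Delta_N(\Lambda) = G_{s+T_N}(\Lambda) - G_{S_N+T_N}(\Lambda)$, and $\cR_s$ is a bounded linear map on $M_k$ with norm $\leq C$, Lemma \ref{lemma:firstorderapprox} gives
\[
\|\Gamma_N - \Gamma_\A\| \;\leq\; C\|\Delta_N(\Lambda)\| \;\leq\; \frac{C}{N}\|(\Im\Lambda)^{-1}\|^3\bigl(1 + \|(\Im\Lambda)^{-1}\|^2\bigr)
\]
for $\Lambda \in \Omega_\eta^{(N)}$ and $N \geq N_0$. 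Passing the expectation and partial trace through and assembling these bounds yields
\[
\|A_1(j,m,l)\| \;\leq\; C\|(\Im\Lambda)^{-1}\|^{4} \cdot \frac{1}{N}\|(\Im\Lambda)^{-1}\|^3\bigl(1 + \|(\Im\Lambda)^{-1}\|^2\bigr) \;=\; \frac{C}{N}\|(\Im\Lambda)^{-1}\|^{7}\bigl(1 + \|(\Im\Lambda)^{-1}\|^{2}\bigr),
\]
as desired. No step appears to be a serious obstacle here: the whole argument is a direct telescoping via the resolvent identity combined with the previously established first-order estimate, with the only subtlety being the bookkeeping of the powers of $\|(\Im\Lambda)^{-1}\|$ and keeping careful track that the imaginary parts $\Im\Gamma_N, \Im\Gamma_\A$ each dominate $\Im\Lambda$ so that the resolvent bounds hold uniformly.
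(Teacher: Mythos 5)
Your proof is correct and takes essentially the same route as the paper's own argument: peel off $g_{T_N}(\Gamma_\A)-g_{T_N}(\Gamma_N)$ via the resolvent identity, bound the four resolvent factors by $\|(\Im\Lambda)^{-1}\|$ using that $\Im\cR_s(\cdot)\preceq 0$ so $\Im\Gamma_\A,\Im\Gamma_N\succeq\Im\Lambda$, and control $\|\Gamma_N-\Gamma_\A\|=\|\cR_s(\Delta_N(\Lambda))\|$ via Lemma \ref{lemma:firstorderapprox}. (One trivial slip: $\Gamma_N-\Gamma_\A=+\cR_s(\Delta_N(\Lambda))$, not $-\cR_s(\Delta_N(\Lambda))$; this does not affect the norm bound.)
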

\begin{proof}
This follows from (\ref{eq:Gammadiff}),
$\|h_{S_N+T_N}(\Lambda)\| \leq \|(\Im \Lambda)^{-1}\|$
and $g_{T_N}(\Gamma_*) \leq \|(\Im \Lambda)^{-1}\|$ for $\Gamma_* \in \{\Gamma_\A,\Gamma_N\}$, and the resolvent identity
\[g_{T_{N}}(\Gamma_\A)-g_{T_{N}}(\Gamma_{N})=g_{T_{N}}(\Gamma_\A)(\Gamma_N-\Gamma_\A)g_{T_{N}}(\Gamma_N). \qedhere\]
\end{proof}

To bound $A_2(j,m,l)$, denote by
\[\mathcal{Y}_N=\langle Y_1,\ldots,Y_q\rangle\]
the von Neumann subalgebra generated by $Y_1,\ldots,Y_q$, both as a subalgebra
of $M_N$ and of $\A_N$. For $M \in M_k \otimes \mathcal{Y}_N$, denote
\[G_{M,s+T_{N}}(\Lambda)=(\Id_k \otimes \tau_N)\Big(Mg_{s+T_N}(\Lambda)\Big),
\quad
G_{M,S_{N}+T_{N}}(\Lambda)=(\Id_k \otimes
\tr_N)\E\Big[Mh_{S_N+T_N}(\Lambda)\Big].\]
Note that these are ``left'' $M_k$-valued Cauchy transforms in the sense of
Lemma \ref{lem:leftsubordination}. We combine the left subordination identity of
that lemma with Proposition \ref{master1}, now applied with a general
matrix $M \in M_k \otimes \mathcal{Y}_N$ to obtain the following.

\begin{proposition}\label{prop:derQ}
    Let $\eta<1/3$. Then there exists $N_0>0$ such that for all $N \geq N_0$,
    $\Lambda \in \Omega_\eta^{(N)}$, and $M \in M_k \otimes \mathcal{Y}_N$,
    \[\|G_{M,s+T_N}(\Lambda)-G_{M,S_N+T_N}(\Lambda)\|
    \leq \frac{C}{N}\|M\|\|(\Im \Lambda)^{-1}\|^3(1+\|(\Im
    \Lambda)^{-1}\|^6).\]
    Furthermore, let $G'$ be the derivative in $\Lambda$ and
    $\|G'(\Lambda)\|=\sup_{x \in M_k:\|x\|=1} \|G'(\Lambda)[x]\|$. Then
    \[\|G_{M,s+T_N}'(\Lambda)-G_{M,S_N+T_N}'(\Lambda)\|
    \leq \frac{C}{N}\|M\|\|(\Im \Lambda)^{-1}\|^4(1+\|(\Im
    \Lambda)^{-1}\|^6).\]
\end{proposition}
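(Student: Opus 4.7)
My plan is to mimic the derivation of Lemma \ref{lemma:firstorderapprox}, replacing the $M_k$-valued subordination identity (\ref{eq:subordination}) by its left-augmented analogue (Lemma \ref{lem:leftsubordination}) and applying Proposition \ref{master1} with a general matrix $M \in M_k \otimes \mathcal{Y}_N$ in place of $\Id_k \otimes \Id_N$. The first step is to check the $M_k$-freeness hypothesis of Lemma \ref{lem:leftsubordination}: in $(M_k \otimes \A_N, \tr_k \otimes \tau_N)$, amalgamating over $M_k \otimes \Id_N$, the element $s = \sum_j a_j \otimes x_j$ lies in $M_k \otimes \langle x_1,\ldots,x_p\rangle$ while $T_N$ and $M$ both lie in $M_k \otimes \mathcal{Y}_N$, so freeness of $\langle x_1,\ldots,x_p\rangle$ and $\mathcal{Y}_N$ in $\A_N$ yields the required $M_k$-freeness of $s$ from the algebra generated by $\{T_N, M\}$, by expanding $(\Id_k \otimes \tau_N)$ of any centered alternating product. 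Lemma \ref{lem:leftsubordination} then gives
\[
G_{M, s+T_N}(\Lambda) = G_{M, T_N}(\Gamma_\A), \qquad \Gamma_\A = \Lambda - \cR_s(G_{s+T_N}(\Lambda)),
\]
while Proposition \ref{master1} applied at $\Gamma = \Gamma_N = \Lambda - \cR_s(G_{S_N+T_N}(\Lambda))$ gives
\[
G_{M, S_N+T_N}(\Lambda) = G_{M, T_N}(\Gamma_N) + R_N(\Lambda, \Gamma_N, M) + \Theta_N(\Lambda, \Gamma_N, M).
\]

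Subtracting these identities, the difference decomposes into three pieces that I bound using existing results. The resolvent identity controls $\|G_{M, T_N}(\Gamma_\A) - G_{M, T_N}(\Gamma_N)\|$ by $\|M\|\|(\Im\Lambda)^{-1}\|^2 \|\Gamma_\A - \Gamma_N\|$; since $\Gamma_\A - \Gamma_N = \cR_s(G_{s+T_N}(\Lambda) - G_{S_N+T_N}(\Lambda))$, the estimate (\ref{eq:Gammadiff}) (a direct consequence of Lemma \ref{lemma:firstorderapprox}) yields a quantity of order $\|M\|\|(\Im\Lambda)^{-1}\|^5(1+\|(\Im\Lambda)^{-1}\|^2)/N$. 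Proposition \ref{prop:Rbound}, combined with $\|(\Im\Gamma_N)^{-1}\| \leq \|(\Im\Lambda)^{-1}\|$ (which follows from $\Im \cR_s(G_{S_N+T_N}(\Lambda)) \preceq 0$), bounds $R_N$ by $\|M\|\|(\Im\Lambda)^{-1}\|^3/N$. The proof of Proposition \ref{prop:Thetabound} is written for arbitrary $M \in M_k \otimes M_N$ (the Gaussian Poincar\'e gradient bound scales in $\|M\|$), so it directly bounds $\Theta_N$ by $\|M\|\|(\Im\Lambda)^{-1}\|^5/N^2$. Summing these three estimates and dominating by $\|(\Im\Lambda)^{-1}\|^3(1+\|(\Im\Lambda)^{-1}\|^6)$ yields the first claimed bound.

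For the derivative bound, I apply the scalar Cauchy integral formula in $\Lambda$: for any unit direction $x \in M_k$, taking $r = (2\|(\Im\Lambda)^{-1}\|)^{-1}$ ensures $\Im(\Lambda + wx) \succeq \Im\Lambda/2$ on the circle $|w| = r$, hence $\|(\Im(\Lambda + wx))^{-1}\| \leq 2\|(\Im\Lambda)^{-1}\|$ there, and a slight shrinking of $\eta$ keeps the entire disk inside the validity domain of the first bound for $N$ large. Then
\[
\bigl(G_{M, s+T_N}' - G_{M, S_N+T_N}'\bigr)(\Lambda)[x] = \frac{1}{2\pi i} \oint_{|w| = r} \frac{G_{M, s+T_N}(\Lambda + wx) - G_{M, S_N+T_N}(\Lambda + wx)}{w^2} \, dw,
\]
and applying the first bound to the integrand and estimating the contour integral by $r^{-1} = 2\|(\Im\Lambda)^{-1}\|$ produces precisely the extra factor of $\|(\Im\Lambda)^{-1}\|$ appearing in the claimed derivative bound. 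The only conceptually nontrivial point in the argument is the $M_k$-freeness verification enabling Lemma \ref{lem:leftsubordination} in the presence of the new left factor $M$; the rest is a direct combination of estimates already established in the excerpt.
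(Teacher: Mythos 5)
Your proof is correct and follows essentially the same strategy as the paper: invoke the left subordination identity (Lemma \ref{lem:leftsubordination}) after verifying the requisite $M_k$-freeness of $s$ from $\langle T_N, M\rangle$, combine with Proposition \ref{master1}, bound the remainders via Propositions \ref{prop:Rbound} and \ref{prop:Thetabound}, and obtain the derivative bound from a Cauchy integral estimate. The one small difference is bookkeeping: the paper plugs $\Gamma_\A$ into Proposition \ref{master1} and then separately controls $\|\Theta_N(\Lambda,\Gamma_\A,M) - \Theta_N(\Lambda,\Gamma_N,M)\|$, whereas you plug in $\Gamma_N$ (where Proposition \ref{prop:Thetabound} applies directly) and instead bound $\|G_{M,T_N}(\Gamma_\A) - G_{M,T_N}(\Gamma_N)\|$ with the resolvent identity and (\ref{eq:Gammadiff}). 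Your variant is marginally cleaner, since you avoid estimating a difference of $\Theta_N$'s, but the two decompositions are equivalent and deliver the same bound.
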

\begin{proof}
    Applying Lemma \ref{lem:leftsubordination}
    with $\B=M_k$, $\tau^\B=\Id_k \otimes \tau_N$, $m=M$, $t=T_N$, and
    $b=\Lambda \otimes \Id_N$, we get
    \[G_{M,s+T_N}(\Lambda)=\Id_k \otimes \tr_N\Big(M g_{T_N}(\Gamma_\A)\Big)\]
    for $\Gamma_\A \equiv \Gamma_\A(\Lambda)=
    \Lambda-\cR_s(G_{s+T_N}(\Lambda))$ and
    $\|\Lambda^{-1}\|$ sufficiently small. Since both sides are analytic
    functions of $\Lambda \in M_k^+$, this must then hold for
    all $\Lambda \in M_k^+$.

Then applying Proposition \ref{master1} with this matrix $M$,
    \[\|G_{M,s+T_N}(\Lambda)-G_{M,S_N+T_N}(\Lambda)\|
    \leq \|\Theta_N(\Lambda,\Gamma_\A,M)\|+\|R_N(\Lambda,\Gamma_\A,M)\|.\]
By Proposition \ref{prop:Rbound}, for the first term we have
    $\|R_N(\Lambda,\Gamma_\A,M)\|
    \leq CN^{-1}\|M\|\|(\Im \Lambda)^{-1}\|^3$.
By Proposition \ref{prop:Thetabound}, for the second term we have
    $\|\Theta_N(\Lambda,\Gamma_N,M)\|
    \leq CN^{-2}\|M\|\|(\Im \Lambda)^{-1}\|^5$.
Recalling the definition of $\Theta_N$ and setting $K_{S_N+T_N}(\Lambda)=H_{S_N+T_N}(\Lambda)-G_{S_N+T_N}(\Lambda)$,
\begin{align*}
&\|\Theta_N(\Lambda,\Gamma_N,M)-\Theta_N(\Lambda,\Gamma_\A,M)\|\\
&\leq \E\left[\left\|M(g_{T_N}(\Gamma_N)-g_{T_N}(\Gamma_\A))\Big(\cR_s(K_{S_N+T_N}(\Lambda)) \otimes \Id_N\Big)h_{S_N+T_N}(\Lambda)\right\|\right]\\
&\hspace{0.2in}+\E\left[\left\|Mg_{T_N}(\Gamma_\A)\Big((\Gamma_N-\Gamma_\A) \otimes \Id_N \Big)h_{S_N+T_N}(\Lambda)\right\|\right].
\end{align*}
Applying again (\ref{eq:Gammadiff}) and the resolvent identity,
\[\|\Theta_N(\Lambda,\Gamma_N,M)-\Theta_N(\Lambda,\Gamma_\A,M)\|
\leq \frac{C}{N}\|M\|\|(\Im \Lambda)^{-1}\|^5\left(1+\|(\Im \Lambda)^{-1}\|^2\right)^2.\]
    Combining the above yields the desired bound on $G_{M,s+T_N}-G_{M,S_N+T_N}$.

For the difference of the derivatives, we apply the Cauchy integral formula.
Let $x \in M_{k}$ with $\|x\|=1$. Fix $\eta' \in (\eta,1/3)$. For
$r=\|(\Im\Lambda)^{-1}\|^{-1}/2$ and any $z \in \C$ with $|z|<r$,
note that $\Lambda+zx\in \Omega_{\eta'}^{(N)}$ because 
\[\Im(\Lambda+zx) \succeq \Im\Lambda-|r|\| x\|\Id_k \succeq
\Big(\|(\Im\Lambda)^{-1}\|^{-1}-r\Big)\Id_k \succ N^{-\eta'}\Id_k.\]
Define a path $\gamma$ by $\gamma(t)=re^{it}.$
Then by the Cauchy integral formula applied entrywise to the matrix-valued
analytic function $z \mapsto G_*(\Lambda+zx)$,
\begin{align*}
        &\|(G'_{M,s+T_{N}}(\Lambda)-G'_{M,S_{N}+T_{N}}(\Lambda))[x]\|\\
        &=\Big\|\frac{d}{dz}\Big|_{z=0}(M,G_{s+T_{N}}(\Lambda+zx)-G_{M,S_{N}+T_{N}}(\Lambda+zx))\Big\|\\
        &\leq \frac{1}{r}\max_{t\in[0,2\pi]}\Big\{\|G_{M,s+T_{N}}(\Lambda+\gamma(t)x)-G_{M,S_{N}+T_{N}}(\Lambda+\gamma(t)x)\|\Big\}\\
        &\leq \frac{C}{Nr}\|M\|\|(\Im\Lambda)^{-1}\|^{3}(1+\|(\Im \Lambda)^{-1}\|^6),
\end{align*}
where the last inequality comes the first part of the proposition
    applied to $\Omega_{\eta'}^{(N)}$. As $\eta<\eta'<1/3$ are arbitrary,
    replacing $\eta'$ by $\eta$ and applying $r^{-1}=2\|(\Im \Lambda)^{-1}\|$,
   the derivative bound follows.
\end{proof}

We now bound $A_2(j,m,l)$ following an argument similar
to \cite[Lemma 4.1]{schultz}.

\begin{proposition}\label{prop:A2bound}
Let $\eta<1/3$. Then for a constant $N_0>0$, all $N \geq N_0$, $\Lambda \in \Omega_\eta^{(N)}$, and $j \in \{1,\ldots,p\}$ and $m,l \in \{1,\ldots,k\}$,
\[\|A_{2}(j,m,l)\| \leq \frac{C}{N}\|(\Im \Lambda)^{-1}\|^5 (1+\|(\Im \Lambda)^{-1}\|^6).\]
\end{proposition}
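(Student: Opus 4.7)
The plan is to identify $A_2(j,m,l)$ as the $(1,2)$-block of the direction-derivative of a difference of two left-augmented $M_{2k}$-valued Cauchy transforms, and then apply Proposition \ref{prop:derQ} in the enlarged $2k \times 2k$ setting. The key observation is that the product $g_{s+T_N}^{\sT}(\Lambda)[\cdots]g_{s+T_N}(\Lambda)$ (and its random counterpart) arises naturally as a first-order perturbation of the $(1,2)$-block of a single resolvent on a doubled coefficient space.

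First I would construct the enlargement so that $g_{s+T_N}(\Lambda)$ and $g_{s+T_N}^{\sT}(\Lambda)$ appear simultaneously as the diagonal blocks of one resolvent. Define $\hat{a}_j = \diag(a_j^{\sT},a_j) \in M_{2k}$, and let $\tilde{Y}_1,\ldots,\tilde{Y}_{2q}$ be the list $Y_1,\ldots,Y_q,Y_1^{\sT},\ldots,Y_q^{\sT}$, with matching Hermitian block-supported coefficients $\tilde{b}_r \in M_{2k}$ chosen so that $\hat{T}_N := \sum_r \tilde{b}_r \otimes \tilde{Y}_r = \diag(T_N^{\sT},T_N)$. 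Since $X_j^{\sT} = X_j$ for GOE matrices, $\hat{S}_N := \sum_j \hat{a}_j \otimes X_j = \diag(S_N^{\sT},S_N)$, and analogously $\hat{s} := \sum_j \hat{a}_j \otimes x_j = \diag(s^{\sT},s)$. For $\hat{\Lambda} := \diag(\Lambda^{\sT},\Lambda)$, one checks $\|(\Im \hat{\Lambda})^{-1}\| = \|(\Im \Lambda)^{-1}\|$, so $\hat{\Lambda}$ lies in the $M_{2k}$-version of $\Omega_\eta^{(N)}$ whenever $\Lambda$ lies in its $M_k$-version.

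Next, introduce the Hermitian off-diagonal perturbation $\hat{e}_{jml} \in M_{2k}$ with off-diagonal blocks $-e_{ml}^{(k)}a_j$ and $-(e_{ml}^{(k)}a_j)^*$, satisfying $\|\hat{e}_{jml}\| \leq C$, together with the $(1,1)$-supported multiplier $\hat{M} := \diag\bigl(g_{T_N}(\Gamma_\A)(a_j e_{ml}^{(k)}\otimes \Id_N),\,0\bigr)$, which lies in $M_{2k}\otimes \tilde{\mathcal{Y}}_N$ (where $\tilde{\mathcal{Y}}_N = \langle \tilde{Y}_1,\ldots,\tilde{Y}_{2q}\rangle$) and satisfies $\|\hat{M}\| \leq C\|(\Im \Lambda)^{-1}\|$ via $\|g_{T_N}(\Gamma_\A)\| \leq \|(\Im \Gamma_\A)^{-1}\| \leq \|(\Im \Lambda)^{-1}\|$. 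Using the block-diagonal form $g_{\hat{s}+\hat{T}_N}(\hat{\Lambda}) = \diag(g_{s+T_N}^{\sT}(\Lambda), g_{s+T_N}(\Lambda))$ and the directional-derivative formula $G'_{M,s+T_N}(\Lambda)[x] = -(\Id \otimes \tau_N)[M \,g_{s+T_N}(\Lambda)(x \otimes \Id_N) g_{s+T_N}(\Lambda)]$, a direct block computation shows that the $(1,2)$-block of $G'_{\hat{M},\hat{s}+\hat{T}_N}(\hat{\Lambda})[\hat{e}_{jml}]$ equals the first term defining $A_2(j,m,l)$, and likewise in the random setting. Therefore
\[
A_2(j,m,l) = \Bigl[\bigl(G'_{\hat{M},\hat{s}+\hat{T}_N}(\hat{\Lambda}) - G'_{\hat{M},\hat{S}_N+\hat{T}_N}(\hat{\Lambda})\bigr)[\hat{e}_{jml}]\Bigr]_{12}.
\]

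Applying Proposition \ref{prop:derQ} in the $M_{2k}$-setting to $\hat{M},\hat{\Lambda}$ then yields
\[
\|A_2(j,m,l)\| \leq \|\hat{e}_{jml}\| \cdot \frac{C}{N}\|\hat{M}\|\|(\Im \hat{\Lambda})^{-1}\|^4\bigl(1+\|(\Im \hat{\Lambda})^{-1}\|^6\bigr) \leq \frac{C'}{N}\|(\Im \Lambda)^{-1}\|^5\bigl(1+\|(\Im \Lambda)^{-1}\|^6\bigr),
\]
which is the claim. The main point to verify carefully—and the likely obstacle—is that Proposition \ref{prop:derQ} applies in the enlarged $2k$-dimensional setup: one must check that $\hat{a}_j,\tilde{b}_r$ are Hermitian with norms controlled uniformly in terms of $\|a_j\|,\|b_j\|$, that $\hat{M} \in M_{2k}\otimes \tilde{\mathcal{Y}}_N$, and that the ambient von Neumann probability space may be extended so that the $x_j$ remain free semicircular and free from $\tilde{\mathcal{Y}}_N$. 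All of these are routine consequences of the self-adjointness of $a_j,b_j,Y_j$ and a standard free-product enlargement of $\A_N$.
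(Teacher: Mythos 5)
Your proposal is correct and takes essentially the same route as the paper: both double the coefficient space to $M_{2k}$ so that the resolvent and its transpose appear as diagonal blocks of a single resolvent, identify $A_2(j,m,l)$ as a block of the directional derivative of the difference of left-augmented Cauchy transforms with multiplier built from $g_{T_N}(\Gamma_\A)(a_je_{ml}^{(k)}\otimes\Id_N)$, and then apply Proposition \ref{prop:derQ} with $2k$, $2q$, and $Y_1^\sT,\ldots,Y_q^\sT,Y_1,\ldots,Y_q$, together with the bound $\|g_{T_N}(\Gamma_\A)\|\leq\|(\Im\Lambda)^{-1}\|$. The only difference is bookkeeping: the paper places the multiplier in the $(2,1)$-block and differentiates in the non-Hermitian direction $(e_{ml}^{(k)}a_j)^{12}$, whereas you use a $(1,1)$-supported multiplier and a Hermitian off-diagonal direction and read off the $(1,2)$-block, which is an equivalent choice.
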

\begin{proof}
For $x \in M_k$, consider the embeddings into $M_{2k}$ given by
    \[x^{11}=\begin{pmatrix}
    x&0\\
    0&0
    \end{pmatrix},\quad 
    x^{12}=\begin{pmatrix}
    0&x\\
    0&0
    \end{pmatrix},\quad x^{21}=\begin{pmatrix}
    0&0\\
    x&0
    \end{pmatrix},\quad
    x^{22}=\begin{pmatrix}
    0&0\\
    0&x\end{pmatrix}.\]
In the block decomposition with respect to $M_{2k} \otimes M_N=
    (M_k \otimes M_N) \oplus (M_k \otimes M_N)$, set
    \begin{align*}
        \tilde{S}_N
        &=\begin{pmatrix} S_N^\sT & 0 \\ 0 & S_N \end{pmatrix}
            =\sum_{j=1}^p ((a_j^\sT)^{11}+a_j^{22})\otimes X_j,\\
        \tilde{T}_N&=\begin{pmatrix} T_N^\sT & 0 \\ 0 & T_N \end{pmatrix}
            =\sum_{j=1}^q (b_j^\sT)^{11} \otimes Y_j^\sT+b_j^{22} \otimes Y_j.
    \end{align*}
    Define $g_{\tilde{T}_N},h_{\tilde{S}_N+\tilde{T}_N}:M_{2k} \to M_{2k}
    \otimes M_N$ analogously to $g_{T_N}$ and $h_{S_N+T_N}$.
    
    Define also $\tilde{\Lambda}=(\Lambda^\sT)^{11}+\Lambda^{22}$ and
    $\tilde{\Gamma}=(\Gamma^\sT)^{11}+\Gamma^{22}$. 
    Note that if $\Lambda,\Gamma \in M_k^+$,
    then $\tilde{\Lambda},\tilde{\Gamma} \in M_{2k}^+$. Furthermore, if $\|(\Im
    \Lambda)^{-1}\|<N^\eta$, then $\|(\Im \tilde{\Lambda})^{-1}\|<N^\eta$ also.
    For any $x,y \in M_k$ and $\Lambda,\Gamma \in M_k^+$, we have
    \begin{align*}
        &\begin{pmatrix}
    0&0\\
    0&g_{T_N}(\Gamma)(y \otimes \Id_N)h_{S_{N}+T_{N}}^{\sT}(\Lambda)(x\otimes \Id_{N})h_{S_{N}+T_{N}}(\Lambda)
    \end{pmatrix}\\
        &=g_{\tilde{T}_N}(\tilde{\Gamma})(y^{21}\otimes
        \Id_{N})h_{\tilde{S}_N+\tilde{T}_N}(\tilde{\Lambda})
        (x^{12} \otimes \Id_N)h_{\tilde{S}_N+\tilde{T}_N}(\tilde{\Lambda})\\
        &=\frac{d}{dt}\Big|_{t=0}g_{\tilde{T}_N}(\tilde{\Gamma})(y^{21}
        \otimes \Id_{N})h_{\tilde{S}_N+\tilde{T}_N}(\tilde{\Lambda}-tx^{12}).
    \end{align*}
Therefore, 
\begin{align*}
    &(\Id_k \otimes \tr_N)\Big[g_{T_N}(\Gamma)(y \otimes \Id_N)h_{S_N+T_N}^\sT(\Lambda)(x \otimes \Id_N)h_{S_N+T_N}(\Lambda)\Big]\\
    &=(\Tr \otimes \Id_{k})\Bigg[\frac{d}{dt}\Big|_{t=0}(\Id_{2k} \otimes
    \tr_N)\Big(g_{\tilde{T}_N}(\tilde\Gamma)(y^{21} \otimes
    \Id_N)h_{\tilde{S}_N+\tilde{T}_N}(\tilde{\Lambda}-tx^{12})\Big)
    \Bigg].
\end{align*}
We specialize this identity to $\Gamma=\Gamma_\A$, $y=a_je_{ml}^{(k)}$, and
    $x=e_{ml}^{(k)}a_j$. Set
    $\tilde{M}=g_{\tilde{T}_N}(\tilde{\Gamma}_\A)((a_je_{ml}^{(k)})^{21} \otimes
    \Id_N)$, and
    define for $w \in M_{2k}^+$ the left Cauchy transform
    \[\tilde{G}_{\tilde{M},\tilde{S}_N+\tilde{T}_N}(w)=(\Id_{2k} \otimes \tr_N)
    \E[\tilde{M}h_{\tilde{S}_N+\tilde{T}_N}(w)].\]
    Then we obtain that the second term defining $A_2(j,m,l)$ in (\ref{eq:A2})
    is equal to
    \[-\Tr \otimes \Id_{k}\Big[
        \tilde{G}'_{\tilde{M},\tilde{S}_N+\tilde{T}_{N}}
        (\tilde{\Lambda})[(e_{ml}^{(k)}a_j)^{12}]
    \Big].\]
Similar arguments in the space $M_{2k} \otimes \A_N$ yield that the first term
defining $A_2(j,m,l)$ is equal to
\[-\Tr \otimes \Id_{k}\Big[
    \tilde{G}'_{\tilde{M},\tilde{s}+\tilde{T}_{N}}(\tilde{\Lambda})
    [(e_{ml}^{(k)}a_j)^{12}] \Big],\]
where
\[\tilde{s}=\sum_{j=1}^p ((a_j^\sT)^{11}+a_j^{22}) \otimes x_j.\]

Taking the difference, we apply Proposition \ref{prop:derQ} with $2k$,
$2q$, and $Y_1^\sT,\ldots,Y_q^\sT,Y_1,\ldots,Y_q$ 
in place of $k$, $q$, and $Y_1,\ldots,Y_q$. Finally, using the bound
$\|\tilde{M}\| \leq C\|g_{T_N}(\Gamma_\A)\| \leq \|(\Im \Lambda)^{-1}\|$, we
get the desired bound for $A_2(j,m,l)$.
\end{proof}

Combining Propositions \ref{prop:A1bound} and \ref{prop:A2bound} for $A_1$ and
$A_2$, we get (\ref{eq:Rdiffbound}). Lemma \ref{GsT-GST+R/N} then follows from
this and Proposition \ref{prop:secondordersubordination}.

\subsection{The spectrum of \texorpdfstring{$L_N$}{LN}}

Recall the linear polynomials $L_N$ and $L_\A$ from (\ref{eq:LN}) and
(\ref{eq:LA}). We now apply Lemma \ref{GsT-GST+R/N} to obtain the following
spectral inclusion.

\begin{lemma}\label{inclusion_linear}
    In the setting of Theorem \ref{thm:strongfree},
    for any $k \geq 1$, self-adjoint linear $*$-polynomial $L$ with
    coefficients in $M_k(\C)$, and $\delta>0$, almost surely for all large $N$
    \begin{equation}\label{eq:inclusion_linear}
    \spec(L(\mathbf{X}_{N},\mathbf{Y}_{N}))\subseteq \spec
        (L(\mathbf{x},\mathbf{Y}_{N}))_\delta.
\end{equation}
\end{lemma}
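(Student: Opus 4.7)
My plan is to apply the Haagerup-Thorbjørnsen strategy, now that we have the second-order Cauchy transform approximation of Lemma \ref{GsT-GST+R/N}. For $\lambda \in \C^+$, set $\Lambda(\lambda) = \lambda\Id_k - a_0 \in M_k^+$, so that $\tr_k[H_{S_N+T_N}(\Lambda(\lambda))] = (kN)^{-1}\Tr[(\lambda\Id_{kN} - L_N)^{-1}]$ is the normalized Stieltjes transform of $L_N$, and $\tr_k[G_{s+T_N}(\Lambda(\lambda))]$ is that of $L_\A = a_0\otimes\Id_N + s + T_N$, whose spectrum is $\spec(L(\mathbf{x},\mathbf{Y}_N))$.

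The first step is to verify that the function
\[g_\A^{(N)}(\lambda) := \tr_k\!\Big[G_{s+T_N}(\Lambda(\lambda)) + \mathcal{L}_{\Lambda(\lambda)}\big(R_\A(\Lambda(\lambda))\big)\Big]\]
extends analytically to $\C\setminus\spec(L(\mathbf{x},\mathbf{Y}_N))$ with $g_\A^{(N)}(\lambda) = O(1/|\lambda|)$ at infinity, and hence by Stieltjes inversion is the Cauchy transform of a signed measure $\mu_\A^{(N)}$ supported in $\spec(L(\mathbf{x},\mathbf{Y}_N))$. This uses the explicit formula \eqref{eq:RA} for $R_\A$, which is rational in the resolvents $g_{s+T_N}(\Lambda)$ and $g_{T_N}(\Gamma_\A)$, both analytic outside $\spec(L_\A)$. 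Then Lemma \ref{GsT-GST+R/N} translates into the bias bound
\[\big|\E\, \tr_k[H_{S_N+T_N}(\Lambda(\lambda))] - g_\A^{(N)}(\lambda)\big| \leq \frac{C}{N^{2}}\,(\Im\lambda)^{-15}\]
for $\Im\lambda > N^{-\eta}$ with any fixed $\eta < 1/3$, while a Gaussian Poincaré inequality applied to $\tr_k[H_{S_N+T_N}(\Lambda(\lambda))]$ as a Lipschitz function of the $pN^2$ standard Gaussian entries defining $X_1,\ldots,X_p$ gives the variance bound $\Var[\tr_k[H_{S_N+T_N}(\Lambda(\lambda))]] \leq C(\Im\lambda)^{-4}/N^{4}$.

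I would then transfer these estimates from Stieltjes transforms to smooth test functions by a Helffer-Sjöstrand calculation (in the spirit of \cite{haagerupthorbjornsen, male}) using an almost-analytic extension of sufficiently high order and a contour restricted to $|\Im z| \geq N^{-\eta}$. For any $\varphi \in C_c^\infty(\R)$ with $\supp\varphi \cap \spec(L_\A)_{\delta/2} = \emptyset$, the identity $\int \varphi\,d\mu_\A^{(N)} = 0$ produces $|\E\, \tr_{kN}[\varphi(L_N)]| \leq C_\varphi/N^{1+\kappa}$ and $\Var[\tr_{kN}[\varphi(L_N)]] \leq C_\varphi/N^{3+\kappa}$ for some $\kappa > 0$. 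Multiplying by $kN$, Chebyshev and Borel-Cantelli force $\Tr[\varphi(L_N)] \to 0$ almost surely. Since $\|L_N\|$ is a.s.\ bounded by a constant $K$, I cover $[-K,K]\setminus\spec(L_\A)_\delta$ by finitely many intervals chosen deterministically at scale $\delta/4$, apply the argument to each corresponding nonnegative cutoff, and use $\varphi \geq c > 0$ on the location of any prospective eigenvalue to conclude that no eigenvalues of $L_N$ lie outside $\spec(L_\A)_\delta$ eventually, establishing \eqref{eq:inclusion_linear}.

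The main obstacle is verifying that $g_\A^{(N)}$ is the Cauchy transform of a signed measure of bounded total variation uniformly in $N$, so that the Helffer-Sjöstrand identity applies literally and $\int \varphi\,d\mu_\A^{(N)} = 0$ whenever $\supp\varphi$ is disjoint from $\spec(L_\A)$, and balancing the high-degree $(\Im z)^{-15}$ singularity of the bias against the order of the almost-analytic extension in the contour step to obtain a strictly positive exponent $\kappa$. A secondary technical point is handling the $N$-dependence of $\spec(L_\A)$ via a deterministic finite cover of $[-K,K]$ at scale $\delta$, so that Borel-Cantelli can be applied simultaneously across a bounded number of cutoffs independent of $N$.
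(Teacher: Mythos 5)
Your proposal is essentially the proof given in the paper, and the strategy is sound. The paper's Lemma \ref{almostsurelemma} and Proposition \ref{prop:rA} carry out exactly your program: the second-order approximation of Lemma \ref{GsT-GST+R/N} is specialized to scalar-valued Stieltjes transforms (the paper's display (\ref{eq:scalarbound}), matching your bias bound with a $(\Im\lambda)^{-15}$ singularity), the correction term $r_\A(\lambda) = \tr_k[\mathcal{L}_\Lambda(R_\A(\Lambda))]$ is shown to be the Stieltjes transform of a compactly supported distribution on $\spec(L_\A)$, and the conclusion follows by integrating against cutoff functions vanishing near $\spec(L_\A)$ together with the Gaussian Poincar\'e variance bound and Borel--Cantelli.

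Two small remarks on the points you flag as obstacles. First, your worry that $g_\A^{(N)}$ must be the Cauchy transform of a \emph{signed measure of bounded total variation} is in fact unnecessary: since $|r_\A(\lambda)|$ may blow up like $\dist(\lambda,\spec(L_\A))^{-3}$, the correction is in general not a measure but a compactly supported distribution of finite order. The paper handles this via Schultz's criterion (the reference to \cite[Theorem 5.4]{schultz} in Proposition \ref{prop:rA}), which requires only analyticity off $\spec(L_\A)$, decay at infinity, and a polynomial growth bound $|r_\A(\lambda)|\leq C\max\{\dist(\lambda,K)^{-3},1\}$; the Helffer--Sj\"ostrand integration you have in mind then needs an almost-analytic extension of correspondingly high order, which is exactly how your exponent bookkeeping works out. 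Second, on the $N$-dependence of $\spec(L_\A)$: the paper sidesteps your finite-cover-plus-subsequence bookkeeping by simply letting the cutoff $f_N$ be $N$-dependent but with $N$-uniform derivative bounds $|f_N^{(m)}|\leq C_m\delta^{-m}$, so that the Stieltjes inversion and Poincar\'e estimates have $N$-uniform constants and Borel--Cantelli applies directly to the sequence $(\tr_k\otimes\tr_N)f_N(L_N)$. Your fixed deterministic cover at scale $\delta/4$ can also be made to work (applying Borel--Cantelli for each fixed $\varphi_i$ along the subsequence of $N$ where $\supp\varphi_i$ is disjoint from $\spec(L_\A)$), but it is slightly more cumbersome to state precisely.
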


For this, we specialize Lemma \ref{GsT-GST+R/N} to the scalar-valued Stieltjes
transforms of $L_N$ and $L_\A$. For $\lambda \in \C^+$, define
\begin{align*}
    g_N(\lambda)&=\E[(\tr_k\otimes\tr_N)(\lambda \Id_k\otimes
    \Id_{N}-L_N)^{-1}]=\tr_k (G_{S_N+T_N}(\lambda \Id_k-a_0)),\\
    g_\A(\lambda)&=(\tr_k\otimes\tau_N)(\lambda \Id_k\otimes \Id_N-L_\A)^{-1}
    =\tr_k (G_{s+T_N}(\lambda \Id_k-a_0)),\\
    r_\A(\lambda)&=\tr_k [\mathcal{L}_{(\lambda \Id_k-a_0)}
    (R_\A(\lambda \Id_k-a_0))]
\end{align*}
Then Lemma \ref{GsT-GST+R/N} applied with $\Lambda=\lambda \Id_k - a_0$ yields
\begin{equation}\label{eq:scalarbound}
    \Big|g_\A(\lambda)-g_N(\lambda)+r_\A(\lambda)\Big|\leq
\frac{C}{N^2}(\Im \lambda)^{-5}(1+(\Im \lambda)^{-10})
\end{equation}
for any $\eta \in (0,1/3)$, a constant $C \equiv C(\eta)>0$,
and all $\lambda \in \C^+$ such that $\Im \lambda \geq N^{-\eta}$.

As in \cite{schultz}, we first show the following.

\begin{proposition}\label{prop:rA}
    The function $r_\A(\lambda)$ is the Stieltjes transform of a distribution on
    $\R$ with support contained in $\spec(L_\A)$.
\end{proposition}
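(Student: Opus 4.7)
The plan is to verify three properties from which the conclusion follows: (i) $r_\A(\lambda)$ extends analytically to $\C \setminus \spec(L_\A)$; (ii) $r_\A(\bar\lambda) = \overline{r_\A(\lambda)}$; and (iii) $r_\A(\lambda) = O(1/|\lambda|^2)$ as $|\lambda| \to \infty$. Granted these, the Plemelj--Sokhotski jump
\[
\mu_\A = \tfrac{1}{2\pi i}\bigl(r_\A(\,\cdot + i0) - r_\A(\,\cdot - i0)\bigr)
\]
defines a distribution of compact support contained in $\spec(L_\A)$, and $r_\A(\lambda) = \int (x - \lambda)^{-1}\,d\mu_\A(x)$ on $\C \setminus \spec(L_\A)$ by the Stieltjes inversion formula.

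To verify (i), I substitute the explicit form
$\mathcal{L}_\Lambda(z) = z + (\Id_k \otimes \tau_N)[g_{s+T_N}(\Lambda)(\cR_s(z) \otimes \Id_N) g_{s+T_N}(\Lambda)]$
derived in the proof of Proposition \ref{prop:secondordersubordination}, and the definition (\ref{eq:RA}) of $R_\A(\Lambda)$, into $r_\A(\lambda) = \tr_k[\mathcal{L}_\Lambda(R_\A(\Lambda))]$ with $\Lambda = \lambda\Id_k - a_0$. This presents $r_\A(\lambda)$ as a polynomial in $(\Id_k \otimes \tau_N)$-traces of products of three resolvent factors: $g_{s+T_N}(\Lambda)$, $g_{s+T_N}^\sT(\Lambda)$, and $g_{T_N}(\Gamma_\A(\Lambda))$. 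The first two are $(\lambda \Id_{kN} - L_\A)^{-1}$ and $(\lambda \Id_{kN} - L_\A^\sT)^{-1}$, both analytic on $\C \setminus \spec(L_\A)$ since $\spec(L_\A^\sT) = \spec(L_\A)$. For the third factor, I would invoke the $M_k$-valued subordination regularity theorem: since $s$ and $T_N$ are free over $M_k$ with $s$ a matrix-valued semicircular, the subordination function $\Lambda \mapsto \Gamma_\A(\Lambda)$ maps $\C^\pm$ into $M_k^\pm$ and analytically continues across $\R \setminus \spec(L_\A)$ into the set of $M_k$-elements for which $\Gamma_\A \otimes \Id_N - T_N$ is invertible in $M_k \otimes \A_N$. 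This extends $g_{T_N}(\Gamma_\A(\Lambda))$ analytically to $\C \setminus \spec(L_\A)$.

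Property (ii) is a direct check: self-adjointness of $L_\A$ and $T_N$, together with the Hermiticity of $a_0,\ldots,a_p,b_1,\ldots,b_q$, implies each resolvent is sent to its adjoint under $\lambda \mapsto \bar\lambda$, while $\cR_s$, the partial trace, and $\mathcal{L}_\Lambda$ are all compatible with complex conjugation. For property (iii), as $|\lambda| \to \infty$ one has $\|g_{s+T_N}(\Lambda)\| = O(1/|\lambda|)$, and since $\Gamma_\A(\Lambda) = \Lambda - \cR_s(G_{s+T_N}(\Lambda))$ gives $\|\Gamma_\A\| \asymp |\lambda|$, also $\|g_{T_N}(\Gamma_\A(\Lambda))\| = O(1/|\lambda|)$. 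Because $R_\A(\Lambda)$ contains three resolvent factors and an explicit $1/N$ prefactor, and the correction term from $\mathcal{L}_\Lambda$ is $O(1/|\lambda|^2)$ relative to the identity, one obtains $r_\A(\lambda) = O(1/|\lambda|^2)$.

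The principal obstacle is the analyticity step (i), specifically the claim that $\Gamma_\A(\Lambda) \otimes \Id_N - T_N$ stays invertible on all of $\C \setminus \spec(L_\A)$. Naively, the bound $\|(\Gamma_\A \otimes \Id_N - T_N)^{-1}\| \leq \|(\Im \Gamma_\A)^{-1}\|$ used elsewhere in the paper degenerates as $\lambda$ approaches the real axis off $\spec(L_\A)$. The resolution is the operator-valued analogue of the classical subordination fact that the subordination function for free additive convolution with a semicircular sends the complement of the support of the sum into the complement of the support of the other summand; this input gives the required invertibility throughout $\C \setminus \spec(L_\A)$. Once (i)--(iii) are in hand, the representation of $r_\A$ as the Stieltjes transform of a distribution supported on $\spec(L_\A)$ follows from the standard Stieltjes inversion argument.
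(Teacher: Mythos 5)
Your proposal follows the same general template as the paper's proof — verify a list of analytic conditions on $r_\A$ and then invoke a Stieltjes-inversion/Plemelj--Sokhotski result — and your treatment of the analyticity obstacle is sound: the observation that $\Gamma_\A$ must carry $\R\setminus\spec(L_\A)$ into the resolvent set of $T_N$ is precisely what underlies the paper's step ``then $g_{T_N}(\Gamma_\A(\lambda))$ also exists and is analytic.''

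However, your list of sufficient conditions is incomplete, and this is a genuine gap. Analyticity off $\spec(L_\A)$, conjugation symmetry, and decay at infinity do \emph{not} imply that $r_\A$ is the Stieltjes transform of a distribution: those conditions permit essential singularities on the real axis (e.g.\ $\lambda^{-2}\exp\bigl(1/((\lambda-1)(\lambda+1))\bigr)$ satisfies all three but is not the transform of any distribution supported on $\{\pm 1\}$, since such a transform would have to be rational). The boundary values $r_\A(x \pm i0)$ only exist as distributions if one also controls the rate of blow-up as $\lambda$ approaches the support. The paper's proof cites a result (\cite[Theorem 5.4]{schultz}) whose third hypothesis is exactly such a control, namely $|r_\A(\lambda)| \leq C\max\{\dist(\lambda,K)^{-3},1\}$ for all $\lambda\in\C\setminus\R$ and some compact $K\supseteq\spec(L_\A)$, and the paper spends a paragraph establishing this via the bound $|r_\A(\lambda)| \leq C|\Im\lambda|^{-3}(1+|\Im\lambda|^{-2})$ over a bounded rectangle, combined with boundedness away from that rectangle. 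Your condition (iii) only addresses $|\lambda|\to\infty$ and says nothing about $\Im\lambda\to 0$ near $\spec(L_\A)$. Without this near-support growth control, the Plemelj--Sokhotski jump you invoke is not known to define a distribution, and the argument does not close. The fix is straightforward in outline — you would supply the same estimate the paper does, using $\|g_{s+T_N}(\Lambda)\|, \|g_{T_N}(\Gamma_\A)\| \leq \|(\Im\Lambda)^{-1}\| = |\Im\lambda|^{-1}$ — but as written the proposal asserts sufficiency of a set of conditions that is not in fact sufficient.
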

\begin{proof}
    By \cite[Theorem 5.4]{schultz}, it suffices to check that
    \begin{itemize}
    \item $r_\A(\lambda)$ is analytic on $\C \setminus \spec(L_\A)$,
    \item $r_\A(\lambda)\rightarrow 0$ as $|\lambda|\rightarrow\infty$, and
    \item There exists a constant $C>0$ and a compact set $K \subset \R$
        containing $\spec(L_\A)$ such that
        $|r_\A(\lambda)| \leq C\cdot \max\{\dist(\lambda,K)^{-3},1\}$
    for all $\lambda\in\C\setminus\R$.
    \end{itemize}

    The matrix $\Gamma_\A$ in (\ref{eq:RA}) is given by
    $\Gamma_\A(\lambda)=\lambda \Id_k-a_0
    -\mathcal{R}_{s}(G_{s+T_{N}}(\lambda \Id_k-a_0))$.
    For the first claim, if $\lambda \notin \spec(L_\A)$, then
    $G_{s+T_N}(\lambda \Id_k-a_0)$ exists and is analytic at $\lambda$. The
    subordination identity (\ref{eq:subordination})
    implies $G_{s+T_N}(\lambda \Id_k-a_0)=G_{T_N}(\Gamma_\A(\lambda))$ for
    all $\lambda \in \C^+$, and
    hence also for all $\lambda \notin \spec(L_\A)$ by analytic continuation.
    Then $g_{T_N}(\Gamma_\A(\lambda))$ also
    exists and is analytic at $\lambda$. Recalling the definition of $r_\A$
    above and of $R_\A$ from (\ref{eq:RA}), we see that
    $r_\A(\lambda)$ is analytic on $\C \setminus \spec(L_\A)$.
    
    For the second claim, note that for some constant $M>0$, 
    uniformly over $\lambda \in \C$ where $|\lambda| \geq M$, we have
    \[\|G_{s+T_{N}}(\lambda \Id_k-a_0)\|\leq
    \|(\lambda \Id_k\otimes \Id_N-L_\A)^{-1}\| \leq C/|\lambda|\]
    and similarly $\|G_{s+T_{N}}'(\lambda \Id_k-a_0)\| \leq C/|\lambda|^2$.
    Then also
    \[\|G_{T_N}(\Gamma_A(\lambda))\| \leq \frac{1}{|\lambda|-\|a_0\|
    -\|\mathcal{R}_{s}(G_{s+T_{N}}(\lambda \Id_k-a_0))\|-\|T_N\|}
    \leq C/|\lambda|.\]
    Thus $\|R_\A(\lambda \Id_k-a_0)\| \leq C|\lambda|^{-3}$, and
    $|r_\A(\lambda)| \leq C|\lambda|^{-3}(1+|\lambda|^{-2})$. In particular,
    $r_\A(\lambda) \to 0$ as $|\lambda| \to \infty$

    For the third claim, let $K=[-M,M]$. Over the region $\Re \lambda \in K$
    and $\Im \lambda \in [-M,M] \setminus \{0\}$, we apply the similar bound
    \[|r_\A(\lambda)| \leq C|\Im \lambda|^{-3}(1+|\Im \lambda|^{-2})\]
    to get $|r_\A(\lambda)| \leq C\dist(\lambda,K)^{-3}$. For $\lambda$ outside
    this region, the preceding argument implies $|r_\A(\lambda)|$ is uniformly
    bounded. The third claim follows.
\end{proof}

Combining this with (\ref{eq:scalarbound}), we get the following result.

\begin{lemma}\label{almostsurelemma}
    Fix any $M,\delta>0$ such that $\spec(L_\A)_\delta \subset [-M,M]$ for all
    large $N$. Consider any (sequence of) non-negative smooth functions
    $f_N:\R \to [0,1]$ such that
    \[f_N(x)=\begin{cases} 0 & x \in \spec(L_\A)_{\delta/2} \text{ or }
        x \notin [-M-\delta,M+\delta]\\
        1 & x \in [-M,M] \setminus \spec(L_\A)_\delta
    \end{cases}\]
    and $|f_N^{(k)}(x)| \leq C_k\delta^{-k}$ for each $k \geq 1$, some constants
    $C_k>0$, and all $x \in \R$.
    Then for any fixed $\kappa \in (0,1/2)$, almost surely as $N \to \infty$,
    \[N^{1+\kappa}(\tr_k\otimes\tr_N)(f_N(L_N))\rightarrow 0.\]
\end{lemma}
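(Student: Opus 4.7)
The approach is the now-standard Helffer-Sjöstrand plus Gaussian Poincaré argument of \cite{haagerupthorbjornsen,schultz,male}. Fix $m \in \mathbb{N}$ large (to be chosen) and let $\tilde f_N \in C_c^\infty(\C)$ be an almost-analytic extension of $f_N$ of order $m$, supported in a fixed compact neighborhood of $\R$ and satisfying the pointwise bound $|\partial_{\bar z}\tilde f_N(z)| \leq C_m|\Im z|^m$; the existence of such $\tilde f_N$ with constants depending on $m$ and $\delta^{-1}$ is allowed by the hypothesized derivative bounds $|f_N^{(k)}|\leq C_k\delta^{-k}$. Writing $H_N(z) = (\tr_k\otimes\tr_N)(z\Id - L_N)^{-1}$ for the random scalar Cauchy transform, the HS formula reads
\[
F_N \;:=\; (\tr_k\otimes\tr_N)(f_N(L_N)) \;=\; -\frac{1}{\pi}\int_\C \partial_{\bar z}\tilde f_N(z)\,H_N(z)\,d^2z.
\]

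For the expectation, observe that by Proposition \ref{prop:rA} the function $r_\A$ is the Stieltjes transform of a measure supported in $\spec(L_\A)$, and $g_\A$ is directly the Stieltjes transform of a measure supported in $\spec(L_\A)$. Since $f_N$ vanishes on $\spec(L_\A)_{\delta/2}\supseteq \spec(L_\A)$, the HS identity applied in reverse gives $\int_\C \partial_{\bar z}\tilde f_N\,(g_\A+r_\A)\,d^2z = 0$, so that
\[
\E F_N \;=\; -\frac{1}{\pi}\int_\C \partial_{\bar z}\tilde f_N(z)\bigl(g_N(z) - g_\A(z) - r_\A(z)\bigr)d^2z.
\]
I would split the integration at $|\Im z|=N^{-\eta}$ for some $\eta<1/3$: on $\{|\Im z|\geq N^{-\eta}\}$, the scalar bound (\ref{eq:scalarbound}) of Lemma \ref{GsT-GST+R/N} controls $|g_N - g_\A - r_\A|$ by $CN^{-2}(|\Im z|^{-5}+|\Im z|^{-15})$; on the inner strip, use the trivial $|g_N|,|g_\A|\leq|\Im z|^{-1}$ together with $|r_\A|\leq C|\Im z|^{-5}$ from the proof of Proposition \ref{prop:rA}. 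Choosing $m$ large enough (e.g.\ $m\geq 20$), both pieces are $O(N^{-2})$, so $|\E F_N|\leq CN^{-2}$.

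For the variance, apply Gaussian Poincaré to $F_N$ through its HS representation, expressing $\nabla F_N$ in terms of $\nabla H_N(z)$ and using the resolvent bound $\E\|\nabla H_N(z)\|^2\leq C/(N^2|\Im z|^4)$. A weighted Cauchy-Schwarz, with weight matched to the $|\Im z|^m$ vanishing of $\partial_{\bar z}\tilde f_N$ against the $|\Im z|^{-4}$ singularity of the Poincaré bound, yields $\Var F_N \leq CN^{-4}$, as in the variance estimates of \cite{haagerupthorbjornsen} adapted to the GOE in \cite{schultz,male}. Chebyshev then gives
\[
\P\bigl(|F_N - \E F_N|>N^{-1-\kappa}\bigr) \;\leq\; \frac{\Var F_N}{N^{-2-2\kappa}} \;\leq\; CN^{2\kappa-2},
\]
summable in $N$ precisely for $\kappa<1/2$; combined with $|\E F_N|=O(N^{-2})=o(N^{-1-\kappa})$, Borel-Cantelli yields $F_N = o(N^{-1-\kappa})$ almost surely, i.e., $N^{1+\kappa}F_N\to 0$ a.s.

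The main obstacle is the variance bound $\Var F_N = O(N^{-4})$: a direct Gaussian Poincaré treating $F_N$ as a functional of the $X_j$ entries with the bare Lipschitz constant $\|f_N'\|_\infty/\sqrt{N}\leq C/(\delta\sqrt{N})$ only gives the insufficient $O(N^{-2})$. The missing factor of $N^{-2}$ must come from the HS representation itself, exploiting the high-order vanishing of $\partial_{\bar z}\tilde f_N$ near the real axis together with the cancellation produced by subtracting the zero-contribution of $g_\A+r_\A$; this is the most delicate technical step, matching the GOE adaptation of the HT argument.
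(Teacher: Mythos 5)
Your overall structure matches the paper's: bound $|\E F_N|\leq CN^{-2}$ using the fact that the deterministic part $g_\A+r_\A$ contributes zero, bound $\Var F_N\leq CN^{-4}$, and conclude via Chebyshev and Borel--Cantelli with $\kappa<1/2$. The expectation step is essentially correct; the paper uses Stieltjes inversion rather than a Helffer--Sj\"ostrand integral, but these are interchangeable here, and both rely on Proposition \ref{prop:rA} plus the scalar bound (\ref{eq:scalarbound}).

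However, your proposed mechanism for the variance bound is wrong, and this is where the genuine difficulty lies. You write that the missing factor $N^{-2}$ should come ``from the HS representation itself, exploiting the high-order vanishing of $\partial_{\bar z}\tilde f_N$ near the real axis together with the cancellation produced by subtracting the zero-contribution of $g_\A+r_\A$.'' But $g_\A+r_\A$ is deterministic, so subtracting it has no effect whatsoever on $\Var F_N$. And the high-order vanishing of $\partial_{\bar z}\tilde f_N$ near $\R$ serves only to tame the $|\Im z|^{-4}$ singularity of $\E\|\nabla H_N(z)\|^2$; it cannot produce an extra factor of $N^{-2}$, since the powers of $N$ in that estimate are decoupled from the powers of $|\Im z|$. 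A straightforward HS plus Poincar\'e plus weighted Cauchy--Schwarz, as you propose, gives only $\Var F_N\leq CN^{-2}$, which (as you note) is insufficient.

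The paper's argument instead proceeds in two stages, neither of which passes through HS for the variance. First, Gaussian Poincar\'e combined with the Haagerup--Thorbj\o rnsen trace inequality (\cite[Proposition 4.7]{haagerupthorbjornsen}) gives the pointwise gradient bound
\[
\|\nabla F_N(Z_1,\ldots,Z_p)\|_2^2 \;\leq\; \frac{C}{N}\,(\tr_k\otimes\tr_N)\bigl((f_N')^2(L_N)\bigr),
\]
so that $\Var F_N\leq (C/N^2)\,\E[(\tr_k\otimes\tr_N)((f_N')^2(L_N))]$. Second---and this is the key point you miss---the function $(f_N')^2$ is itself a smooth bump function that vanishes on $\spec(L_\A)_{\delta/2}$ (since $f_N$ is locally constant there), so the \emph{same} $O(N^{-2})$ expectation argument you used for $f_N$ applies verbatim to $(f_N')^2$, giving $\E[(\tr_k\otimes\tr_N)((f_N')^2(L_N))]\leq C/N^2$. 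Multiplying, $\Var F_N\leq C/N^4$. In short: the extra $N^{-2}$ comes from a second application of the expectation bound to the derivative bump function, not from any cancellation in the HS kernel. You would need to replace your variance argument with this two-stage one to close the proof.
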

\begin{proof}
    The argument is similar to \cite{haagerupthorbjornsen} and \cite{male}, and we will
    omit most of the details. Since $f_N \equiv 0$ on $\spec(L_\A)$, we have from 
    Proposition \ref{prop:rA} and the Stieltjes inversion formula that
    \begin{align*}
        \E[(\tr_k\otimes\tr_N)f_N(L_N)]
        &=\lim_{y\rightarrow 0^+}-\frac{1}{\pi}
        \Im\left[\int_{\R} f_N(x)g_N(x+iy)]dx\right]\\
        &=\lim_{y\rightarrow 0^+}\frac{1}{\pi}
    \Im\left[\int_{\R}f_N(x)[g_\A(x+iy)+r_\A(x+iy)-g_N(x+iy)]dx\right].
    \end{align*}
    Then applying (\ref{eq:scalarbound}) and
    following the same arguments as \cite[Theorem 6.2]{haagerupthorbjornsen}, we get
    \[\E[(\tr_k\otimes\tr_N)f_N(L_N)] \leq C/N^2\]
    for a constant $C \equiv C(\delta)>0$.

    As in the proof of Proposition \ref{master1}, we write
    $X_{j}=\frac{1}{\sqrt{2}}(Z_{j}+Z_{j}^\sT)$ where $Z_j \in \R^{N \times N}$
    has i.i.d.\ $\N(0,1/N)$ entries. Defining
    \[F_N(Z_1,...,Z_p)=f_N\left(a_0\otimes \Id_N+\frac{1}{\sqrt{2}}
    \sum_{j=1}^p a_j\otimes (Z_j+Z_j^\sT)+\sum_{j=1}^q b_j\otimes Y_j\right),\]
    the Gaussian Poincar\'e inequality yields
    \[\Var[(\tr_k\otimes\tr_N)f_N(L_N)]\leq \frac{1}{N}\E\left[\|\nabla
    F_N(Z_1,...,Z_p)\|_2^2\right].\]
    The same argument as \cite[Proposition 4.7]{haagerupthorbjornsen} yields
    \[\|\nabla F_N(Z_1,...,Z_p)\|_2^2
    \leq \frac{C}{N}(\tr_k\otimes\tr_N)((f_N')^2(L_N)),\]
    where $(f_N')^2$ denotes the function $z \mapsto (f_N'(z))^2$. So
    \[\Var[(\tr_k\otimes\tr_N)f(L_N)] \leq \frac{C}{N^2}
    \E[(\tr_k\otimes\tr_N)((f_N')^2(L_N))].\]
    Applying the same argument as above,
    \begin{align*}
&\E[(\tr_k\otimes\tr_N)(f_N')^2(L_N)]\\
    &=\lim_{y\rightarrow 0^+}\frac{1}{\pi}
    \Im\left[\int_{\R}(f_N'(x))^2[g_\A(x+iy)+r_\A(x+iy)-g_N(x+iy)]dx\right]
    \leq C/N^2,
\end{align*}
    so $\Var[(\tr_k\otimes\tr_N)f_N(L_N)] \leq C/N^4$. Then by Markov's
    inequality,
    \[\P[(\tr_k\otimes\tr_N)f_N(L_N) \geq N^{-1-\kappa}]
    \leq N^{2+2\kappa}\E[((\tr_k\otimes\tr_N)f_N(L_N))^2]
    \leq CN^{2+2\kappa} \cdot N^{-4}.\]
    Taking $0<\kappa<1/2$, the result follows from Borel-Cantelli.
\end{proof}

Taking a constant $M>0$ large enough ensures $\spec(L_N) \subset [-M,M]$
almost surely for all large $N$. Then defining $f_N$ as in
Lemma \ref{almostsurelemma}, if there exists an eigenvalue of $L_N$
outside $\spec(L_\A)_\delta$, we must have
\[(\tau_k\otimes\tau_N)f_N(L_N) \geq N^{-1}.\]
Lemma \ref{almostsurelemma} guarantees this does not happen, almost
surely for all large $N$. This concludes the proof of
Lemma \ref{inclusion_linear}.

\subsection{Linearization trick and ultraproduct argument}

We conclude the proof of Theorem \ref{thm:strongfree} from
Lemma \ref{inclusion_linear} by applying the linearization trick and
ultraproduct argument of \cite{haagerupthorbjornsen}. As our algebra $\A$ is $N$-dependent,
we apply this argument in a subsequence form.

Let $M_k(\mathbb{Q}+i\mathbb{Q})_{sa}$ be the set of $k\times k$ Hermitian
matrices whose entries have rational real and imaginary parts. Define the
countable set
\[\mathcal{L}=\bigcup_{k=1}^\infty
\{\text{all linear} *\text{-polynomials of } p+q \text{ variables
with coefficients in }
M_k(\mathbb{Q}+i\mathbb{Q})_{sa}\}.\]
Let $\Omega$ denote the sample space. 
Let $\mathbf{Z}_N(\omega)=(\mathbf{X}_{N}(\omega),\mathbf{Y}_{N})$, $\mathbf{z}_N=(\mathbf{x},\mathbf{Y}_{N})$, for all $\omega\in\Omega$.
\begin{proof}[Proof of Theorem \ref{thm:strongfree}] 
Let $\Omega_0 \subset \Omega$
be the event where
\[\sup_{N \geq 1} \max_{i=1}^p
\left\|X_i^{(N)}(\omega)\right\|<\infty,\] 
and also where for each $L \in \mathcal{L}$ and (rational)
$\delta>0$, there exists $N_0(L,\delta,\omega)>0$ such that
\begin{equation}\label{step1:coro_linear_inclusion}
     \spec(L(\mathbf{Z}_N(\omega)))\subseteq \spec(L(\mathbf{z}_N))_\delta
\end{equation}
for all $N \geq N_0(L,\delta,\omega)$. By Lemma \ref{inclusion_linear},
$\Omega_0$ has probability 1.

We claim that (\ref{eq:polynomial_inclusion}) holds on $\Omega_0$.
Suppose by contradiction that this is false for some
non-commutative $*$-polynomial $Q$ (with coefficients in $\C$),
$\delta>0$, and $\omega \in \Omega_0$. Then at this $\omega$,
there is a subsequence $\{N_j\}$ and values
$\{\lambda_{N_{j}}\} \in \R$ such that for all $j$,
\begin{equation}\label{step3:assumption}
      \lambda_{N_{j}}\in \spec(Q(\mathbf{Z}_{N_{j}}(\omega))) \quad\text{but}\quad
      \lambda_{N_{j}}\not\in \spec(Q(\mathbf{z}_{N_{j}}))_\delta.
\end{equation}
Since $\spec(Q(\mathbf{Z}_N(\omega)))$ is uniformly bounded in $N$,
there is a further subsequence $\{N_{j_m}\}$
such that (\ref{step3:assumption}) still holds and
\begin{equation}\label{step3:converge}
      \lambda_{N_{j_m}}\to \lambda_0 \quad\text{as}\quad N_{j_m}\rightarrow\infty,
\end{equation}
for some $\lambda_0\in \R$. To ease notation, let us denote $\{N_{j_m}\}$ in the
following argument simply as $\{N\}$.

We introduce the quotient map defined in \cite[Proposition
7.3]{haagerupthorbjornsen}. Define the product and sum of the sequence of
algebras $\{\A_N\}_{N=1}^{\infty}$ by 
    \[\prod_N \A_N=\left\{\left(a_N\right)_{N=1}^{\infty}:a_N\in
    \A_N,\,\sup_N \left\|a_N\right\|<\infty\right\}\]
and
    \[\sum_N \A_N=\left\{\left(a_N\right)_{N=1}^{\infty}:a_N\in
    \A_N,\,\lim_{N\rightarrow\infty}\left\|a_N\right\|=0\right\}.\]
Then $\prod_N \A_N$ is a $C^*$-algebra (under coordinate-wise addition and
multiplication), and $\sum_N \A_N$ is a two-sided ideal. Thus, we can define a
quotient map by
    \[\pi_\A:\prod_N \A_N \longrightarrow \Big(\prod_N
    \A_N\Big)\Big/\Big(\sum_N \A_N \Big) \equiv \mathcal{C}_\A.\]
We identify $M_k \otimes \mathcal{C}_\A$ with
    \[\Big(\prod_N M_k\otimes\A_N \Big)\Big/\Big(\sum_N M_k\otimes\A_N\Big).\]
Similarly, define the product and sum of the matrix spaces
$\{M_N\}_{N=1}^{\infty}$, and a quotient map
    \[\pi:\prod_N M_N \longrightarrow \Big(\prod_N
    M_N\Big)\Big/\Big(\sum_N M_N \Big) \equiv \mathcal{C}.\]
Denote $\mathbf{Z}_N(\omega)=(\mathbf{X}_{N}(\omega),\mathbf{Y}_{N})$
and $\mathbf{z}_N=(\mathbf{x},\mathbf{Y}_{N})$.
Denote their images under the above quotient maps as
\begin{align*}
\mathbf{Z}'(\omega)&=(Z'_{i}(\omega))_{i=1}^{p+q}\\
&=\left(\pi\left(\left\{X_1^{(N)}(\omega)\right\}\right),\ldots,\pi\left(\left\{X_p^{(N)}(\omega)\right\}\right),\pi\left(\left\{Y_1^{(N)}\right\}\right),\ldots,\pi\left(\left\{Y_q^{(N)}\right\}\right)\right),\\
\mathbf{z}'&=(z'_{i})_{i=1}^{p+q}\\
&=\left(\pi_\A\left(\left\{x_1\right\}\right),\ldots,\pi_\A\left(\left\{x_p\right\}\right),\pi_\A\left(\left\{Y_1^{(N)}\right\}\right),\ldots,\pi_\A\left(\left\{Y_q^{(N)}\right\}\right)\right).
\end{align*}

We first claim that for every $L \in \mathcal{L}$,
\begin{equation}\label{step1:conclusion}
     \spec(L(\mathbf{Z}'(\omega)))\subseteq \spec(L(\mathbf{z}')).
 \end{equation}
 Indeed, fixing $L\in\mathcal{L}$, for any
 $\lambda\not\in \spec(L(\mathbf{z}'))$ there exists an element
 $w'\in M_k \otimes \mathcal{C}_\A$ such that
 $w'\left(\lambda-L(\mathbf{z}')\right)=1$.
 Letting $\left(w_N\right)_{N=1}^{\infty}\in  \prod_{N } M_{k }\otimes\A_N$ be
 such that $\pi_\A\left(\left\{w_N\right\}\right)=w',$ and noting that
 $\Id_k\otimes \pi_\A(\{L(\mathbf{z}_N )\})=L(\mathbf{z}')$, there must
 exist $\left(v_N\right)_{N=1}^{\infty}\in  \sum_{N } M_{k }\otimes\A_N$ such
 that for every $N$,
 \[w_{N}\left(\lambda \Id_k\otimes \Id_{N}-
 L(\mathbf{z}_N)\right)=\Id_k\otimes \Id_N+v_{N}.\]
 For $N$ large enough such that $\|v_N\|<1/2$, we get
    \[\lambda \not\in \spec( L(\mathbf{z}_{N }
     ))\quad\text{and}\quad\left\|\left(\lambda \Id_k\otimes\Id_{N }-
     L(\mathbf{z}_{N })\right)^{-1}\right\| \leq 2\sup_N \|w_N\|.\]
 Then $\dist(\lambda,\spec( L(\mathbf{z}_{N }))) \geq (2\sup_N \|w_N\|)^{-1}$.
 Applying (\ref{step1:coro_linear_inclusion}) with
 $\delta=(4\sup_N \|w_N\|)^{-1}$, we conclude that
 $\dist(\lambda,\spec( L(\mathbf{Z}_{N }(\omega)))) \geq (4\sup_N \|w_N\|)^{-1}$,
 so $\lambda \not\in \spec( L(\mathbf{Z}_{N }(\omega)))$ for all large $N$.
 Then defining $\{W_N\}_{N=1}^\infty$ by
 $(\lambda \Id_k\otimes \Id_N-L(\mathbf{Z}_{N }(\omega)))^{-1}$ for large $N$,
 we obtain that $\pi(\{W_N\})$ is the inverse of
 $\lambda-L(\mathbf{Z}'(\omega))$ in $M_k \otimes \mathcal{C}$.
 Thus, $\lambda\not\in \spec(L(\mathbf{Z}'(\omega)))$, so (\ref{step1:conclusion})
 holds.

 Then for this fixed $\omega$,
\cite[Theorem 2.4]{haagerupthorbjornsen} establishes the existence of a
unital $*$-homomorphism
\[\phi:\langle 1,z_1',\ldots,z_{p+q}'
\rangle \rightarrow  \langle 1,Z_1'(\omega),\ldots,Z_{p+q}'(\omega) \rangle\]
such that $\phi(z'_i)=Z'_i(\omega)$ for each $i \in \{1,\ldots,p+q\}$.
Note that if $x \in \langle 1,z_1',\ldots,z_{p+q}' \rangle$
is invertible with inverse $x^{-1}$,
then $\phi(x)$ is also invertible with inverse $\phi(x^{-1})$.
The assumption (\ref{step3:assumption}) implies
that $\dist(\lambda_N,\spec(Q(\mathbf{z}_N))) \geq \delta$ for all $N$. Then
\[\pi_\A\left(\{\lambda_N \Id_N-Q(\mathbf{z}_{N})\}_{N=1}^{\infty}\right)
=\pi_\A(\{\lambda_N \Id_N\})-Q(\mathbf{z}')\]
is invertible. Applying $\phi(Q(\mathbf{z}'))=Q(\mathbf{Z}'(\omega))$,
we get that
$\phi(\pi_\A(\{\lambda_N  \Id_N\}))-Q(\mathbf{Z}'(\omega))$ is also
invertible. From (\ref{step3:converge}), we obtain
\begin{equation}
    \pi(\{\lambda_N \Id_N\})=\pi(\{ \lambda_0 \Id_N\})=\lambda_0
    \1_{\mathcal{C}}=\phi(\lambda_0 \1_{\mathcal{C}_\A})=\phi(\pi_\A(\{\lambda_0
    \Id_N\}))=\phi(\pi_\A(\{\lambda_N \Id_N\})).
 \end{equation}
 Then $\pi(\{\lambda_N \Id_N-Q(\mathbf{Z}_{N}(\omega))\}_{N=1}^{\infty})$ is
 invertible. So
 $W_N(\lambda_N \Id_N-Q(\mathbf{Z}_{N}(\omega)))=\Id_N+V_N$ for some
 matrices $W_N,V_N$ with $\sup_N \|W_N\|<\infty$ and $\|V_N\| \to 0$.
 For large enough $N$, this contradicts the first statement of
 (\ref{step3:assumption}), that $\lambda_N \in \spec(Q(\mathbf{Z}_N))$,
 concluding the proof.
 \end{proof}

\begin{proof}[Proof of Theorem \ref{thm:strongfreeusual}]
The convergence in trace in (\ref{norm_convergence}) is known, see e.g.\
    \cite[Theorem 5.4.5]{AGZbook}. To verify the convergence in norm, it is
    sufficient to show almost surely
 \[\liminf_{N\rightarrow\infty}\|Q(\mathbf{X}_N,\mathbf{Y}_N)\|\geq \|Q(\mathbf{x},\mathbf{y})\|,\]
 \[\limsup_{N\rightarrow\infty}\|Q(\mathbf{X}_N,\mathbf{Y}_N)\|\leq \|Q(\mathbf{x},\mathbf{y})\|.\]
 The first inequality can be verified from the trace convergence and \cite[Lemma
    7.2]{haagerupthorbjornsen}. For the second inequality, because of the
    linearization trick, it suffices to prove that for any linear polynomial $L$
    with coefficients in $M_k$, any $\delta>0$, and all large $N$,
 \[\spec(L(\mathbf{X}_N,\mathbf{Y}_N))\subset\spec(L(\mathbf{x},\mathbf{y}))_\delta.\] Based on Lemma \ref{inclusion_linear}, it remains to show \[\spec(L(\mathbf{x},\mathbf{Y}_N))\subset\spec(L(\mathbf{x},\mathbf{y}))_\delta,\] 
 which is the main result in \cite[Section 7 and Appendix A]{male}.
\end{proof}

\section{Anisotropic resolvent approximation}\label{app:resolventapprox}

In this section, we prove Theorem \ref{thm:resolventapprox}.
We note that
for specific matrix models, stronger forms of Theorem \ref{thm:resolventapprox}
known as \emph{anisotropic local laws} were obtained in \cite{knowlesyin}, which
allow for $z \to \spec(w)$ at a near-optimal rate.
Our result is global, in that it considers only $z$ with constant
separation from $\spec(w)$, but it encompasses more complicated models
than those studied in \cite{knowlesyin} and provides a general recipe for how to
derive the approximation using free probability techniques.

When the rank-one matrix $vu^*$ is ``infinitesimally free'' of $W$ 
(for example, if $W$ is rotationally invariant with respect to $u,v$),
Theorem \ref{thm:resolventapprox} is also related to the work
of \cite{shlyakhtenko,collinsetal}, and the resolvent approximation is given by
\[u^*R(z)v \approx
\langle u,v \rangle \cdot N^{-1}\Tr R(z) \approx \langle u,v \rangle
\cdot m_0(z),\]
where $m_0$ is the Stieltjes transform of $w$. Our analysis extends to
anisotropic approximations, where $R_0(z) \neq m_0(z)\Id$. We require this
in our application, because signal eigenvectors in one covariance $\Sigma_r$
of the mixed model can have a non-random orientation with respect to the bulk
eigenvectors of a different covariance matrix $\Sigma_s$.

Our proof will proceed by first showing convergence of moments, and
then converting this information into convergence of the resolvents.

\subsection{Convergence for moments}\label{sec:moment-conv}

We first show the following result on convergence of moments.

\begin{theorem}\label{thm:anisotropicpoly}
Under the assumptions of Theorem \ref{thm:resolventapprox},
let $Q$ be any fixed $*$-polynomial of $p+q$ arguments,
with coefficients in $\D$, and
$v,w \in \C^N$ any deterministic vectors such that $\|v\|,\|w\| \leq C$.
Then almost surely as $N \to \infty$,
\[v^*Q(H_1,\ldots,H_p,B_1,\ldots,B_q)w-v^*\tau^\cH(Q(H_1,\ldots,H_p,b_1,\ldots,b_q))w \to 0.\]
\end{theorem}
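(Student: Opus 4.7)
The plan is to reduce to monomials, then combine Haar orthogonal averaging on $\mathcal{O}$ with concentration of measure. By multilinearity of both sides of the claim in $Q$, it suffices to consider the case where $Q$ is a single monomial word $W(H,B) = d_0 A_{i_1} d_1 \cdots A_{i_L} d_L$ with $A_{i_\ell} \in \{H_1,\ldots,H_p,B_1,\ldots,B_q\}$ and $d_\ell \in \D$. Words involving no $B_j$ satisfy $\tau^\cH(W(H,b)) = W(H)$, making the claim trivial, so I assume $W$ contains at least one $B_j$. By the orthogonal invariance of $(B_1,\ldots,B_q)$, I may introduce an independent Haar-distributed $O \in \mathcal{O}$ and prove the claim with $OBO^\sT$ in place of $B$.

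Conditioning on $B$ (a.s.\ bounded in operator norm by hypothesis), I decompose
\[
v^* W(H, OBO^\sT)w - v^*\tau^\cH(W(H,b))w = \Delta_{\mathrm{fluc}} + \Delta_{\mathrm{mean}},
\]
where $\Delta_{\mathrm{fluc}}$ is the deviation of $v^* W(H,OBO^\sT)w$ from its $O$-conditional mean $\E_O[\,\cdot\,\mid B]$, and $\Delta_{\mathrm{mean}}$ is the difference of this mean from $v^*\tau^\cH(W(H,b))w$. The map $O \mapsto v^* W(H,OBO^\sT)w$ is a polynomial of fixed degree in $O,O^\sT$, hence Lipschitz on $\mathcal{O}$ with constant bounded uniformly in $N$ (using $\|H_i\|,\|B_j\|,\|v\|,\|w\| \leq C$). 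Since Haar measure on each factor $O(N_r)$ satisfies a log-Sobolev inequality with constant of order $1/N_r$, block-wise concentration yields sub-Gaussian deviations at scale $N^{-1/2+\eps}$, and Borel--Cantelli then gives $\Delta_{\mathrm{fluc}} \to 0$ almost surely.

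The convergence $\Delta_{\mathrm{mean}} \to 0$ is proved by expanding $\E_O[v^*W(H,OBO^\sT)w \mid B]$ via the orthogonal Weingarten formula applied block-wise on $\mathcal{O} = \prod_{r=1}^k O(N_r)$. Each pair partition of the $2m$ orthogonal factors (where $m$ is the number of $B$'s in $W$) contributes a product of normalized block-traces $N_r^{-1}\Tr_r[\,\cdot\,]$ of polynomials in the $B_j$'s---which converge almost surely to $\tau_2$ of the corresponding polynomials in the $b_j$'s by hypothesis (\ref{eq:bapprox}), extended to block traces by absorbing projections $P_r \in \D$ into the polynomial---multiplied by a deterministic bilinear form $v^*(\cdot)w$ built from $H_i$'s and $P_r$'s. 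Crossing pair partitions are suppressed by an order-$1/N$ Weingarten factor; the surviving non-crossing pairings combine via the moment--cumulant relation (\ref{eq:momentcumulant}) to yield exactly the value of $v^*\tau^\cH(W(H,b))w$ dictated by the $\D$-valued free product structure of $(H)$ and $(b)$.

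The main obstacle is this final combinatorial identification: matching the leading-order non-crossing Weingarten sum to the $\D$-valued moment--cumulant formula for the amalgamated free product. This is essentially the orthogonal-group analogue of Voiculescu's asymptotic freeness with amalgamation for Haar unitary conjugation of deterministic matrices. While standard in spirit, the block-wise decomposition of $\mathcal{O}$ and the careful bookkeeping of block indices $r \in \{1,\ldots,k\}$ through each pair partition---together with verifying that the orthogonal Weingarten asymptotics align precisely with the $\D$-valued free cumulants---require the most technical effort to execute rigorously.
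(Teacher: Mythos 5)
Your overall strategy---reduce to monomials, invoke block-orthogonal invariance, and split into a Haar-fluctuation term plus a conditional-mean term---shares the invariance step with the paper, and the concentration half is fine in spirit (the paper instead bounds a fourth moment and applies Borel--Cantelli). The genuine gap is in the mean term. The selection rule you assert---crossing pairings are suppressed by an order-$1/N$ Weingarten factor and the surviving non-crossing matched pairings reproduce $v^*\tau^\cH(W(H,b))w$---is wrong once the normalized trace is replaced by a bilinear form. Replacing one closed trace loop by $v^*(\cdot)w$ costs a factor of $N$ in the index-sum count, so pairings that are subleading in the trace setting (crossing matched pairings with an extra closed loop of \emph{unnormalized} traces, and mismatched pairings whose Weingarten weight is of order $N^{-m-1}$) contribute at order one. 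Already for $\E_O[v^*\,OB_1O^\sT H\,OB_2O^\sT w]$ the correct limit is $v^*\tau^\cH(b_1hb_2)w=\tau(b_1)\tau(b_2)\,v^*Hw+\bigl(\tau(b_1b_2)-\tau(b_1)\tau(b_2)\bigr)\,N^{-1}\Tr(H)\,v^*w$; the term $\tau(b_1b_2)\,N^{-1}\Tr(H)\,v^*w$ comes from a pairing linking the two $B$-blocks (crossing relative to the word), and the term $-\tau(b_1)\tau(b_2)\,N^{-1}\Tr(H)\,v^*w$ comes from Weingarten weights of order $N^{-3}$ multiplied by three unnormalized traces of order $N$ each. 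Both are invisible to your leading-order bookkeeping, yet they are exactly what builds the operator-valued conditional expectation $\tau^\cH$. In addition, for the orthogonal group the expansion contains transpose-type pairings producing $B_j^\sT$, $H_i^\sT$ and $wv^\sT$ insertions, which must also be controlled---this is precisely why the paper's Lemma \ref{lemma:combinatorics} handles matrices of the form $\check{w}\check{v}^\sT$ alongside $\check{w}\check{v}^*$. So the step you defer as bookkeeping is not a routine transcription of Collins--\'Sniady; as described it would produce the wrong limit.

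For comparison, the paper avoids exact asymptotics of the mean altogether: in Lemma \ref{lemma:anisotropicpoly} it inductively centers every factor at its $\tau^\D$-image, so that the free-probability target of the fully centered alternating word is identically zero (vanishing of alternating $\D$-valued cumulants via \cite{NSS}), and then only an \emph{upper bound} on a fourth moment of the Haar average is needed; that bound is what Lemma \ref{lemma:combinatorics} supplies by exploiting the trace-zero blocks and counting rank-one insertions. If you wish to keep the ``exact mean via Weingarten'' route, you must either carry out the full operator-valued resummation with amalgamation over $\D$ (tracking all pairings, including transposes, whose Weingarten order plus loop count is $O(1)$), or first center as the paper does so that only an upper bound is required. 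A minor additional fix: $O(N_r)$ is disconnected, so the log-Sobolev/concentration argument should be run on $SO(N_r)$ or on each coset separately.
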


Call a matrix $A \in \C^{N \times N}$ (or element $a \in \A$) simple if
$P_rAP_s=A$ (resp.\ $P_raP_s=a$) for some $r,s \in \{1,\ldots,k\}$. By
linearity, we may reduce Theorem \ref{thm:anisotropicpoly} to the following
setting.

\begin{lemma}\label{lemma:anisotropicpoly}
Fix the constants $C,c>0$.
Suppose, in addition to the assumptions of Theorem \ref{thm:anisotropicpoly},
that each $H_i$, $B_j$, and $b_j$ is simple for $i=1,\ldots,p$ and
$j=1,\ldots,q$. Then for any $m \geq 0$, any $j_1,\ldots,j_m \in \{1,\ldots,q\}$
and $\{i_1,\ldots,i_{m-1}\} \in \{1,\ldots,p\}$, and any deterministic
$v,w \in \C^N$ with $\|v\|,\|w\| \leq C$, almost surely as $N \to \infty$,
\begin{equation}\label{eq:inductiveclaim}
v^*B_{j_1}H_{i_1}\ldots B_{j_{m-1}}H_{i_{m-1}}B_{j_m}w
-v^*\tau^\cH(b_{j_1}H_{i_1}\ldots b_{j_{m-1}}H_{i_{m-1}}b_{j_m})w \to 0.
\end{equation}
\end{lemma}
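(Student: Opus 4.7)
\emph{Plan.} I will proceed by induction on $m$, the number of random $B$-factors. Throughout, I decompose each simple matrix as $B_{j_l} = \tau^\D(B_{j_l}) + \tilde B_{j_l}$, where $\tau^\D(B_{j_l}) \in \D \subset \cH$ and $\tilde B_{j_l}$ is $\D$-centered (and still $\mathcal{O}$-invariant and of bounded norm), and similarly $b_{j_l} = \tau^\D(b_{j_l}) + \tilde b_{j_l}$. Applying (\ref{eq:bapprox}) to each scalar functional $P(B)=n_r^{-1}\Tr(P_rB_{j_l}P_r)$ gives $\|\tau^\D(B_{j_l}) - \tau^\D(b_{j_l})\| \to 0$ almost surely.

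\emph{Base case $m=1$.} Since $\A_2$ is $\D$-free from $\cH$, $\tau^\cH(b_{j_1}) = \tau^\D(b_{j_1})$, so after absorbing the $\D$-part it suffices to show $v^*\tilde B_{j_1}w \to 0$. For this, I use joint $\mathcal{O}$-invariance: for any Haar $O \in \mathcal{O}$, $v^*\tilde B_{j_1}w \stackrel{d}{=} v^*O\tilde B_{j_1}O^\sT w = (O^\sT v)^*\tilde B_{j_1}(O^\sT w)$. Weingarten calculus on the block-diagonal orthogonal group gives that the first moment is the $\D$-trace of $\tilde B_{j_1}$, which vanishes by centering, and the second moment is $O(N^{-1})$ because $\|\tilde B_{j_1}\| \leq C$. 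Borel--Cantelli then yields almost-sure convergence.

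\emph{Inductive step.} Expand $\prod_l(\tau^\D(B_{j_l}) + \tilde B_{j_l})$ with the $H_{i_l}$'s interlaced, producing $2^m$ terms. In any term where at least one $B_{j_l}$ is replaced by $\tau^\D(B_{j_l}) \in \cH$, that $\D$-factor merges with adjacent $H_{i_{l-1}}$ and $H_{i_l}$ into a single element of $\cH$ that is still simple (after an additional block-decomposition if necessary), producing a shorter product with strictly fewer random factors; the inductive hypothesis applies. Performing the analogous expansion of $b_{j_1}\cdots b_{j_m}$ inside $\tau^\cH$ (using the $\cH$-bimodule property so that $\D$-factors move outside $\tau^\cH$) matches the matrix expansion term by term, with the approximation error controlled by $\|\tau^\D(B_{j_l}) - \tau^\D(b_{j_l})\| \to 0$. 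This leaves the single \emph{fully centered} term
\[
v^* \tilde B_{j_1} H_{i_1} \cdots \tilde B_{j_m} w - v^*\tau^\cH\!\bigl(\tilde b_{j_1} H_{i_1}\cdots \tilde b_{j_m}\bigr)w.
\]

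\emph{Main obstacle: the fully centered term.} I claim both quantities tend to $0$. For the free side, I further decompose each $H_{i_l} = \tau^\D(H_{i_l}) + \tilde H_{i_l}$: the subterms containing only $\tilde H_{i_l}$'s yield an alternating product of $\D$-centered elements drawn from the two $\D$-free subalgebras $\A_2$ and $\cH$, whose $\tau^\D$-value (and therefore whose $\tau^\cH$-value) vanishes by amalgamated freeness; any subterm containing a $\tau^\D(H_{i_l}) \in \D$ absorbs that factor into the neighboring $\tilde b$'s to form a shorter alternating centered product, which vanishes by iterating the same argument down to $m=1$, where $\tau^\cH(\tilde b_{j_1})=\tau^\D(\tilde b_{j_1})=0$. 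For the matrix side, joint $\mathcal{O}$-invariance gives the distributional identity $v^*\tilde B_{j_1} H_{i_1}\cdots\tilde B_{j_m}w \stackrel{d}{=} v^*O\tilde B_{j_1}O^\sT H_{i_1}O\tilde B_{j_2}O^\sT\cdots O\tilde B_{j_m}O^\sT w$; integrating over Haar $O\in\mathcal{O}$ and applying Weingarten calculus on the block-diagonal orthogonal group, every non-vanishing contraction pattern either picks up at least one trace $\tau_r(\tilde B_{j_l})=0$, or pays at least one explicit $N^{-1}$ weight beyond the leading term. This shows the expectation is $O(N^{-1})$, and an analogous Weingarten bound on the square of the expression controls the variance at the same order; Borel--Cantelli then yields almost-sure convergence to $0$. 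The technical heart of the argument lies in organizing the Weingarten expansion so that the cancellations forced by centering and by the bounded norm assumptions on $(\tilde B_j)$ and $(H_i)$ can be read off systematically.
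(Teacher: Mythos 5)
Your overall strategy matches the paper's: induction on $m$, centering each factor by its $\D$-expectation, isolating a fully $\D$-centered product plus remainder terms handled by the inductive hypothesis, killing the free-side trace by amalgamated freeness (vanishing cumulants), and handling the matrix side via block-diagonal orthogonal invariance plus a Weingarten-type moment bound and Borel--Cantelli. Centering $B_j$ by $\tau^\D(B_j)$ rather than the paper's $\tau^\D(b_j)$ is fine, since (\ref{eq:bapprox}) gives $\|\tau^\D(B_j)-\tau^\D(b_j)\| \to 0$ a.s. and the paper records exactly this correction in (\ref{eq:tauTrBapprox}).

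There is, however, a genuine gap in the concentration step, both in your base case and in the inductive step. You bound the \emph{second} moment of the fully centered quantity by $O(N^{-1})$ and invoke Borel--Cantelli. But Markov/Chebyshev then yields $\P[|X|>\eps] \le C\eps^{-2}N^{-1}$, and $\sum_N N^{-1}$ diverges; this does \emph{not} give almost-sure convergence. Moreover, $O(N^{-1})$ for the second moment is in fact sharp here (it cannot be improved to $O(N^{-2})$), so the issue is not a lost factor but a fundamental inadequacy of the second moment. The paper resolves this by bounding the \emph{fourth} moment: it rewrites $\E[|v^*\mathring{B}_{j_1}\mathring{H}_{i_1}\cdots\mathring{B}_{j_m}w|^4]$ as the expectation of a trace of a product containing four rank-one factors ($v w^*$ and its variants), and the combinatorial Lemma \ref{lemma:combinatorics} then gives an $O(N^{-K/2})$ bound where $K$ is the number of rank-one matrices in the word. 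With $K=4$ this yields $O(N^{-2})$, which is summable. Your sketch of the Weingarten expansion also omits the role these rank-one insertions play: the gain over the generic $O(1)$ trace bound comes precisely from Cauchy--Schwarz-type saving on the rank-one blocks (this is the $R_1/2$, $T_0/2$ bookkeeping in the paper's inductive claim), and without tracking that, one cannot certify the required power of $N$. To repair the proof, either carry out the fourth-moment Weingarten estimate (as the paper does via Lemma \ref{lemma:combinatorics}), or replace the moment/Borel--Cantelli argument with a genuine concentration inequality for Lipschitz functions on the block-diagonal orthogonal group, which would give exponential tails and is summable even from a variance-level Lipschitz estimate.
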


We first explain why Theorem \ref{thm:anisotropicpoly} follows,
and then prove the lemma by induction on $m$.

\begin{proof}[Proof of Theorem \ref{thm:anisotropicpoly}]
Any $A \in \C^{N \times N}$ or $a \in \A$ is decomposed into simple elements as
\[A=\sum_{r=1}^k\sum_{s=1}^k P_rAP_s, \qquad a=\sum_{r=1}^k\sum_{s=1}^k
p_rap_s.\]
Then by linearity, it suffices to establish Theorem \ref{thm:anisotropicpoly}
for all $*$-monomials $Q$,
when each $H_1,\ldots,H_p$, $B_1,\ldots,B_q$, and $b_1,\ldots,b_q$ is simple.
Combining adjacent $x$'s and $y$'s in $Q$, and extending the families
$\{H_1,\ldots,H_p\}$ and $\{B_1,\ldots,B_q\}$ to include products and Hermitian
conjugates of these matrices as necessary, we may assume that
$Q$ is an alternating word in $x_i$'s and $y_i$'s. If $Q$ begins with $x_i$ or
ends with $x_j$, let us use $\tau^\cH(H_iaH_j)=H_i\tau^\cH(a)H_j$
and replace $v$ by $H_i^*v$ and $w$ by $H_jw$. Then the result follows from
Lemma \ref{lemma:anisotropicpoly}.
\end{proof}

\begin{proof}[Proof of Lemma \ref{lemma:anisotropicpoly}]
We induct on $m$. The result is clear for $m=0$, as $\tau^\cH(1)=1$ and
the left side of (\ref{eq:inductiveclaim}) is simply $v^*w-v^*w$. Suppose by
induction that the lemma holds up to $m-1$, and consider the case of $m$.
Introduce the centered elements
\[\mathring{H}_i=H_i-\tau^\D(H_i), \qquad \mathring{B}_j=B_j-\tau^\D(b_j),
\qquad \mathring{b}_j=b_j-\tau^\D(b_j).\]
(Note that here, we first center $B_j$ by $\tau^\D(b_j)$, not a normalized trace
of $B_j$.) On the left side of (\ref{eq:inductiveclaim}), let us write
$H_{i_r}=\mathring{H}_{i_r}+\tau^\D(H_{i_r})$ for each $i_r$,
and similarly for each $B_{j_r}$ and $b_{j_r}$. Expanding the resulting product,
we obtain that the left side of (\ref{eq:inductiveclaim}) is equal to
\begin{equation}\label{eq:centeredterm}
v^*\mathring{B}_{j_1}\mathring{H}_{i_1}\ldots \mathring{B}_{j_{m-1}}\
\mathring{H}_{i_{m-1}}\mathring{B}_{j_m}w
-v^*\tau^\cH(\mathring{b}_{j_1}\mathring{H}_{i_1}\ldots
\mathring{b}_{j_{m-1}}\mathring{H}_{i_{m-1}}\mathring{b}_{j_m})w
\end{equation}
plus a (constant) number of remainder terms which include at least one factor
$\tau^\D(H_i)$ or $\tau^\D(b_j)$. Since $H_i$ is simple, we have
either $\tau^\D(H_i)=0$ or $\tau^\D(H_i)=z(H_i) \cdot P_{r_i}$
for some $r_i \in \{1,\ldots,k\}$ and for $z(H_i)=\tau(H_i)/\tau(P_{r_i})
\in \C$, and similarly for $\tau^\D(b_j)$. Then,
absorbing $P_{r_i}$ into the adjacent factor and applying the arguments of the
proof of Theorem \ref{thm:anisotropicpoly} above,
each such remainder term may be written as a sum of differences of the form
(\ref{eq:inductiveclaim}) for a value $m' \leq m-1$, multiplied by an
$N$-dependent coefficient $z_N$
which is a product of a subset of the coefficients
$z(H_1),\ldots,z(H_p),z(b_1),\ldots,z(b_q)$. Since $\|\tau^\D(H_i)\| \leq
\|H_i\| \leq C$ and similarly for $b_j$, we have that $|z_N| \leq C$ for a
constant $C>0$ and all $N$. Then the remainder terms converge to 0 by the
inductive hypothesis.

It remains to show that the difference (\ref{eq:centeredterm}) converges to
0. We claim that
\begin{equation}\label{eq:freetracezero}
\tau^\cH(\mathring{b}_{j_1}\mathring{H}_{i_1}\ldots
\mathring{b}_{j_{m-1}}\mathring{H}_{i_{m-1}}\mathring{b}_{j_m})=0.
\end{equation}
Indeed, letting $\operatorname{NC}(m)$ be the set of non-crossing partitions of
$\{1,\ldots,m\}$ and introducing the $\cH$-valued non-crossing cumulants
$\kappa_\pi^\cH$, we have
\[\tau^\cH(\mathring{b}_{j_1}\mathring{H}_{i_1}\ldots
\mathring{b}_{j_{m-1}}\mathring{H}_{i_{m-1}}\mathring{b}_{j_m})
=\sum_{\pi \in \operatorname{NC}(m)}
\kappa_\pi^\cH(\mathring{b}_{j_1}\mathring{H}_{i_1},\;
\ldots,\mathring{b}_{j_{m-1}}\mathring{H}_{i_{m-1}},\;\mathring{b}_{j_m}).\]
Each partition $\pi$ has an element which is an interval $\{r,\ldots,r+\ell-1\}$
of consecutive indices, for some $\ell \geq 1$. Letting $\tau^\D$ be the
$\tau$-invariant projection onto $\D$, we apply \cite[Theorem 3.5]{NSS} and
freeness of $\cH$ and $\B$ over $\D$ to obtain
\[\kappa_\ell^\cH(b_1H_1,\ldots,b_{\ell-1}H_{\ell-1},b_\ell)
=\kappa_\ell^\D(b_1\tau^\D(H_1),\ldots,b_{\ell-1}\tau^\D(H_{\ell-1}),b_\ell)
=0\]
for any elements $b_1,\ldots,b_\ell \in \B$ and $H_1,\ldots,H_{\ell-1}
\in \cH$ which are zero-centered with respect to $\tau^\D$.
(In the case $\ell=1$, the
latter equality holds because $\kappa_1^\D(b_1)=\tau^\D(b_1)=0$.) Applying
this to the cumulant $\kappa_\ell^\cH$ of the terms corresponding to this
interval $\{r,\ldots,r+\ell-1\}$ of $\pi$, we obtain
$\kappa_\pi^\cH(\mathring{b}_{j_1}\mathring{H}_{i_1},\;
\ldots,\mathring{b}_{j_{m-1}}\mathring{H}_{i_{m-1}},\;\mathring{b}_{j_m})=0$
for each $\pi \in \operatorname{NC}(m)$, and hence (\ref{eq:freetracezero}).

Thus, to show that (\ref{eq:centeredterm}) converges to 0,
we must show that correspondingly
\begin{equation}\label{eq:matrixtracezero}
v^*\mathring{B}_{j_1}\mathring{H}_{i_1}\ldots \mathring{B}_{j_{m-1}}\
\mathring{H}_{i_{m-1}}\mathring{B}_{j_m}w \to 0.
\end{equation}
Since $H_i$ and $B_j$ are simple, some $(r_i,s_i)$ block of each
$\mathring{H}_i$ is non-zero and the remaining blocks are 0,
and some $(t_j,u_j)$ block of each $\mathring{B}_j$ is non-zero and
the remaining blocks are 0. We may suppose $u_{j_1}=r_{i_1}$,
$s_{i_1}=t_{j_2}$, $u_{j_2}=r_{i_2}$, etc., for otherwise the left side of
(\ref{eq:matrixtracezero}) is automatically 0. Denote by
\[\check{H}_i \in \C^{N_{r_i} \times N_{s_i}}\]
the non-zero block of $\mathring{H}_i$.
If $r_i \neq s_i$, then $\check{H}_i$ is just the corresponding
block $(H_i)_{r_is_i}$ of $H_i$. If $r_i=s_i$, then by the fact that $\tau$
coincides with $N^{-1}\Tr$ on $\A_1$,
$\check{H}_i=(H_i)_{r_ir_i}-N_{r_i}^{-1} \Tr H_i$
is the centered version of this block. Define also
\[\check{B}_j \in \C^{N_{t_j} \times N_{u_j}}\]
to be the non-zero block of $\mathring{B}_j$ if $t_j \neq u_j$,
or $\check{B}_j=(B_j)_{t_jt_j}-N_{t_j}^{-1} \Tr B_j$ if $t_j=u_j$. In
the latter case, note that $\check{B}_j$ differs from the nonzero block of
$\mathring{B}_j$ by the quantity
\begin{equation}\label{eq:tauTrBapprox}
\left(\frac{N}{N_{t_j}}\tau(B_j)-N_{t_j}^{-1} \Tr B_j\right)\Id_{N_{t_j}}
\to 0,
\end{equation}
where the convergence is in operator norm as $N \to \infty$ by (\ref{eq:bapprox}).
Finally, define $\check{v} \in \C^{r_{j_1}}$ to be the $r_{j_1}$ block of $v$, and
$\check{w} \in \C^{s_{j_m}}$ to be the $s_{j_m}$ block of $w$. Then 
\[\Big|v^*\mathring{B}_{j_1}\mathring{H}_{i_1}\ldots \mathring{B}_{j_{m-1}}
\mathring{H}_{i_{m-1}}\mathring{B}_{j_m}w
-\check{v}^*\check{B}_{j_1}\check{H}_{i_1}\ldots \check{B}_{j_{m-1}}
\check{H}_{i_{m-1}}\check{B}_{j_m}\check{w}\Big| \to 0,\]
almost surely as $N \to \infty$, by the observation (\ref{eq:tauTrBapprox})
and the operator norm bound on each $H_i$ and $B_j$. So it suffices to show
\[\check{v}^*\check{B}_{j_1}\check{H}_{i_1}\ldots \check{B}_{j_{m-1}}
\check{H}_{i_{m-1}}\check{B}_{j_m}\check{w} \to 0.\]

Let us introduce a random orthogonal matrix
\[O=\diag(O_1,\ldots,O_k) \in \mathcal{O}\]
where each $O_r \in \R^{N_r \times N_r}$ is independently Haar-distributed on
the orthogonal group and
also independent of $B_1,\ldots,B_q$. By the assumed conjugation invariance of
$(B_1,\ldots,B_q)$, we have the equality in law
\[(\check{B}_1,\ldots,\check{B}_q)\overset{L}{=}
(O_{t_1}\check{B}_1O_{u_1}^{-1},\ldots,
O_{t_q}\check{B}_qO_{u_q}^{-1}),\]
and thus we may equivalently show (almost surely as $N \to \infty$)
\begin{equation}\label{eq:almostsureconvergence}
\check{v}^*O_{t_{j_1}}\check{B}_{j_1}O_{u_{j_1}}^{-1}\check{H}_{i_1}
\ldots O_{t_{j_m}}\check{B}_{j_m}O_{u_{j_m}}^{-1}\check{w} \to 0.
\end{equation}
We then condition on $\check{B}_1,\ldots,\check{B}_q$, and write $\E$
for the expectation over $O_1,\ldots,O_k$. Defining
\[\mathcal{E}=\E\Big[|\check{v}^*O_{t_{j_1}}\check{B}_{j_1}O_{u_{j_1}}^{-1}
\check{H}_{i_1}\ldots
O_{t_{j_m}}\check{B}_{j_m}O_{u_{j_m}}^{-1}\check{w}|^4\Big],\]
we observe that this may be written in the form
\[\mathcal{E}=\E[\Tr O_{r_1}^{e_1}D_1O_{r_2}^{e_2}D_2\ldots
O_{r_{8m}}^{e_{8m}}D_{8m}]\]
where
\begin{itemize}
\item Each $r_i \in \{1,\ldots,k\}$ and each $e_i \in \{-1,1\}$.
\item Each $D_i$ is one of $\check{H}_1,\ldots,\check{H}_p$,
$\check{B}_1,\ldots,\check{B}_q$, $\check{w}\check{v}^*$,
$\check{w}\check{v}^\sT$ or their Hermitian conjugates.
\item If $r_i=r_{i+1}$ and $D_i$ is not of the form
$\check{w}\check{v}^*$, $\check{w}\check{v}^\sT$ or their conjugates,
then the centering of $\check{H}$ and $\check{B}$ implies $\Tr D_i=0$.
\item At least four of the matrices $D_1,\ldots,D_{8m}$ are of rank 1.
\end{itemize}
Then Lemma \ref{lemma:combinatorics} below implies (conditional on
$\check{B}_1,\ldots,\check{B}_q$ for all $N$, and on the event of probability
1 where $\|\check{B}_1\|,\ldots,\|\check{B}_q\|<C'$ for a constant $C'>0$ and
all large $N$) that $\mathcal{E} \leq CN^{-2}$. Then
(\ref{eq:almostsureconvergence}) holds almost surely as $N \to \infty$ by
Markov's inequality and Borel-Cantelli, as desired.
\end{proof}

\begin{lemma}\label{lemma:combinatorics}
Fix constants $B,C,c>0$ and suppose $c<N_r/N<C$ for each $r=1,\ldots,k$.
Let $O_1,\ldots,O_k$ be independent matrices, with each
$O_r \in \R^{N_r \times N_r}$ Haar-distributed on the orthogonal group.

Fix $M \geq 1$, $r_1,\ldots,r_M \in \{1,\ldots,k\}$, $e_1,\ldots,e_M \in
\{-1,1\}$, and cyclically identify $r_{M+1} \equiv r_1$. For each
$m=1,\ldots,M$, let $D_m \in \C^{N_{r_m} \times N_{r_{m+1}}}$
be a deterministic matrix with $\|D_m\|<B$. For each $m$,
suppose at least one of the following holds:
\begin{itemize}
\item $r_m \neq r_{m+1}$, or
\item $D_m$ is of rank 1, or
\item $r_m=r_{m+1}$ and $\Tr D_m=0$.
\end{itemize}
Finally, suppose that at least $K$ of $D_1,\ldots,D_M$ have rank 1. 
Then for a constant $C' \equiv C'(k,K,M,B)>0$,
\[\E[\Tr O_{r_1}^{e_1}D_1O_{r_2}^{e_2}D_2\ldots O_{r_M}^{e_M}D_M]
\leq C'N^{-K/2}.\]
\end{lemma}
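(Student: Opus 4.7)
The plan is to apply the orthogonal Weingarten formula to each of the independent Haar factors $O_r$ and then combinatorially count index summations. Set $S_r=\{m:r_m=r\}$ and $n_r=|S_r|$. Expanding the trace entrywise, invoking independence of $O_1,\ldots,O_k$, and using $(O_r^{-1})_{ij}=(O_r)_{ji}$,
\[
\E[\Tr \cdots]=\sum_{\vec i,\vec j}\Big(\prod_{r=1}^k\E_{O_r}\Big[\prod_{m\in S_r}(O_r)_{\iota_m,\eta_m}\Big]\Big)\prod_{m=1}^M(D_m)_{j_m,i_{m+1}},
\]
where $(\iota_m,\eta_m)$ equals $(i_m,j_m)$ or $(j_m,i_m)$ according to $e_m=\pm 1$. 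Each inner expectation vanishes when $n_r$ is odd; when $n_r$ is even, the orthogonal Weingarten formula expresses it as a sum over pairs of pair partitions $(\pi_r,\sigma_r)$ of $S_r$, weighted by $\operatorname{Wg}^O(\pi_r^{-1}\sigma_r;N_r)$ of magnitude at most $CN^{-n_r/2}$, with sub-leading terms carrying additional factors of $N^{-1}$ for each Brauer-algebra defect.

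After contracting the resulting Kronecker deltas, each Weingarten term becomes a product of traces and bilinear forms in $D_1,\ldots,D_M$; its size is $N^{-\sum_r n_r/2}\cdot N^L$ where $L$ is the number of free index loops remaining after contractions. The three conditions on each $D_m$ exactly rule out the configurations that would otherwise produce an unwanted $O(N)$ trace loop: if $r_m\neq r_{m+1}$ then no Weingarten pair can bridge position $m$ since the surrounding $O_r$'s are independent; if $r_m=r_{m+1}$ and $\Tr D_m=0$ then the ``short'' Weingarten pair joining the two $O_r$'s across $D_m$ contributes $\sum_i (D_m)_{ii}=0$; and if $D_m=u_m v_m^\sT$ has rank one, its two indices are pinned against bounded vectors, replacing a potentially $O(N)$ loop with an $O(1)$ inner product such as $\langle v_m,u_m\rangle$ or $u_m^* X v_{m'}$ for some bounded $X$.

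Hence each non-rank-1 position contributes at most a bounded factor without increasing $L$ beyond the generic Weingarten-matched count, while each of the $K$ rank-1 positions strictly reduces $L$, costing at least one extra power of $N^{-1}$. Since the target bound $C'N^{-K/2}$ is weaker than the heuristic $N^{-K}$, there is room to spare even when several rank-1 $D_m$'s chain together through Weingarten pairings so their pinnings overlap rather than save two $N$-powers each. The finiteness of the pair-partition data (bounded in terms of $M$ and $k$) together with $\|D_m\|\leq B$ then yield the overall bound after summation.

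The main obstacle will be the systematic bookkeeping of the $N$-scaling across all pair-partition tuples $(\pi_r,\sigma_r)_r$. One must verify, for each configuration, how rank-1 pinnings interact with trace-closures and with sub-leading Weingarten defects, and show that the net power of $N$ never exceeds $-K/2$. A case analysis on the structure of the pair partitions, combined with the bound $|\operatorname{Wg}^O(\tau;N_r)|\leq CN^{-n_r/2-|\tau|}$ where $|\tau|$ is the Brauer-algebra defect, should handle this: sub-leading defects only help by providing extra $N^{-1}$ factors, so the worst case is the leading Weingarten term, which is controlled by the three conditions on the $D_m$ as described. A graph-theoretic reformulation, in which each tuple defines a multigraph on the $2M$ indices whose faces are the free loops, should make the loop count transparent and allow the estimate to follow from an Euler-characteristic-type identity relating pairs, loops, and rank-1 pinnings.
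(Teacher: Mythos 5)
Your plan is sound in outline and is essentially a Weingarten-formula rephrasing of what the paper does: the paper also expands $\E[\Tr\cdots]$ over index tuples, passes to set partitions $\Q$ of $\{1,\ldots,M\}\sqcup\{1,\ldots,M\}$, and bounds $\E[V(\i,\j)]$ by $CN^{-M/2}$ (their Lemma B.3, which is the Weingarten bound in disguise). The three hypotheses play exactly the roles you assign them. So the strategies are not genuinely different — both reduce to bounding the combinatorial piece $D(\Q)$ (your ``$N^L$'' count) by $N^{(M-K)/2}$.

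The gap is that you never actually establish that bound, and it is precisely where all the work in the lemma lives. You acknowledge the difficulty yourself (``the main obstacle will be the systematic bookkeeping''), but the remainder of the paragraph only asserts the conclusion: you claim each rank-1 position ``strictly reduces $L$,'' then note rank-1 matrices can chain and overlap in Weingarten pairings so the saving is not additive, and then declare there is ``room to spare'' without showing the net saving is still at least $N^{-K/2}$. The paper's proof is almost entirely devoted to this: it introduces four carefully chosen counts $T_0,T_1,R_0,R_1$ (cardinality-2 blocks with zero or $\geq 1$ ``single'' indices, and $\geq 4$-blocks with zero or exactly one single index), proves by induction on $T_0+T_1$ that $|D(\Q)|\leq C_0 N^{R_0+T_0/2+R_1/2}$, and only then uses $2(M-K)\geq 4R_0+2T_0+3R_1$ to get the target exponent. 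The casework in that induction (Cases 1--3, and the table of count transitions) is the substantive content and has no analogue in your sketch.

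One more specific issue: your treatment of the trace-zero case is too simple. You say that the ``short'' Weingarten pair across a traceless $D_m$ contributes $\sum_i(D_m)_{ii}=0$. That only kills the pairing where $D_m$ closes a self-loop. Other pairings make $D_m$ part of a longer loop, giving terms like $\Tr(D_m A)$ for nontrivial $A$, which are not zero. The paper handles this through its Case 1: it writes the constrained sum over distinct indices as an inclusion--exclusion between the full (vanishing) trace and sums over merged partitions, and then applies the induction to the merged partitions. Without something equivalent, your loop-count estimate does not follow from ``$\Tr D_m = 0$'' alone.
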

\begin{proof}
The proof of this lemma is similar to that of \cite[Lemma B.2]{fanjohnstonebulk}, which
established a version of this result for $K=0$.
We extend the combinatorial argument here to handle the case of general $K$.
To ease subscript notation, we write $v[i]$ and $A[i,j]$ for entry
$i$ of $v$ and entry $(i,j)$ of $A$. We denote by $C>0$ a constant which may
depend on $k,K,M,B$ and change from instance to instance.

We may write
\[\mathcal{E} \equiv
\E[\Tr O_{r_1}^{e_1}D_1O_{r_2}^{e_2}D_2\ldots O_{r_M}^{e_M}D_M]
=\sum_{\i,\j} D(\i,\j)\E[V(\i,\j)],\]
where the sum is over all tuples
$(\i,\j)=(i_1,\ldots,i_M,j_1,\ldots,j_M)$ satisfying
\[1 \leq i_k,j_k \leq N_{r_k}\]
and where
\[V(\i,\j)=\prod_{m=1}^M O_{r_m}^{e_m}[i_m,j_m],
\qquad D(\i,\j)=\prod_{m=1}^M D_m[j_m,i_{m+1}]\]
with the cyclic identification $i_{M+1} \equiv i_1$. Define the set partition
\[
\bigsqcup_{r = 1}^k \cI(r) = \{1, \ldots, M\}
\]
by $\cI(r) = \{m : r_m = r\}$.  Consider now set partitions of the set $\{1,
    \ldots, M\} \sqcup \{1, \ldots, M\}$ of cardinality $2M$,
    where we denote elements of the first copy of $\{1, \ldots, M\}$ with a
    subscript $i$ and the second with a subscript $j$. A set in this partition
    can have elements of either or both copies of $\{1,\ldots,M\}$;
    for example, $\{1_i, 2_j\}$ or $\{2_j, 3_j\}$ might be sets in the set partition.  We say that $\i, \j$ induces $\Q$, denoted $\i, \j \mid \Q$, if
\[
\Q = \bigsqcup_{r = 1}^k \bigsqcup_{s = 1}^{N_r} (\Q^1(r, s) \sqcup \Q^2(r, s)),
\]
for
\begin{align*}
  \Q^1(r, s) &= \{m_i : m \in \cI(r), i_m = s, e_m = 1\} \cup \{m_j : m \in \cI(r), j_m = s, e_m = -1\}\\
  \Q^2(r, s) &= \{m_i : m \in \cI(r), i_m = s, e_m = -1\} \cup \{m_j : m \in \cI(r), j_m = s, e_m = 1\}.
\end{align*}
Denote $\Q(r) := \bigsqcup_{s = 1}^{N_r} (\Q^1(r, s) \sqcup \Q^2(r, s))$, and let $|\Q|$ be the total number of non-empty sets in $\Q$.

Notice that the quantity
\[\E[V(\i,\j)] \equiv E(\Q)\]
depends on $(\i,\j)$ only via its induced partition $\Q$. By
\cite[Lemma B.3(a)]{fanjohnstonebulk} we have $|E(\Q)| \leq CN^{-M/2}$
for any partition $\Q$. Thus we find
\begin{equation}\label{eq:Ebound}
\mathcal{E} \leq CN^{-M/2} \sum_{\Q:E(\Q) \neq 0} |D(\Q)|, \qquad
D(\Q) \equiv \sum_{\i,\j \mid \Q} D(\i,\j),
\end{equation}
so our main task is to bound $|D(\Q)|$ when $E(\Q) \neq 0$.
By \cite[Lemma B.3(b)]{fanjohnstonebulk}, if $\i, \j \mid \Q$ and $E(\Q) \neq 0$,
then for each $r \in \{1, \ldots, k\}$ and each $s \in \{1,\ldots,N_r\}$, the cardinality of $|\Q^1(r, s)|$ and $|\Q^2(r, s)|$
must be even. That is, each set $S \in \Q$ has even cardinality. To motivate the combinatorial idea,
note that the bound $|D_m[j_m,i_{m+1}]| \leq B$ implies that $D(\i, \j) \leq B^M$ for all $(\i, \j)$, while
\[
\#\{(\i, \j) : \i, \j \mid \Q\} \leq C N^{|\Q|},
\]
since for any fixed $\Q$ choosing $\i, \j$ which induce $\Q$ involves choosing for each set in $\Q$
a distinct index from $\{1, \ldots, N_r\}$ for some $r$.  Together, these yield the naive bound $|D(\Q)| \leq CN^{|\Q|}$. Since each set in $\Q$ has
cardinality at least 2, and the sum of all cardinalities is $2M$, we have
$|\Q| \leq M$. Combining with (\ref{eq:Ebound}) would yield
\[\mathcal{E} \leq CN^{-M/2} \cdot N^M,\]
but the exponent is too large in $M$ and does not depend on the
number of rank 1 matrices $K$.

This motivates the definitions of the following counts associated to $\Q$.
For $m \in \{1, \ldots, M\}$, call the index $m_i$ single if $D_{m - 1}$ is of rank $1$ and the index $m_j$ single if $D_m$ is of rank $1$---that is,
an index is single if it corresponds to some rank 1 matrix in the product $D(\i,\j)$.
For a fixed set partition $\Q$, define the following quantities.
\begin{itemize}
\item $T_0$: number of sets in $\Q$ of cardinality 2,
which contain no single indices.
\item $T_1$: number of sets in $\Q$ of cardinality 2,
which contain 1 or 2 single indices.
\item $R_0$: number of sets in $\Q$ of cardinality $\geq 4$,
which contain no single indices.
\item $R_1$: number of sets in $\Q$ of cardinality $\geq 4$,
which contain (exactly) 1 single index.
\end{itemize}
We establish the following claim by induction on $T_0 + T_1$.\\

\noindent {\bf Inductive claim:} For any $M \geq 1$, any
$r_1,\ldots,r_M,e_1,\ldots,e_M,D_1,\ldots,D_M$ which satisfy the conditions
of the lemma, and any such partition $\Q$ of $\{1, \ldots, M\} \sqcup \{1, \ldots, M\}$ with $T_0,T_1,R_0,R_1$ as defined above,
\begin{equation}\label{eq:DQbound}
|D(\Q)| \leq C_0N^{R_0+T_0/2+R_1/2}
\end{equation}
for a constant $C_0 \equiv C_0(k,M,T_0,T_1,R_0,R_1,B)>0$.\\

Assuming that this claim holds, note that the number of non-single indices
is $2(M-K)$, where $K$ is the number of rank 1 matrices.
Then $2(M-K) \geq 4R_0+2T_0+3R_1$. Dividing this by 4 gives the improved bound
\[|D(\Q)| \leq C_0N^{(M-K)/2}.\]
Combining with (\ref{eq:Ebound}) yields
$\mathcal{E} \leq CN^{-K/2}$, as desired.

To establish (\ref{eq:DQbound}), we induct on the total number of elements of
$\Q$ of cardinality 2, which is $T_0+T_1$. For the base case $T_0+T_1=0$,
let us assume for notational convenience that $D_1,\ldots,D_K$ are of rank 1.
For $m=1,\ldots,K$, we write $D_m=v_mw_m^*$ for bounded length vectors $v_m$ and $w_m$, and apply
$|D_m[i,j]| \leq B$ for $m=K+1,\ldots,M$. This gives
\begin{equation} \label{eq:d-base-bound}
  |D(\Q)| \leq C\sum_{\i,\j \mid \Q} |v_1[j_1]w_1[i_2]\cdots v_K[j_K]w_K[i_{K+1}]|.
\end{equation}
Let $R_2$ be the number of elements of $\Q$ containing two or more single
indices. Since $\Q$ has no elements of cardinality 2, all elements of $\Q$ are counted
by $R_0$, $R_1$, or $R_2$. We now view the sum in (\ref{eq:d-base-bound}) as a product of sums over
distinct indices for the elements of $\Q$ counted by $R_0, R_1, R_2$.  We bound the sum over distinct indices counted by $R_0$ simply by
$CN^{R_0}$. For the sum over distinct indices counted by $R_1$, note by Cauchy-Schwartz that
\[
\sum_i |u[i]| \leq \sqrt{\|u\|} \sqrt{N},
\]
yielding a combined bound of $CN^{R_1/2}$ for these indices because $\|u\|$ is bounded for the relevant vectors. For distinct
indices counted by $R_2$, we apply a bound of the form
\[\sum_i |u_1[i]\ldots u_m[i]| \leq C\sum_i |u_1[i]u_2[i]|
\leq C\|u_1\|\cdot \|u_2\|\]
for any $m \geq 2$ and any bounded vectors $u_1, \ldots, u_m$, yielding a constant bound for the combined sum over such
indices. Thus, we get
\[|D(\Q)| \leq CN^{R_0+R_1/2},\]
which concludes the proof of (\ref{eq:DQbound}) in this base case.

Assume inductively that (\ref{eq:DQbound}) holds for $T_0+T_1 \leq t-1$,
and consider now $T_0+T_1=t \geq 1$. Then there is some set $S \in \Q$
with cardinality $|S|=2$. We consider three cases.

{\bf Case 1:} $S=\{m_j, (m + 1)_i\}$, and $D_m$ is \emph{not} of rank 1. (So $S$ is
counted by $T_0$.) Suppose for notational convenience that $S=\{1_j, 2_i\}$.
This implies in particular that $r_1=r_2$ and $D_1$ is square.
Then the assumption of the lemma implies
\[\Tr D_1=0.\]
Denote by $\sum_{\i,\j|\Q \setminus S}$ the sum over indices in the tuple $(\i,\j)$ excluding
$j_1$ and $i_2$ which induce $\Q \setminus S$, and by
$\sum_{j \notin \Q(r_1) \setminus S}$
the remaining sum over the value of $j_1 \equiv i_2$,
restricted to be distinct from the $|\Q(r_1)| - 1$ preceding values
in $\{1,\ldots,N_{r_1}\}$ assumed by sets in $\Q(r_1) \setminus S$. Then
\[D(\Q)=\sum_{\i,\j|\Q \setminus S}
\prod_{m=2}^M D_m[j_m,i_{m+1}]
\cdot \sum_{j \notin \Q(r_1) \setminus S} D_1[j,j].\]
Let $\Pi$ be the set of new partitions $\Q'$ which merge $S=\{1_j, 2_i\}$ with
some other set in $\Q(r_1) \setminus S$. Then applying $\Tr D_1=0$ yields
\[D(\Q)=-\sum_{\Q' \in \Pi} \sum_{\i,\j|\Q'}
\prod_{m=1}^M D_m[j_m,i_{m+1}],\]
and hence
\[|D(\Q)| \leq \sum_{\Q' \in \Pi} |D(\Q')|.\]
As $D_m$ is not of rank 1, the indices $1_j, 2_i$ are not single.
If $\{1_j, 2_i\}$ was merged into a set in $\Q$
of cardinality $\geq 4$, then $\Q'$ has the counts $(T_0-1,T_1,R_0,R_1)$.
If $\{1_j,2_i\}$ was merged into a set in $\Q$ counted by $T_1$, then
$\Q'$ has either the counts $(T_0-1,T_1-1,R_0,R_1)$ or
$(T_0-1,T_1-1,R_0,R_1+1)$.
If $\{1_j,2_i\}$ was merged into another set in $\Q$ counted by $T_0$, then
$\Q'$ has the counts $(T_0-2,T_1,R_0+1,R_1)$. In all cases, $T_0+T_1$ has
reduced by at least 1, and the exponent $R_0+T_0/2+R_1/2$ in (\ref{eq:DQbound})
has not increased. Then applying the inductive hypothesis for each $\Q'$ and
noting that the cardinality of $\Pi$ is a constant independent of $N$, we get
(\ref{eq:DQbound}) for $\Q$.

{\bf Case 2:} $S=\{m_j,(m+1)_i\}$, and $D_m$ \emph{is} of rank 1. (So $S$ is
counted by $T_1$.) Suppose for notational convenience $S=\{1_j,2_i\}$. Then with
the same notation as defined in Case 1, we get
\[|D(\Q)| \leq |\Tr D_1| \cdot \left|\sum_{\i,\j|\Q \setminus S} \prod_{m=2}^M
D_m[j_m,i_{m+1}]\right|+
\sum_{\Q' \in \Pi} \left|\sum_{\i,\j|\Q'}
\prod_{m=1}^M D_m[j_m,i_{m+1}]\right|,\]
where the first term arises because we no longer have $\Tr D_1=0$. (If $M=1$,
the first term is understood to just be $|\Tr D_1|$.) Note that $|\Tr D_1| \leq
C$, as $D_1$ has bounded operator norm and is of rank 1. The partition $\Q
\setminus S$ in the first term must have the counts $(T_0,T_1-1,R_0,R_1)$, and
we may apply the inductive hypothesis to this term. For each $\Q'$ in the second
term, the argument is a bit different from Case 1 as $1_j,2_i$ are single.
If $\{1_j,2_i\}$ was merged into a set in $\Q$ counted by $T_0$,
$T_1$, $R_0$, $R_1$, or none of these four,
then $\Q'$ has the counts $(T_0-1,T_1-1,R_0,R_1)$,
$(T_0,T_1-2,R_0,R_1)$, $(T_0,T_1-1,R_0-1,R_1)$, $(T_0,T_1-1,R_0,R_1-1)$,
or $(T_0,T_1-1,R_0,R_1)$ respectively. Applying the inductive hypothesis
in all cases, we get (\ref{eq:DQbound}) for $\Q$.

{\bf Case 3:} The two indices in $S$ do not index the same matrix $D_m$.
Suppose for notational convenience $S=\{2_i,2_j\}$, so that they index $D_1$ and
$D_2$; other cases are analogous. Then with similar notation as in Case 1, we have
\begin{align*}
D(\Q)&=\sum_{\i,\j|\Q \setminus S}
\prod_{m=3}^M D_m[j_m,i_{m+1}]
\cdot \sum_{i \notin \Q(r_2) \setminus S} D_1[j_1,i]D_2[i,i_3].
\end{align*}
Let us introduce the matrix $\tilde{D}=D_1D_2$. Then applying the triangle
inequality as in Cases 1 and 2,
\begin{align*}
|D(\Q)|& \leq \left|\sum_{\i,\j|\Q \setminus S} \tilde{D}[j_1,i_3]
\prod_{m=3}^M D_m[j_m,i_{m+1}]\right|
+\sum_{\Q' \in \Pi} \left|\prod_{m=1}^M D_m[j_m,i_{m+1}]\right|,
\end{align*}
where $\Pi$ is the set of partitions
$\Q'$ which merge $\{2_i,2_j\}$ with another set in 
of $\Q(r_2) \setminus S$. (The product in the first term is understood to be $1$
if $M=2$.)

For the first term involving $\Q \setminus S$, note that if $\tilde{D}$ is not
of rank 1, then both $D_1$ and $D_2$ are also not of rank 1.
So $2_i,2_j,1_j,3_i$ were
not single in $\Q$, and $1_j,3_i$ remain non-single in $\Q \setminus S$
(with respect to
$\tilde{D},D_3,\ldots,D_M$). Then
$\Q \setminus S$ must have the counts $(T_0-1,T_1,R_0,R_1)$. If $\tilde{D}$ is
of rank 1, then the removal of $\{2_i,2_j\}$ reduces either $T_0$ or $T_1$ by 1,
but it is possible that $1_j$ and/or $3_i$ may have been converted
from a non-single index in $\Q$ to a single index in $\Q \setminus S$. One such
conversion may induce the count mapping $(T_0,T_1) \mapsto (T_0,T_1-1)$,
$(T_0,T_1) \mapsto (T_0-1,T_1+1)$, $(R_0,R_1) \mapsto (R_0,R_1-1)$, or
$(R_0,R_1) \mapsto (R_0-1,R_1+1)$. Note that each of these mappings does not
increase $T_0+T_1$, nor increase the exponent $R_0+T_0/2+R_1/2$ of $N$
in (\ref{eq:DQbound}). Then we may apply the induction hypothesis in every case
to obtain $|D(\Q \setminus S)| \leq CN^{R_0+T_0/2+R_1/2}$ for the first term.

For each $\Q' \in \Pi$ of the second term, we perform some casework,
depending on whether $2_i,2_j$ are both non-single (so $D_1$ and $D_2$
both have rank more than 1), and also
whether $\{2_i,2_j\}$ was merged into a set in $\Q$
counted by $T_0,T_1,R_0,R_1$ or none of these four. The possible
resulting counts for $\Q'$ are summarized in Table \ref{tab:count}.
In each setting, $T_0+T_1$ has reduced by at least 1, the exponent
$R_0+T_0/2+R_1/2$ has not increased, and we may thus
apply the induction hypothesis for $\Q'$ to obtain (\ref{eq:DQbound}) for $\Q$.

\renewcommand{\arraystretch}{1.25}
\begin{table}[h!]
\begin{tabular}{lll} \hline 
Merged into & $2_j,2_i$ not single & one or both of $2_j,2_i$ single\\ \hline
$T_0$ & $T_0-2,T_1,R_0+1,R_1$  & \makecell[cl]{$T_0-1,T_1-1,R_0,R_1$ or\\$T_0-1,T_1-1,R_0,R_1+1$}\\
$T_1$ & \makecell[cl]{$T_0-1,T_1-1,R_0,R_1$ or\\$T_0-1,T_1-1,R_0,R_1+1$} & $T_0,T_1-2,R_0,R_1$\\
$R_0$ & $T_0-1,T_1,R_0,R_1$&\makecell[cl]{$T_0,T_1-1,R_0-1,R_1$ or\\$T_0,T_1-1,R_0-1,R_1+1$} \\
$R_1$ & $T_0-1,T_1,R_0,R_1$& $T_0,T_1-1,R_0,R_1-1$ \\
None of above & $T_0-1,T_1,R_0,R_1$ & $T_0,T_1-1,R_0,R_1$ \\ \hline 
\end{tabular}
\caption{Possible counts for $\Q'$}
\label{tab:count}
\end{table}

This establishes that (\ref{eq:DQbound}) holds when $T_0+T_1=t$, in all three of
the above Cases. This completes the induction and the proof of the lemma.
\end{proof}

\subsection{Convergence for resolvent} \label{sec:resolvent}

Finally, we use Theorem \ref{thm:anisotropicpoly} to complete the proofs of
Theorem \ref{thm:resolventapprox} and Corollary \ref{cor:resolventapprox}. This will
depend on the following lemma, which allows us to work with a series expansion of the Stieltjes transform.

\begin{lemma} \label{lem:analytic-extend}
Let $C > 0$ be such that $\|W\| \leq C$ and $\|w\| \leq C$ for large $N$, and
    suppose that $f_N$ is an analytic function on $\C \setminus \spec(W)$ and
    $f$ an analytic function on $\C \setminus \spec(w)$ such that almost surely
    as $N \to \infty$, we have $f_N-f \to 0$ uniformly on $\mathbb{D}'=\{z \in
    \C:|z|>2C\}$.  Then for any fixed constant $\delta > 0$, almost surely,
    $f_N-f \to 0$ uniformly on $\mathbb{D}_N = \{z \in \C : \dist(z, \spec(w)) \geq \delta \text{ and } \dist(z, \spec(W)) \geq \delta\}$. 
\end{lemma}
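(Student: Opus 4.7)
The plan is to use Vitali's convergence theorem after passing to subsequences to reduce the $N$-varying domains $\mathbb{D}_N$ to a common limiting domain, exploiting the implicit uniform bound on $f_N$ and $f$ that comes from their being bilinear forms of resolvents in the application to Theorem \ref{thm:resolventapprox}.

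Since $W$ and $w$ are self-adjoint with $\|W\|, \|w\| \leq C$, we have $\spec(W), \spec(w) \subset [-C, C]$, and $g_N := f_N - f$ is analytic on $\C \setminus (\spec(W) \cup \spec(w))$. I would argue by contradiction: if the conclusion fails, then along a subsequence there exist $\epsilon > 0$ and points $z_N \in \mathbb{D}_N$ with $|g_N(z_N)| \geq \epsilon$. The hypothesis on $\mathbb{D}'$ forces $|z_N| \leq 2C$, so after passing to a subsequence $z_N \to z_\infty$ with $|z_\infty| \leq 2C$. Using compactness of the Hausdorff topology on closed subsets of $[-C, C]$, I extract a further subsequence on which $\spec(W) \to K_W$ and $\spec(w) \to K_w$ in Hausdorff distance; passing to the limit in the distance constraints defining $\mathbb{D}_N$ yields $\dist(z_\infty, K_W \cup K_w) \geq \delta$.

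Let $\Omega = \{z \in \C : \dist(z, K_W \cup K_w) > \delta/2\}$, which is open and connected (because $K_W \cup K_w \subset \R$, so the complement of its $\delta/2$-neighborhood is connected) and contains both $z_\infty$ and $\mathbb{D}'$. For all large $N$, Hausdorff convergence places every compact $D \subset \Omega$ at distance $\geq \delta/4$ from $\spec(W) \cup \spec(w)$, making $g_N$ analytic on $D$. In the paper's application, $f_N(z) = u^*(W-z)^{-1}v$ and $f(z) = u^*\tau^\cH((w-z)^{-1})v$, so $|f_N(z)|, |f(z)| \leq \|u\|\|v\|/\dist(z, \spec(\cdot))$, giving a uniform bound on $\{g_N\}$ over any compact of $\Omega$ for large $N$. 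Vitali's convergence theorem, applied to $\{g_N\}$ on the connected domain $\Omega$ using the known convergence $g_N \to 0$ on $\mathbb{D}' \subset \Omega$, then yields uniform convergence $g_N \to 0$ on compact subsets of $\Omega$, which contradicts $|g_N(z_N)| \geq \epsilon$ with $z_N \to z_\infty \in \Omega$. The main obstacle is this uniform boundedness of $\{g_N\}$ on compacts of $\Omega$, which is not explicit in the lemma's abstract statement but is built in through the resolvent form of $f_N$ and $f$ in the application.
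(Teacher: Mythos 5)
Your argument follows essentially the same route as the paper's proof: argue by contradiction, use Hausdorff sequential compactness to extract a subsequence along which $\spec(W)$ and $\spec(w)$ converge to fixed compact sets, form the fixed connected limiting domain containing $\mathbb{D}'$, and propagate the convergence from $\mathbb{D}'$ to that domain to contradict $|f_{N}(z_N)-f(z_N)|\geq\eps$. The only difference is that you make the propagation step explicit via Vitali's theorem together with the resolvent-type bound (correctly noting this uniform local boundedness is not part of the lemma's abstract hypotheses), whereas the paper simply asserts that analyticity on the fixed connected domain plus convergence on $\mathbb{D}'$ gives uniform convergence there, leaving the same normal-family input implicit.
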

\begin{proof}
Let $\Omega_0$ be the event of probability 1 where $\spec(W)$ (and
also $\spec(w)$) are uniformly bounded in $[-C,C]$ for all large $N$, and
\[
\lim_{N \to \infty} \sup_{z \in \mathbb{D}'} |f_N(z) - f(z)|=0.
\]
Suppose by contradiction that for some $\omega \in \Omega_0$ and
$\eps>0$, we have
\begin{equation}\label{eq:Dconvergence}
\limsup_{N \to \infty}
\sup_{z \in \mathbb{D}_N} |f_N(z) - f(z)|>\eps.
\end{equation}
Then there is a subsequence $\{N_k\}_{k=1}^\infty$ and points $z_{N_k} \in
\mathbb{D}_{N_k}$ for which $|f_{N_k}(z_{N_k}) - f(z_{N_k})|>\eps$ for all
$k$. Since $\spec(W)$ and $\spec(w)$ are uniformly bounded compact
subsets of $\R$, by sequential compactness under Hausdorff distance,
there must be a further subsequence of $\{N_k\}_{k=1}^\infty$ along which these 
sets converge in Hausdorff distance to fixed limits $S_1 \equiv S_1(\omega)$ and
$S_2 \equiv S_2(\omega)$. Define $\mathbb{D}_\infty(\omega)=
\{z \in \C:\dist(z,S_1) \geq \delta/2,\,\dist(z,S_2) \geq
\delta/2\}$. Then $\mathbb{D}_\infty(\omega)$ is a fixed ($N$-independent)
connected domain of $\C$. As $f_N(z) - f(z)$ is analytic on
$\mathbb{D}_\infty(\omega)$ for all large $N$, we then have
\[
\lim_{N \to \infty} \sup_{z \in \mathbb{D}_\infty(\omega)} |f_N(z) - f(z)|=0,
\]
by the convergence over $z \in \mathbb{D}'$. This implies $z_{N_k} \notin
\mathbb{D}_\infty(\omega)$ for all large $k$. But then
\[
\limsup_{k \to \infty}\;\min(\dist(z_{N_k},S_1),\dist(z_{N_k},S_2)) \leq \delta/2,
\]
which implies by the definition of Hausdorff distance that
\[
\limsup_{k \to \infty}\;\min(\dist(z_{N_k},\spec(w)),\dist(z_{N_k},\spec(W)))
\leq \delta/2,
\]
contradicting that $z_{N_k} \in \mathbb{D}_{N_k}$. Thus
(\ref{eq:Dconvergence}) cannot hold for any $\omega \in \Omega_0$.
\end{proof}

\begin{proof}[Proof of Theorem \ref{thm:resolventapprox}]
The given assumptions imply that there is a constant $C>0$ such that $\|W\| \leq C$ and $\|w\| \leq C$ almost surely for all large $N$. Let
$\mathbb{D}'=\{z \in \C:|z|>2C\}$.
Fix $\eps>0$. Applying the contractive property $\|\tau^\cH(a)\| \leq \|a\|$
of conditional expectations, there is $K>0$ such that
\[\sup_{z \in \mathbb{D}'}
\left\|\sum_{k=K+1}^\infty z^{-(k+1)}W^k\right\|<\eps,
\qquad \sup_{z \in \mathbb{D}'}
\left\|\sum_{k=K+1}^\infty z^{-(k+1)}\tau^\cH(w^k)\right\|<\eps\]
for all large $N$. For each $k \in \{0,\ldots,K\}$, Theorem
\ref{thm:anisotropicpoly} implies $u^*W^kv-u^*\tau^\cH(w^k)v \to 0$ almost
surely. Then
applying the series expansions for $(w-z)^{-1}$ and $(W-z\Id)^{-1}$, convergent
for $|z|>2C$, we get
\[\limsup_{N \to \infty} \sup_{z \in \mathbb{D}'}
|u^*(W-z\Id)^{-1}v-u^*R_0(z)v|<2\eps.\]
As $\eps>0$ is arbitrary, we obtain almost surely
\begin{equation}\label{eq:Dprimeconvergence}
\lim_{N \to \infty} \sup_{z \in \mathbb{D}'}
|u^*(W-z\Id)^{-1}v-u^*R_0(z)v|=0.
\end{equation}
Applying Lemma \ref{lem:analytic-extend} for $f_N(z) = u^*(W-z\Id)^{-1}v$ and $f(z) = u^*R_0(z)v$ concludes the proof.
\end{proof}

\begin{proof}[Proof of Corollary \ref{cor:resolventapprox}]
Let $W'=W+P_2+\ldots+P_k$ and $w'=w+P_2+\ldots+P_k$.
Note that $W',w'$ define the same submatrices
$W_{11} \in \C^{N_1 \times N_1}$ and $(R_0(z))_{11} \in \C^{N_1 \times N_1}$,
the latter because
\[P_1\tau^\cH((w'-z)^{-1})P_1=\tau^\cH(P_1(w'-z)^{-1}P_1)
=\tau^\cH(P_1(w-z)^{-1}P_1)=P_1\tau^\cH((w-z)^{-1})P_1.\]
On the other hand, for $k \geq 2$, their spectra satisfy
\[\spec(W)=\spec(W_{11}) \cup \{0\}, \quad \spec(w)=\spec(w_{11}) \cup \{0\},\]
\[\spec(W')=\spec(W_{11}) \cup \{1\}, \quad \spec(w)=\spec(w_{11}) \cup \{1\}.\]
Then for any $\delta \leq 1/2$, setting $\mathbb{D}$ and $\mathbb{D}'$ as the
sets (\ref{eq:R0}) with $(W,w)$ and $(W',w')$, we have
\[\mathbb{D}_1=\mathbb{D} \cup \mathbb{D}'.\]
Then the result follows from
applying Theorem \ref{thm:resolventapprox} with $u=(u_1,0,\ldots,0)$ and
$v=(v_1,0,\ldots,0)$, for both $(W,w)$ and $(W',w')$.
\end{proof}

\section{Analysis of the mixed effects model}\label{appendix:mixedmodel}

In this appendix, we present the details of the proofs
of Theorems \ref{thm:outliers} and \ref{thm:eigenvectors}, which were omitted
from Section \ref{sec:proof}.

\subsection{Preliminary results} \label{sec:prelim-results}

First, we prove Theorem \ref{thm:sticktobulk}, which guarantees that no bulk
eigenvalues separate from the support.

\begin{proof}[Proof of Theorem \ref{thm:sticktobulk}]
	Recall the block decomposition (\ref{block_decomposition}) in
        $\C^{N\times N}$, the orthogonal projections $P_0, \ldots, P_{2k}$,
and the embedded matrices
        $\tilde{F}_{rs},\tilde{G}_r,\tilde{H}_r\in\C^{N\times N}$.
The only non-zero block of the matrix
	\[\tilde{W}=\sum_{r,s=1}^{k}\tilde{H}_r^*\tilde{G}_r^*\tilde{F}_{rs}\tilde{G}_s
        \tilde{H}_s\in\R^{N\times N}\]
        is the $(0,0)$-block,
        which is equal to $\hSigma$. Consider the two matrices $\tilde{W}$ and
        $\check{W}=\tilde{W}+P_1+\ldots+P_{2k}$. Then
        $\spec(\tilde{W})=\spec(\hSigma) \cup \{0\}$ and
        $\spec(\check{W})=\spec(\hSigma) \cup \{1\}$, so
        \[\spec(\hSigma)=\spec(\tilde{W}) \cap \spec(\check{W}).\]

        Let $X \in \R^{N \times N}$ be a GOE matrix.
Then $\tilde{G}_r$ can be realized as
        $\tilde{G}_r=\sqrt{\frac{N}{n_r}}P_{r+k}XP_r$. Hence, 
	\[\tilde{W}=\sum_{r,s=1}^{k}\frac{N}{\sqrt{n_rn_s}}\tilde{H}_r^*P_rXP_{r+k}\tilde{F}_{rs}P_{s+k}XP_s\tilde{H}_s.\]
        We construct a free deterministic equivalent in the following way:
        Let $\D=\langle P_0,\ldots,P_{2k} \rangle$, and let
        $(\A_1,\tau_1)$ be the von Neumann free product of
        $(\D,N^{-1}\Tr)$ and a von Neumann probability space
        containing a semicircular element $x$. Set
        $(\A_2,\tau_2) \equiv (\C^{N \times N},N^{-1}\Tr)$, which
        contains $\{\tilde{F}_{rs},\tilde{H}_r:r,s=1,\ldots,k\}$ and also
        $\D$. Let $(\A,\tau)=(\A_1,\tau_1) \times_\D (\A_2,\tau_2)$ be the
        amalgamated free product over $\D$. In $\A$, identify
        $f_{rs} \equiv \tilde{F}_{rs}$, $h_r \equiv \tilde{H}_r$, $p_r \equiv
        P_r$, and define $g_r=\sqrt{N/n_r}p_{r+k}xp_r$.
        By this construction,
        $x$ is free of $\D$ (over $\C$) and also free of $\A_2$ over $\D$.
        Then \cite[Proposition 3.7]{NSS} implies that $x$ is free of $\A_2$
        (over $\C$). We may then apply Theorem \ref{thm:strongfree} and
        Assumption \ref{assump:asymptotic} to conclude
	\begin{equation}\label{middle_inclusion_in_proof}
            \spec(\hSigma)\subset\spec(\tilde{w})_\delta \cap
            \spec(\check{w})_\delta
	\end{equation}
        for all large $N$, where
        \[\tilde{w}=\sum_{r,s=1}^{k} \frac{N}{\sqrt{n_rn_s}}
        h_r^*p_rxp_{r+k}f_{rs}p_{s+k}xp_sh_s
        =\sum_{r,s=1}^{k} h_r^*g_r^*f_{rs}g_sh_s,
        \qquad \check{w}=\tilde{w}+p_1+\ldots+p_{2k}.\]

        To finish this proof, we verify that these elements
        $\{f_{rs},g_r,h_r,p_r\}$ have the same joint law as described by
        conditions (1--4) in Section \ref{sec:det-eq-measure}. Conditions
        (1--2) are evident by construction. For condition (3), denoting by
        $\NC_2(2l)$ the non-crossing pairings of $(1,\ldots,2l)$ and $K(\pi)$
        the Kreweras complement of $\pi$,
   	   	\begin{align*}
                    &\frac{N}{p}\tau\Big((g_r^*g_r)^l\Big)\\
&=\frac{N}{p}\left(\frac{N}{n_r}\right)^l\tau\big((p_rxp_{r+k}xp_r)^l\big)\\
                    &=\frac{N}{p}\left(\frac{N}{n_r}\right)^l\tau\big(xp_{r+k}xp_r\cdots xp_{r+k}xp_r\big)\\
                    &=\frac{N}{p}\left(\frac{N}{n_r}\right)^l\sum_{\pi\in\NC_2(2l)}\tau_{K(\pi)}\left[p_{r+k},p_r,...,p_{r+k},p_r\right]\\
   		&=\frac{N}{p}\left(\frac{N}{n_r}\right)^l\sum_{m=1}^l
                    \tau(p_r)^{m}\tau(p_{r+k})^{l+1-m}\cdot
                    |\{\pi\in\NC_2(2l):K(\pi) \text{ has } m \text{ blocks of }
                    p_r\}|\\
   		&=\sum_{m=1}^{l}\left(\frac{p}{n_r}\right)^{m-1}\frac{1}{l}{l
                    \choose m } {l  \choose
                    m-1}\\
&=\sum_{m=1}^{l}\frac{1}{m}\left(\frac{p}{n_r}\right)^{m-1}
                    {l \choose m-1} {l-1 \choose m-1}\\
&=\int x^l\nu_{\frac{p}{n_r}}(x)dx.
   	\end{align*}
        Here, the second line applies \cite[Theorem 14.4]{nicaspeicher},
        freeness of $\{p_r,p_{r+k}\}$ and $x$, and vanishing of all but the
        second non-crossing cumulant of $x$. The third line applies $p_r^l=p_r$
        and $p_{r+k}^l=p_{r+k}$ for $l \geq 1$, and also that
        $|K(\pi)|+|\pi|=2l+1$ so that $|K(\pi)|=l+1$. The fourth line applies
        \[|\{\pi\in\NC_2(2l):K(\pi) \text{ has } m \text{ blocks of }
        p_r\}|=|\{\gamma\in\NC(l):\gamma \text{ has }m \text{ blocks}\}|,\]
        which are defined by the Narayana numbers. For more details, see
        \cite[Lectures 9, 11, 14]{nicaspeicher}. The last equality is the
        formula for the $l^\text{th}$ moment of the Marcenko-Pastur distribution
        (see \cite[Exercise 2.11]{mingospeicher}).

        For condition (4), first consider $a_1,\ldots,a_m \in \A_2$ where
        $a_1,\ldots,a_m$ alternate between the algebras
        $\langle \{f_{rs}\},\D\rangle$ and $\langle \{h_r\},\D \rangle$, and we
        have $\tau^\D(a_i)=0$ for each $i$. The latter condition implies that
        each $a_i$ belonging to $\langle \{f_{rs}\},\D\rangle$ in fact
        satisfies $(p_{k+1}+\ldots+p_{2k})a_i(p_{k+1}+\ldots+p_{2k})=a_i$,
        and each $a_i$ belonging to $\langle \{h_r\},\D\rangle$ in fact
        satisfies $(p_0+\ldots+p_k)a_i(p_0+\ldots+p_k)=a_i$.
        Then we get $\tau^\D(a_1\ldots a_m)=0$. This establishes
        that $\{f_{rs}\}$ and $\{h_r\}$ are free over $\D$. A similar argument
        shows that $g_1,\ldots,g_k$ are free over $\D$, since each $a_i \in
        \langle g_r,\D \rangle$ with $\tau^\D(a_i)=0$ must satisfy
        $(p_r+p_{k+r})a_i(p_r+p_{k+r})=a_i$. By construction of the space $\A$,
        we have that
        $\{g_1,\ldots,g_k\} \in \A_1$ and $\{f_{rs},h_r:r,s=1,\ldots,k\}
        \in \A_2$ are free over $\D$. Thus condition (4) holds.

        Having verified these conditions (1--4), we obtain that
$\mu_0$ is the $\tau^c$-law of $\tilde{w}$ in the compressed algebra
$\A^c=\{a \in
\A:p_0ap_0=a\}$ with trace $\tau^c(a)=\tau(p_0)^{-1}\tau(p_0ap_0)$. Since
$\tau^c$ is faithful, $\supp(\mu_0)$ is the spectrum of $\tilde{w}$
as an operator in
$\A^c$. Then $\spec(\tilde{w})=\supp(\mu_0) \cup \{0\}$ and
$\spec(\check{w})=\supp(\mu_0) \cup \{1\}$, where $\spec(\cdot)$ here
denotes the spectra as operators in $\A$.
So $\supp(\mu_0)_\delta=\spec(\tilde{w})_\delta \cap
\spec(\check{w})_\delta$ for any $\delta<1/2$. Combining this with
(\ref{middle_inclusion_in_proof}) concludes the proof.
\end{proof}

Next, we establish the analytic extension of the functions $a_r,b_r$.
\begin{proposition} \label{prop:analyticextension}
For any positive semidefinite $\oSigma_1,\ldots,\oSigma_k \in \R^{p \times p}$ and symmetric $F \in \R^{M \times M}$,
let $\mu_0$ be the measure defined by (\ref{eq:arecursion}--\ref{eq:m0}). Then
the functions $a_1,\ldots,a_k,b_1,\ldots,b_k,m_0$ which solve
(\ref{eq:arecursion}--\ref{eq:m0}) extend analytically to $\C \setminus
\supp(\mu_0)$. The matrices $z\Id+\b \cdot \oSigma$ and $\Id+F \diag_n(\a)$ are invertible on all of $\C \setminus \supp(\mu_0)$, and these extensions
satisfy (\ref{eq:arecursion}--\ref{eq:m0}) on all of $\C \setminus \supp(\mu_0)$.
\end{proposition}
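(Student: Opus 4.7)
The strategy is to leverage the free-probability representation of $\mu_0$ established in Section \ref{sec:det-eq-measure}: $\mu_0$ is the $\tau^c$-distribution of $w \in \A^c$, so $\supp(\mu_0) = \spec(w)$ where $\spec(w)$ denotes the spectrum of $w$ as an element of the compressed algebra $\A^c$. Hence the resolvent $(w - z)^{-1}$ is defined and analytic in $z$ on $\C \setminus \supp(\mu_0)$ as an element of $\A^c$. Since $m_0$ is the Stieltjes transform of the probability measure $\mu_0$, it extends analytically to $\C \setminus \supp(\mu_0)$ automatically. The remaining tasks are to extend $a_r, b_r$ and to establish invertibility of the two matrices $z\Id + \b \cdot \oSigma$ and $\Id + F \diag_n(\a)$.

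For the first matrix, working in $\A^c$, I would consider the $\cH$-valued Cauchy transform $G_w^\cH(z) := \tau^\cH((w - z)^{-1})$, which is analytic on $\C \setminus \supp(\mu_0)$. By Proposition \ref{prop:free-identity}(a,c), the relation $\alpha_r(z) = \tau_r(h_r G_w^\cH(z) h_r^*)$ holds on $\C^+$ and extends by analytic continuation to $\C \setminus \supp(\mu_0)$, giving the extension of $a_r(z) = -p\alpha_r(z)/n_r$. The identity $\tau^\cH((w - z)^{-1}) = -(z p_0 + \sum_r h_r^* h_r b_r(z))^{-1}$ from the proof of Proposition \ref{prop:det-res-val} (via \cite[Eq.\ (4.12) and Lemma 4.4]{fanjohnstonebulk}) then forces the operand $z p_0 + \sum_r h_r^* h_r b_r(z)$ to be invertible in $\A^c$ on $\C \setminus \supp(\mu_0)$. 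Restricting to the $(0,0)$-block identifies this element with $z\Id + \b(z) \cdot \oSigma$ in $\C^{p \times p}$, which establishes its invertibility and simultaneously produces the analytic extension of each $b_r(z)$ as the coefficient of $\oSigma_r$ (once the $\oSigma_1,\ldots,\oSigma_k$ are in general position; the general case follows by a perturbation / limit argument).

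For the second matrix, I would thread through the second subordination layer of Section \ref{sec:proof}. By Proposition \ref{prop:free-identity}(b,d), $e_{r+k}(z) = -a_r(z)^{-1}$, so restricting $e$ to the span of $p_{k+1},\ldots,p_{2k}$ and identifying this compressed subalgebra with matrices on $\R^{n_+}$, the element $e - u$ with $u = \sum_{r,s} f_{rs}$ is identified with $-\diag_n(\a)^{-1} - F$. Invertibility of this block is equivalent — by multiplication by $\diag_n(\a)$ and the identity $\det(\Id + AB) = \det(\Id + BA)$ — to invertibility of $\Id + F \diag_n(\a)$, and it follows from invertibility of $e - u$ in the appropriate compressed algebra, which is guaranteed by the chain of subordination identities of \cite[Sec.\ 4]{fanjohnstonebulk} linking $(w - z)^{-1}$ to $(e - u)^{-1}$ via the intermediate element $v$ and value $d$. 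With both invertibility statements in hand, each side of (\ref{eq:arecursion}--\ref{eq:m0}) is analytic on the connected set $\C \setminus \supp(\mu_0)$ and agrees with its counterpart on $\C^+$ by Theorem \ref{thm:bulk}, so the identity theorem extends the fixed-point equations to all of $\C \setminus \supp(\mu_0)$.

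The main obstacle is the invertibility statements rather than the analytic extensions themselves: continuing $a_r, b_r, m_0$ is straightforward from the free-probability identifications, but propagating invertibility of $z\Id + \b \cdot \oSigma$ and $\Id + F \diag_n(\a)$ across $\C \setminus \supp(\mu_0)$ requires identifying them as genuine operator-algebraic resolvents within the free probability model. The second layer in particular demands careful bookkeeping of the block decomposition and of the subordination chain linking $(w-z)^{-1}$, $(d-v)^{-1}$, and $(e - u)^{-1}$, together with verification that the identifications from Proposition \ref{prop:free-identity} remain valid away from $\C^+$.
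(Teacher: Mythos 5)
Your free-probability route handles the analytic continuation of $a_r$ and $m_0$ cleanly: identifying $\supp(\mu_0)=\spec(w)$ and extending $\alpha_r(z)=\tau_r(h_r G^\cH_w(z)h_r^*)$ across $\C\setminus\spec(w)$ is a legitimate alternative to the paper's argument that $a_r$ is the Stieltjes transform of a measure $\nu_r$ with $\supp(\nu_r)\subseteq\supp(\mu_0)$. The genuine gap is in the invertibility claim. You assert that the identity $\tau^\cH((w-z)^{-1})=-(zp_0+\sum_r h_r^*h_r b_r(z))^{-1}$ ``forces'' the operand to be invertible on $\C\setminus\supp(\mu_0)$, but the reasoning is circular: the identity is established where both sides are already known to make sense (namely $\C^+$, or large $|z|$), and to continue it you would first need to know that the analytic operator-valued function $\tau^\cH((w-z)^{-1})$ remains \emph{invertible} throughout $\C\setminus\supp(\mu_0)$. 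That is precisely the hard content of the proposition, not a free byproduct of $(w-z)^{-1}$ existing. Conditional expectations of resolvents are in general not invertible off the spectrum: with $\A=M_2(\C)$, $\B=\C\Id$, and $a=\diag(1,-1)$, one has $\tau^\B((z-a)^{-1})=\tfrac{z}{z^2-1}\Id$, which vanishes at $z=0\notin\spec(a)=\{-1,1\}$. So analyticity of $\tau^\cH((w-z)^{-1})$ alone does not prevent a discrete set of real points in a spectral gap at which the inverse fails to exist and $b_r(z)$ has a pole.

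The paper's proof never needs to assert such an invertibility a priori. It works entirely at the level of the scalar functions: it first shows each $a_r$ extends analytically (Stieltjes positivity and domination by $\Im m_0$), then extends $b_r$ meromorphically via (\ref{eq:brecursion}) and rules out poles by a monotonicity argument -- $a_r'(z)\geq 0$ and $b_r'(z)\geq 0$ on $\R\setminus\supp(\mu_0)$ (Proposition \ref{prop:b-increasing}) -- which would force $-\Tr M_2(z)^{-1}F_2\to\infty$ at any would-be singular point of $\Id+F\diag_n(\a)$, contradicting finiteness of the remaining $b_r$. Only then is invertibility of $z\Id+\b\cdot\oSigma$ concluded, by comparing the a priori meromorphic continuation of (\ref{eq:arecursion}) against the already-known analytic $a_r$. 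Your proposal supplies no substitute for this monotonicity step. A secondary concern is the ``general position'' reduction: when the $\oSigma_r$ are linearly dependent (or some are zero), the individual $b_r$ cannot be read off from $\b\cdot\oSigma$, and showing that a perturbation limit recovers exactly the $b_r$ of the original fixed-point system requires an argument you do not give. The second subordination layer (invertibility of $\Id+F\diag_n(\a)$ via $e-u$) inherits the same circularity.
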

\begin{proof}[Proof of Proposition \ref{prop:analyticextension}]
Denote by $a_r(z)$ and $b_r(z)$ the values of $a_r,b_r$ at $z \in \C^+$, and set
$R_0(z)=(z\Id+\b(z) \cdot \oSigma)^{-1}$. Note that $\Tr R_0(z)AR_0(z)^*B$ is real and nonnegative for any positive semidefinite $A,B$. Then from (\ref{eq:arecursion}), we have
\begin{align*}
\Im a_r(z)&=-n_r^{-1} \Im \Tr R_0(z)\oSigma_r\\
&=-n_r^{-1} \Im \Tr \Big(R_0(z)\oSigma_r R_0(z)^*(z\Id+\b(z)\cdot\oSigma)^*\Big)\\
&=n_r^{-1}(\Im z)\Tr R_0(z)\oSigma_r R_0(z)^*+n_r^{-1}\sum_{s=1}^k (\Im b_s(z)) \Tr R_0(z)\oSigma_r R_0(z)^*\oSigma_s.
\end{align*}
In particular, as $\Im z>0$, $\Im b_r(z) \geq 0$, and $R_0(z)$ is invertible, we
have that either $\oSigma_r=0$ and $a_r(z) \equiv 0$ for all $z \in \C^+$, or
$\oSigma_r \neq 0$ and $\Im a_r(z)>0$ for all $z \in \C^+$. In the former case,
$a_r$ trivially extends to $a_r(z) \equiv 0$ on $\C \setminus \supp(\mu_0)$. In
the latter case, we recall from the analysis of \cite[Theorem
4.1]{fanjohnstonebulk} that each $b_r(iy)$ remains bounded as $y \to \infty$.
Then $\lim_{y \to \infty} iy \cdot a_r(iy)=-\Tr \oSigma_r/n_r$, so $a_r:\C^+ \to
\C^+$ is the Stieltjes transform of a finite measure $\nu_r$ on $\R$ with total
mass $\nu_r(\R)=\Tr \oSigma_r/n_r$ \cite[Lemma 2]{geronimohill}. Analogous to the above, we also have
\[\Im m_0(z)=p^{-1}(\Im z)\Tr R_0(z)R_0(z)^*+p^{-1}\sum_{s=1}^k (\Im b_s(z)) \Tr R_0(z)R_0(z)^*\oSigma_s,\]
and hence for all $z \in \C^+$
\[\Im a_r(z) \leq \frac{p}{n_r}\|\oSigma_r\| \cdot \Im m_0(z).\]
From the Stieltjes inversion formula, this implies $\supp(\nu_r) \subseteq \supp(\mu_0)$, and hence $a_r$ extends analytically to $\C \setminus \supp(\mu_0)$ also in this case as well.

Then we may extend $b_1(z),\ldots,b_k(z)$ to meromorphic functions on $\C
\setminus \supp(\mu_0)$ via (\ref{eq:brecursion}), potentially with poles at
points $z \in \C \setminus \supp(\mu_0)$ where $\Id+F \diag_n(\a(z))$ is
singular. We claim that no such points exist: Denote $D(z)=\diag_n(\a(z))$,
and suppose that $\Id+FD(z_0)$ is singular for some $z_0 \in \C \setminus
\supp(\mu_0)$. It is shown in \cite[Lemma C.2]{fanjohnstonebulk} that
$\Id+FD(z)$ is invertible for $z \in \C^+$.
For $z \in \C^-$, it is verified by conjugate-symmetry that $\Id+FD(z)$
is also invertible, and $\overline{b_r(z)}=b_r(\overline{z})$. Thus $z_0 \in \R
\setminus \supp(\mu_0)$, and each $b_r$ is real on $\R \setminus \supp(\mu_0)$.
Suppose, for notational convenience, that $b_1(z),\ldots,b_j(z)$ have poles at
$z_0$, and $b_{j+1}(z),\ldots,b_k(z)$ do not. (We may take $j=0$ or $j=k$ if all
or none of the $b_r$'s have poles.)
Taking the limit $z \nearrow z_0$ along the real line, we have
\begin{align*}
&\partial_z \left(-(\Id+FD(z))^{-1}F\right)\\
&=(\Id+FD(z))^{-1}F\diag(a_1'(z)\Id_{m_1},\ldots,a_k'(z)\Id_{m_k})(\Id+FD(z))^{-1}F.
\end{align*}
Assuming momentarily that $F$ is invertible, $(\Id+FD(z))^{-1}F=(F^{-1}+D(z))^{-1}$ is real and symmetric.
Then this is also true for non-invertible $F$ by continuity.
As each $a_s$ is either identically 0 or the Stieltjes transform of a measure
    $\nu_s$, we have $a_s'(z) \geq 0$ for all $s$. So the above derivative in
    $z$ is positive-semidefinite. In particular, as $z \nearrow z_0$, each
    $b_r(z)$ is increasing. So $b_1(z),\ldots,b_j(z) \to \infty$ as $z \nearrow
    z_0$, while $b_{j+1}(z),\ldots,b_k(z)$ approach finite values. This implies
    that for any $v$ in the combined column span of
    $\oSigma_1,\ldots,\oSigma_j$, we have $(z\Id+\b \cdot \oSigma)^{-1}v \to 0$
    as $z \nearrow z_0$. Then $(z\Id+\b \cdot \oSigma)^{-1}\oSigma_r \to 0$ and
$a_r(z_0)=0$ for each $r=1,\ldots,j$.
Denote by $D_2(z)$ and $F_2$ the lower-right blocks of $D(z)$ and $F$ corresponding to
$j+1,\ldots,k$, and by $M_2(z)=\Id+F_2D_2(z)$ the lower-right blocks of
$\Id+FD(z)$. Then the matrix $\Id+FD(z)$ has the block form
\[\begin{pmatrix} \Id & * \\ 0 & M_2(z) \end{pmatrix}.\]
Since $\Id+FD(z_0)$ is singular, we must have that $M_2(z_0)=\Id+F_2D_2(z_0)$ is
singular. The above argument shows that $M_2(z)^{-1}F_2$ is real-symmetric and
that $\partial_z(-M_2(z)^{-1}F_2)$ is positive-semidefinite, so this implies
that $-\Tr M_2(z)^{-1}F_2 \to \infty$ as $z \nearrow z_0$. But then
$b_r(z)=-n_r^{-1}\Tr_r [M_2(z)F_2] \to \infty$ for some
$r \in \{j+1,\ldots,k\}$, contradicting that $b_r(z_0)$ exists and is finite.
Thus, $\Id+FD(z)$ is invertible and $b_1,\ldots,b_k$ are analytic on all of $\C \setminus \supp(\mu_0)$.

We may then extend $m_0(z)$ to $\C \setminus \supp(\mu_0)$ by (\ref{eq:m0}).
Note that this must coincide with the Stieltjes transform of $\mu_0$ on $\C
\setminus \supp(\mu_0)$, by uniqueness of the analytic extension. Finally, note
that if we define $\tilde{a}_1(z),\ldots,\tilde{a}_k(z)$ on $\C \setminus
\supp(\mu_0)$ by (\ref{eq:arecursion}) from $b_1(z),\ldots,b_k(z)$, then each
$\tilde{a}_r(z)$ is a meromorphic function on $\C \setminus \supp(\mu_0)$,
possibly with poles where $z\Id+\b(z) \cdot \oSigma$ is singular. These must agree with $a_1(z),\ldots,a_k(z)$ everywhere outside of these poles, as they agree on $\C^+$. Since $a_1(z),\ldots,a_k(z)$ are analytic on $\C \setminus \supp(\mu_0)$, 
no such poles exist, $z\Id+\b(z) \cdot \oSigma$ is invertible, and $a_1(z),\ldots,a_k(z)$ satisfy (\ref{eq:arecursion}) on all of $\C \setminus \supp(\mu_0)$.
\end{proof}

We record here the following property shown in the above proof.
\begin{proposition} \label{prop:b-increasing}
    For $z \in \R \setminus \supp(\mu_0)$ and each $r \in \{1,\ldots,k\}$, the
values $a_r(z)$ and $b_r(z)$ are real, and $a_r'(z) \geq 0$ and
$b_r'(z) \geq 0$.
\end{proposition}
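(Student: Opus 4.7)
The plan is to assemble the proof from three observations, the first two of which are essentially contained already in the proof of Proposition \ref{prop:analyticextension}; only the monotonicity of $b_r$ requires a small extra argument.

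First, for reality on $\R \setminus \supp(\mu_0)$, I would exploit the conjugate symmetry of the defining system (\ref{eq:arecursion}--\ref{eq:brecursion}). Because $\oSigma_1,\ldots,\oSigma_k$ and $F$ are real-symmetric, the tuple $(\overline{a_r(\bar z)}, \overline{b_r(\bar z)})$ satisfies exactly the same fixed-point equations as $(a_r(z), b_r(z))$. The uniqueness of the analytic extension established in Proposition \ref{prop:analyticextension} forces them to coincide, so on $\R \setminus \supp(\mu_0)$ both $a_r$ and $b_r$ take real values.

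Second, for $a_r'(z) \geq 0$, I would invoke the fact, shown in the proof of Proposition \ref{prop:analyticextension}, that $a_r$ is either identically zero (when $\oSigma_r=0$) or the Stieltjes transform of a finite nonnegative measure $\nu_r$ supported in $\supp(\mu_0)$. Differentiating $a_r(z)=\int (x-z)^{-1}\,d\nu_r(x)$ then yields $a_r'(z)=\int (x-z)^{-2}\,d\nu_r(x) \geq 0$ for real $z$ outside $\supp(\nu_r)$.

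The main step, and the one requiring the most care, is $b_r'(z) \geq 0$. Writing $D(z)=\diag_n(\a(z))$ and $M(z)=(\Id+FD(z))^{-1}F$, differentiation of (\ref{eq:brecursion}) combined with the resolvent identity $\partial_z A^{-1}=-A^{-1}A'A^{-1}$ gives
\[
b_r'(z) = n_r^{-1} \Tr_r\!\bigl[M(z)\,\diag_n(\a'(z))\,M(z)\bigr].
\]
The obstacle is to verify that this trace is nonnegative. The key observation—already noted inside the proof of Proposition \ref{prop:analyticextension}—is that $M(z)$ is real-symmetric for real $z \in \R \setminus \supp(\mu_0)$: when $F$ is invertible this is immediate from the rewriting $M(z)=(F^{-1}+D(z))^{-1}$, and the singular case follows by continuity by replacing $F$ with $F+\eps\Id$ and sending $\eps\to 0$. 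Decomposing $M(z)$ into blocks $(M_{rs})_{r,s=1}^k$ aligned with the block structure of $\diag_n$, the $(r,r)$ diagonal block of $M\,\diag_n(\a')\,M$ equals $\sum_s a_s'(z)\,M_{rs}M_{rs}^\sT$, so its trace is $\sum_s a_s'(z)\,\|M_{rs}\|_F^2$. By the second step each $a_s'(z)$ is nonnegative, and therefore so is this sum, completing the proof of $b_r'(z) \geq 0$.
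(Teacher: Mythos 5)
Your proposal is correct and follows essentially the same route as the paper, which records this proposition as a byproduct of the proof of Proposition \ref{prop:analyticextension}: reality via conjugate symmetry, $a_r'\geq 0$ from the Stieltjes-transform representation of $a_r$, and $b_r'\geq 0$ from $-\partial_z[(\Id+F\diag_n(\a))^{-1}F]=M(z)\diag_n(\a')M(z)$ with $M(z)$ real symmetric. Your only deviation is cosmetic: you expand the $(r,r)$ block as $\sum_s a_s'(z)\|M_{rs}\|_F^2$, whereas the paper simply observes that this matrix is positive semidefinite so its block traces are nonnegative.
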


\subsection{Approximation lemmas} \label{app:approx-trace}

We prove Lemmas \ref{lem:master-eq} and \ref{lem:schur-comp}, and provide the
remaining details of the proof of Proposition \ref{prop:det-eq}.

\begin{proof}[Proof of Lemma \ref{lem:master-eq}]
The eigenvalues of $\hSigma$ which are not eigenvalues of $W$ are the roots of 
\[
\det\Big(R(z) (\hSigma - z \Id)\Big) = 0.
\]
Writing $\hSigma=W+P$ and recalling the notations of Section
\ref{subsec:master}, we have
    \[P = Q \wGamma^\sT \Xi^\sT F G H + H^\sT G^\sT F \Xi \wGamma Q^\sT + Q \wGamma^\sT
    \Xi^\sT F \Xi \wGamma Q^\sT,\]
from which we may compute 
\begin{align*}
R(z)(\hSigma - z \Id) &= \Id + R(z)P\\
&= \Id + \left[\begin{matrix} R(z)Q \wGamma^\sT & R(z) H^\sT G^\sT F \Xi  \wGamma + R(z) Q \wGamma^\sT \Xi^\sT F \Xi \wGamma\end{matrix}\right] \left[\begin{matrix} \Xi^\sT F G H\\ Q^\sT \end{matrix}\right].
\end{align*}
Applying the identity $\det(\Id + XY) = \det(\Id + YX)$, we find that 
\begin{align*}
0 &= \det\left(\Id + \left[\begin{matrix} \Xi^\sT F G H\\  Q^\sT
    \end{matrix}\right] \left[\begin{matrix} R(z)Q \wGamma^\sT & R(z) H^\sT G^\sT F
\Xi \wGamma + R(z) Q \wGamma^\sT \Xi^\sT F \Xi \wGamma \end{matrix}\right]
    \right)=\det \hK(z). \qedhere
\end{align*}
\end{proof}

Lemma \ref{lem:schur-comp} uses the following concentration result from
\cite{BEKYY14}.

\begin{proposition}[{\cite[Lemma 3.1]{BEKYY14}}] \label{prop:concentration}
Let $x,y \in \R^N$ be independent vectors with independent entries satisfying
\[
\E[x_i] = \E[y_i] = 0, \quad \E[x_i^2] = \E[y_i^2] = 1/N, \quad
    \E[|x_i|^k] < C_kN^{-k/2}, \quad \E[|y_i|^k] < C_kN^{-k/2}
\]
    for each $k \geq 1$ and some constants $C_k>0$. Let $A_1,A_2 \in \C^{N
    \times N}$ be any deterministic matrices and $v \in \C^N$ any deterministic
    vector. Then for any $\tau,D>0$ and all $N \geq N_0(\tau,D)$,
    \[\P[|x^\sT v| \geq N^{-1/2+\tau} \|v\|_2]<N^{-D},\]
    \[\P[|x^\sT A_1 x - \Tr A_1| \geq N^{-1+\tau}\|A_1\|_\HS]<N^{-D},
    \quad \P[|x^\sT A_2 y| \geq N^{-1+\tau}\|A_2\|_\HS]<N^{-D}.
\]
\end{proposition}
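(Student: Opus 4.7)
The plan is to prove each of the three bounds by the moment method, since all entries have bounded moments of every order and we want polynomial tail decay rather than sub-Gaussian tails. The key point is that for any fixed $k$, one can show $\E[|Z|^{2k}] \leq C_k N^{-k+k\tau/2} M^{2k}$ where $Z$ is the linear/quadratic/bilinear form in question (normalized appropriately so $M$ is $\|v\|_2$ or $\|A\|_{\HS}$), and then Markov's inequality yields
\[
\P\bigl[|Z| \geq N^{-1/2+\tau}M\bigr]
\leq N^{-2k\tau} \cdot \frac{\E[|Z|^{2k}]}{M^{2k}N^{-k}}
\leq C_k N^{-k\tau+\text{small}},
\]
and choosing $k$ large enough compared to $D/\tau$ gives the bound $N^{-D}$.

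First, for the linear form $x^\sT v$, I would expand $\E[(x^\sT v)^{2k}] = \sum_{i_1,\ldots,i_{2k}} v_{i_1}\cdots v_{i_{2k}} \E[x_{i_1}\cdots x_{i_{2k}}]$. By independence and the mean-zero assumption on each $x_i$, only index tuples in which every index appears at least twice contribute. Grouping indices into pairs and using $\E[|x_i|^m] \leq C_m N^{-m/2}$, each such contributing tuple has expectation bounded by $C_kN^{-k}$. The number of tuples where $2k$ indices are paired off into $k$ pairs produces a factor $\sum v_i^2 \cdots \sum v_i^2 = \|v\|_2^{2k}$ up to combinatorial constants, and tuples with a larger collision pattern produce even smaller contributions. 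This yields $\E[(x^\sT v)^{2k}] \leq C_k N^{-k} \|v\|_2^{2k}$.

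Second, for the quadratic form, I would decompose
\[
x^\sT A_1 x - \Tr A_1 = \sum_{i}(x_i^2 - N^{-1})(A_1)_{ii} + \sum_{i \neq j}(A_1)_{ij} x_ix_j.
\]
Each piece has mean zero, so I can again compute the $2k$-th moment. The off-diagonal sum is analyzed similarly to the linear case: expanding $\E[(\sum_{i\neq j}(A_1)_{ij}x_ix_j)^{2k}]$ and using independence, contributing tuples must pair up all indices, contributing $\sum_{i,j}((A_1)_{ij})^2 = O(\|A_1\|_{\HS}^2)$ per pair after combinatorial grouping, giving a bound of $C_k N^{-2k}\|A_1\|_{\HS}^{2k}$. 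The diagonal sum involves independent mean-zero random variables $(x_i^2 - N^{-1})$ with variances $O(N^{-2})$ and bounded higher moments, so Rosenthal's inequality (or direct moment computation) gives a bound of the same order using $\sum_i (A_1)_{ii}^2 \leq \|A_1\|_{\HS}^2$.

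Third, for the bilinear form $x^\sT A_2 y$, independence of $x$ from $y$ makes the analysis cleaner: conditioning on $y$ reduces it to the linear-form bound with $v = A_2y$, and then averaging over $y$ (also a linear form type bound) gives $\E[(x^\sT A_2 y)^{2k}] \leq C_k N^{-2k} \E[\|A_2y\|_2^{2k}] \leq C_k N^{-2k}\|A_2\|_{\HS}^{2k}$, by another application of the quadratic-form moment bound to $y^\sT A_2^\sT A_2 y$.

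The main obstacle is the combinatorial bookkeeping in the quadratic-form case, specifically making sure that higher-collision index patterns (tuples where indices coincide more than in pairs) do not dominate. Since we have $\E[|x_i|^m] \leq C_m N^{-m/2}$ for all $m$ rather than just second moments, any tuple where $r$ indices coincide still contributes at most $N^{-r/2}$, so higher collisions give strictly smaller contributions than clean pairings and the dominant term is governed by the Hilbert--Schmidt norm. This is precisely the content of \cite[Lemma 3.1]{BEKYY14}, so I would ultimately cite it rather than reproduce the full combinatorics.
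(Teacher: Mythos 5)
The paper offers no proof of this proposition: it is stated as a direct citation of \cite[Lemma 3.1]{BEKYY14}, which is in fact what you settle on at the end of your sketch. So the comparison is between your expository outline of the moment method and a bare citation; the two approaches are not in tension.

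Your outline of the underlying argument is correct in its essentials. The moment method — expand $\E[|Z|^{2k}]$, observe that only index tuples with each index repeated at least twice survive, use the decay $\E[|x_i|^m] \leq C_m N^{-m/2}$ to show that every partition pattern of the $2k$ slots contributes $O(N^{-k})$ (times the appropriate $\ell^2$/Hilbert--Schmidt quantity), and then apply Markov with $k \gg D/\tau$ — is precisely the proof strategy in \cite{BEKYY14}. Two small points worth tightening if you were to write this in full. First, the intermediate target $\E[|Z|^{2k}] \lesssim N^{-k + k\tau/2}M^{2k}$ carries a spurious $N^{k\tau/2}$ factor; the clean moment bound has no $\tau$-dependence and the slack isn't needed. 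Second, in the bilinear step, reducing $\E[(x^\sT A_2 y)^{2k}]$ to $\E[\|A_2 y\|_2^{2k}]$ by conditioning on $y$ is fine, but controlling $\E[(y^\sT A_2^\sT A_2 y)^k]$ by $N^{-k}\|A_2\|_{\HS}^{2k}$ requires combining the fluctuation bound on $y^\sT B y - N^{-1}\Tr B$ with the inequality $\|B\|_{\HS} \leq \Tr B$ valid for the PSD matrix $B = A_2^\sT A_2$; as written, the quadratic-form bound on the centered quantity alone does not immediately yield it. Neither point is a genuine gap — they are exactly the sort of details absorbed into the cited lemma.
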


For a sufficiently large constant $C>0$, define the good event
\begin{equation}\label{eq:goodevent}
    \cE_n=\{\spec(W) \subset \supp(\mu_0)_{\delta/2},\;
\|G_r\|<C,\;\|\Xi_r\|<C \text{ for all }r=1,\ldots,k\}.
\end{equation}
From Theorem \ref{thm:sticktobulk} and Assumption \ref{assump:alpha}, we have
that $\cE_n$ holds almost surely for all large $n$.
On this event $\cE_n$,
we have $\|G\|<C$, $\|\Xi\|<C$, $\|R(z)\|<C\min(1,1/|z|)$, and
$\|R'(z)\|<C\min(1,1/|z|^2)$ for all $z \in U_\delta$ and a constant
$C>0$.

\begin{proof}[Proof of Lemma \ref{lem:schur-comp}]
    Note that $S(z)$ has blocks given by
    \[\sum_{s = 1}^k \Xi_r^\sT F_{rs} G_{s} H_{s} R(z) Q\]
     for $r=1,\ldots,k$, where $\Xi_1,\ldots,\Xi_k$ are independent of
    $G_1,\ldots,G_k$. On the event $\cE_n$, for any fixed $\eps>0$, we have
    $\|S(z)\|_\infty<\eps$ for all $|z|>C_0$ and some constant $C_0>0$.
    For $|z| \leq C_0$, note that $\|F_{rs}G_sH_sR(z)Q\|<C$
    for all $z \in U_\delta$.
    Then this bound holds for the $\ell_2$-norm of each column of
    $F_{rs}G_sH_sR(z)Q$. The entries of $\Xi_r$ satisfy the
    conditions of Proposition \ref{prop:concentration} with $N=n_r$.
    Applying the proposition
    conditional on $G_1,\ldots,G_k$ and on $\cE_n$, we get
    $\|\Xi_r^\sT F_{rs} G_{s} H_{s} R(z) Q\|_\infty<n^{-1/2+\tau}$
    with probability $1-n^{-D}$, and hence $\|S(z)\|_\infty<n^{-1/2+\tau}$
    as well. Taking a union bound over a grid of values in $U_\delta \cap \{|z|
    \leq C_0\}$ with spacing $n^{-1/2}$, and applying Lipschitz continuity of
    $S(z)$ on $\cE_n$, we get almost surely
    \[\sup_{z \in U_\delta : |z| \leq C_0} \|S(z)\|_\infty \to 0.\]
    Then $\limsup_{n \to \infty}
    \sup_{z \in U_\delta} \|S(z)\|_\infty \leq \eps$. As
    $\eps>0$ is arbitrary, this shows $S(z) \sim 0$. This implies also
    $\hK_{11}(z) \sim \Id_{\ell_+}$.

    For $\hT(z)$, note first that $S(z) \sim 0$ and $\hK_{11}(z) \sim \Id_{\ell_+}$
    imply
\[
    \hT(z) \sim \Id + Q^\sT R(z) Q \cdot \wGamma^\sT \Xi^\sT \Big(F - F G H R(z) H^\sT G^\sT F\Big) \Xi \wGamma. 
\]
Notice that $\Xi^\sT \Big(F - F G H R(z) H^\sT G^\sT F\Big) \Xi$ is a $k \times k$ block matrix with blocks
\[
    \Xi_r^\sT Y_{rs}(z)\Xi_s, \qquad Y_{rs}(z)=F_{rs} - \sum_{r', s' = 1}^k F_{rs'} G_{s'} H_{s'} R(z) H_{r'}^\sT G_{r'}^\sT F_{r's}.
\]
    On $\cE_n$, we bound $\|Y_{rs}(z)\|_\HS \leq C\sqrt{n}\|Y_{rs}(z)\|
    \leq C'\sqrt{n}$. Then, applying Proposition \ref{prop:concentration} again
    for each pair $(r,s)$ and each pair of columns of $\Xi_r$ and $\Xi_s$,
    we get for each fixed $z \in U_\delta$ that
\[
    \left\|\Xi_r^\sT Y_{rs}(z)\Xi_s - \1\{r = s\} n_r^{-1} \Tr_r[F-FGHR(z)H^\sT
    G^\sT F] \cdot \Id_{\ell_r}\right\|_\infty<n^{-1/2+\tau}
\]
    with probability $1-n^{-D}$. Applying Lipschitz continuity and a union bound
    over a grid of values $|z| \leq C_0$, a separate argument for $|z|>C_0$
    as above, and the Borel-Cantelli lemma, we obtain the lemma.
\end{proof}

\begin{proof}[Proof of Proposition \ref{prop:det-eq}]
For any $\eps > 0$, we may choose $K > 0$ so that almost surely for all large
    $n$,
\[
\sup_{z \in \mathbb{D}} \left\|\sum_{l = K + 1}^\infty z^{-l - 1} W^l \right\| < \eps, \qquad \sup_{z \in \mathbb{D}} \left\|\sum_{l = K + 1}^\infty z^{-l - 1} w^l \right\| < \eps.
\]
    Then applying the convergent series expansions of $-R(z)=(z-W)^{-1}$
    and of $(z - w)^{-1}$ on
    $\mathbb{D}$, the fact that $\{H_r\}_{r = 1}^k$, $\{G_r\}_{r = 1}^k$, and
    $\{F_{rs}\}_{r, s = 1}^k$ are almost surely uniformly
    bounded for large $n$, and the conclusion
    \[\tau(a_{rts} w^l)-N^{-1}\Tr H_r^\sT G_r^\sT F_{rt} F_{ts} G_s
    H_s W^l \to 0\]
    for each fixed $l \in \{0,\ldots,K\}$
    by \cite[Theorem 3.9]{fanjohnstonesun}, we obtain
\[
    \sup_{z \in \mathbb{D}} \left|-N^{-1} \Tr[H_r^\sT G_r^\sT F_{rt}F_{ts} G_s
    H_s R(z)] - \tau(a_{rts}(z - w)^{-1})\right| < 2 \eps.
\]
As $\eps>0$ is arbitrary, the left side converges to 0 almost surely. By Lemma
    \ref{lem:analytic-extend}, we may then replace the supremum over
$\mathbb{D}$ with one over $U_\delta$.
Applying Proposition \ref{prop:free-comp}, we find that 
\begin{align*}
\frac{1}{n_t} \Tr_t[F G H R(z) H^\sT G^\sT F]&=
    \sum_{r,s = 1}^k \frac{1}{n_t} \Tr[H_r^\sT
    G_r^\sT F_{rt}F_{ts} G_s H_s R(z)] \\
&\sim \sum_{r,s = 1}^k -\frac{N}{n_t}\tau(a_{rts} (z
    - w)^{-1})\\
& = \sum_{r,s= 1}^k -\frac{N}{n_t}
    \tau\Big(f_{ts} (e - u)^{-1} f_{rt}\Big)\\
    &=\frac{1}{n_t} \Tr_t \Big(F (\diag_n(\a^{-1}) + F)^{-1} F \Big),
\end{align*}
    the last step applying the equality of the $N^{-1}\Tr$-law of
    $\{\tilde{F}_{rs}\}$ and the $\tau$-law of
    $\{f_{rs}\}$, the definitions of $e$ and $u$,
    and Proposition \ref{prop:free-identity}(d). Notice now that by the
    Woodbury identity,
    \[F - F(\diag_n(\a^{-1}) + F)^{-1} F =
    (F^{-1} + \diag_n(\a))^{-1} = (\Id + F \diag_n(\a))^{-1} F,\]
which holds also for non-invertible $F$ by continuity. Taking the block trace
    $\Tr_t$, and comparing with the above and with the definition of $b_t$ in
    (\ref{eq:brecursion}) concludes the proof.
\end{proof}

\subsection{Proof for outlier eigenvalues} \label{app:outlier-eigenvalues}

In this section, we give a detailed proof of Theorem \ref{thm:outliers} on
outlier eigenvectors.  We require first the following preliminary results.

\begin{proposition} \label{prop:b-bound}
    There is a constant $C>0$ such that for all $z \in U_\delta$,
    $r \in \{1, \ldots, k\}$, and large enough $n$, we have $|b_r(z)| < C$.
\end{proposition}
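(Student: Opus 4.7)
The plan is to represent $b_r$ as the sum of a real constant and the Cauchy transform of a finite positive measure supported on $\supp(\mu_0)$, and then bound each piece uniformly in $n$. Since $b_r : \C^+ \to \overline{\C^+}$ is analytic by Theorem \ref{thm:bulk}, it is a Herglotz function and admits the Nevanlinna representation
\[
b_r(z) = \alpha_r z + \beta_r + \int_{\R} \left(\frac{1}{t-z} - \frac{t}{1+t^2}\right) d\mu_r(t),
\]
with $\alpha_r \geq 0$, $\beta_r \in \R$, and a positive measure $\mu_r$ satisfying $\int (1+t^2)^{-1}\,d\mu_r(t) < \infty$. Step 4 of the proof of \cite[Lemma 4.4]{fanjohnstonebulk} (recalled in Appendix \ref{appendix:qualitative}) shows $b_r(z)$ stays bounded as $|z| \to \infty$, which forces $\alpha_r = 0$ and makes $\mu_r$ finite; setting $L_r = \beta_r - \int \frac{t}{1+t^2}\,d\mu_r(t)$ then gives the cleaner form $b_r(z) = L_r + \int (t-z)^{-1}\,d\mu_r(t)$.

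Next I would argue $\supp(\mu_r) \subseteq \supp(\mu_0)$. By Proposition \ref{prop:analyticextension}, $b_r$ extends analytically to all of $\C \setminus \supp(\mu_0)$, and conjugate symmetry $b_r(\bar z) = \overline{b_r(z)}$ of the fixed-point equations forces $b_r$ to be real-valued on $\R \setminus \supp(\mu_0)$. The Stieltjes inversion formula $\mu_r((a,b)) = \pi^{-1}\lim_{\eps \downarrow 0}\int_a^b \Im b_r(t+i\eps)\,dt$ then yields $\mu_r(I) = 0$ for any compact interval $I \subset \R \setminus \supp(\mu_0)$.

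Finally, $|L_r|$ and $\mu_r(\R)$ must be bounded uniformly in $n$. Taking $z = iy$ and letting $y \to \infty$ in (\ref{eq:brecursion}), together with $a_s(iy) \to 0$, gives $L_r = -n_r^{-1}\Tr_r F$, which satisfies $|L_r| \leq \|F\| \leq C$ under Assumption \ref{assump:asymptotic}. A first-order Taylor expansion of (\ref{eq:brecursion}) in $\a(z)$ yields $b_r(z) - L_r = n_r^{-1}\sum_s a_s(z)\Tr F_{rs}F_{sr} + O(|z|^{-2})$, and the asymptotic $a_s(iy) \sim (iy)^{-1}(-n_s^{-1}\Tr\oSigma_s)$ from (\ref{eq:arecursion}) gives
\[
\mu_r(\R) = \lim_{y\to\infty} y\,\Im b_r(iy) = n_r^{-1}\sum_{s=1}^k n_s^{-1}\Tr\oSigma_s \cdot \Tr F_{rs}F_{sr},
\]
which is uniformly bounded since $n_s^{-1}\Tr\oSigma_s \leq (p/n_s)\|\oSigma_s\| \leq C$ and $n_r^{-1}\Tr F_{rs}F_{sr} \leq \|F\|^2 \leq C$. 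For $z \in U_\delta$ and $t \in \supp(\mu_r) \subseteq \supp(\mu_0)$ we have $|t-z| \geq \delta$, so $|b_r(z)| \leq |L_r| + \mu_r(\R)/\delta \leq C$. The only delicate point is the support-containment argument, which requires combining Proposition \ref{prop:analyticextension} with Stieltjes inversion carefully; everything else is routine asymptotic analysis of the recursion.
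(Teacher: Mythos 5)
Your proof is correct, but it takes a genuinely different route from the paper's.

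The paper's proof of Proposition \ref{prop:b-bound} is a short \emph{probabilistic} argument: it invokes Proposition \ref{prop:det-eq}, which says that the deterministic quantity $-b_r(z)$ is approximated uniformly over $U_\delta$ by the random quantity $n_r^{-1}\Tr_r[F - FGHR(z)H^{\mathsf T}G^{\mathsf T}F]$; on the good event $\cE_n$ the latter is bounded, so the deterministic limit $b_r(z)$ must be too. This is slick but indirect---a deterministic bound is deduced by observing that a random approximant stays bounded. Your argument is purely \emph{analytic} and self-contained: you write $b_r$ as a Nevanlinna (Herglotz) function, use Proposition \ref{prop:analyticextension} together with Stieltjes inversion to locate $\supp(\mu_r)$ inside $\supp(\mu_0)$, and then get an explicit $n$-uniform bound by computing $L_r = -n_r^{-1}\Tr F_{rr}$ and $\mu_r(\R)$ from the large-$|z|$ expansion of the fixed-point equations. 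Your route decouples the deterministic estimate from the random-matrix machinery (Proposition \ref{prop:det-eq}, Theorem \ref{thm:sticktobulk}), which is a nice simplification of the logical dependencies, and it yields a more explicit constant. The paper's route is shorter given that Proposition \ref{prop:det-eq} has already been established and needed anyway.

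One small blemish in the exposition of your argument: you assert that ``$b_r(z)$ stays bounded as $|z|\to\infty$ $\ldots$ makes $\mu_r$ finite.'' Boundedness of a Herglotz function on $\C^+$ does \emph{not} by itself force finiteness of the representing measure (e.g.\ Lebesgue measure gives a bounded Nevanlinna function with infinite mass). What you actually need is the sharper fact $b_r(z) - L_r = O(1/|z|)$, equivalently $y\,\Im b_r(iy)$ bounded, so that monotone convergence gives $\mu_r(\R) = \lim_y y\,\Im b_r(iy) < \infty$. You do establish this sharper fact later, via the first-order Taylor expansion of (\ref{eq:brecursion}), so the proof is not actually broken; the conclusion is just stated before its justification. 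Reordering so that the $O(1/|z|)$ expansion comes before the claim of finiteness would make the argument airtight.
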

\begin{proof}
By Proposition \ref{prop:det-eq}, almost surely as $n \to \infty$ we have
\[
\sup_{z \in U_\delta} \left|n_r^{-1} \Tr_r[FGHR(z) H^\sT G^\sT F-F]
    -b_r(z)\right| \to 0. 
\]
    On the event $\cE_n$ of (\ref{eq:goodevent}), 
    by Assumption \ref{assump:asymptotic},
    we see that for each $z \in U_\delta$,
\[
\left\|FGHR(z)H^\sT G^\sT F-F\right\| < C,
\]
    and hence $|b_r(z)|<C$ almost surely for large $n$. Then this holds
    deterministically for large $n$, since $b_r(z)$ is deterministic.
\end{proof}

\begin{proposition} \label{prop:boundedsupport}
There is a constant $C>0$ such that $\supp(\mu_0) \subset [-C, C]$.
\end{proposition}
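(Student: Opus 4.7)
The plan is to use the identification $\supp(\mu_0) = \spec(w)$ from (\ref{eq:support_mu_0}), where $w \in \A^c$ is the element defined in (\ref{def:w}). Since $\spec(w) \subset \{\lambda \in \R : |\lambda| \leq \|w\|\}$, the proposition reduces to an $n$-independent operator norm bound $\|w\| \leq C$ in the $W^*$-probability space $(\A,\tau)$ constructed in Section \ref{sec:det-eq-measure}.

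First I would apply submultiplicativity of the $C^*$-norm together with the triangle inequality to obtain
\[
\|w\| \leq \sum_{r,s=1}^k \|h_r\|\,\|g_r\|\,\|f_{rs}\|\,\|g_s\|\,\|h_s\|.
\]
Since $k$ is fixed by Assumption \ref{assump:asymptotic}, it suffices to bound each of $\|h_r\|$, $\|g_r\|$, and $\|f_{rs}\|$ by a universal constant. For any positive $a \in \A$, faithfulness of the normal trace $\tau$ yields the standard moment-to-norm identity $\|a\| = \lim_m \tau(a^m)^{1/m}$: the upper bound is immediate from $\tau(a^m) \leq \|a^m\| = \|a\|^m$, while the lower bound uses that the spectral projection of $a$ on $(\|a\|-\eps,\|a\|]$ has strictly positive $\tau$-value for every $\eps>0$ because $\tau$ is faithful. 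This reduces each norm bound to a moment computation supplied by conditions (2)--(3) of Section \ref{sec:det-eq-measure}.

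I would then verify the three individual bounds in sequence:
\begin{itemize}
\item By condition (2), $\tau((h_r^*h_r)^m) = N^{-1}\Tr(\tilde H_r^*\tilde H_r)^m \leq (p/N)\|\oSigma_r\|^m$, whence $\|h_r\|^2 \leq \|\oSigma_r\| \leq C$ by Assumption \ref{assump:asymptotic}.
\item By condition (2), $\tau((f_{rs}^*f_{rs})^m) \leq (n_s/N)\|F_{rs}\|^{2m}$, and the bounds $\|U_r\|,\|U_s\| \leq C$, $\|B\| \leq C/n$ yield $\|F_{rs}\| = \sqrt{n_rn_s}\,\|U_r^\sT BU_s\| \leq C$, so $\|f_{rs}\| \leq C$.
\item By condition (3), $\tau((g_r^*g_r)^l) = (p/N)\int x^l\,d\nu_{p/n_r}(x)$, and since $\nu_{p/n_r}$ is supported in $[0,(1+\sqrt{p/n_r})^2]$ with $p/n_r$ bounded, $\|g_r\|^2 \leq (1+\sqrt{p/n_r})^2 \leq C$.
\end{itemize}

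Combining these three $O(1)$ estimates gives $\|w\| \leq C$ for a constant depending only on those in Assumption \ref{assump:asymptotic}, and hence $\supp(\mu_0) = \spec(w) \subset [-C,C]$. I do not anticipate any substantive obstacle; the only subtle point is the moment-to-norm identity, which is a standard consequence of faithfulness of the trace on a $W^*$-probability space, and a brief check that $\spec(w)$ as an operator in $\A^c$ coincides (up to possibly the point $0$, which is harmless for this statement) with its spectrum viewed in the ambient $\A$, so the norm bound transfers directly.
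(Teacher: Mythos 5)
Your proposal is correct and follows the same approach as the paper: identify $\supp(\mu_0)=\spec(w)$ via (\ref{eq:support_mu_0}) and bound $\|w\|$ by a constant. The paper's proof is extremely terse (it simply asserts ``We have $\|w\| \leq C$'' and stops), and your detailed derivation of this bound---triangle inequality and submultiplicativity reducing to the individual factors, then the moment-to-norm identity $\|a\|=\lim_m \tau(a^m)^{1/m}$ from faithfulness of $\tau$ combined with conditions (2)--(3) of Section \ref{sec:det-eq-measure} and Assumption \ref{assump:asymptotic}---supplies exactly the justification the paper leaves implicit.
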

\begin{proof}
    The law $\mu_0$ is the $\tau^c$-distribution of $w=\sum_{r,s=1}^k
    h_r^*g_r^*f_{rs}g_sh_s$ in the compressed algebra $(\A^c,\tau^c)$.
    We have $\|w\| \leq C$ for a constant $C>0$,
    hence $\supp(\mu_0)=\spec(w) \subset [-C,C]$.
\end{proof}

\begin{proposition} \label{prop:T-analytic}
The following properties hold for $\wT(z)$ and all large $n$.
\begin{itemize}
    \item[(a)] All roots of $\det \wT(z) = 0$ in $\C \setminus \supp(\mu_0)$
        are real.

\item[(b)] There exists some $R > 0$ so that all roots of $\det \wT(z) = 0$ have
    magnitude at most $R$.

\item[(c)] For $\delta > 0$, there is a constant $C > 0$ such that
\[
\sup_{z \in U_\delta} \|\wT(z)\|_\infty < C, \qquad \sup_{z \in U_\delta} |\det
        \wT(z)| < C.
\]
\end{itemize}
    The following properties hold for $\hT(z)$ almost surely for all large $n$.
\begin{itemize}
    \item[(a')] For $\delta > 0$, all roots of $\det \hT(z) = 0$ in $U_\delta$ are real.
    
    \item[(b')] There exists some $R > 0$ so that all roots of $\det \hT(z) = 0$ in $U_\delta$ have magnitude at most $R$. 
     
    \item[(c')] For $\delta > 0$, there is a constant $C > 0$ such that  we have
\[
\sup_{z \in U_\delta} \|\hT(z)\|_\infty < C, \qquad \sup_{z \in U_\delta} |\det
        \hT(z)| < C.
\]
\end{itemize}
\end{proposition}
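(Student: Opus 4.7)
The starting observation is that $\det \wT(z) = \det T(z)$ via the Sylvester identity $\det(\Id+AB) = \det(\Id+BA)$, so parts (a)--(c) may be proven using either formulation. My plan is first to establish the unprimed statements for $\wT$ directly from its explicit formula and deterministic properties of the $b_r(z)$, and then to deduce the primed statements from the almost-sure approximation $\hT \sim \wT$ (Lemmas \ref{lem:schur-comp} and \ref{lem:master-limit}) together with the reality of the eigenvalues of $\hSigma$.

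For (c), I plan to bound each factor in
\[
\wT(z) = \Id + Q^\sT (z\Id + \b \cdot \oSigma)^{-1} Q \cdot \wGamma^\sT \diag_\ell(\b)\wGamma
\]
separately on $U_\delta$. Proposition \ref{prop:b-bound} gives $|b_r(z)| \le C$, the norms $\|\wGamma\|$ and $\|\oSigma_r\|$ are bounded by Assumption \ref{assump:asymptotic}, and $Q^\sT Q = \Id$ by construction, so the key input is the uniform operator-norm bound $\|(z\Id+\b\cdot\oSigma)^{-1}\| \le 1/\delta$ for $z \in U_\delta$. For this I will identify $-(z\Id+\b\cdot\oSigma)^{-1}$ with the $(0,0)$-block of $\tau^\cH((w-z)^{-1}) \in \A^c$ via the free-probability computation from the proof of Proposition \ref{prop:det-res-val}. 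Since $\spec(w) = \supp(\mu_0)$ in the compressed algebra (equation \eqref{eq:support_mu_0}), $\|(w-z)^{-1}\| \le 1/\delta$ on $U_\delta$, and contractivity of the conditional expectation $\tau^\cH$ yields the bound. Combining these gives $\|\wT(z)\|_\infty \le C$, and the bounded dimension of $\wT$ then yields $|\det \wT(z)| \le C$. For (b), the same bounds give $\|\b\cdot\oSigma\| \le C$ uniformly, so $\|(z\Id+\b\cdot\oSigma)^{-1}\| = O(1/|z|)$ and $\wT(z) \to \Id$ as $|z| \to \infty$; in particular $\det \wT(z) \to 1$, so all roots lie in some bounded disk.

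For (a'), on the almost-sure event where $\cE_n$ holds and $\hK_{11}(z)\sim\Id$ (Lemma \ref{lem:schur-comp}), the matrix $\hK_{11}(z)$ is invertible throughout $U_\delta$ for all large $n$, and the Schur complement identity gives $\det\hK(z) = \det\hK_{11}(z)\det\hT(z)$. Thus roots of $\det\hT(z)=0$ in $U_\delta$ coincide with roots of $\det\hK(z)=0$, which by Lemma \ref{lem:master-eq} are the eigenvalues of $\hSigma$ outside $\spec(W)$, hence real. Then (c') follows from (c) together with the uniform convergence $\|\hT(z) - \wT(z)\|_\infty \to 0$, and (b') follows from (a') together with the almost-sure bound $\|\hSigma\| \le C$, a consequence of Assumption \ref{assump:asymptotic} and the boundedness of $\|\Xi\|,\|G\|$ on $\cE_n$, which forces all eigenvalues of $\hSigma$ in $U_\delta$ to be bounded.

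Finally for (a), I use a Hurwitz-type argument with (a') as input. Since $\supp(\mu_0) \subset \R$ is compact (Proposition \ref{prop:boundedsupport}), the set $\C\setminus\supp(\mu_0)$ is connected; by Proposition \ref{prop:analyticextension} $\det\wT$ is analytic there, and its limit of $1$ at infinity shows it is not identically zero, so its zeros are isolated. If $z_0 \in \C\setminus\supp(\mu_0)$ were a non-real root, I would choose $\delta'>0$ small enough that $z_0 \in U_{\delta'}$ and a closed disk $\overline D \subset U_{\delta'}$ around $z_0$ disjoint from $\R$ and containing no other zeros of $\det\wT$; uniform convergence of $\det\hT \to \det\wT$ on $\overline D$ then forces $\det\hT$ to have a zero in $D$ for almost all large $n$ by Hurwitz's theorem, contradicting (a'). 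The main obstacle is the operator-norm bound on $(z\Id+\b\cdot\oSigma)^{-1}$ in (c); everything else is a clean synthesis of that bound with the existing lemmas, the Schur complement, and standard complex analysis.
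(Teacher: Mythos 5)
Your parts (a'), (b'), (c'), and (b) match the paper's argument, and your route to (c) is a legitimate variant: the paper never bounds $\|(z\Id+\b\cdot\oSigma)^{-1}\|$ directly, but instead deduces boundedness of the deterministic matrix $Q^\sT(z\Id+\b\cdot\oSigma)^{-1}Q$ from the almost-sure approximation $Q^\sT R(z)Q \sim -Q^\sT(z\Id+\b\cdot\oSigma)^{-1}Q$ together with $\|Q^\sT R(z)Q\|\le C$ on the event $\cE_n$. Your direct operator bound works, but only if you are careful to use the \emph{compressed} resolvent: in the full algebra $\A$ the element $w=p_0wp_0$ has $0$ in its spectrum regardless of $\supp(\mu_0)$, so $\|(w-z)^{-1}\|\le 1/\delta$ is false for $z\in U_\delta$ near $0$; you should bound $p_0(w-z)^{-1}p_0$ viewed in $\A^c$ (where $\spec(w)=\supp(\mu_0)$), use $p_0\tau^\cH((w-z)^{-1})p_0=\tau^\cH(p_0(w-z)^{-1}p_0)$ and contractivity, and extend the identification with $-(z\Id+\b\cdot\oSigma)^{-1}$ to all of $\C\setminus\supp(\mu_0)$ by analytic continuation (Proposition \ref{prop:analyticextension}).

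The genuine gap is in (a). Your Hurwitz argument treats $\wT$, $\supp(\mu_0)$, and the hypothetical non-real root $z_0$ as fixed, but all of these are $n$-dependent, and there is no fixed limit function: the approximation $\hT\sim\wT$ has no rate, so a quantitative Rouch\'e comparison on an $n$-dependent disk (whose radius and boundary minimum of $|\det\wT|$ may degenerate) is not available. Even after passing to subsequences as in Proposition \ref{prop:subseq}, a putative sequence of non-real roots could drift to the real axis or into $\supp(\mu_0)$, in which case the limit function $D$ need not have a non-real zero and no contradiction with (a') is reached; since (a) asserts reality of \emph{all} roots in $\C\setminus\supp(\mu_0)$, including ones with imaginary part $o(1)$, this regime is exactly the one you cannot exclude. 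The paper's proof of (a) is instead direct and deterministic, holding for every $n$: by $\det(\Id+XY)=\det(\Id+YX)$ and invertibility of $z\Id+\b\cdot\oSigma$ off $\supp(\mu_0)$, the equation $0=\det\wT(z)$ is equivalent to $0=\det(z\Id+\b\cdot\oSigma+\Gamma^\sT\diag_\ell(\b)\Gamma)$, and for $z\in\C^+$ one has $\Im b_r(z)\ge 0$, so for any $v\neq 0$ the quadratic form satisfies $\Im\,v^*[z\Id+\b\cdot\oSigma+\Gamma^\sT\diag_\ell(\b)\Gamma]v \ge (\Im z)\|v\|^2>0$; the conjugate-symmetric argument handles $\C^-$. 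You should replace your Hurwitz detour for (a) with this two-line positivity argument.
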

\begin{proof}
    We first prove the statements for $\wT(z)$. For (a), applying
    $\det(\Id+XY)=\det(\Id+YX)$ and the fact that $z
    \Id +\b \cdot \oSigma$ is invertible for $z \in \C \setminus \supp(\mu_0)$
    from Proposition \ref{prop:analyticextension}, we have
    \begin{align}
        0=\det \wT(z) &\Leftrightarrow 0=\det\Big(\Id+(z\Id+\b \cdot \oSigma)^{-1}
    \Gamma^\sT \diag_\ell(\b) \Gamma\Big)\nonumber\\
        &\Leftrightarrow 0=\det\Big(z\Id+\b \cdot \oSigma+\Gamma^\sT
\diag_\ell(\b)
        \Gamma\Big).\label{eq:rotated-eq}
    \end{align}
    For $z \in \C^+$ and any $v \neq 0 \in \C^p$,
    we apply $\Im b_r(z) \geq 0$ to get
\[
\Im v^* [z\Id+\b \cdot \oSigma+\Gamma^\sT \diag_\ell(\b)
        \Gamma]v>0,\]
    and hence $z$ is not a root of (\ref{eq:rotated-eq}). A similar argument
    holds for $z \in \C^-$, which establishes (a).

Note by Proposition \ref{prop:b-bound} and Assumption \ref{assump:asymptotic}
that $\|\b
    \cdot \oSigma + \Gamma^\sT \diag_\ell(\b) \Gamma\| < C$ for large $n$.
    Then (b) follows from (\ref{eq:rotated-eq}).
For (c), note by Proposition \ref{prop:det-res-val} that
\[
\|Q^\sT R(z) Q + Q^\sT(z \Id + \b \cdot \oSigma)^{-1}Q\| < C 
\]
almost surely for large $n$. On the event $\cE_n$, $\|Q^\sT R(z) Q\|$ is
    uniformly bounded. Then so is
    $\|Q^\sT(z \Id + \b \cdot \oSigma)^{-1}Q\|$ for large
    $n$. Combining with Proposition \ref{prop:b-bound} and Assumption
    \ref{assump:asymptotic}, the first bound of (c) follows.
    Since the dimension of $\wT(z)$ is $\ell$ which is at most a constant,
    the first bound implies the second.  

We now prove the statements for $\hT(z)$. For (a'), by Lemma
    \ref{lem:schur-comp}, $\det \hK_{11}(z)$ almost surely for large $n$ does not vanish on $U_\delta$.  Thus, for $z \in U_\delta$, by the Schur complement formula, we see that 
\[
\det \hK(z) = \det \hK_{11}(z) \cdot \det \hT(z),
\]
meaning that any root of $\det \hT(z) = 0$ is also a root of $\det \hK(z) = 0$,
    hence a (real) eigenvalue of $\hSigma$ by Lemma \ref{lem:master-eq}.
    Claim (b') follows from the fact that roots of $\det \hT(z) = 0$ are
    eigenvalues of $\hSigma$, and $\|\hSigma\| < C$ almost surely for large $n$.
    For (c'), we apply $\|\wT(z)\|_\infty < C$ and $\hT(z) \sim \vT(z) \sim
    \wT(z)$ from Lemmas \ref{lem:schur-comp} and \ref{lem:master-limit}.
\end{proof}

Under our assumptions, the measure $\mu_0$ and function $\wT(z)$ are
$n$-dependent, and are not required to converge as $n \to \infty$. However,
the following technical result will allow us to pass to convergent subsequences.

\begin{proposition} \label{prop:subseq}
There exists a subsequence $\{n_l^0\}_{l = 1}^\infty$ along which $\supp(\mu_0)$ converges to a fixed closed set $V \subset \R$ in the sense that 
\begin{equation} \label{eq:suppconvergence}
\lim_{l \to \infty} \sup_{z \in V} \dist(z, \supp(\mu_0)) = 0 \qquad \text{ and } \qquad \lim_{l \to \infty} \sup_{z \in \supp(\mu_0)} \dist(z, V) = 0,
\end{equation}
and $\det \wT(z)$ converges to a fixed analytic function $D: \C \setminus V \to \C$ uniformly on compact subsets. 
\end{proposition}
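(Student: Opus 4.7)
The plan is to prove the two conclusions by sequential applications of two well-known compactness results: Blaschke selection for the Hausdorff convergence of $\supp(\mu_0)$, and Montel's theorem for the normal family convergence of $\det \wT(z)$.

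First I would extract a subsequence realizing the Hausdorff convergence in \eqref{eq:suppconvergence}. By Proposition \ref{prop:boundedsupport}, each $\supp(\mu_0)$ is a closed subset of the fixed compact interval $[-C,C]$. The space of non-empty closed subsets of a compact metric space is itself compact under the Hausdorff metric (Blaschke's selection theorem), so we may pass to a subsequence $\{n_l\}$ along which $\supp(\mu_0)$ converges in Hausdorff distance to some closed set $V \subset [-C,C]$. This immediately yields the two symmetric limits in \eqref{eq:suppconvergence}.

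Next I would show that, along this subsequence, $\{\det \wT(z)\}$ is a normal family on $\C \setminus V$. Fix any compact set $K \subset \C \setminus V$, and let $2\delta = \dist(K, V) > 0$. By Hausdorff convergence, $\supp(\mu_0) \subset V_\delta$ for all large $l$, so $K \subset U_\delta$. By Proposition \ref{prop:T-analytic}(c), $|\det \wT(z)| < C$ uniformly over $U_\delta$ for large $n$, hence uniformly over $K$ along the tail of our subsequence. Moreover, each $\det \wT(z)$ is analytic on $\C \setminus \supp(\mu_0) \supset K$ for large $l$. Montel's theorem applies, giving a subsequence that converges uniformly on $K$ to a holomorphic limit.

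To obtain a single subsequence that works simultaneously on all compact subsets of $\C \setminus V$, I would exhaust the open set $\C \setminus V$ by compact sets $K_1 \subset K_2 \subset \cdots$ with $\bigcup_m K_m = \C \setminus V$, apply the above to each $K_m$ in turn, and take the Cantor diagonal subsequence $\{n_l^0\}$. The resulting limit $D: \C \setminus V \to \C$ is holomorphic as a locally uniform limit of holomorphic functions.

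The argument is essentially a routine compactness extraction, so there is no substantive obstacle; the only thing to track carefully is that Hausdorff convergence of $\supp(\mu_0)$ to $V$ indeed places any compact $K \subset \C \setminus V$ inside $U_\delta$ for all large $l$, so that the uniform bound from Proposition \ref{prop:T-analytic}(c) applies and Montel's theorem becomes available.
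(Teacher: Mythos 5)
Your proposal is correct and follows essentially the same route as the paper: sequential compactness of closed subsets of $[-C,C]$ under the Hausdorff metric (Proposition \ref{prop:boundedsupport}) for the support, then the uniform bound of Proposition \ref{prop:T-analytic}(c) plus Montel's theorem for $\det \wT(z)$. Your extra care with the compact exhaustion, the diagonal extraction, and checking that Hausdorff convergence places each compact $K \subset \C \setminus V$ inside $U_\delta$ simply makes explicit what the paper leaves implicit.
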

\begin{proof}
    The first part of the statement follows from Proposition
    \ref{prop:boundedsupport} and sequential compactness of the metric
    space of compact subsets of $[-C, C]$ under the Hausdorff metric. For the
    second part, note that $\det \wT(z)$ is well-defined and analytic on compact
    subsets of $\C \setminus V$ at $n = n_l^0$ for all large $l$. Proposition
    \ref{prop:T-analytic} ensures that $\det \wT(z)$ is uniformly bounded on any such compact subset. Then Montel's Theorem implies that there is a further subsequence which converges uniformly over compact sets to an analytic function $D$.
\end{proof}

Using these results, we now prove Theorem \ref{thm:outliers}.

\begin{proof}[Proof of Theorem \ref{thm:outliers}]
    By the identity $\det(\Id+XY)=\det(\Id+YX)$,
    $\Lambda_0$ is also the set of roots of $0=\det \wT(z)$. Let
    $\Omega$ be the sample space, and $\Omega_0 \subset \Omega$
    the event of probability 1 on which all preceding almost sure statements
    hold.

    Fix $\omega \in \Omega_0$.
    First suppose that we pass to a subsequence satisfying the result of
    Proposition
    \ref{prop:subseq}, meaning that $\supp(\mu_0)$ converges to a fixed closed
    set $V$ and $\det \wT(z) \to D(z)$ uniformly on compact subsets of $\C
    \setminus V$.  By Proposition \ref{prop:T-analytic}(a), for all
    large $n$, all roots of $\det \wT(z) = 0$ and $\det \hT(z) = 0$ with
    distance at least $\delta/2$ to $V$ are real and
    have magnitude less than some $R > 0$.  Because $\det \wT(z) \to D(z)$, we
    see that this is true for $D$ as well. Since $D$ is analytic, this implies
    that $D$ has finitely many such roots.  Let 
\[
\lambda_1 < \cdots < \lambda_J
\]
be the distinct roots of $D$ whose distance to $V$ is at least $\delta / 2$, and let $m_j$ be the multiplicity of $\lambda_j$.

Choose $\eps$ small enough so that $\eps < \delta / 4$.  For constants $r_j, \sigma > 0$, let $\gamma_j$ be the counterclockwise contour traversing the rectangle with vertices $(\lambda_j \pm r_j) \pm i \sigma$.  Choose $r_j, \sigma$ small enough such that 
\begin{itemize}
    \item the contours $\gamma_j$ do not intersect,
    \item each $\gamma_j$ is contained within a radius $\eps / 2$ ball centered
        at $\lambda_j$, and
    \item the only root of $D(z)$ contained within or on each $\gamma_j$ is $\lambda_j$. 
\end{itemize}
Partitioning the set
\[
\{x \in \R : \dist(x, \supp(\mu_0)) > \delta/2,\;
    \dist(x, \lambda_j) > r_j\text{ for all } j,\; |x| < R\}
\]
into disjoint open intervals, for each such interval $\cI = (l, u)$, define also a counterclockwise contour $\gamma_\cI'$ traversing the rectangle with vertices $l \pm i \sigma$ and $u \pm i \sigma$.

By construction, $D(z)$ does not vanish along any of $\gamma_j$ or
    $\gamma_\cI'$, and $\det \wT(z)$ converges uniformly to $D(z)$ on each
    contour.  Hence, by Hurwitz's theorem, for all large $n$, $\det \wT(z)$ has
    $m_j$ zeros within each $\gamma_j$, which are real by Proposition
    \ref{prop:T-analytic}, and no zeros within each $\gamma_\cI'$.  Now, observe
    that by Lemmas \ref{lem:schur-comp} and \ref{lem:master-limit}, as $n \to
    \infty$,
\[
\sup_{z \in U_{\delta/4}} \|\hT(z) - \wT(z)\|_\infty \to 0,
\]
    which implies by Proposition \ref{prop:T-analytic}
    that $|\det \hT(z) - \det \wT(z)| \to 0$ uniformly on each contour
    and thus that $|\det \hT(z) - D(z)| \to 0$ uniformly on each contour.
    Applying Hurwitz's theorem again, we find that for all large $n$, $\det \hT(z)$ also has $m_j$ zeros within each $\gamma_j$, which are real by Proposition \ref{prop:T-analytic}, and no zeros within each $\gamma_\cI'$.

Taking $\Lambda_\delta$ and $\hat{\Lambda}_\delta$ as the zeros of $\det \wT(z)$
    and $\det \hT(z)$ within the contours $\gamma_j$, this yields 
\[
\ordereddist(\Lambda_\delta, \hat{\Lambda}_\delta) < \eps.
\]
By Lemma \ref{lem:schur-comp}, for $z \in U_\delta$, $\hK_{11}(z)$ is
    invertible for large $n$, so we may apply the Schur complement formula to obtain
\[
\det \hK(z) = \det \hK_{11}(z) \det \hT(z).
\]
By Lemma \ref{lem:master-eq}, we conclude that $\hLambda_\delta \subseteq
    \spec(\hSigma)$.  Further, since neither $\det \wT(z)$ or $\det \hT(z)$ have
    zeros inside $\gamma_\cI'$ or $(-\infty, R] \cup [R, \infty)$,
    we find that $\Lambda_\delta$ and
    $\hat{\Lambda}_\delta$ contain all zeros of $\det \wT(z)$ and elements of
    $\spec(\hSigma)$, respectively, which have distance at least $\delta/2$ from
    $V$.  Thus they contain all such values which have distance at least
    $\delta$ from $\supp(\mu_0)$ for all large $n$, establishing the result along this subsequence.

To conclude the proof, suppose by contradiction that there is a subset $\Omega_1
\subset \Omega_0$ of positive probability for which there is
a $\omega$-dependent subsequence $\{n_l^0\}$ such
    that for each $n = n_l^0$, no such sets $\Lambda_\delta$ and
    $\hat{\Lambda}_\delta$ satisfying the required conditions exist. By
    Proposition
    \ref{prop:subseq}, there is a further subsequence along which $\supp(\mu_0)$
    and $\det \wT(z)$ converge.  On this subsequence, our previous construction
    shows that $\Lambda_\delta$ and $\hat{\Lambda}_\delta$ satisfying the
    desired conditions exist, a contradiction. This concludes the proof.
\end{proof}

\subsection{Proof for outlier eigenvectors} \label{app:outlier-eigenvectors}

In this section, we prove Propositions \ref{prop:sec-sing} and
\ref{prop:derbounds} used in the proof of Theorem \ref{thm:eigenvectors}
for outlier eigenvectors.

\begin{proof}[Proof of Proposition \ref{prop:sec-sing}]
    Recall from (\ref{eq:rotated-eq}) that
the roots of $\det \wT(z) = 0$ are also roots of $\det M(z) = 0$ for
    $M(z) =
    z\Id+\b \cdot \oSigma+\Gamma^\sT \diag_\ell(\b)\Gamma=
    z \Id + \b \cdot \Sigma$.  By Proposition \ref{prop:b-increasing}, we see that 
\[
    \partial_z[z \Id + \b(z) \cdot \Sigma] = \Id + \b'(z) \cdot \Sigma \succeq \Id
\]
for $z \in \R \setminus \supp(\mu_0)$, meaning that the ordered eigenvalues of
    $M(z)$
    increase at a rate of at least $1$ on each interval of $\R \setminus
    \supp(\mu_0)$.  As a result, at the given isolated root
    $z = \lambda$ of $0=\det M(z)$, the matrix $M(\lambda)$ has a single
    eigenvalue equal to $0$ and remaining eigenvalues outside $(-\delta,
    \delta)$, and the second-smallest singular value of $M(\lambda)$ is at least
    $\delta$. By Propositions \ref{prop:b-bound} and
    \ref{prop:T-analytic}(b), we see that $|b_r(\lambda)|$ and $|\lambda|$ are
    bounded, so
    $\|\lambda \cdot \Id + \b(\lambda) \cdot \oSigma\| < C$ and all
    singular values
    of $(\lambda \Id + \b(\lambda) \cdot \oSigma)^{-1}$ are at least $1/C$.
    Now, letting $v \in \ker M(\lambda)$ be a unit vector, we have
    that for any $w \in \R^p$, $\|w\|^2-|v^\sT w|^2$ is the squared length
    of the component of $w$ orthogonal to $v$. Then
\begin{equation} \label{eq:rm-bound}
    \|(\lambda \Id + \b \cdot \oSigma)^{-1} M(\lambda) w\|^2 \geq (\delta/C)^2
    \cdot (\|w\|^2 - |v^\sT w|^2).
\end{equation}

Choose $U \in \R^{p \times (p - \ell)}$ so that $[Q \mid U]$ is an orthogonal matrix. Notice that because
\[
(\lambda \Id + \b \cdot \oSigma)^{-1} (\lambda \Id + \b \cdot \Sigma) U = U +
    (\lambda \Id + \b \cdot \oSigma)^{-1} \Gamma^\sT \diag_\ell(\b)\Gamma U = U, 
\]
we have that
\[
[Q \mid U]^\sT (\lambda \Id + \b \cdot \oSigma)^{-1} M(\lambda) [Q \mid U] =
    \left[\begin{matrix} \wT(\lambda) & 0 \\ U^\sT (\lambda \Id + \b \cdot
    \oSigma)^{-1} (\Gamma^\sT \diag_\ell(\b) \Gamma) Q & \Id \end{matrix}\right].
\]
Note that $(Q^\sT v, U^\sT v)$ is a unit vector in the kernel of this matrix, hence $Q^\sT
    v \in \ker \wT(\lambda)$ and $Q^\sT v \neq 0$.
    Now, for any $u_1 \in \R^{\ell}$ orthogonal to $Q^\sT v$ and \[
        u_2 = - U^\sT (\lambda \Id + \b \cdot \oSigma)^{-1} (\Gamma^\sT
        \diag_\ell(\b)\Gamma) Qu_1,
\]
define the vector $u = (u_1, u_2)$.  For this $u$, we obtain by (\ref{eq:rm-bound}) that
\[
\|\wT(\lambda) u_1\|^2 = \|[Q \mid U]^\sT (\lambda \Id + \b \cdot \oSigma)^{-1}
    M(\lambda)[Q \mid U] u \|^2 \geq (\delta/C)^2
    (\|u\|^2 - |v^\sT Q u_1 + v^\sT U u_2|^2).
\]
Since $u_1$ is orthogonal to $Q^\sT v$ and $v$ is a unit vector, we see that 
\[
|v^\sT Q u_1 + v^\sT U u_2| = |v^\sT U u_2| \leq \|u_2\|.
\]
Substituting, we obtain for any $u_1$ orthogonal to $Q^\sT v$ that 
    $\|\wT(\lambda)u_1\|^2 \geq (\delta/C)^2 \|u_1\|^2$.
This implies that $\ker \wT(\lambda)$ is one-dimensional and spanned by $Q^\sT
    v$, and the next smallest singular value of $\wT(\lambda)$ is bounded below
    by $\delta/C$, as desired.
\end{proof}

\begin{proof}[Proof of Proposition \ref{prop:derbounds}]
    By Lemma \ref{lem:schur-comp}, we have
    \[\sup_{z \in U_{\delta/2}} \|S(z)\|_\infty \to 0\]
    almost surely. For each $z \in U_\delta$,
    define a contour $\gamma(t)=\delta/2 \cdot e^{it}$ for $t \in [0,2\pi]$.
    Applying the Cauchy integral formula entrywise to $S(z)$, we get
    \[\|S'(z)\|_\infty \leq \frac{2}{\delta}
    \cdot \max_{t \in [0,2\pi]} \|S(z+\gamma(t))\|_\infty
    \leq C\sup_{z \in U_{\delta/2}} \|S(z)\|_\infty.\]
    Hence $S'(z) \sim 0$. The other statements follow similarly from
    Propositions \ref{prop:det-res-val} and \ref{prop:det-eq}.
\end{proof}

\section*{Acknowledgments}
We thank Camille Male and Roland Speicher for helpful pointers to the strong
asymptotic freeness literature.
Y.~S.~was supported by a Junior Fellow award from the Simons Foundation and NSF
Grant DMS-1701654. Z.~F.~was supported in part by NSF Grant DMS-1916198.

\bibliographystyle{alpha}
\bibliography{refs}

\newcommand{\etalchar}[1]{$^{#1}$}
\begin{thebibliography}{CMA{\etalchar{+}}18}

\bibitem[AGZ10]{AGZbook}
Greg~W Anderson, Alice Guionnet, and Ofer Zeitouni.
\newblock {\em An Introduction to Random Matrices}.
\newblock Cambridge University Press, Cambridge New York, 2010.

\bibitem[BB16a]{bhattacharjeebose1}
Monika Bhattacharjee and Arup Bose.
\newblock Large sample behaviour of high dimensional autocovariance matrices.
\newblock {\em The Annals of Statistics}, 44(2):598--628, 2016.

\bibitem[BB16b]{bhattacharjeebose2}
Monika Bhattacharjee and Arup Bose.
\newblock Polynomial generalizations of the sample variance-covariance matrix
  when $pn^{-1} \to 0$.
\newblock {\em Random Matrices: Theory and Applications}, 5(04):1650014, 2016.

\bibitem[BB17]{bhattacharjeebose3}
Monika Bhattacharjee and Arup Bose.
\newblock Matrix polynomial generalizations of the sample variance-covariance
  matrix when $pn^{-1} \to y \in (0,\infty)$.
\newblock {\em Indian Journal of Pure and Applied Mathematics}, 48(4):575--607,
  2017.
\newblock Errata: 49(4):783--788, 2018.

\bibitem[BB18]{bosebook}
Arup Bose and Monika Bhattacharjee.
\newblock {\em Large Covariance and Autocovariance Matrices}.
\newblock Chapman and Hall/CRC, 2018.

\bibitem[BB19]{bhattacharjeebose4}
Monika Bhattacharjee and Arup Bose.
\newblock Joint convergence of sample autocovariance matrices when $p/n \to 0$
  with application.
\newblock {\em The Annals of Statistics}, 47(6):3470--3503, 2019.

\bibitem[BBC17]{belinschietal2}
Serban Belinschi, Hari Bercovici, and Mireille Capitaine.
\newblock On the outlying eigenvalues of a polynomial in large independent
  random matrices.
\newblock {\em arXiv preprint arXiv:1703.08102}, 2017.

\bibitem[BBCF17]{belinschietal}
Serban~T Belinschi, Hari Bercovici, Mireille Capitaine, and Maxime F{\'e}vrier.
\newblock Outliers in the spectrum of large deformed unitarily invariant
  models.
\newblock {\em The Annals of Probability}, 45(6A):3571--3625, 2017.

\bibitem[BBP05]{baiketal}
Jinho Baik, Gerard {Ben~Arous}, and Sandrine P{\'e}ch{\'e}.
\newblock Phase transition of the largest eigenvalue for nonnull complex sample
  covariance matrices.
\newblock {\em The Annals of Probability}, 33(5):1643--1697, 2005.

\bibitem[BC17]{belinschicapitaine}
Serban~T Belinschi and Mireille Capitaine.
\newblock Spectral properties of polynomials in independent {W}igner and
  deterministic matrices.
\newblock {\em Journal of Functional Analysis}, 273(12):3901--3963, 2017.

\bibitem[BEK{\etalchar{+}}14]{BEKYY14}
Alex Bloemendal, L\'{a}szl\'{o} Erd\H{o}s, Antti Knowles, Horng-Tzer Yau, and
  Jun Yin.
\newblock Isotropic local laws for sample covariance and generalized {W}igner
  matrices.
\newblock {\em Electronic Journal of Probability}, 19(33):1--53, 2014.

\bibitem[BG09]{benaychgeorges}
Florent Benaych-Georges.
\newblock Rectangular random matrices, related convolution.
\newblock {\em Probability Theory and Related Fields}, 144(3-4):471--515, 2009.

\bibitem[BGN11]{BGN}
Florent Benaych-Georges and Raj~Rao Nadakuditi.
\newblock The eigenvalues and eigenvectors of finite, low rank perturbations of
  large random matrices.
\newblock {\em Advances in Mathematics}, 227(1):494--521, 2011.

\bibitem[BGN12]{BGNrectangular}
Florent Benaych-Georges and Raj~Rao Nadakuditi.
\newblock The singular values and vectors of low rank perturbations of large
  rectangular random matrices.
\newblock {\em Journal of Multivariate Analysis}, 111:120--135, 2012.

\bibitem[BJW05]{burdaetal}
Zdzis{\l}aw Burda, Jerzy Jurkiewicz, and Bart{\l}omiej Wac{\l}aw.
\newblock Spectral moments of correlated {W}ishart matrices.
\newblock {\em Physical Review E}, 71(2):026111, 2005.

\bibitem[BM15]{blowsmcguigan}
Mark~W Blows and Katrina McGuigan.
\newblock The distribution of genetic variance across phenotypic space and the
  response to selection.
\newblock {\em Molecular Ecology}, 24(9):2056--2072, 2015.

\bibitem[BS06]{baiksilverstein}
Jinho Baik and Jack~W Silverstein.
\newblock Eigenvalues of large sample covariance matrices of spiked population
  models.
\newblock {\em Journal of Multivariate Analysis}, 97(6):1382--1408, 2006.

\bibitem[BY12]{baiyao}
Zhidong Bai and Jianfeng Yao.
\newblock On sample eigenvalues in a generalized spiked population model.
\newblock {\em Journal of Multivariate Analysis}, 106:167--177, 2012.

\bibitem[CC04]{capitainecasalis}
Mireille Capitaine and Muriel Casalis.
\newblock Asymptotic freeness by generalized moments for {G}aussian and
  {W}ishart matrices. {A}pplication to {B}eta random matrices.
\newblock {\em Indiana University mathematics journal}, 53(2):397--431, 2004.

\bibitem[CDM07]{capitainedonatimartin}
Mireille Capitaine and Catherine Donati-Martin.
\newblock Strong asymptotic freeness for {W}igner and {W}ishart matrices.
\newblock {\em Indiana University mathematics journal}, 56(2):767--803, 2007.

\bibitem[CHS18]{collinsetal}
Benoit Collins, Takahiro Hasebe, and Noriyoshi Sakuma.
\newblock Free probability for purely discrete eigenvalues of random matrices.
\newblock {\em Journal of the Mathematical Society of Japan}, 70(3):1111--1150,
  2018.

\bibitem[CMA{\etalchar{+}}18]{colletetal}
Julie~M Collet, Katrina McGuigan, Scott~L Allen, Stephen~F Chenoweth, and
  Mark~W Blows.
\newblock Mutational pleiotropy and the strength of stabilizing selection
  within and between functional modules of gene expression.
\newblock {\em Genetics}, 208(4):1601--1616, 2018.

\bibitem[Col03]{collins}
Benoit Collins.
\newblock Moments and cumulants of polynomial random variables on unitary
  groups, the {I}tzykson-{Z}uber integral, and free probability.
\newblock {\em International Mathematics Research Notices}, 2003(17):953--982,
  2003.

\bibitem[C{\'S}06]{collinssnaidy}
Benoit Collins and Piotr {\'S}niady.
\newblock Integration with respect to the {H}aar measure on unitary, orthogonal
  and symplectic group.
\newblock {\em Communications in Mathematical Physics}, 264(3):773--795, 2006.

\bibitem[DL18]{dobribanliu}
Edgar Dobriban and Sifan Liu.
\newblock A new theory for sketching in linear regression.
\newblock {\em arXiv preprint arXiv:1810.06089}, 2018.

\bibitem[Dyk93]{dykema}
Ken Dykema.
\newblock On certain free product factors via an extended matrix model.
\newblock {\em Journal of Functional Analysis}, 112(1):31--60, 1993.

\bibitem[Fis18]{fisher}
Ronald~A Fisher.
\newblock The correlation between relatives on the supposition of {M}endelian
  inheritance.
\newblock {\em Transactions of the Royal Society of Edinburgh},
  52(02):399--433, 1918.

\bibitem[FJ19]{fanjohnstonebulk}
Zhou Fan and Iain~M Johnstone.
\newblock Eigenvalue distributions of variance components estimators in
  high-dimensional random effects models.
\newblock {\em The Annals of Statistics}, 47(5):2855--2886, 2019.

\bibitem[FJS18]{fanjohnstonesun}
Zhou Fan, Iain~M Johnstone, and Yi~Sun.
\newblock Spiked covariances and principal components analysis in
  high-dimensional random effects models.
\newblock {\em arXiv preprint arXiv:1806.09529}, 2018.

\bibitem[GH03]{geronimohill}
Jeffrey~S Geronimo and Theodore~P Hill.
\newblock Necessary and sufficient condition that the limit of {S}tieltjes
  transforms is a {S}tieltjes transform.
\newblock {\em Journal of Approximation Theory}, 121(1):54--60, 2003.

\bibitem[HB06]{hineblows}
Emma Hine and Mark~W Blows.
\newblock Determining the effective dimensionality of the genetic
  variance--covariance matrix.
\newblock {\em Genetics}, 173(2):1135--1144, 2006.

\bibitem[HLN07]{hachemetal}
Walid Hachem, Philippe Loubaton, and Jamal Najim.
\newblock Deterministic equivalents for certain functionals of large random
  matrices.
\newblock {\em The Annals of Applied Probability}, 17(3):875--930, 2007.

\bibitem[HP00]{hiaipetz}
Fumio Hiai and Denes Petz.
\newblock Asymptotic freeness almost everywhere for random matrices.
\newblock {\em Acta Sci. Math. (Szeged)}, 66(3--4):809--834, 2000.

\bibitem[HT05]{haagerupthorbjornsen}
Uffe Haagerup and Steen Thorbj{\o}rnsen.
\newblock A new application of random matrices:
  $\operatorname{Ext}(c_\mathrm{red}^*(f_2))$ is not a group.
\newblock {\em Annals of Mathematics}, 162:711--775, 2005.

\bibitem[JL09]{johnstonelu}
Iain~M Johnstone and Arthur~Yu Lu.
\newblock On consistency and sparsity for principal components analysis in high
  dimensions.
\newblock {\em Journal of the American Statistical Association},
  104(486):682--693, 2009.

\bibitem[Joh01]{johnstone}
Iain~M Johnstone.
\newblock On the distribution of the largest eigenvalue in principal components
  analysis.
\newblock {\em The Annals of Statistics}, 29(2):295--327, 2001.

\bibitem[Jol11]{jolliffe}
Ian Jolliffe.
\newblock {\em Principal component analysis}.
\newblock Springer, 2011.

\bibitem[JP18]{johnstonepaul}
Iain~M Johnstone and Debashis Paul.
\newblock {PCA} in high dimensions: An orientation.
\newblock {\em Proceedings of the IEEE}, 106(8):1277--1292, 2018.

\bibitem[KY17]{knowlesyin}
Antti Knowles and Jun Yin.
\newblock Anisotropic local laws for random matrices.
\newblock {\em Probability Theory and Related Fields}, 169(1-2):257--352, 2017.

\bibitem[LAP15]{liuauepaul}
Haoyang Liu, Alexander Aue, and Debashis Paul.
\newblock On the {M}ar{\v{c}}enko--{P}astur law for linear time series.
\newblock {\em The Annals of Statistics}, 43(2):675--712, 2015.

\bibitem[LS07]{leekstorey}
Jeffrey~T Leek and John~D Storey.
\newblock Capturing heterogeneity in gene expression studies by surrogate
  variable analysis.
\newblock {\em PLoS genetics}, 3(9):e161, 2007.

\bibitem[LW98]{lynchwalsh}
Michael Lynch and Bruce Walsh.
\newblock {\em Genetics and analysis of quantitative traits}, volume~1.
\newblock Sinauer Sunderland, MA, 1998.

\bibitem[Mal12]{male}
Camille Male.
\newblock The norm of polynomials in large random and deterministic matrices.
\newblock {\em Probability Theory and Related Fields}, 154(3-4):477--532, 2012.

\bibitem[MP67]{marcenkopastur}
Vladimir~A Marcenko and Leonid~Andreevich Pastur.
\newblock Distribution of eigenvalues for some sets of random matrices.
\newblock {\em Sbornik: Mathematics}, 1(4):457--483, 1967.

\bibitem[MS17]{mingospeicher}
James~A Mingo and Roland Speicher.
\newblock {\em Free probability and random matrices}.
\newblock Fields Institute Monograph, volume 35. Springer, 2017.

\bibitem[Nad08]{nadler2008}
Boaz Nadler.
\newblock Finite sample approximation results for principal component analysis:
  {A} matrix perturbation approach.
\newblock {\em The Annals of Statistics}, 36(6):2791--2817, 2008.

\bibitem[NS06]{nicaspeicher}
Alexandru Nica and Roland Speicher.
\newblock {\em Lectures on the Combinatorics of Free Probability}.
\newblock Cambridge University Press, 2006.

\bibitem[NSS02]{NSS}
Alexandru Nica, Dimitri Shlyakhtenko, and Roland Speicher.
\newblock Operator-valued distributions. {I}. {C}haracterizations of freeness.
\newblock {\em International Mathematics Research Notices}, 29:1509--1538,
  2002.

\bibitem[PA14]{paulaue}
Debashis Paul and Alexander Aue.
\newblock Random matrix theory in statistics: A review.
\newblock {\em Journal of Statistical Planning and Inference}, 150:1--29, 2014.

\bibitem[Pau07]{paul}
Debashis Paul.
\newblock Asymptotics of sample eigenstructure for a large dimensional spiked
  covariance model.
\newblock {\em Statistica Sinica}, 17(4):1617--1642, 2007.

\bibitem[PPR06]{pattersonetal}
Nick Patterson, Alkes~L Price, and David Reich.
\newblock Population structure and eigenanalysis.
\newblock {\em PLoS Genetics}, 2(12):e190, 2006.

\bibitem[Rao72]{rao}
C~Radhakrishna Rao.
\newblock Estimation of variance and covariance components in linear models.
\newblock {\em Journal of the American Statistical Association},
  67(337):112--115, 1972.

\bibitem[Sch05]{schultz}
Hanne Schultz.
\newblock Non-commutative polynomials of independent {G}aussian random
  matrices. {T}he real and symplectic cases.
\newblock {\em Probability Theory and Related Fields}, 131(2):261--309, 2005.

\bibitem[SCM09]{searleetal}
Shayle~R Searle, George Casella, and Charles~E McCulloch.
\newblock {\em Variance Components}.
\newblock John Wiley \& Sons, 2009.

\bibitem[Shl15]{shlyakhtenko}
Dimitri Shlyakhtenko.
\newblock Free probability of type {B} and asymptotics of finite-rank
  perturbations of random matrices.
\newblock {\em arXiv preprint arXiv:1509.08841}, 2015.

\bibitem[SPP{\etalchar{+}}12]{stegleetal}
Oliver Stegle, Leopold Parts, Matias Piipari, John Winn, and Richard Durbin.
\newblock Using probabilistic estimation of expression residuals {(PEER)} to
  obtain increased power and interpretability of gene expression analyses.
\newblock {\em Nature protocols}, 7(3):500, 2012.

\bibitem[SV12]{speichervargas}
Roland Speicher and Carlos Vargas.
\newblock Free deterministic equivalents, rectangular random matrix models, and
  operator-valued free probability theory.
\newblock {\em Random Matrices: Theory and Applications}, 1(02):1150008, 2012.

\bibitem[TW96]{tracywidom}
Craig~A Tracy and Harold Widom.
\newblock On orthogonal and symplectic matrix ensembles.
\newblock {\em Communications in Mathematical Physics}, 177(3):727--754, 1996.

\bibitem[Voi91]{voiculescu}
Dan Voiculescu.
\newblock Limit laws for random matrices and free products.
\newblock {\em Inventiones mathematicae}, 104(1):201--220, 1991.

\bibitem[Voi98]{voiculescustrengthened}
Dan Voiculescu.
\newblock A strengthened asymptotic freeness result for random matrices with
  applications to free entropy.
\newblock {\em International Mathematics Research Notices}, 1998(1):41--63,
  1998.

\bibitem[WAP17]{wangauepaul}
Lili Wang, Alexander Aue, and Debashis Paul.
\newblock Spectral analysis of sample autocovariance matrices of a class of
  linear time series in moderately high dimensions.
\newblock {\em Bernoulli}, 23(4A):2181--2209, 2017.

\bibitem[Wri35]{wright}
Sewall Wright.
\newblock The analysis of variance and the correlations between relatives with
  respect to deviations from an optimum.
\newblock {\em Journal of Genetics}, 30(2):243--256, 1935.

\bibitem[YLGV11]{yangetal}
Jian Yang, S~Hong Lee, Michael~E Goddard, and Peter~M Visscher.
\newblock {GCTA}: a tool for genome-wide complex trait analysis.
\newblock {\em The American Journal of Human Genetics}, 88(1):76--82, 2011.

\bibitem[Zha06]{zhang}
Lixin Zhang.
\newblock {\em Spectral analysis of large dimensional random matrices}.
\newblock PhD thesis, National University of Singapore, 2006.

\bibitem[ZS12]{zhoustephens}
Xiang Zhou and Matthew Stephens.
\newblock Genome-wide efficient mixed-model analysis for association studies.
\newblock {\em Nature genetics}, 44(7):821, 2012.

\end{thebibliography}

\end{document}